\newcommand{\R}{\mathbb{R}}
\newcommand{\C}{\mathbb{C}}
\newcommand{\calC}{\mathcal{C}}
\newcommand{\N}{\mathbb{N}}
\newcommand{\Z}{\mathbb{Z}}
\newcommand{\E}{\mathbb{E}}
\newcommand{\scrF}{\mathscr{F}}
\newcommand{\calR}{\mathcal{R}}
\newcommand{\bla}{\big \langle}
\newcommand{\bra}{\big \rangle}
\numberwithin{equation}{section}
\newcommand{\ud}[0]{\,\mathrm{d}}
\newcommand{\esssup}[0]{\operatornamewithlimits{ess\,sup}}
\newcommand{\BMO}[0]{\operatorname{BMO}}
\newcommand{\bmo}[0]{\operatorname{bmo}}
\newcommand{\loc}[0]{\operatorname{loc}}
\renewcommand{\Re}[0]{\operatorname{Re}}
\newcommand{\good}[0]{\operatorname{good}}
\newcommand{\ch}[0]{\operatorname{ch}}
\newcommand{\calD}[0]{\mathcal{D}}
\newcommand{\wt}[1]{{\widetilde{#1}}}
\theoremstyle{plain}
\newtheorem{thm}[equation]{Theorem}
\newtheorem{lem}[equation]{Lemma}
\newtheorem{prop}[equation]{Proposition}
\theoremstyle{definition}
\newtheorem{defn}[equation]{Definition}
\newtheorem{exmp}[equation]{Example}
\theoremstyle{remark}
\newtheorem{rem}[equation]{Remark}
\author{Kangwei Li}
\address[K.L.]{BCAM (Basque Center for Applied Mathematics), Alameda de Mazarredo 14, 48009 Bilbao, Spain} 
\email{kli@bcamath.org}
\author{Henri Martikainen}
\address[H.M.]{Department of Mathematics and Statistics, University of Helsinki, P.O.B. 68, FI-00014 University of Helsinki, Finland}
\email{henri.martikainen@helsinki.fi}
\author{Emil Vuorinen}
\address[E.V.]{Department of Mathematics and Statistics, University of Helsinki, P.O.B. 68, FI-00014 University of Helsinki, Finland}
\email{emil.vuorinen@helsinki.fi}
\title{Bilinear Calder\'on--Zygmund theory on product spaces}
\subjclass[2010]{42B20}
\keywords{Calder\'on--Zygmund operators, bi-parameter analysis, bilinear analysis, dyadic model operators, representation theorems, weighted estimates, commutators, multipliers} 
\begin{document}

\begin{abstract}
We develop a wide general theory of bilinear bi-parameter singular integrals $T$. First, we prove a dyadic representation theorem starting from $T1$ assumptions and apply it to show many estimates, including $L^p \times L^q \to L^r$ estimates in the full
natural range together with weighted estimates and mixed-norm estimates.
Second, we develop commutator decompositions and show estimates in the full range for commutators and iterated commutators, like $[b_1,T]_1$ and $[b_2, [b_1, T]_1]_2$, where
$b_1$ and $b_2$ are little BMO functions. Our proof method can be used to simplify and improve linear commutator proofs, even in the two-weight Bloom setting.
We also prove commutator lower bounds by using and developing the recent median method.
\end{abstract}

\maketitle

\section{Introduction}
\subsection{Estimates for singular integrals in various settings}
\subsubsection*{Linear and multilinear theory}
A singular integral operator (SIO) refers to a linear operator $T$ for which
$$
\langle Tf_1, f_2 \rangle := \int_{\R^n} Tf_1(x) f_2(x)\ud x = \iint_{\R^n \times \R^n} K(x,y) f_1(y) f_2(x)\ud y \ud x
$$ for some suitable kernel $K$ and for all nice functions $f_1, f_2$ that are disjointly supported.
A fundamental problem is to prove that $T$ is bounded in $L^p$, $p \in (1,\infty)$, under the assumption that $T$ is bounded in some $L^{p_0}$.
A key feature of the classical theory is that the following argument achieves this: prove weak $(1,1)$ via Calder\'on--Zygmund decomposition, interpolate and dualise. See any standard textbook such as Stein \cite{Stein:book}.

We discuss the bilinear theory of SIOs. A heuristic model of a bilinear SIO $T$ in $\R^n$ is
$T(f_1, f_2)(x) := \tilde T(f_1 \otimes f_2)(x,x)$, where $x \in \R^n$, $f_i \colon \R^n \to \C$, $(f_1 \otimes f_2)(x_1,x_2) = f_1(x_1) f_2(x_2)$
 and $\tilde T$ is a linear SIO in $\R^{2n}$. More precisely, a bilinear SIO $T$ has a kernel $K$ satisfying estimates that are obtained from the above heuristic via the linear estimates, and if $\operatorname{spt} f_i \cap \operatorname{spt} f_j = \emptyset$ for some $i,j$ then
$$
\langle T(f_1, f_2), f_3 \rangle =  \iiint_{\R^{3n}} K(x,y,z) f_1(y) f_2(z) f_3(x) \ud y \ud z \ud x.
$$

A fundamental aspect brought to play by the bilinearity is that now the natural range for boundedness is
$L^p \times L^q \to L^r$ for $p, q \in (1, \infty]$ and $r \in (1/2,\infty)$ satisfying the scaling $1/p + 1/q = 1/r$. If $p,q,r \in (1,\infty)$ we talk about the Banach range,
and otherwise about the quasi--Banach range, where we may have $r < 1$.
There are a lot of ways, including bilinear interpolation, sparse domination and good lambda type arguments, to go from the boundedness
 $L^{p_0} \times L^{q_0} \to L^{r_0}$, where $p_0, q_0, r_0$ are fixed, to the whole range. Some of these methods go through the endpoint estimate $L^1 \times L^1 \to L^{1/2, \infty}$, which again follows via the Calder\'on--Zygmund decomposition. See e.g. Grafakos--Torres \cite{GT}. More modern aspects are cited and used e.g. in \cite{LMOV}.

\subsubsection*{Multi-parameter theory}
We deal with bi-parameter theory.
An example of a bi-parameter SIO in the product
space $\R^n \times \R^m$ is $T_n \otimes T_m$, where
$T_n$ and $T_m$ are linear SIOs in $\R^n$ and $\R^m$, respectively, and $(T_n \otimes T_m)(f_1 \otimes f_2)(x) = T_n f_1(x_1) T_m f_2(x_2)$
for $x = (x_1, x_2) \in \R^{n+m}$ and two functions $f_1, f_2$ defined in $\R^n$ and $\R^m$, respectively. Noticing that $T_n \otimes T_m =
T_n^1 T_m^2$, where e.g. $T_n^1 f(x) = T_n(f(\cdot, x_2))(x_1)$, it follows by Fubini that
$
\| T_n \otimes T_m\|_{L^p \to L^p} \le \|T_n \|_{L^p \to L^p} \|T_m\|_{L^p \to L^p}.
$
Theory of non-tensor form bi-parameter SIOs is interesting. The classical references Fefferman--Stein \cite{FS} and Journ\'e \cite{Jo2} deal
with convolution form and general kernels, respectively.
The kernel structure of a non-tensor form SIO can be deduced from the tensor product case. We require two kinds of kernel representations depending
whether we have separation in both $\R^n$ and $\R^m$ (a full kernel representation), or just in $\R^n$ or $\R^m$ (a partial kernel representation).
This viewpoint on the kernels is recent, see \cite{Ma1} by one of us and Pott-Villarroya \cite{PV}.
Grau de la Herr\'an \cite{Grau} showed that the operator-valued formulations of Journ\'e \cite{Jo2} are equivalent to the more tangible
full and partial kernel assumptions of \cite{Ma1}.

The one-parameter machinery for achieving $L^p$ boundedness from boundedness in a single $L^{p_0}$ via weak $(1,1)$ does not work as this endpoint is false.
An alternative route via the boundedness $L^{\infty} \to \BMO_{\textup{prod}}$, where $ \BMO_{\textup{prod}}$ is the product BMO of Chang and Fefferman \cite{CF1, CF2}, exists. That this can be interpolated via the implication $T^* \colon H^1 \to L^1$ requires Chang--Fefferman \cite{CF3}, which heavily involves
product Hardy space machinery.
The estimate $L^{\infty} \to \BMO_{\textup{prod}}$ is also very important for the $T1$ theory. This is proved via a difficult argument involving Journ\'e's covering theorem \cite{Jo1}. For a modern proof see e.g. Hyt\"onen--Martikainen \cite{HM}, which works even in the non-doubling situation. For dyadic viewpoints on product $\BMO$ and $H^1$ see Pipher--Ward \cite{PW} and Treil \cite{Tr}.

An alternative way to prove $T \colon L^p \to L^p$, $p \in (1,\infty)$, is to show that $T1$ assumptions
imply $L^p$ boundedness (and not just $L^2$). This avoids the $H^1$ theory and follows e.g. from the representation theorem \cite{Ma1}. If a bi-parameter SIO $T$
and its partial adjoint $T_1$ (defined via $\langle T_1(f_1 \otimes f_2), g_1 \otimes g_2 \rangle = \langle T(g_1 \otimes f_2), f_1 \otimes g_2 \rangle$)
are bounded in some $L^{p_0}$, possibly with a different $p_0$, the estimate $L^{\infty} \to \BMO_{\textup{prod}}$ implies the $T1$ assumptions
for $T, T^*, T_1, T_1^*$, and so $T$ is bounded in all $L^p$. The partial adjoint has to be mentioned as it need not be bounded even if $T$ is.
We will show that at least this viewpoint has a useful analog in the bilinear bi-parameter setting.
Of course, the original main point of $T1$ theorems also holds: they show the boundedness just from testing conditions.

\subsubsection*{Multilinear multi-parameter theory}
Again, we focus on the bilinear bi-parameter setting.
A model of a bilinear bi-parameter singular integral in $\R^n \times \R^m$ is
$$
(T_n \otimes T_m)(f_1 \otimes f_2, g_1 \otimes g_2)(x) := T_n(f_1, g_1)(x_1)T_m(f_2, g_2)(x_2),
$$
where $f_1, g_1 \colon \R^n \to \C$, $f_2, g_2 \colon \R^m \to \C$,
$x = (x_1, x_2) \in \R^{n+m}$ and $T_n$, $T_m$ are bilinear SIOs defined in $\R^n$ and $\R^m$, respectively.
Unlike in the linear bi-parameter case, the theory of the tensor products is already non-trivial -- we will return to this point.
As in the linear bi-parameter case, the natural kernel structure of a general bilinear bi-parameter SIO can be deduced from the tensor product case. These are the operators that we study in this paper. Difficult estimates for some concrete bilinear bi-parameter SIOs have had interesting applications -- we get to these later. A systematic abstract theory in this setting has been missing, and we claim that developing such theory has big advantages.

The most fundamental question is the boundedness of a bilinear bi-parameter SIO $T$
in the full natural range $L^p \times L^q \to L^r$, where again $p, q \in (1, \infty]$ and $r \in (1/2,\infty)$ satisfy $1/p + 1/q = 1/r$.
An easy to explain corollary of our theory is that if $T$ is a bilinear bi-parameter SIO (which refers
just to possessing the kernel structure),
\begin{equation}\label{eq:bbb}
\|T(f_1, f_2)\|_{L^{r_0}} \lesssim \|f_1\|_{L^{p_0}} \|f_2\|_{L^{q_0}}, \qquad 1/p_0 + 1/q_0 = 1/r_0 < 1,
\end{equation}
and the same estimate holds for the partial adjoints of $T$ (maybe with different exponents $p_0, q_0, r_0$), then
we have in the full range that
$
\|T(f_1, f_2)\|_{L^r} \lesssim \|f_1\|_{L^p} \|f_2\|_{L^q}.
$
If $T(1,1) = 0$, and the same holds for the adjoints and the partial adjoints, we call $T$ free of full paraproducts (this terminology
will become clear later). In this case
\begin{equation}\label{eq:tensor}
\|T(f_1 \otimes f_2, g_1 \otimes g_2)\|_{L^{r_0}} \lesssim \|f_1\|_{L^{p_0}} \|f_2\|_{L^{p_0}} \|g_1 \|_{L^{q_0}}\|g_2 \|_{L^{q_0}}
\end{equation}
for some $p_0, q_0,r_0$ with $1/p_0 + 1/q_0 = 1/r_0 \le 1$ gives the boundedness of $T$ everywhere.

In general, we get the boundedness of SIOs in the full range under boundedness and cancellation conditions,
which consist of product BMO type conditions as in $T(1,1) \in \BMO_{\textup{prod}}$ (this follows from \eqref{eq:bbb} but also from weaker assumptions) and some mild conditions for which the tensor boundedness \eqref{eq:tensor} is sufficient. \emph{The story we have been trying to tell is that while this type of theory can be used
to prove the boundedness via the $T1$ assumptions (which is as fundamental here as it is in the classical theory),
it is also a machine giving the boundedness in the full range from the boundedness
in a fixed tuple $1/p_0 + 1/q_0 = 1/r_0 < 1$.}

Bilinear bi-parameter multipliers (that are special SIOs, see the Appendix) were studied by Muscalu--Pipher--Tao--Thiele \cite{MPTT}, and
they applied their theory to Leibniz rule type estimates relevant in non-linear PDE. The difficulty was to establish the boundedness
in the quasi--Banach range, as the boundedness in the Banach range was easy -- see however the above point  in cursive.
Other references for multilinear multi-parameter multipliers are Chen--Lu \cite{CL} (e.g. a weighted bound and some restricted smoothness results), Grafakos--He--Nguyen--Yan \cite{GHNY} (restricted smoothness),  Benea--Muscalu \cite{BM1, BM2} and Di Plinio--Ou \cite{DO} (e.g. mixed-norm estimates). We also mention Lacey--Metcalfe \cite{LM}.

In the free of full paraproducts case we also get bilinear weighted
estimates involving  bi-parameter Muckenhoupt $A_p$ classes, and we can in fact use this as the technology to get the full range via
extrapolation. We also prove mixed-norm estimates, where
$L^p(\R^{n+m})$ is replaced by $L^{p_1}(\R^n; L^{p_2}(\R^m))$. Such estimates are topical (e.g. the multipliers are free of full paraproducts).
For a bilinear bi-parameter SIO free of full paraproducts the simple condition \eqref{eq:tensor} (or some weaker conditions) yields all these difficult estimates.

In Journ\'e \cite{Jo3} tensor products $U_n \otimes U_m$ of ``multilinear singular integral forms'' are studied -- see Christ--Journ\'e \cite{CJ}
for the definition of these forms that they call $\delta$-$n$ SIF.
It is stated that a tensor product of general bilinear operators, both bounded $L^{\infty} \times L^2 \to L^2$,
need not be bounded $L^{\infty} \times L^2 \to L^2$.
It was proved that this is true, however, for their singular forms, and this was further applied to certain Cauchy type operators improving a previous application of the bi-parameter $T1$ theorem \cite{Jo3}.
We mention that we get that $T_n \otimes T_m$ -- a tensor product of bounded bilinear SIOs --  is weighted bounded in the full range, even without paraproduct free assumptions -- see Remark \ref{rem:tensorfullb}.

\subsubsection*{Commutators and methodology}
Our second contribution deals with commutator estimates and methods to prove them.
Commutator estimates have remained at the heart of modern harmonic analysis since the result of Coifman--Rochberg--Weiss \cite{CRW}
showing that $$
\|b\|_{\BMO} \lesssim \|[b,T]\|_{L^p \to L^p} \lesssim \|b\|_{\BMO}, \textup{ where } [b,T]f := bTf - T(bf),
$$
for a class of non-degenerate CZOs $T$. Both the upper and lower bounds of this estimate are non-trivial, and
allow deep extensions -- notably to the multi-parameter setting, see e.g. Ferguson--Sadosky \cite{FS} and Ferguson--Lacey \cite{FL}.
We contribute to the methodology by
developing commutator decompositions and estimates. Then we prove upper and lower bound commutator estimates in the bilinear bi-parameter framework.

A technical backbone for all of the above is obtained by first developing further the probabilistic--dyadic methods, which were initially pioneered by Nazarov--Treil--Volberg \cite{NTV} in their study of non-doubling singular integrals. 
These methods have previously also been significantly developed by many leading analysts to prove state of the art weighted results, and also to prove results in the border of harmonic analysis and geometric measure theory (see Tolsa's book \cite{To:book}).
Our modern approach is key to our results, but also offers many new avenues for future research in the bilinear bi-parameter setup -- such avenues e.g. include Bloom type two-weight estimates and non-doubling analysis.
We now move on to give a more detailed account of the field and our theory.

We now embark on the second phase of the introduction with full details.
\subsection{Dyadic representation theorems}
A representation theorem aims to represent SIOs by using some natural dyadic model operators (DMOs). The aim
is to reduce questions to these simpler DMOs. Such theorems have reshaped the modern thinking of singular integrals,
and the development of these methods is the reason why many problems in the bi-parameter scene are more attainable today.

Recently, representation theorems have been shown to hold
for both the bi-parameter and bilinear SIOs, but not in the simultaneous presence of both these difficulties. Moreover, the theory of the DMOs themselves and the usage of the representation is much more complicated in the bilinear bi-parameter framework.
The bi-parameter representation theorem is by one of us \cite{Ma1} and the bilinear case
is by Li--Martikainen--Ou--Vuorinen \cite{LMOV}. The representations provide precise structural information, which is often of key importance.
That is the case also in this paper. $T1$ theorems are byproducts.

In the linear bi-parameter context the representation theorem \cite{Ma1} has
proved to be very useful e.g. in connection with bi-parameter commutators and weighted analysis, see e.g. Holmes--Petermichl--Wick \cite{HPW} and Ou--Petermichl--Strouse \cite{OPS}. In the bilinear bi-parameter theory quasi--Banach estimates provide a key additional challenge not
present in the linear situation, and similarly weighted estimates are harder to obtain.

The representation theorems were originally motivated by the sharp weighted $A_p$ theory.
Petermichl \cite{Pe} showed a representation for the Hilbert transform, and Hyt\"onen \cite{Hy} proved a representation theorem for all bounded linear one-parameter SIOs.
While the rise of sparse domination methods has somewhat reduced the need of one-parameter representation theorems, this is not the case in multi-parameter situations, where a fully satisfying theory of sparse domination is missing (although see Barron--Pipher \cite{BP}).

There are three types of DMOs that are relevant for us: bilinear bi-parameter shifts, partial paraproducts and full paraproducts. We show, starting from $T1$ type assumptions, that our SIOs have a representation as an average of a rapidly decaying sum of DMOs. While the formulation is naturally of similar nature as in the previous representation theorems, the underlying dyadic structure is extremely involved in this situation, and the proof requires a very delicate decomposition.

The following terminology is convenient. A bilinear bi-parameter SIO should have the suitable kernel representations -- see Definition \ref{defn:SIO}.
A SIO satisfying the $T1$ type assumptions -- see Definition \ref{defn:CZO} -- is a Calder\'on--Zygmund operator (CZO).
\begin{thm}\label{thm:repIntro}
Suppose $T$ is a bilinear bi-parameter CZO. Then we have
$$
\langle T(f_1,f_2), f_3\rangle = C_T \mathbb{E}_{\omega}\mathop{\sum_{k = (k_1, k_2, k_3) \in \Z_+^3}}_{v = (v_1, v_2, v_3) \in \Z_+^3} \alpha_{k, v} 
\bla U^{k,v}_{\omega}(f_1, f_2), f_3 \bra,
$$
where $\omega = (\omega_1, \omega_2)$ is associated to random dyadic grids $\calD_{\omega} = \mathcal{D}^n_{\omega_1} \times \mathcal{D}^m_{\omega_2}$,
 $C_T \lesssim 1$, the numbers $\alpha_{k, v} > 0$ decay exponentially in complexity $(k,v)$, and
$U^{k,v}_{\omega}$ denotes some bilinear bi-parameter dyadic model operator of complexity $(k,v)$
defined in the lattice $\mathcal{D}_{\omega}$.
\end{thm}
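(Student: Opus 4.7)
The plan is to follow the template of the one-parameter bilinear representation of \cite{LMOV} and the linear bi-parameter representation of \cite{Ma1}, combining and substantially refining them. I would start by introducing two independent random dyadic grids $\calD^n_{\omega_1}$ and $\calD^m_{\omega_2}$ and expanding each $f_i$ (and implicitly $f_3$) into its martingale difference expansion on rectangles $I\times J$ with $I\in\calD^n_{\omega_1}$, $J\in\calD^m_{\omega_2}$. By the Nazarov--Treil--Volberg good/bad decomposition performed independently in each parameter, I can restrict the sum to ``good'' rectangles at the cost of a multiplicative constant $C_T$, obtained as a reciprocal of the probability of goodness; the bad part vanishes in expectation. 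This reduces $\langle T(f_1,f_2),f_3\rangle$ to an averaged sum of Haar coefficient triples $\langle T(h_{I_1\times J_1},h_{I_2\times J_2}),h_{I_3\times J_3}\rangle$ with goodness constraints on all three rectangles.

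Next I would split this sum by the relative geometric configuration of the three cubes in each of the two parameters separately. In the $\R^n$ side, the configurations of $(I_1,I_2,I_3)$ fall into a finite list of cases: one cube is strictly smallest and the other two are larger (with varying side-length ratios), all three are comparable, or two are comparable and one is much larger. The same taxonomy applies in $\R^m$ to $(J_1,J_2,J_3)$, and the bi-parameter cases are the Cartesian product of the two lists. For each pure ``separated'' case in a given parameter -- where the relevant smaller cube is far from the supports of the other Haar functions -- I would use the full or partial CZ kernel representation from Definition~\ref{defn:SIO} to insert the kernel, exploit the Haar cancellation to extract decay factors like $2^{-\alpha k_i}$ or $2^{-\alpha v_j}$ from the kernel H\"older estimates, and then group the Haar coefficients over common dyadic ancestors. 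This reorganization converts the sum in that parameter into the shift form with complexities recording the side-length ratios.

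When a parameter is in a ``non-separated'' (paraproduct-type) case -- where one cube sits inside the others with no spatial gap -- the kernel is no longer accessible and I would instead subtract and add the appropriate local averages to replace the troublesome Haar function by a constant, then apply the pertinent $T1$-type testing quantity from Definition~\ref{defn:CZO}. This creates a paraproduct coefficient controlled by a BMO norm of $T(1,1)$, its partial adjoints, or the partial-kernel CZO acting on $1$ in one variable; crucially, one recovers product BMO or little BMO depending on whether both parameters or only one are in this paraproduct case. Combining the two parameters, the pure shift $\times$ shift cases give the bilinear bi-parameter shifts, the shift $\times$ paraproduct (or vice versa) cases give the partial paraproducts, and the paraproduct $\times$ paraproduct cases give the full paraproducts. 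Each of these, after grouping over ancestors indexed by the complexity $(k,v)=((k_1,k_2,k_3),(v_1,v_2,v_3))$, takes the required normalized DMO form $U^{k,v}_\omega$, and the kernel H\"older exponents yield the exponentially decaying coefficients $\alpha_{k,v}$.

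The main obstacle is the combinatorial explosion caused by taking the Cartesian product of case analyses in the two parameters simultaneously with the three-fold bilinear cube geometry: every mixed configuration must be checked to produce one of the three admissible DMO types with the correct normalization and testing input, and one must be careful that the ``goodness'' constraints, which couple the three rectangles in a non-trivial way after ancestor-grouping, do not spoil either the paraproduct testing (which demands summing over all descendants) or the shift normalization. A secondary technicality is handling the diagonal configurations where Haar cancellation is not available in one or both parameters; here one must unwind the martingale differences into averages, carefully absorbing the resulting non-cancellative pieces into the partial and full paraproduct DMOs rather than into shifts, and verifying that all terms so produced are absolutely convergent in expectation with the claimed exponential rate in $(k,v)$.
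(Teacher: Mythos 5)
Your general architecture (random grids, case split by geometric configuration in each parameter, kernel estimates for the separated/diagonal cases, $T1$ testing for the nested cases, and the shift/partial-paraproduct/full-paraproduct trichotomy) does match the paper. However, your handling of goodness contains a real error. The bad part does \emph{not} vanish in expectation, and one cannot impose goodness on all three rectangles and then factor out a single multiplicative constant by independence: the goodness of a cube depends on $\omega$ at scales around and above its side length, and for the two larger cubes these overlap with the scales the pairing itself depends on, so the needed independence fails. What is actually required is goodness only of the \emph{smallest} cube in each parameter. The paper adds this via the collapse trick of \cite{LMOV}: within each of the nine main symmetries, the sums over the two larger cubes are collapsed to $E^\omega_{\ell(I_3)/2,\ell(J_3)/2}$, so that the resulting pairing depends on $\omega$ only at scales strictly below $\ell(I_3),\ell(J_3)$, while goodness of $I_3,J_3$ depends on $\omega$ only at scales at or above those; independence then gives $\E_\omega\Sigma^1(\omega)=(\pi_{\good,n}\pi_{\good,m})^{-1}\E_\omega\Sigma^1_{\good}(\omega)$ \emph{exactly}, with no bad residual to iterate away.

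You also gloss over the second structural point that is specific to the bilinear bi-parameter setting: producing collapsed paraproducts requires a further ``bilinear'' collapse inside each symmetry and a cross-term recombination. After re-expanding, the paper reorganizes the $(I_1,I_2)$ and $(J_1,J_2)$ sums by relative size and collapses the inner one, yielding four pieces $\sigma_1^1,\dots,\sigma_1^4$ per symmetry; the paraproduct-like contributions of these pieces (after splitting $T(h_{I_2^{(1)}}\otimes h_{J_2^{(1)}},1_{I_2\times J_2})$ using the full and partial $T1$ traces) are individually \emph{uncollapsed} telescoping sums, and only by summing the matching pieces from different $\sigma_1^i$ do the telescopes close to produce genuine partial and full paraproducts. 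Your ``add and subtract local averages'' maneuver operates at a different level (inside the decomposition of $T(h\otimes h,1)$) and does not substitute for this recombination. Finally, you say nothing about making the infinite sums meaningful: the paper first assumes an a priori bound $L^3\times L^3\to L^{3/2}$, works with truncated martingale differences $\Delta^{\le N}$ for compactly supported $f_i$ (which also guarantees $\ell(I_3)<2^N$ strictly in the separated sums, so that $h_{I_3}$ is cancellative where the kernel argument requires it), and removes the a priori assumption only at the very end.
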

The free of full paraproducts assumption that appears in some statements below implies precisely that $T$
has a representation with shifts and partial paraproducts only. This requires that $T(1,1) = 0$
in the sense that $\langle T(1,1), h_I \otimes h_J \rangle = 0$ (where $h_I$ is a cancellative Haar function on a cube $I$)
and the same for the adjoints and partial adjoints.

Multipliers are paraproduct free CZOs
(even the partial paraproducts vanish). An example of an operator that is free of full paraproducts but not necessarily of partial paraproducts
is $T_n \otimes T_m$, where $T_n$ is free of paraproducts.
Regarding terminology it seems that continuous analogs of what we call shifts are often called paraproducts. For us a paraproduct
is an operator involving BMO philosophies. The shifts are the simplest DMOs as their boundedness is based on the size of the individual coefficients only.
\subsection{Boundedness properties of $T$}
The following theorem states our estimates for CZOs.
\begin{thm}
Suppose $T$ is a bilinear bi-parameter CZO.
Then we have
$$
\|T(f_1, f_2)\|_{L^r(\R^{n+m})} \lesssim \|f_1\|_{L^p(\R^{n+m})} \|f_2\|_{L^q(\R^{n+m})} 
$$
for all $1 < p, q \le \infty$ and $1/2 < r < \infty$ satisfying $1/p+1/q = 1/r$. 

Suppose further that $T$ is free of full paraproducts.
Then we have the weighted estimate
$$
\|T(f_1, f_2)\|_{L^r(v_3)} \le C([w_1]_{A_p(\R^n \times \R^m)}, [w_2]_{A_q(\R^n \times \R^m)}) \|f_1\|_{L^p(w_1)} \|f_2\|_{L^q(w_2)} 
$$
for all $1 < p, q < \infty$ and $1/2 < r < \infty$ satisfying $1/p+1/q = 1/r$, and for all bi-parameter weights $w_1 \in A_p(\R^n \times \R^m)$, $w_2 \in A_q(\R^n \times \R^m)$ with $v_3 := w_1^{r/p} w_2^{r/q}$. In the unweighted case we also have the mixed-norm estimates
$$
\|T(f_1, f_2)\|_{L^{r_1}(\R^n; L^{r_2}(\R^m))} \lesssim \|f_1\|_{L^{p_1}(\R^n; L^{p_2}(\R^m))}\|f_2\|_{L^{q_1}(\R^n; L^{q_2}(\R^m))}
$$
for all $1 < p_i, q_i \le \infty$ and $1/2 < r_i < \infty$ with $1/p_i + 1/q_i = 1/r_i$, except that if $r_2 < 1$ we have
to assume $\infty \not \in \{p_1, q_1\}$.
\end{thm}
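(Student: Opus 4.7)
The plan is to combine the representation theorem (Theorem \ref{thm:repIntro}) with uniform estimates on the three types of dyadic model operators: bilinear bi-parameter shifts, partial paraproducts, and full paraproducts. Because the coefficients $\alpha_{k,v}$ decay exponentially in the complexity $(k,v)$, it suffices to establish bounds that grow at most polynomially in $(k,v)$ for each DMO type, uniformly in the random parameter $\omega$, and then take the expectation and sum. All three parts of the theorem reduce to this strategy, with the DMOs appearing in the representation depending on which structural hypotheses are present: under the free of full paraproducts assumption the full paraproduct pieces vanish, which is why the more refined weighted and mixed-norm estimates become accessible.

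For part 1 (unweighted bounds in the full range), I would handle the shifts first: their boundedness follows from dyadic square function arguments in each parameter together with Fefferman--Stein type vector-valued maximal inequalities, giving $L^p \times L^q \to L^r$ for all admissible triples, including the quasi-Banach range, with polynomial complexity growth. Partial paraproducts are treated by freezing one parameter: in that parameter they behave like a one-parameter bilinear paraproduct with a BMO coefficient, while in the other they retain a shift structure, so one-parameter bilinear $\BMO$ bounds combined with a bi-parameter square function argument give the needed estimates. Full paraproducts require the product BMO machinery: the hypothesis $T(1,1) \in \BMO_{\textup{prod}}$ extracted from the $T1$ assumptions together with Chang--Fefferman theory yields the bounds at a single tuple, and interpolation plus duality propagate them across the full range.

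For part 2 (weighted estimates under the free of full paraproducts assumption), only shifts and partial paraproducts appear in the representation. I would establish bi-parameter weighted estimates at a single fixed tuple, for example $p_0 = q_0 = 4$ and $r_0 = 2$, for each of these DMO types: shifts via a bilinear sparse-type domination or a direct weighted square function argument adapted to the bi-parameter $A_p$ class, and partial paraproducts by the one-parameter-frozen reduction combined with the linear bi-parameter weighted theory for BMO coefficients. Bilinear Rubio de Francia extrapolation in the bi-parameter setting, applied with bi-parameter Muckenhoupt weights, then upgrades from this single tuple to the full range of $(p,q,r)$ and all $w_1 \in A_p(\R^n \times \R^m)$, $w_2 \in A_q(\R^n \times \R^m)$, with $v_3 = w_1^{r/p} w_2^{r/q}$.

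Part 3 (mixed-norm) I would deduce from part 2 by iterated extrapolation. Tensor product weights $w(x_1, x_2) = u(x_1) v(x_2)$ with $u \in A_p(\R^n)$ and $v \in A_q(\R^m)$ lie in the bi-parameter $A_p$ class, so extrapolating in the inner variable $x_2$ first and then in $x_1$ produces the $L^{p_1}(\R^n; L^{p_2}(\R^m))$ estimates; the restriction that $\infty \not\in \{p_1, q_1\}$ when $r_2 < 1$ reflects the requirement of finite outer exponents in the quasi-Banach step of the extrapolation. The main obstacle throughout will be the partial paraproducts in the quasi-Banach range: the BMO coefficient in one parameter obstructs a clean dualisation, so one must either develop enough bi-parameter sparse domination for these operators or carefully splice one-parameter bilinear BMO bounds with bi-parameter square function estimates, while tracking the polynomial complexity growth precisely enough that the exponential decay of $\alpha_{k,v}$ still closes the sum.
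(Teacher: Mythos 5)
Your overall strategy (representation theorem, per-DMO bounds with polynomial complexity growth, extrapolation) is the right one, and the identification of the partial paraproducts as the main obstacle is astute. However, there are two concrete gaps that would block the argument as sketched. First, for the weighted shift and partial paraproduct estimates you cannot run a ``direct weighted square function argument adapted to the bi-parameter $A_p$ class'' or dualise against $v_3$: the weight $v_3 := w_1^{r/p}w_2^{r/q}$ lies only in $A_{2r}(\R^n\times\R^m)$, which is useless when $r<1$, so duality and ordinary $A_p$ square function theory are unavailable. The paper's resolution is to use \emph{lower} square function estimates valid for the much larger class $A_\infty$ (the Chang--Wilson--Wolff type bound and its $A_\infty$ extrapolation to $L^p$ for all $p>0$), together with one-parameter bilinear \emph{sparse domination} for the paraproduct component of the partial paraproducts; your ``one-parameter BMO bounds plus bi-parameter square function'' and ``linear bi-parameter weighted theory for BMO coefficients'' do not obviously yield these $A_\infty$-level estimates. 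The bilinear extrapolation is then used precisely to recover the quasi-Banach averaged estimate from the Banach-range case $p_0=q_0=4$, $r_0=2$ where $\E_\omega$ commutes with the norm; you gesture at this but should make explicit that this is the \emph{only} way to get the averaged bound for $r<1$, since $\|\E_\omega S_\omega f\|_{L^r} \le \E_\omega\|S_\omega f\|_{L^r}$ fails there.

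Second, your treatment of the unweighted mixed-norm estimates with endpoint exponents $p_1 = \infty$ or $q_1 = \infty$ cannot be reached by iterated extrapolation at all, since extrapolation only produces estimates with finite exponents. The paper handles this via a genuinely different mechanism: the operator-valued reformulation of the SIO (viewing $T$ as a one-parameter bilinear CZO in $\R^n$ with values in bilinear operators on $L^{p_2}(\R^m)$-type Banach spaces) and classical Banach-space-valued Calder\'on--Zygmund theory, specifically the weak type $L^1\times L^1 \to L^{1/2,\infty}(\R^n;E_3)$ estimate and a Banach-valued pointwise sparse domination. This requires $E_3 = L^{r_2}(\R^m)$ to be a Banach space, i.e. $r_2 \ge 1$, and that is where the exception $\infty \notin\{p_1,q_1\}$ when $r_2<1$ actually comes from — in that regime you fall back on the weighted mixed-norm bound with trivial weights, which only covers finite exponents — rather than from ``the quasi-Banach step of the extrapolation'' as you suggest. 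Finally, for the unweighted full paraproducts, the paper uses restricted weak type estimates and interpolation in the style of Muscalu--Pipher--Tao--Thiele rather than Chang--Fefferman $H^1$--BMO machinery; the latter is used to justify the $T1$ assumption $T(1,1)\in\BMO_{\textup{prod}}$, not to bound the model operators.
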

The weighted estimates are first proved for the shifts and partial paraproducts, and then moved to full paraproduct free CZOs
using the representation and the multilinear extrapolation by Grafakos--Martell (and Duoandikoetxea) \cite{DU, GM}. Extrapolation
is needed as quasi--Banach estimates cannot be moved to $T$ directly via the representation
due to the presence of the averaging. The mixed-norm estimates follow from the weighted estimates and operator-valued analysis.

Weighted estimates for singular integrals in the linear bi-parameter setting are already quite difficult: see Fefferman--Stein \cite{FS} and Fefferman \cite{Fe}, \cite{Fe2}. Recently, Holmes--Petermichl--Wick \cite{HPW} showed weighted bounds for bi-parameter singular integrals using the representation theorem \cite{Ma1}. We use this approach also here, but the nature of the bilinear weights poses a problem with the usage of duality (notice e.g. that if $w_1, w_2 \in A_4$ then $v_3 := w_1^{1/2} w_2^{1/2} \in A_4$ while
we need to work in $L^2(v_3)$).  Therefore, the proof of the weighted boundedness of DMOs cannot proceed
as in the linear case. This is a problem, as many bi-parameter proofs, especially those related to full paraproducts, are
based on $H^1$-$\BMO$ type duality arguments.

We manage to bound the shifts and partial paraproducts with duality free proofs with a careful usage of the $A_{\infty}$ condition (so we only exploit the fact that $v_3 \in A_{\infty}$). For example, we use the $A_{\infty}$ extrapolation by Cruz-Uribe--Martell--P\'erez \cite{CUMP} and known lower square function bounds valid for $A_{\infty}$ weights.
The weighted proof of the partial paraproducts also exploits sparse bounds of the one-parameter bilinear paraproducts.

Regarding the first part of the theorem, we still need to show unweighted bounds in the full range for (averages of) full paraproducts. Here we use a different set of tools than above:
weak type arguments and interpolation. The general method is from the paper \cite{MPTT}, but especially in the commutator setting, where we also use these weak type methods, the setup is much more complicated.

\subsection{Commutator estimates}
\subsubsection*{Upper bounds}
We define the commutators
$$
[b,T]_1(f_1,f_2) = bT(f_1, f_2) - T(bf_1, f_2) \, \textup{ and } \, [b,T]_2(f_1, f_2) = bT(f_1,f_2) - T(f_1, bf_2). 
$$
A function $b$ is in little BMO, $\bmo(\R^{n+m})$, if $b(\cdot, x_2)$ and $b(x_1, \cdot)$ are uniformly in BMO.
By studying DMOs we prove the following upper bound for commutators.
\begin{thm}\label{thm:comUP}
Let $1 < p,q \le \infty$ and $1/2 < r < \infty$ satisfy $1/p + 1/q = 1/r$, and let $b \in \bmo(\R^{n+m})$.
Suppose $T$ is a bilinear bi-parameter CZO.
Then we have
$$
\|[b, T]_1(f_1, f_2)\|_{L^r(\R^{n+m})} \lesssim \|b\|_{\bmo(\R^{n+m})} \|f_1\|_{L^p(\R^{n+m})} \|f_2\|_{L^q(\R^{n+m})},
$$
and similarly for $[b, T]_2$.
Analogous results hold for iterated commutators like $[b_2, [b_1, T]_1]_2$.
\end{thm}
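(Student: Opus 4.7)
The plan is to reduce everything to dyadic model operators via the representation theorem (Theorem \ref{thm:repIntro}) and then prove appropriate commutator bounds for shifts, partial paraproducts, and full paraproducts separately. Writing $T$ as an average of DMOs $U^{k,v}_\omega$ with exponentially decaying coefficients $\alpha_{k,v}$, the commutator becomes an average of $[b, U^{k,v}_\omega]_1$; it then suffices to bound each such dyadic commutator with norm growing at worst polynomially in the complexity $(k,v)$, times $\|b\|_{\bmo}$, so the sum and average converge.

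For a fixed DMO $U$, I would decompose the product $b f_1$ using the bi-parameter martingale / paraproduct decomposition adapted to the grid $\calD_\omega$, splitting it into diagonal (``$\Pi_b$-type'') and off-diagonal (``resonance'') pieces in each parameter. After inserting this into $[b,U]_1 = bU(f_1,f_2) - U(bf_1,f_2)$ and doing the analogous decomposition on the output side, the leading diagonal interaction cancels, exactly as in the linear one-parameter story, and the remaining terms are dyadic operators built from $b$-paraproduct pieces of fixed form. These reduced operators can be identified as bilinear bi-parameter DMOs of comparable type but with coefficients carrying one little BMO factor; here it is crucial that $b \in \bmo$ (not merely product BMO), since this gives usable $\BMO$ control of both $b(\cdot,x_2)$ and $b(x_1,\cdot)$ separately, which is what the partial-paraproduct and mixed-resonance terms demand.

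For the size estimates, my plan is to handle the Banach range first via the weighted bounds from the preceding theorem combined with the multilinear Grafakos--Martell extrapolation, so that a single $L^{p_0} \times L^{q_0} \to L^{r_0}$ weighted estimate for the $b$-modified shifts and partial paraproducts, obtained by the same duality-free $A_\infty$ arguments used for $T$ itself, yields the whole Banach range. Extending to the quasi-Banach range requires weak-type endpoint arguments in the style of Muscalu--Pipher--Tao--Thiele \cite{MPTT}, followed by bilinear interpolation; this is the step I expect to be technically the most delicate, especially for the full paraproduct commutators, where no weighted theory is available and one must mimic the MPTT weak-type machinery with an extra little BMO factor, and for the shifts/partial paraproducts where the complexity dependence must be tracked carefully to beat the exponential decay of $\alpha_{k,v}$.

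Finally, for iterated commutators such as $[b_2,[b_1,T]_1]_2$, I would iterate the construction: apply the paraproduct decomposition with $b_1$ to get $b_1$-modified DMOs, then apply the same scheme with $b_2$ in the second variable to each of those, producing at each stage a bilinear bi-parameter operator of the same general shape carrying one more little BMO factor. The representation structure is preserved, so the only new work is bookkeeping and a second application of weak-type/extrapolation arguments in the quasi-Banach range; the main obstacle throughout remains the quasi-Banach weak-type analysis for full paraproducts, now loaded with two $\bmo$ factors.
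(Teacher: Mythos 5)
Your high-level plan matches the paper's: use the representation to reduce to averages of DMOs, decompose $bf$ into martingale pieces so that the unbounded part of $b$ enters only through differences of averages, treat the Banach range by duality-type estimates, and use MPTT-style weak-type arguments plus interpolation in the quasi-Banach range. But several of the specific ideas that make the argument go through are either missing or replaced by a route that would run into trouble.

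First, the decomposition of $bf$ is not the generic bi-parameter paraproduct decomposition you describe; the paper develops a carefully adapted scheme (Section~\ref{sec:marprod}) in which one expands $bf$ in the bi-parameter sense only when the pairing is against $h_I\otimes h_J$ (both cancellative), expands in one parameter only when pairing against $h_I\otimes h_J^0$ or $h_I^0\otimes h_J$, and does not expand at all for $\langle bf\rangle_{I\times J}$, in each case adding and subtracting a suitable average of $b$. This adaptive choice is precisely what circumvents the ``messy remainder'' issue that arises if one always expands in the bi-parameter sense in the presence of non-cancellative Haar functions (the issue the authors attribute to the HPW approach). Your description of ``splitting into diagonal and off-diagonal pieces in each parameter'' does not flag this distinction, and without it the reduced pieces are not of the nice forms $A_i(b,\cdot)$, $a_j^k(b,\cdot)$, $M_b$, $\varphi^i_{b}$ that make the estimate tractable. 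Also, the ``cancellation of the leading diagonal interaction'' is really a partial cancellation: the surviving differences of averages $\langle b\rangle_{Q\times R}-\langle b\rangle_{I\times J}$ are controlled by Lemma~\ref{lem:bmobound} and are the source of the $(1+\max(k_i,v_i))$ polynomial growth in complexity; one must track this explicitly to beat the exponential decay of $\alpha_{k,v}$.

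Second, your plan to get the Banach range by proving weighted bounds for the ``$b$-modified shifts and partial paraproducts'' and then extrapolating is a detour and is not obviously achievable: the reduced commutator pieces are not DMOs with a BMO factor absorbed into the coefficients but compositions of the DMO with the paraproduct operators $A_i(b,\cdot)$, $a_j^k(b,\cdot)$ and adapted maximal functions acting on the inputs/outputs, and it is not clear that the duality-free $A_\infty$ machinery transfers to them. The paper does something simpler: it proves the Banach range directly in unweighted $L^p\times L^q\to L^r$ (Proposition~\ref{prop:com1ofmodelBanach}) for all DMO types at once, using only Lemmas~\ref{lem:basicAa}, \ref{lem:bmaxbounds}, \ref{lem:maximalbound} and the assumed $L^p$-type estimate for the underlying DMO. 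This also automatically includes full paraproducts, for which no weighted theory is available (as you yourself note), so the weighted route would leave the Banach range for full-paraproduct commutators unaddressed.

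Third, the weak-type argument in the quasi-Banach range is not a straight copy of MPTT: it relies on a modified $H^1$-$\BMO$ duality (Lemma~\ref{lem:H1-BMO-Modified}) adapted to the random lattice and to a carefully chosen exceptional set stopping construction, together with the auxiliary square/maximal operators $\Phi_1, \Phi_2^l, S^{i,j}_U$ and their $A_\infty$/extrapolation-based boundedness. You correctly identify this step as the hardest and that the complexity dependence must be tracked, but the specific duality lemma and the bookkeeping with the families $\calR_u, \wt\Omega_u$ are genuine new ingredients, not just ``mimicking MPTT with an extra $\bmo$ factor.''

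For the iterated case your sketch is essentially aligned with the paper: apply the same expansion rules iteratively, and use the additional operators (like $[b_2,A_i(b_1,\cdot)]$) proved bounded in Lemma~\ref{lem:weightedoneparcommutator} and the associated square functions of Lemma~\ref{lem:DetSquareShift}. Still, as in the first-order case, the precise choice of where to expand and where to add-and-subtract an average of $b_2$ (the decomposition leading to \eqref{eq:Com2Split1}) is what prevents an intractable remainder, and this deserves to be made explicit.

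In short: the scaffold of your proposal is right, but the precise adaptive expansion scheme, the modified $H^1$-$\BMO$ duality, and the direct unweighted Banach-range argument are the key technical insights you would need to make the plan actually work; the weighted/extrapolation route you suggest for the Banach range is both unnecessary and not clearly attainable for the commutator operators that arise.
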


We discuss the proof of Theorem \ref{thm:comUP} later. The theory of commutator estimates is extremely vast and important.
By bounding commutators of shifts Ou, Petermichl and Strouse \cite{OPS} proved that $[b,T]$ is $L^2$ bounded, when $T$ is a bi-parameter SIO as in \cite{Ma1}
and free of paraproducts.
This is the important base case for more complicated multi-parameter commutator estimates -- i.e., the proof method using the DMOs lends itself to more complicated estimates.
For example, suppose that $T_1$ and $T_2$ are paraproduct free linear bi-parameter singular integrals satisfying the assumptions of the representation theorem \cite{Ma1} in $\R^{n_1} \times \R^{n_2}$ and $\R^{n_3} \times \R^{n_4}$ respectively.
Then according to \cite{OPS} we have
\begin{align*}
\|[T_1, [b, T_2]]f&\|_{L^2(\prod_{i=1}^4 \R^{n_i})} \lesssim \max\big( \sup_{x_2, x_4} \|b(\cdot, x_2, \cdot, x_4)\|_{\BMO_{\operatorname{prod}}},
 \sup_{x_2, x_3} \|b(\cdot, x_2, x_3, \cdot)\|_{\BMO_{\operatorname{prod}}}, \\
 & \sup_{x_1, x_4} \|b(x_1, \cdot, \cdot, x_4)\|_{\BMO_{\operatorname{prod}}},
  \sup_{x_1, x_3} \|b(x_1, \cdot, x_3, \cdot)\|_{\BMO_{\operatorname{prod}}} \big) \|f\|_{L^2(\prod_{i=1}^4 \R^{n_i})}.
\end{align*}
Related to this see also Dalenc--Ou \cite{DaO}. The dyadic methods are also of crucial importance in many other deeper instances, like in the Bloom type estimates discussed below.

The paper \cite{OPS} was eventually generalised to concern all bi-parameter singular integrals satisfying $T1$ conditions
by Holmes--Petermichl--Wick \cite{HPW}.  Importantly, they also prove \emph{Bloom type two-weight bounds} for commutators of the particular form $[b,T]$.
With a Bloom type inequality we understand the following. Given some operator $A^b$, the definition of which depends naturally on some function $b$,
we seek for a two-weight estimate
$$
\|A^b\|_{L^p(\mu) \to L^p(\lambda)} \lesssim_{[\mu]_{A_p}, [\lambda]_{A_p}} \|b\|_{\BMO(\nu)},
$$
where $p \in (1,\infty)$, $\mu, \lambda \in A_p$, $\nu := \mu^{1/p}\lambda^{-1/p}$, and $\BMO(\nu)$ is some suitable weighted $\BMO$ space.
Usually $A^b$ is some commutator like $[b,T]$ in \cite{HPW}.

Recently, in a sequel of the current paper \cite{LMV:Bloom} we gave
an efficient proof of \cite{HPW} exploiting our modified commutator decompositions from this paper (see below), and generalised the result of \cite{HPW} to iterated commutators 
of the form $[b_k,\ldots[b_2, [b_1, T]]\ldots]$. This kind of iteration in the Bloom setting is not obvious with the appearance of weighted BMO spaces like $\bmo(\nu^{1/k})$ --
see Lerner--Ombrosi--Rivera-R\'ios \cite{LOR2} for the one-parameter setting and sparse domination.
Related previous one parameter commutator results include
Holmes--Lacey--Wick \cite{HLW, HLW2} and  Lerner--Ombrosi--Rivera-R\'ios \cite{LOR1}.
In an upcoming work we also use the commutator decomposition strategy of the current paper
to generalise Dalenc--Ou \cite{DaO} to the Bloom setting: 
$
\| [T_n^1, [b, T_m^2]] \|_{L^p(\mu) \to L^p(\lambda)} \lesssim_{[\mu]_{A_p}, [\lambda]_{A_p}} \|b\|_{\BMO_{\textup{prod}}(\nu)}
$
for a bounded singular integral $T_n$ in $\R^n$ and a bounded singular integral $T_m$ in $\R^m$. That is,
we handle product BMO type commutators in the Bloom setting.

The linear bi-parameter results discussed so far rely on the bi-parameter representation theorem \cite{Ma1} and its multi-parameter generalisation by Y. Ou \cite{Ou}, and on
increasingly sophisticated ways to bound the appearing model operators and their commutators. Similarly, we rely on our new bilinear bi-parameter representation theorem.
However, in Theorem \ref{thm:comUP} we do not consider Bloom type theory -- such bilinear bi-parameter Bloom theory is left as a very interesting question.
In the one-parameter setting multilinear Bloom type
results appear in Kunwar--Ou \cite{KO} (where sparse domination is used).

\subsubsection*{Proof of Theorem \ref{thm:comUP} and commutator decompositions}
The main challenge in going from \cite{OPS} to \cite{HPW}, apart from the Bloom setting, appeared to be that the various paraproducts include non-cancellative Haar functions and have
a more complicated structure than the shifts.
In \cite{HPW} everything was reduced to a so called remainder term, which entailed always expanding $bf$ in the bi-parameter sense. However, this remainder term has a particularly nice structure only when there are no non-cancellative Haar functions. 

Our guideline is to expand $bf$ using bi-parameter martingales in $\langle bf, h_{I} \otimes h_J\rangle$, using one-parameter martingales in
$\langle bf, h_{I}^0 \otimes h_J\rangle$ (or $\langle bf, h_I \otimes h_J^0\rangle$), and not to expand at all in $\langle bf, h_{I}^0 \otimes h_J^0\rangle$ (here $h_I^0$ is a non-cancellative Haar function).
Moreover, when a non-cancellative Haar function appears, a suitable average of $b$ is added and subtracted.
Working like this the proof of the Banach range boundedness of bilinear commutators is surprisingly short. For the quasi--Banach range
we use weak type estimates as already mentioned, and this is quite delicate.

\subsubsection*{The Cauchy trick and its limitations}
The Cauchy integral trick by Coifman--Rochberg--Weiss \cite{CRW} is a trick, where
one e.g. defines $F(z) = e^{bz}T(e^{-bz}f_1, f_2)$, notices that formally $F'(0) = [b,T]_1(f_1, f_2)$, uses the Cauchy integral
formula and then hopes to employ \emph{weighted} bounds of $T$ to get bounds for $[b,T]_1$. Weighted bounds for $T$ are always needed as some weights of the form $e^{\Re(bz)}$ appear in this argument.
This trick is tied to the BMO properties of $b$ by requiring exponential integrability as given by the classical inequality of John--Nirenberg.
It still works in the bi-parameter setting if $b$ is in little BMO, however.

The simplest reason why we do not use this trick is that we currently get
weighted bounds for $T$ only in the free of full paraproducts case, while we prove the commutator result for all $T$.
Also, our proof of the weighted bound for the partial paraproducts utilises one-parameter sparse domination, and therefore does not work in three or higher parameters.
Thus, we think that another proof for the boundedness of the partial paraproducts in the full range is of interest. While we do not show
such proofs explicitly, we perform weak type arguments in this more complicated commutator setting from which they can easily be deduced. 

The most important reason for developing and using the dyadic commutator estimates is that they can be used in many instances where the Cauchy trick fails. The trick does not work for the product BMO type commutators discussed in the linear situation above, and
we are also currently investigating certain natural bilinear analogs of product BMO type commutators.
Moreover, in the Bloom setting one has to directly prove the first order case no matter what, and in the most general little bmo type iterations $b$ is not in $\bmo$ (recall that the Cauchy trick relies on the classical BMO property of $b$). So in the Bloom setting the Cauchy trick is not really useful even in the little BMO context.

\subsubsection*{Lower bounds}
Lower bounds for commutators of particular singular integrals (like the Riesz transform) complement the theory of the upper bounds discussed above. For the history and interesting new results and methods see the recent paper by Hyt\"onen \cite{Hy5}.

In the multi-parameter setting the deepest lower bounds concern the product BMO type commutators. The Hilbert case $[H_1, [b, H_2]]$
in considered in Ferguson--Lacey \cite{FL} -- see also Lacey--Petermichl--Pipher--Wick \cite{LPPW, LPPW2} for the Riesz setting. Ou--Petermichl--Strouse \cite{OPS} also contains general bounds of such type. Notice that e.g. in \cite{OPS} the proved lower bounds for Riesz transforms go through
some well-chosen bi-parameter CZOs of not tensor product type -- this is yet another example of the use of general theory.

Little BMO type lower bounds are not as difficult as the product BMO ones.
In this paper we are dealing with bilinear little BMO type commutators. In the linear bi-parameter case at least \cite{HPW} contains a little BMO type lower bound, even in the Bloom setting.
Guo--Lian--Wu \cite{GLW} proved lower bounds in the bilinear one-parameter setting -- even for exponents in the quasi--Banach
range. Previous bilinear results like Chaffee \cite{Ch} worked only in the Banach range.
However, we do not try to adapt any of these methods to the bilinear bi-parameter setting, since a bilinear bi-parameter version
of the wonderful median method of Hyt\"onen \cite{Hy5} and Lerner--Ombrosi--Rivera-R\'ios \cite{LOR2} can be developed.
This allows us to state the lower bounds for quite general non-degenerate kernels using very weak off-diagonal type testing conditions -- see Section \ref{sec:LowerBounds} for the statements.

Sections \ref{sec:def}, \ref{sec:defbilinbiparSIO} and \ref{sec:defbilinbiparmodel} are devoted to definitions and setting the stage.
In Section \ref{sec:ProofOfRep} we prove the representation theorem. Sections \ref{sec:ModelBou} and \ref{sec:TBou}
contain the estimates for the DMOs and the CZOs, respectively. Sections \ref{sec:com1} and \ref{sec:iterated} give the commutator upper bounds.

\subsection*{Acknowledgements}
K. Li is supported by Juan de la Cierva - Formaci\'on 2015 FJCI-2015-24547, by the Basque Government through the BERC
2018-2021 program and by Spanish Ministry of Economy and Competitiveness
MINECO through BCAM Severo Ochoa excellence accreditation SEV-2013-0323
and through project MTM2017-82160-C2-1-P funded by (AEI/FEDER, UE) and
acronym ``HAQMEC''.

H. Martikainen is supported by the Academy of Finland through the grants 294840 and 306901, the three-year research grant 75160010 of the University of Helsinki,
and is a member of the Finnish Centre of Excellence in Analysis and Dynamics Research.

E. Vuorinen is supported by the Academy of Finland through the grant 306901 and by the Finnish Centre of Excellence in Analysis and Dynamics Research.

We thank Francesco Di Plinio and Yumeng Ou for useful conversations when the last two named authors visited MIT in December 2017. We
thank MIT for the hospitality. 
\section{Basic definitions and results}\label{sec:def}
\subsection{Basic notation}
We denote $A \lesssim B$ if $A \le CB$ for some constant $C$ that can depend on the dimension of the underlying spaces, on integration exponents, and on various other constants appearing in the assumptions. We denote $A \sim B$ if $B \lesssim A \lesssim B$.

We work in the bi-parameter setting in the product space $\R^{n+m}$.
In such a context $x = (x_1, x_2)$ with $x_1 \in \R^n$ and $x_2 \in \R^m$.
We often take integral pairings with respect to one of the two variables only:
If $f \colon \R^{n+m} \to \C$ and $h \colon \R^n \to \C$, then $\langle f, h \rangle_1 \colon \R^{m} \to \C$ is defined by
$
\langle f, h \rangle_1(x_2) = \int_{\R^n} f(y_1, x_2)h(y_1)\ud y_1.
$

\subsection{Dyadic notation, Haar functions and martingale differences}
We denote a dyadic grid in $\R^n$ by $\calD^n$ and a dyadic grid in $\R^m$ by $\calD^m$.
The dyadic rectangles are denoted by $\calD = \calD^n \times \calD^m$.
If $I \in \calD^n$, then $I^{(k)}$ denotes the unique dyadic cube $S \in \calD^n$ so that $I \subset S$ and $\ell(S) = 2^k\ell(I)$. Here $\ell(I)$ stands for side length. Also, $\text{ch}(I)$ denotes the dyadic children of $I$ -- this means that $I' \in \ch(I)$ if $(I')^{(1)} = I$.  The measure of a cube $I$ is simply denoted by $|I|$ no matter what dimension we are in.

When $I \in \calD^n$ we denote by $h_I$ a cancellative $L^2$ normalised Haar function. This means the following.
Writing $I = I_1 \times \cdots \times I_n$ we can define the Haar function $h_I^{\eta}$, $\eta = (\eta_1, \ldots, \eta_n) \in \{0,1\}^n$, by setting
$
h_I^{\eta} = h_{I_1}^{\eta_1} \otimes \cdots \otimes h_{I_n}^{\eta_n}, 
$
where $h_{I_i}^0 = |I_i|^{-1/2}1_{I_i}$ and $h_{I_i}^1 = |I_i|^{-1/2}(1_{I_{i, l}} - 1_{I_{i, r}})$ for every $i = 1, \ldots, n$. Here $I_{i,l}$ and $I_{i,r}$ are the left and right
halves of the interval $I_i$ respectively. If $\eta \in \{0,1\}^n \setminus \{0\}$ the Haar function is cancellative: $\int h_I^{\eta} = 0$. We usually suppress the presence of $\eta$
and simply write $h_I$ for some $h_I^{\eta}$, $\eta \in \{0,1\}^n \setminus \{0\}$. Then $h_Ih_I$ can stand for $h_I^{\eta_1} h_I^{\eta_2}$, but we always treat
such a product as a non-cancellative function.

For $I \in \calD^n$ and $f \in L^1_{\textup{loc}}(\R^n)$ we define the martingale difference
$$
\Delta_I f = \sum_{I' \in \textup{ch}(I)} \big[ \bla f \bra_{I'} -  \bla f \bra_{I} \big] 1_{I'}.
$$
Here $\bla f \bra_I = \frac{1}{|I|} \int_I f$. We also write $E_I f = \bla f \bra_I 1_I$.
Now, we have $\Delta_I f = \sum_{\eta \ne 0} \langle f, h_{I}^{\eta}\rangle h_{I}^{\eta}$, or suppressing the $\eta$ summation, $\Delta_I f = \langle f, h_I \rangle h_I$, where $\langle f, h_I \rangle = \int f h_I$. A martingale block is defined by
$$
\Delta_{K,i} f = \mathop{\sum_{I \in \calD^n}}_{I^{(i)} = K} \Delta_I f, \qquad K \in \calD^n.
$$

Next, we define bi-parameter martingale differences. Let $f \colon \R^n \times \R^m \to \C$ be locally integrable.
Let $I \in \calD^n$ and $J \in \calD^m$. We define the martingale difference
$$
\Delta_I^1 f \colon \R^{n+m} \to \C, \Delta_I^1 f(x) := \Delta_I (f(\cdot, x_2))(x_1).
$$
Define $\Delta_J^2f$ analogously, and also define $E_I^1$ and $E_J^2$ similarly.
We set
$$
\Delta_{I \times J} f \colon \R^{n+m} \to \C, \Delta_{I \times J} f(x) = \Delta_I^1(\Delta_J^2 f)(x) = \Delta_J^2 ( \Delta_I^1 f)(x).
$$
Notice that $\Delta^1_I f = h_I \otimes \langle f , h_I \rangle_1$, $\Delta^2_J f = \langle f, h_J \rangle_2 \otimes h_J$ and
$ \Delta_{I \times J} f = \langle f, h_I \otimes h_J\rangle h_I \otimes h_J$ (suppressing the finite $\eta$ summations).
Martingale blocks are defined in the natural way
$$
\Delta_{K \times V}^{i, j} f  =  \sum_{I\colon I^{(i)} = K} \sum_{J\colon J^{(j)} = V} \Delta_{I \times J} f = \Delta_{K,i}^1( \Delta_{V,j}^2 f) = \Delta_{V,j}^2 ( \Delta_{K,i}^1 f).
$$

\subsection{Weights}
A weight $w(x_1, x_2)$ (i.e. a locally integrable a.e. positive function) belongs to the bi-parameter $A_p$ class, $A_p(\R^n \times \R^m)$, $1 < p < \infty$, if
$$
[w]_{A_p(\R^n \times \R^m)} := \sup_{R} \bla w \bra_R \bla w^{1-p'} \bra_R^{p-1} < \infty,
$$
where the supremum is taken over $R = I \times J$, where $I \subset \R^n$ and $J \subset \R^m$ are cubes
with sides parallel to the axes (we simply call such $R$ rectangles).
We have
$$
[w]_{A_p(\R^n\times \R^m)} < \infty \textup { iff } \max\big( \esssup_{x_1 \in \R^n} \,[w(x_1, \cdot)]_{A_p(\R^m)}, \esssup_{x_2 \in \R^m}\, [w(\cdot, x_2)]_{A_p(\R^n)} \big) < \infty,
$$
and that $\max\big( \esssup_{x_1 \in \R^n} \,[w(x_1, \cdot)]_{A_p(\R^m)}, \esssup_{x_2 \in \R^m}\, [w(\cdot, x_2)]_{A_p(\R^n)} \big) \le [w]_{A_p(\R^n\times \R^m)}$, while
the constant $[w]_{A_p}$ is dominated by the maximum to some power.
Of course, $A_p(\R^n)$ is defined similarly as $A_p(\R^n \times \R^m)$ -- just take the supremum over cubes $Q$. For the basic theory
of bi-parameter weights consult e.g. \cite{HPW}.

Also, recall that $w \in A_{\infty}(\R^n)$ if
$$
[w]_{A_{\infty}(\R^n)} = \sup_Q \Big( \frac{1}{|Q|} \int_Q w \Big) \operatorname{exp}\Big( \frac{1}{|Q|} \int_Q \log w^{-1} \Big) < \infty,
$$
where the supremum is taken over all the cubes $Q \subset \R^n$. We will only use
the fact that $A_p \subset A_{\infty}$, and that certain key estimates hold for the larger class of $A_{\infty}$ weights. 

We record the following standard weighted square function estimates.
\begin{lem}\label{lem:standardEst1}
For $p \in (1,\infty)$ and $w \in A_p(\R^n \times \R^m)$ we have
\begin{align*}
\| f \|_{L^p(w)}
& \sim_{[w]_{A_p(\R^n \times \R^m)}} \Big\| \Big( \mathop{\sum_{I \in \calD^n}}_{J \in \calD^m} |\Delta_{I \times J} f|^2 \Big)^{1/2} \Big\|_{L^p(w)} \\
&\sim_{[w]_{A_p(\R^n \times \R^m)}}  \Big\| \Big(  \sum_{I \in \calD^n} |\Delta_I^1 f|^2 \Big)^{1/2} \Big\|_{L^p(w)}
\sim_{[w]_{A_p(\R^n \times \R^m)}} \Big\| \Big(  \sum_{J \in \calD^m} |\Delta_J^2 f|^2 \Big)^{1/2} \Big\|_{L^p(w)}.
\end{align*}
\end{lem}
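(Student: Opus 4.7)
The plan is to reduce each equivalence in the lemma to its one-parameter counterpart by exploiting the characterisation already recorded in the excerpt:
$$
[w]_{A_p(\R^n\times\R^m)} < \infty \iff \esssup_{x_1} [w(x_1,\cdot)]_{A_p(\R^m)} + \esssup_{x_2} [w(\cdot,x_2)]_{A_p(\R^n)} < \infty,
$$
together with quantitative control of the one-dimensional slice constants by $[w]_{A_p(\R^n\times\R^m)}$. Throughout I would use as a black box the standard one-parameter weighted Littlewood--Paley inequality: for $u \in A_p(\R^n)$ one has $\Norm{g}{L^p(u)} \sim_{[u]_{A_p(\R^n)}} \bNorm{(\sum_I |\Delta_I g|^2)^{1/2}}{L^p(u)}$, and its vector-valued extension, which for any sequence $\{g_k\}$ in $L^p(u;\ell^2)$ gives
$$
\BNorm{\Big(\sum_k |g_k|^2\Big)^{1/2}}{L^p(u)} \sim_{[u]_{A_p(\R^n)}} \BNorm{\Big(\sum_k \sum_I |\Delta_I g_k|^2\Big)^{1/2}}{L^p(u)}.
$$
This vector-valued form follows either by Rubio de Francia extrapolation or, directly, by randomising with independent $\pm 1$ signs and invoking Khintchine.

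First I would establish $\Norm{f}{L^p(w)} \sim \bNorm{(\sum_I |\Delta_I^1 f|^2)^{1/2}}{L^p(w)}$. Fix $x_2 \in \R^m$: the slice $f(\cdot,x_2)$ sits in $L^p(w(\cdot,x_2))$ with $w(\cdot,x_2) \in A_p(\R^n)$ and $[w(\cdot,x_2)]_{A_p(\R^n)} \le [w]_{A_p(\R^n\times\R^m)}$. Applying the one-parameter square function equivalence for each fixed $x_2$ with a constant uniform in $x_2$, and then integrating in $x_2$ via Fubini, yields the desired equivalence. The estimate $\Norm{f}{L^p(w)} \sim \bNorm{(\sum_J |\Delta_J^2 f|^2)^{1/2}}{L^p(w)}$ is obtained symmetrically.

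Next I would chain these to get the double square function equivalence. Starting from $\Norm{f}{L^p(w)} \sim \bNorm{(\sum_I |\Delta_I^1 f|^2)^{1/2}}{L^p(w)}$, I would apply the vector-valued one-parameter estimate in the second variable: for a.e. fixed $x_1$ the sequence $\{(\Delta_I^1 f)(x_1,\cdot)\}_I$ lies in $L^p(w(x_1,\cdot);\ell^2)$ with the weight in $A_p(\R^m)$ uniformly in $x_1$, hence
$$
\BNorm{\Big(\sum_I |\Delta_I^1 f|^2\Big)^{1/2}}{L^p(w)} \sim_{[w]_{A_p(\R^n\times\R^m)}} \BNorm{\Big(\sum_I \sum_J |\Delta_J^2 \Delta_I^1 f|^2\Big)^{1/2}}{L^p(w)},
$$
after Fubini in $x_1$. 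Because $\Delta_{I\times J} f = \Delta_J^2 \Delta_I^1 f$, this is precisely the double square function norm. A second, independent way to obtain the same bound is to combine the two one-parameter equivalences via Khintchine with a product sign system $\varepsilon_{I,J} = \varepsilon_I^1 \varepsilon_J^2$, decoupling the randomisations in the two variables and applying the one-parameter statement separately in each.

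The main obstacle is essentially bookkeeping: verifying that the one-parameter $A_p$ constants along slices are controlled uniformly by $[w]_{A_p(\R^n\times\R^m)}$ so that the implicit constants survive the Fubini step, and checking that the vector-valued extension of the one-parameter Littlewood--Paley inequality is available with the correct dependence on the weight constant. Given the tools already cited in the excerpt, no step presents a genuine difficulty; the content of the lemma is exactly that Fubini combines the weighted one-parameter Littlewood--Paley theory with the slice characterisation of bi-parameter $A_p$ weights.
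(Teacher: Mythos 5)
The paper records Lemma \ref{lem:standardEst1} as a ``standard'' fact and supplies no proof, so there is no in-paper argument to compare against; your job is simply to fill a deliberate gap. Your slice-based reduction is correct and is indeed the canonical way to do it: fix one variable, use that the slice $A_p$ constants are dominated uniformly by $[w]_{A_p(\R^n\times\R^m)}$ (which the paper records in Section 2.3), apply the one-parameter weighted Littlewood--Paley inequality and Fubini for the one-parameter equivalences, and then iterate via the $\ell^2$-valued version (Rubio de Francia extrapolation from $p=2$, or two-parameter Khintchine with product signs $\varepsilon_I\varepsilon_J$) to pass from $\Delta_I^1$ to $\Delta_{I\times J}=\Delta_J^2\Delta_I^1$. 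The only point worth making explicit in a write-up is the a.e.-$x_2$ qualifier when invoking $\esssup_{x_2}[w(\cdot,x_2)]_{A_p(\R^n)}\le[w]_{A_p(\R^n\times\R^m)}$, but this is harmless for the Tonelli step; otherwise the argument is complete.
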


\subsection{Maximal functions}
Given $f \colon \R^{n+m} \to \C$ and $g \colon \R^n \to \C$ we denote the dyadic maximal functions
by
$$
M_{\calD^n}g := \sup_{I \in \calD^n} \frac{1_I}{|I|}\int_I |g|\, \textup{ and } \,
M_{\calD} f:= \sup_{R \in \calD}  \frac{1_R}{|R|}\int_R |f|.
$$
The non-dyadic variants are simply denoted by $M$, as it is clear what is meant from the context.
We also set $M^1_{\calD^n} f(x_1, x_2) =  M_{\calD^n}(f(\cdot, x_2))(x_1)$. The operator $M^2_{\calD^m}$ is defined similarly.
For various maximal functions $M$ we define $M_s$ by setting $M_s f = (M |f|^s)^{1/s}$.

Standard weighted estimates involving maximal functions are recorded below.
\begin{lem}\label{lem:standardEst2}
For $p, s \in (1,\infty)$ and $w \in A_p$ we have the Fefferman--Stein inequality
$$
\Big\| \Big( \sum_j |M f_j |^s \Big)^{1/s} \Big\|_{L^p(w)} \le C([w]_{A_p}) \Big\| \Big( \sum_{j} | f_j |^s \Big)^{1/s} \Big\|_{L^p(w)}.
$$
We also have
$$
\| \varphi_{\calD^n}^1 f\|_{L^p(w)} 
\sim_{[w]_{A_p}}
\Big\| \Big( \sum_{I \in \calD^n} \frac{1_I}{|I|} \otimes [M \langle f, h_I \rangle_1]^2 \Big)^{1/2} \Big\|_{L^p(w)} 
\lesssim_{[w]_{A_p}} \|f\|_{L^p(w)},
$$
where
$
\varphi_{\calD^n}^1 f  := \sum_{I \in \calD^n} h_I \otimes M \langle f, h_I \rangle_1.
$
The function $\varphi_{\calD^m}^2 f$ is defined in the symmetric way and satisfies the same estimates.
\end{lem}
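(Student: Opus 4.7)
The plan is to reduce everything to one-parameter statements by Fubini, using the key fact that $[w]_{A_p(\R^n \times \R^m)} < \infty$ implies that $w(\cdot, x_2) \in A_p(\R^n)$ and $w(x_1, \cdot) \in A_p(\R^m)$ uniformly in the hidden variable, with constants bounded in terms of $[w]_{A_p(\R^n \times \R^m)}$. The three ingredients of the lemma will then be obtained by iterating one-parameter tools: the classical vector-valued Fefferman--Stein inequality, the one-parameter weighted square function estimate, and (for the last step) Lemma \ref{lem:standardEst1}.

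For the Fefferman--Stein bound I would take $M$ to act as the bi-parameter strong maximal function $M^1 M^2$ and prove the inequality in two stages. First, for a.e.\ $x_2$, the weight $w(\cdot, x_2)$ lies in $A_p(\R^n)$ with uniform constant, so the classical one-parameter vector-valued Fefferman--Stein inequality gives
\[
\int_{\R^n} \Bigl( \sum_j |M^1 f_j(x_1,x_2)|^s \Bigr)^{p/s} w(x_1,x_2) \ud x_1 \lesssim \int_{\R^n} \Bigl( \sum_j |f_j(x_1,x_2)|^s \Bigr)^{p/s} w(x_1,x_2) \ud x_1.
\]
Then applying the one-parameter vector-valued inequality once more in $x_2$ (using $w(x_1,\cdot) \in A_p(\R^m)$ uniformly) to the sequence $\{M^1 f_j\}_j$ completes the proof after integrating in $x_1$. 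The only subtlety is to justify that one may interpret $M$ as the composition of one-parameter maximal operators, which is standard.

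For the equivalence of $\|\varphi^1_{\calD^n} f\|_{L^p(w)}$ with the displayed square function, I observe that for each fixed $x_2$ the function $x_1 \mapsto \varphi^1_{\calD^n}f(x_1,x_2) = \sum_I h_I(x_1)\, M\langle f, h_I\rangle_1(x_2)$ is a Haar expansion in $\R^n$ with scalar coefficients $M\langle f, h_I\rangle_1(x_2)$. The one-parameter weighted square function estimate for $A_p(\R^n)$ (applied with weight $w(\cdot, x_2)$) yields
\[
\|\varphi^1_{\calD^n}f(\cdot,x_2)\|_{L^p(w(\cdot,x_2))} \sim \Bigl\| \Bigl( \sum_I |h_I|^2\, |M\langle f, h_I\rangle_1(x_2)|^2 \Bigr)^{1/2} \Bigr\|_{L^p(w(\cdot,x_2))},
\]
and the identity $|h_I^\eta|^2 = 1_I/|I|$ together with Fubini in $x_2$ produces the asserted equivalence, with implicit constant depending only on $[w]_{A_p(\R^n \times \R^m)}$.

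Finally, for the upper bound by $\|f\|_{L^p(w)}$, the idea is to peel off the maximal function in the second variable and then invoke the bi-parameter square function estimate from Lemma \ref{lem:standardEst1}. Writing $\phi_I(x_2) := \langle f, h_I\rangle_1(x_2)$ and applying the vector-valued Fefferman--Stein inequality (just proven, with $s=2$) in the $x_2$ variable to the family $\bigl\{ \frac{1_I(x_1)^{1/2}}{|I|^{1/2}}\, \phi_I \bigr\}_I$, we get
\[
\Bigl\| \Bigl( \sum_I \frac{1_I}{|I|} \otimes |M\phi_I|^2 \Bigr)^{1/2} \Bigr\|_{L^p(w)} \lesssim \Bigl\| \Bigl( \sum_I \frac{1_I}{|I|} \otimes |\phi_I|^2 \Bigr)^{1/2} \Bigr\|_{L^p(w)}.
\]
Since $\frac{1_I}{|I|}\otimes |\phi_I|^2 = |h_I|^2 \otimes |\langle f, h_I\rangle_1|^2 = |\Delta_I^1 f|^2$ (summing over the suppressed $\eta$), the right-hand side is exactly the first-variable martingale square function controlled by Lemma \ref{lem:standardEst1}, so it is bounded by $\|f\|_{L^p(w)}$. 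The main obstacle is just the careful bookkeeping of where each weight lies and which variable is being acted upon; there is no deep new idea required once the one-parameter machinery is in place.
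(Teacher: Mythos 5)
The paper states Lemma \ref{lem:standardEst2} without proof, recording these as standard weighted facts, so there is no internal argument to compare against; your proposal supplies a correct one. The Fubini--iteration scheme you use is the standard route: dominate the strong maximal function pointwise by the composition $M^2 M^1$, slice in each variable (using that $[w]_{A_p(\R^n\times\R^m)}<\infty$ controls $[w(\cdot,x_2)]_{A_p(\R^n)}$ and $[w(x_1,\cdot)]_{A_p(\R^m)}$ uniformly, as recorded in Section~2.3), apply the one-parameter weighted vector-valued Fefferman--Stein inequality twice, and for the remaining statements combine the one-parameter weighted Haar square-function equivalence in $\R^n$ for each fixed $x_2$ with the bi-parameter square-function bound of Lemma~\ref{lem:standardEst1}. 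All the identifications ($|h_I|^2 = 1_I/|I|$, $|\Delta_I^1 f|^2 = \frac{1_I}{|I|}\otimes|\langle f,h_I\rangle_1|^2$) and weight-constant dependencies are handled correctly, and the argument goes through.
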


\subsection{BMO spaces}\label{ss:bmo}
We say that a locally integrable function $b \colon \R^n \to \C$ belongs to the dyadic BMO space $\BMO(\calD^n)$ if
$$
\|b\|_{\BMO(\calD^n)} := \sup_{I \in \calD^n} \frac{1}{|I|} \int_I |b - \langle b \rangle_I| < \infty.
$$
The ordinary space $\BMO(\R^n)$ is defined by taking the supremum over all cubes. 

We say that a locally integrable function $b \colon \R^{n+m} \to \C$ belongs to the dyadic little BMO space $\bmo(\calD)$,
where $\calD = \calD^n \times \calD^m$, if
$$
\|b\|_{\bmo(\calD)} := \sup_{R \in \calD} \frac{1}{|R|} \int_R |b - \langle b \rangle_R| < \infty.
$$
The non-dyadic space $\bmo(\R^{n+m})$ is defined in the natural way -- take the supremum over all rectangles. We have
$$
\|b\|_{\bmo(\calD)} \sim \max\big( \esssup_{x_1 \in \R^n} \, \|b(x_1, \cdot)\|_{\BMO(\calD^m)}, \esssup_{x_2 \in \R^m}\, \|b(\cdot, x_2)\|_{\BMO(\calD^n)} \big).
$$
We also have the John--Nirenberg property
$$
\|b\|_{\bmo(\R^{n+m})} \sim \sup_{R \subset \R^{n+m}} \Big( \frac{1}{|R|} \int_{R} |b - \langle b \rangle_{R}|^p \Big)^{1/p}, \qquad 1 < p < \infty.
$$
Moreover, we need to know that $\bmo(\R^{n+m}) \subset \BMO_{\textup{prod}}(\R^{n+m})$. The reader can consult e.g. \cite{HPW, OPS}.
A short proof of a weighted version of this inclusion is included in \cite{LMV:Bloom}.

Finally, we have the product BMO space. Set
$$
\|b\|_{\BMO_{\textup{prod}}(\calD)} := 
\sup_{\Omega} \Big( \frac{1}{|\Omega|} \mathop{\sum_{I \times J \in \calD}}_{I \times J \subset \Omega} |\langle b, h_I \otimes h_J\rangle|^2  \Big)^{1/2},
$$
where the supremum is taken over those sets $\Omega \subset \R^{n+m}$ such that $|\Omega| < \infty$ and such that for every $x \in \Omega$ there exist
$I \times J \in \calD$ so that $x \in I \times J \subset \Omega$.
The non-dyadic product BMO space $\BMO_{\textup{prod}}(\R^{n+m})$ can be defined using the norm defined by the supremum over all dyadic grids of
the above dyadic norms. 

\subsection{Commutators}
We set 
$$
[b,T]_1(f_1,f_2) = bT(f_1, f_2) - T(bf_1, f_2) \, \textup{ and } \, [b,T]_2(f_1, f_2) = bT(f_1,f_2) - T(f_1, bf_2). 
$$
These are understood in a situation, where we e.g. know that $T \colon L^3(\R^{n+m}) \times L^3(\R^{n+m}) \to L^{3/2}(\R^{n+m})$, and
$b$ is locally in $L^3$. Then we study the case that $f_1$ and $f_2$ are, say, bounded and compactly supported, so that
e.g. $bf_2 \in L^3(\R^{n+m})$ and $bT(f_1,f_2) \in L^1_{\loc}(\R^{n+m})$.

\subsection{Duality}\label{ss:duality}
For a bilinear operator $T$ acting on functions defined on the product space $\R^{n+m} = \R^n \times \R^m$ we define the following duals
\begin{align*}
\langle T(f_1 \otimes& f_2, g_1 \otimes g_2), h_1 \otimes h_2\rangle \\
&= \langle T^{1*}(h_1 \otimes h_2, g_1 \otimes g_2), f_1 \otimes f_2\rangle
= \langle T^{2*}(f_1 \otimes f_2, h_1 \otimes h_2), g_1 \otimes g_2\rangle \\
& = \langle T^{1*}_1(h_1 \otimes f_2, g_1 \otimes g_2), f_1 \otimes h_2\rangle
= \langle T^{2*}_1(f_1 \otimes f_2, h_1 \otimes g_2), g_1 \otimes h_2\rangle  \\
&= \langle T^{1*}_2(f_1 \otimes h_2, g_1 \otimes g_2), h_1 \otimes f_2\rangle 
= \langle T^{2*}_2(f_1 \otimes f_2, g_1 \otimes h_2), h_1 \otimes g_2\rangle \\
&= \langle T^{1*, 2*}_{1,2}(h_1 \otimes f_2, g_1 \otimes h_2), f_1 \otimes g_2\rangle
= \langle T^{1*, 2*}_{2,1}(f_1 \otimes h_2, h_1 \otimes g_2), g_1 \otimes f_2\rangle.
\end{align*}
One should understand that one cannot claim that the boundedness of $T$ would e.g. imply the boundedness of $T^{1*}_1$ -- partial
duals like this need not be bounded even if $T$ is. This is a standard caveat of bi-parameter analysis.

\section{Bilinear bi-parameter SIOs and CZOs}\label{sec:defbilinbiparSIO}
We begin by formulating what it means for $T$ to be a bilinear bi-parameter singular integral operator (SIO).
Let $\scrF_n$ consist of all the finite linear combinations of indicators of cubes $I \subset \R^n$,
define $\scrF_m$ analogously, and let $\scrF$ consist of all the finite linear combinations of indicators of rectangles
$R \subset \R^{n+m}$.
Assume that we have bilinear operators $T$, $T^{1*}$, $T^{2*}$,
$T^{1*}_1$, $T^{2*}_1$, $T^{1*}_2$, $T^{2*}_2$, $T^{1*, 2*}_{1,2}$ and $T^{1*, 2*}_{2,1}$, each acting on tuples $(f_1, f_2) \in \scrF \times \scrF$ and mapping them
into locally integrable functions, and assume that we have the duality identities of Section \ref{ss:duality} for all $f_1, g_1, h_1 \in \scrF_n$ and $f_2, g_2, h_2 \in \scrF_m$.

\subsection{Kernel representations and SIOs}\label{ss:structural}
Let $f_i = f_i^1 \otimes f_i^2$, where $f_i^1 \in \scrF_n$ and $f_i^2 \in \scrF_m$.
\subsubsection{Full kernel representation}\label{ss:FK}
Here we assume that spt$\, f_i^1 \cap \textup{spt}\,f_j^1 = \emptyset$ for some $i,j$, \emph{and} spt$\, f_{i'}^2 \cap \textup{spt}\,f_{j'}^2 = \emptyset$ for some $i',j'$.
In this case we demand that
$$
\langle T(f_1, f_2), f_3\rangle = \iint_{\R^{n+m}} \iint_{\R^{n+m}} \iint_{\R^{n+m}} K(x,y,z)f_1(y) f_2(z) f_3(x)\ud x \ud y \ud z,
$$
where
$
K \colon (\R^{n+m})^3 \setminus \{ (x,y,z) \in (\R^{n+m})^3\colon x_1 = y_1 = z_1 \textup{ or } x_2 = y_2 = z_2\} \to \C
$
is a kernel satisfying a set of estimates which we specify next. Note that this implies kernel representations also for $T^{1*}$, $T^{2*}$,
$T^{1*}_1$, $T^{2*}_1$, $T^{1*}_2$, $T^{2*}_2$, $T^{1*, 2*}_{1,2}$ and $T^{1*, 2*}_{2,1}$. We denote their kernels by $K^{1*}$, $K^{2*}$ and so forth. They
all have an obvious formula using $K$.

Let $\alpha \in (0,1]$. The kernel $K$ is assumed to satisfy the size estimate
\begin{displaymath}
|K(x,y,z)| \le C\frac{1}{(|x_1-y_1|+|x_1-z_1|)^{2n}}\frac{1}{(|x_2-y_2|+|x_2-z_2|)^{2m}},
\end{displaymath}
the H\"older estimate
\begin{align*}
|K(x,y,z) - K((&x_1, x_2'), y,z) - K((x_1', x_2), y,z) + K(x', y,z)| \\
&\le C\frac{|x_1-x_1'|^{\alpha}}{(|x_1-y_1|+|x_1-z_1|)^{2n+\alpha}}  \frac{|x_2-x_2'|^{\alpha}}{(|x_2-y_2|+|x_2-z_2|)^{2m+\alpha}} 
\end{align*}
whenever $|x_1 - x_1'| \le \max(|x_1-y_1|, |x_1-z_1|)/2$ and $|x_2-x_2'| \le \max(|x_2-y_2|, |x_2-z_2|)/2$,
and the mixed H\"older and size estimate
\begin{align*}
|K(x,y,z) - K((x_1', x_2), y,z)| \le C\frac{|x_1-x_1'|^{\alpha}}{(|x_1-y_1|+|x_1-z_1|)^{2n+\alpha}} \frac{1}{(|x_2-y_2|+|x_2-z_2|)^{2m}}
\end{align*}
whenever $|x_1 - x_1'| \le \max(|x_1-y_1|, |x_1-z_1|)/2$. These estimates are also assumed from the kernels $K^{1*}$, $K^{2*}$, and so forth.

\subsubsection{Partial kernel representations}\label{sec:partial}
Suppose that spt$\, f_i^1 \cap \textup{spt}\,f_j^1 = \emptyset$ for some $i,j$. Then we assume that
$$
\langle T(f_1, f_2), f_3\rangle = \int_{\R^{n}} \int_{\R^{n}} \int_{\R^{n}} K_{f_1^2, f_2^2, f_3^2}(x_1,y_1,z_1)f_1^1(y_1) f_2^1(z_1) f_3^1(x_1)\ud x_1 \ud y_1 \ud z_1,
$$
where
$
K_{f_1^2, f_2^2, f_3^2} \colon \R^{3n} \setminus \{ (x_1,y_1,z_1) \in \R^{3n} \colon x_1 = y_1 = z_1\} \to \C
$
is a kernel satisfying the estimates of a standard bilinear Calder\'on--Zygmund kernel in $\R^n$, but with a constant depending on the functions $f_1^2, f_2^2, f_3^2$.
This means that we have the size condition
$$
|K_{f_1^2, f_2^2, f_3^2}(x_1,y_1,z_1)| \le C(f_1^2, f_2^2, f_3^2) \frac{1}{(|x_1-y_1| + |x_1-z_1|)^{2n}}
$$
and the H\"older estimate
$$
|K_{f_1^2, f_2^2, f_3^2}(x_1,y_1,z_1) - K_{f_1^2, f_2^2, f_3^2}(x_1',y_1,z_1)| \le C(f_1^2, f_2^2, f_3^2)
\frac{|x_1-x_1'|^{\alpha}}{(|x_1-y_1|+|x_1-z_1|)^{2n+\alpha}} 
$$
whenever $|x_1 - x_1'| \le \max(|x_1-y_1|, |x_1-z_1|)/2$. The analogous H\"older estimates in the $y_1$ and $z_1$ slots are also
assumed.

We assume that
$$
C(1_J, 1_J, 1_J) + C(a_J, 1_J, 1_J) + C(1_J, a_J, 1_J) + C(1_J, 1_J, a_J) \le C|J|
$$
for all cubes $J \subset \R^m$
and all functions $a_J \in \scrF_m$ satisfying $a_J = 1_Ja_J$, $|a_J| \le 1$ and $\int a_J = 0$.
Analogous partial kernel representation is assumed when spt$\, f_{i'}^2 \cap \textup{spt}\,f_{j'}^2 = \emptyset$ for some $i',j'$.

\begin{defn}\label{defn:SIO}
If $T$ is a bilinear operator with full and partial kernel structure as defined in Section \ref{ss:FK}
and Section \ref{sec:partial}, respectively, we call $T$ a bilinear bi-parameter SIO.
\end{defn}

\subsection{Reformulation of a bilinear bi-parameter SIO}\label{sec:reform}
We present a vector-valued reformulation of our SIOs, which is useful in certain operator-valued
considerations related to mixed-norm estimates and is also needed in the proof of Proposition \ref{prop:Journe}.

Suppose $T$ is a bilinear bi-parameter SIO. We want to define for $x_1, z_1, y_1 \in \R^{n}$ (such that we don't have $x_1 = y_1 = z_1$) a bilinear SIO $U_1(x_1, y_1, z_1)$ in $\R^m$ so that we have the properties (1) and (2) below.

(1) If $f_1, f_2, f_3 \colon \R^{n+m} \to \C$ are functions in $\scrF$
so that for some $i,j$ we have $\textup{spt}_{\R^n} \,f_i \cap \textup{spt}_{\R^n}\, f_j = \emptyset$, then we have
$$
\langle T(f_1, f_2), f_3\rangle = \int_{\R^n} \int_{\R^n} \int_{\R^n} \bla U_1(x_1, y_1, z_1)(f_1(y_1, \cdot), f_2(z_1, \cdot)), f_3(x_1, \cdot)\bra\ud x_1 \ud y_1 \ud z_1.
$$
Here $\textup{spt}_{\R^n} \,f$ is the closure of those $x_1$ for which there exists $x_2$ so that $f(x_1, x_2) \ne 0$.

(2) For a bilinear SIO $U$ in $\R^m$ denote
\begin{align*}
\|U\| &:= \|U\|_{\textup{CZ}_{\alpha}} + \|U(1,1)\|_{\BMO(\R^m)} + \|U^{1*}(1,1)\|_{\BMO(\R^m)}  \\ &+ \|U^{2*}(1,1)\|_{\BMO(\R^m)} + \sup\{ |\langle U(1_V, 1_V), 1_V\rangle| \colon V \subset \R^m \textup{ cube}\},
\end{align*}
where $\|U\|_{\textup{CZ}_{\alpha}}$ denotes the best constant in the bilinear size and $\alpha$-H\"older estimates (meaning standard bilinear kernel estimates like
in Section \ref{sec:partial}).
We demand the natural size and H\"older estimates in this norm, for instance that
\begin{equation*}
\|U_1(x_1, y_1, z_1) - U_1(x_1', y_1, z_1)\| \lesssim \frac{|x_1-x_1'|^{\alpha}}{(|x_1-y_1| + |x_1-z_1|)^{2n+\alpha}}
\end{equation*}
whenever $|x_1-x_1'| \le \max(|x_1-y_1|, |x_1-z_1|)/2$. Of course,
we also want to define $U_2(x_2, y_2, z_2)$ (a bilinear SIO on $\R^n$) analogously.

In the bi-parameter setting Journ\'e \cite{Jo2} originally used an operator-valued formulation in this spirit.
The proof that in the linear bi-parameter setting assumptions like in \cite{Ma1} with partial and full kernels
imply operator-valued type assumptions like in \cite{Jo2} is by A. Grau de la Herr\'an \cite{Grau}.
This argument can be adapted with ease to our bilinear bi-parameter setting to give the desired $U_1$ satisfying (1) and (2) above. We omit the details.

The conditions in (2) are only useful in conjunction with a quantitative bilinear $T1$ theorem. A suitable version can at least be found in \cite{LMOV}.
Let $U$ be a bilinear SIO in $\R^m$ with $\|U\|_{\textup{CZ}_{\alpha}} < \infty$. Then we have
$$
\|U\|_{L^p(\R^m) \times L^q(\R^m) \to L^r(\R^m)} \lesssim \|U\|
$$
for all $1 < p, q \le \infty$ and $1/2 < r < \infty$ satisfying $1/p + 1/q = 1/r$. Moreover, we have
$
\|U\|_{L^{\infty}(\R^m) \times L^{\infty}(\R^m) \to \BMO(\R^m)} \lesssim \|U\|
$
and
$
\|U\|_{L^1(\R^m) \times L^1(\R^m) \to L^{1/2,\infty}(\R^m)} \lesssim \|U\|.
$

\subsection{Boundedness and cancellation assumptions and CZOs}
\subsubsection*{Weak boundedness assumptions}
We say that $T$ satisfies the weak boundedness property if
$
|\langle T(1_I \otimes 1_J, 1_I \otimes 1_J), 1_I \otimes 1_J\rangle| \le C|I| |J|
$
for all cubes $I \subset \R^n$ and $J \subset \R^m$.

\subsubsection*{Diagonal BMO assumption}
SIO $T$ satisfies the diagonal BMO assumption if the following holds. If $I \subset \R^n$ and $J \subset \R^m$ are cubes
and $a_I \in \scrF_n$ and $a_J \in \scrF_m$ are functions satisfying $a_I = 1_Ia_I$, $|a_I| \le 1$ and $\int a_I = 0$ and
$a_J = 1_Ja_J$, $|a_J| \le 1$ and $\int a_J = 0$, then we have
\begin{align*}
& |\langle T(a_I \otimes 1_J, 1_I \otimes 1_J), 1_I \otimes 1_J\rangle| + |\langle T(1_I \otimes 1_J, a_I \otimes 1_J), 1_I \otimes 1_J\rangle| \\
& + |\langle T(1_I \otimes 1_J, 1_I \otimes 1_J), a_I \otimes 1_J\rangle| + |\langle T(1_I \otimes a_J, 1_I \otimes 1_J), 1_I \otimes 1_J\rangle| \\
&+|\langle T(1_I \otimes 1_J, 1_I \otimes a_J), 1_I \otimes 1_J\rangle| + |\langle T(1_I \otimes 1_J, 1_I \otimes 1_J), 1_I \otimes a_J\rangle| \le C|I| |J|.
\end{align*}

\subsubsection*{Product BMO assumption}
SIO $T$ satisfies the product BMO assumption if it holds
$S(1,1) \in \BMO_{\textup{prod}}(\R^{n+m})$ for all the nine choices
$
S \in \{T, T^{1*}, T^{2*}, T^{1*}_1, T^{2*}_1, T^{1*}_2, T^{2*}_2, T^{1*, 2*}_{1,2}, T^{1*, 2*}_{2,1}\}.
$
\begin{rem}
Recall that $T(1,1) \in \BMO_{\textup{prod}}(\R^{n+m})$ means that $\|T(1,1)\|_{\BMO(\calD)} \lesssim 1$
for all $\calD = \calD^n \times \calD^m$. To make sense of this, we need only to define the pairings
$\langle T(1,1), h_I \otimes h_J\rangle$. This can be done as follows.
It is by definition the sum of the following nine terms:
\begin{equation}\label{eq:FullKernelTerms}
\begin{split}
&\langle T(1_{(3I)^c} \otimes 1_{(3J)^c}, 1 \otimes 1 ), h_I \otimes h_J \rangle,  \,\,
\langle T(1_{(3I)^c} \otimes 1_{3J}, 1 \otimes 1_{(3J)^c} ), h_I \otimes h_J \rangle, \\
&\langle T(1_{3I} \otimes 1_{(3J)^c}, 1_{(3I)^c} \otimes 1), h_I \otimes h_J \rangle, \,\,
\langle T(1_{3I} \otimes 1_{3J}, 1_{(3I)^c} \otimes  1_{(3J)^c}), h_I \otimes h_J \rangle,
\end{split}
\end{equation}
\begin{equation}\label{eq:PartialKernelTerms}
\begin{split}
& \langle T(1_{(3I)^c} \otimes 1_{3J}, 1 \otimes 1_{3J} ), h_I \otimes h_J \rangle, \,\,
\langle T(1_{3I} \otimes 1_{3J}, 1_{(3I)^c} \otimes 1_{3J} ), h_I \otimes h_J \rangle, \\
&\langle T(1_{3I} \otimes 1_{(3J)^c}, 1_{3I} \otimes 1), h_I \otimes h_J \rangle, \,\,
\langle T(1_{3I} \otimes 1_{3J}, 1_{3I} \otimes 1_{(3J)^c}), h_I \otimes h_J \rangle,
\end{split}
\end{equation}
and $\langle T(1_{3I} \otimes 1_{3J}, 1_{3I} \otimes 1_{3J} ), h_I \otimes h_J \rangle$.
However, only the last term is defined from the get go.
The terms in \eqref{eq:FullKernelTerms} can be defined in a natural way using the full kernel of $T$ and the terms in 
\eqref{eq:PartialKernelTerms} via the partial kernels.
\end{rem}

\begin{defn}\label{defn:CZO}
A bilinear bi-parameter SIO $T$
satisfying the weak boundedness property, the diagonal BMO assumption and the product BMO assumption is called a bilinear bi-parameter Calder\'on--Zygmund operator (CZO).
\end{defn}

The following proposition can be shown quite similarly to the linear case -- for a modern proof in the linear setting see both \cite{HM} and \cite{Grau}.
The version below requires the known John--Nirenberg for dyadic product BMO. We omit the details.
\begin{prop}\label{prop:Journe}
If $T$ is a bilinear bi-parameter SIO and for some $r > 1$ we have
$$
\int_{\Omega} |T(1_{\Omega}, 1_{\Omega})|^r \lesssim |\Omega|
$$
for all finite unions of rectangles $\Omega$, then $T(1, 1) \in \BMO_{\textup{prod}}(\R^{n+m})$.
\end{prop}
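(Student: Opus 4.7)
The plan is to mimic the linear bi-parameter Journ\'e-type argument of Hyt\"onen--Martikainen \cite{HM} and Grau de la Herr\'an \cite{Grau}, adapted to the bilinear kernel setup of Section \ref{sec:defbilinbiparSIO}. First I would reduce the conclusion $T(1,1) \in \BMO_{\textup{prod}}(\R^{n+m})$ to the $L^r$ variant: for any dyadic grid $\calD$ and any finite union of dyadic rectangles $\Omega$,
$$
\|P_\Omega T(1,1)\|_{L^r(\R^{n+m})} \lesssim |\Omega|^{1/r},
$$
where $P_\Omega T(1,1) := \sum_{R \in \calD,\, R \subset \Omega} \langle T(1,1), h_R\rangle h_R$ and each pairing $\langle T(1,1), h_R\rangle$ is defined through the nine-term decomposition from the Remark (four full-kernel terms, four partial-kernel terms, one fully local term). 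Promoting this $L^r$ estimate to the square-sum characterization of $\BMO_{\textup{prod}}$ is the known dyadic product-BMO John--Nirenberg inequality already invoked in the excerpt.

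By $L^r$-$L^{r'}$ duality, and since $P_\Omega$ is bounded on $L^{r'}$ via the bi-parameter dyadic square function, proving the display above reduces to estimating $|\langle T(1,1), g\rangle|$ for finite Haar expansions $g = \sum_{R \subset \Omega} \beta_R h_R$ supported in $\Omega$ with $\|g\|_{L^{r'}} \lesssim 1$. The key geometric ingredient is Journ\'e's covering lemma in the strong-maximal-function form: setting $\hat\Omega := \{M_s 1_\Omega > c_0\}$ with $c_0 \in (0,1)$ small enough that $3R \subset \hat\Omega$ for every dyadic $R \subset \Omega$, the strong maximal theorem gives $|\hat\Omega| \lesssim |\Omega|$ (the implicit constant depending on $c_0$ through an $L\log L$ factor, which is harmless). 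If needed, one thickens $\hat\Omega$ slightly into a genuine finite union of rectangles so that the hypothesis of the proposition is directly applicable.

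With this in hand I would split $T(1,1) = T(1_{\hat\Omega}, 1_{\hat\Omega}) + T(1_{\hat\Omega^c}, 1) + T(1_{\hat\Omega}, 1_{\hat\Omega^c})$, the two non-local summands being interpreted through the nine-term expansion when paired with $g$. For the main local piece, H\"older and the hypothesis applied to $\hat\Omega$ give
$$
|\langle T(1_{\hat\Omega}, 1_{\hat\Omega}), g\rangle| \le \|T(1_{\hat\Omega}, 1_{\hat\Omega})\|_{L^r(\hat\Omega)} \|g\|_{L^{r'}(\Omega)} \lesssim |\hat\Omega|^{1/r} \lesssim |\Omega|^{1/r},
$$
since $g$ is supported in $\Omega \subset \hat\Omega$. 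For the two off-diagonal pieces, the defining property $3R \subset \hat\Omega$ yields the inclusion $\hat\Omega^c \subset ((3I)^c \times \R^m) \cup (\R^n \times (3J)^c)$ for each $R = I \times J \subset \Omega$. One splits $1_{\hat\Omega^c}$ accordingly into a $y_1$-separated part and a $y_2$-separated part, applies the two partial kernel representations of Section \ref{sec:partial} (and the full kernel where both separations happen to hold) to extract H\"older decay factors like $\ell(I)^{\alpha}/|y_1-c_I|^{n+\alpha}$ in the first variable and analogously in the second, and then sums the decayed pairings against $\beta_R$ via a standard Carleson/paraproduct estimate combined with $\|g\|_{L^{r'}} \lesssim 1$ to get the bound $\lesssim |\Omega|^{1/r}$.

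The main technical obstacle will be controlling the off-diagonal terms, because the bi-parameter complement $\hat\Omega^c$ has no product structure: at a given point one need not have simultaneous separation in both coordinates, so the splitting into the two partial-kernel regimes, and the careful handling of their overlap where the full kernel applies, is essential. Together with the verification that the nine-term decomposition defining $\langle T(1,1),h_R\rangle$ converges absolutely and is well-behaved under the Haar summation, this constitutes the bulk of the proof; the product-BMO John--Nirenberg upgrade then lifts the $L^r$ control to the standard square-sum definition of $\BMO_{\textup{prod}}(\R^{n+m})$.
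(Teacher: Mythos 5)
The paper deliberately omits the proof of this proposition, pointing to \cite{HM} and \cite{Grau} for the linear model argument, so I am judging your sketch on its own terms. Your high-level plan -- reduce to an $L^r$ estimate on $P_\Omega T(1,1)$ via dyadic product-BMO John--Nirenberg, dualise, inflate $\Omega$ to $\hat\Omega := \{M_s 1_\Omega > c_0\}$ and split $T(1,1)$ into a local piece (handled by the testing hypothesis) and off-diagonal pieces (handled by kernel estimates) -- is the right skeleton and matches the known linear argument in spirit.

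However, the off-diagonal step as you have written it does not close, and this is a genuine gap rather than a bookkeeping omission. With only $3R \subset \hat\Omega$ and $|\hat\Omega| \lesssim |\Omega|$ in hand, the H\"older decay of the kernel (after subtracting $K(c_I,\cdot,\cdot)$ using $\int h_I = 0$, say via the operator-valued reformulation of Section~\ref{sec:reform}) yields precisely the neutral bound $|\langle T(1_{\hat\Omega^c\cap(3I)^c\times\R^m},1), h_R\rangle| \lesssim |R|^{1/2}$ with no further smallness, because the $y_1$-integral over $(3I)^c$ of $\ell(I)^{\alpha}/|y_1-c_I|^{n+\alpha}$ is $O(1)$, not $o(1)$. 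The subsequent ``standard Carleson/paraproduct estimate'' then cannot work: one would need $\sum_{R\subset\Omega}|R| \lesssim |\Omega|$, which is false since the left-hand sum diverges over scales. What is actually required is a per-rectangle gain of the form $(\ell(I)/\ell(I^*))^{\alpha}$ where $I^*$ is the Journ\'e embedding scale (the largest $I^*\supset I$ with $I^*\times J\subset\hat\Omega$), together with Journ\'e's covering lemma $\sum_{R\subset\Omega}(\ell(I)/\ell(I^*))^{\alpha}|R| \lesssim |\Omega|$ applied to a maximal-rectangle decomposition of $\Omega$. Your phrase ``Journ\'e's covering lemma in the strong-maximal-function form'' conflates the crude measure bound $|\hat\Omega|\lesssim|\Omega|$ with the quantitative covering lemma; only the latter provides the decay that makes the Carleson sum converge. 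You should also supply the gain in the other slot for the $y_2$-separated piece, and verify (as you note) that the global splitting $T(1,1)=T(1_{\hat\Omega},1_{\hat\Omega})+\cdots$ reproduces the nine-term definition of $\langle T(1,1),h_R\rangle$ when paired with each $h_R$ with $R\subset\Omega$ -- a routine but nontrivial check given that $1_{\hat\Omega^c}$ is neither a tensor product nor in $\scrF$.
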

This is key for the statements made in the Introduction, where boundedness in one tuple (for $T$ and all its adjoints)
implies boundedness in the full range.

\section{DMOs: shifts, partial paraproducts and full paraproducts}\label{sec:defbilinbiparmodel}
We define the dyadic model operators (DMOs) that appear in the bilinear bi-parameter representation theorem.
Let $f_1, f_2 \colon \R^{n+m} \to \C$ be two given functions. The operators are here defined in some fixed grids $\calD^n$ and $\calD^m$.
\subsection{Bilinear bi-parameter shifts}\label{ss:bilinbiparshiftScalar}
For triples of integers $k = (k_1, k_2, k_3)$, $k_1, k_2, k_3 \ge 0$, and $v = (v_1, v_2, v_3)$, $v_1, v_2, v_3 \ge 0$,
and cubes $K \in \calD^n$ and $V \in \calD^m$, define
\begin{align*}
A_{K, k}^{V, v}(f_1,f_2) = \sum_{\substack{I_1, I_2, I_3 \in \calD^n \\ I_1^{(k_1)} = I_2^{(k_2)} = I_3^{(k_3)} = K}} 
&\sum_{\substack{J_1, J_2, J_3 \in \calD^m \\ J_1^{(v_1)} = J_2^{(v_2)} = J_3^{(v_3)} = V}} a_{K, V, (I_i), (J_j)} \\
&\times \langle f_1, h_{I_1} \otimes h_{J_1}\rangle \langle f_2,  h_{I_2} \otimes h_{J_2}\rangle  h_{I_3}^0 \otimes h_{J_3}^0.
\end{align*}
We also demand that the scalars $a_{K, V, (I_i), (J_j)}$ satisfy the estimate
$$
|a_{K, V, (I_i), (J_j)}| \le \frac{|I_1|^{1/2} |I_2|^{1/2}|I_3|^{1/2}}{|K|^2} \frac{|J_1|^{1/2} |J_2|^{1/2}|J_3|^{1/2}}{|V|^2}. 
$$ 
A shift of complexity $(k,v)$ of a particular form (the non-cancellative Haar functions are in certain positions) is
$$
S_{k}^{v}(f_1,f_2) = \sum_{K \in \calD^n} \sum_{V \in \calD^m} A_{K, k}^{V, v}(f_1,f_2).
$$
An operator of the above form, but having the non-cancellative Haar functions $h_I^0$ and $h_J^0$ in some of
the other slots, is also a shift. So there are shifts of nine different types, and we could e.g. also have for all $K, V$ that
\begin{align*}
A_{K, k}^{V, v}(f_1,f_2) = \sum_{\substack{I_1, I_2, I_3 \in \calD^n \\ I_1^{(k_1)} = I_2^{(k_2)} = I_3^{(k_3)} = K}} 
&\sum_{\substack{J_1, J_2, J_3 \in \calD^m \\ J_1^{(v_1)} = J_2^{(v_2)} = J_3^{(v_3)} = V}} a_{K, V, (I_i), (J_j)} \\
&\times \langle f_1, h_{I_1}^0 \otimes h_{J_1}\rangle \langle f_2,  h_{I_2} \otimes h_{J_2}\rangle  h_{I_3} \otimes h_{J_3}^0.
\end{align*}

\subsection{Bilinear bi-parameter partial paraproducts}
First, we need bilinear one-parameter paraproducts. Let $b \colon \R^m \to \C$ be a function and define
\begin{equation*}
A_b(g_1, g_2) := \sum_{V \in \calD^m} \langle b, h_V\rangle \langle g_1 \rangle_V \langle g_2 \rangle_V h_V,
\end{equation*}
where $g_i \colon \R^m \to \C$. An operator $\pi_b$ is called a dyadic bilinear paraproduct in $\R^m$ if it is
of the form $A_b$, $A_b^{1*}$ or $A_b^{2*}$.

Let $k = (k_1, k_2, k_3)$, $k_1, k_2, k_3 \ge 0$. For each $K, I_1, I_2, I_3 \in \calD^n$ we are given a function $b_{K, I_1, I_2, I_3} \colon \R^m \to \C$ such that
$
\| b_{K, I_1, I_2, I_3} \|_{\BMO(\R^m)} \le \frac{|I_1|^{1/2} |I_2|^{1/2}|I_3|^{1/2}}{|K|^2}.
$
A partial paraproduct of complexity $k$ of a particular form is
$$
P_k(f_1, f_2) = \sum_{K \in \calD^n} \sum_{\substack{I_1, I_2, I_3 \in \calD^n \\ I_1^{(k_1)} = I_2^{(k_2)} = I_3^{(k_3)} = K}} h_{I_3}^0 \otimes
\pi_{b_{K, I_1, I_2, I_3}}(\langle f_1, h_{I_1} \rangle_1, \langle f_2, h_{I_2} \rangle_1),
$$ 
where $\pi_{b_{K, I_1, I_2, I_3}} $denotes a bilinear paraproduct in $\R^m$, and is of the same form for all $K,I_1, I_2, I_3$.
Again, an operator of the above form, but having the non-cancellative Haar function $h_I^0$ in some other slot, is also a partial paraproduct. 
Therefore, we have nine different possibilities as usual (the bilinear paraproducts can be of one of the three different types, and the non-cancellative Haar function in $\R^n$ can appear in one of the three slots).

We also have partial paraproducts with shift structure in $\R^m$ and paraproducts in $\R^n$.

\subsection{Bilinear bi-parameter full paraproducts}
Let $b\colon \R^{n+m} \to \C$ be a function with $\|b\|_{\BMO_{\textup{prod}}(\R^{n+m})} \le 1$. A full paraproduct $\Pi_b$ of a particular form is
$$
\Pi_b(f_1, f_2) = \Pi(f_1, f_2) = \sum_{\substack{K \in \calD^n \\ V \in \calD^m}} \lambda_{K,V}^b \langle f_1 \rangle_{K \times V} \langle f_2 \rangle_{K \times V} h_K \otimes h_V,
$$
where the function $b$ determines the coefficients $\lambda_{K,V}^b$ via the formula $\lambda_{K,V}^b := \langle b, h_K \times h_V \rangle$.
Again, an operator of the above form, but having the cancellative Haar functions $h_K$ or $h_V$ in some other slots, is also a full paraproduct. There are nine
different cases as the Haar functions present in the coefficients $\lambda_{K,V}^b$ are not allowed to move, i.e. we always have $\lambda_{K,V}^b := \langle b, h_K \times h_V \rangle$. For example, $\Pi_b$ could also be of the form
$$
\Pi_b(f_1, f_2) = \sum_{\substack{K \in \calD^n \\ V \in \calD^m}} \lambda_{K,V}^b \langle f_1 \rangle_{K \times V} \Big\langle f_2, \frac{1_K}{|K|} \otimes h_V \Big\rangle h_K \otimes \frac{1_V}{|V|}.
$$

\section{The representation theorem}\label{sec:ProofOfRep}
We need the following notation regarding random dyadic grids.
Let $\mathcal{D}_0^n$ and $\mathcal{D}_0^m$ denote the standard dyadic grids on $\R^n$ and $\R^m$ respectively.
For $\omega_1 = (\omega^i_1)_{i \in \Z} \in (\{0,1\}^n)^{\Z}$, $\omega_2 = (\omega^i_2)_{i \in \Z} \in(\{0,1\}^m)^{\Z}$, $I \in \calD^n_0$ and $J \in \calD^m_0$ denote
$$
I + \omega_1 := I + \sum_{i:\, 2^{-i} < \ell(I)} 2^{-i}\omega_1^i \qquad \textup{and} \qquad J + \omega_2 := J + \sum_{i:\, 2^{-i} < \ell(J)} 2^{-i}\omega_2^i.
$$
Then we define the random lattices
$$\calD^n_{\omega_1} = \{I + \omega_1\colon I \in \calD^n_0\} \qquad \textup{and} \qquad
\calD^m_{\omega_2} = \{J + \omega_2\colon J \in \calD^m_0\}.
$$
There is a natural probability product measure $\mathbb{P}_{\omega_1}$ in $(\{0,1\}^n)^{\Z}$ and $\mathbb{P}_{\omega_2}$ in $(\{0,1\}^m)^{\Z}$.
We set $\omega = (\omega_1, \omega_2) \in (\{0,1\}^n)^{\Z} \times (\{0,1\}^m)^{\Z}$,
and denote the expectation over the product probability space by $\E_{\omega} = \E_{\omega_1, \omega_2} = \E_{\omega_1} \E_{\omega_2} = \E_{\omega_2} \E_{\omega_1} =
\iint \ud \mathbb{P}_{\omega_1} \ud \mathbb{P}_{\omega_2}$.

We also set $\calD_0 = \calD^n_0 \times \calD^m_0$. Given $\omega = (\omega_1, \omega_2)$ and $R = I \times J \in \calD_0$ we may set
$$
R + \omega = (I+\omega_1) \times (J+\omega_2) \qquad \textup{and} \qquad \calD_{\omega} = \{R + \omega\colon \, R \in \calD_0\}.
$$
\begin{thm}\label{thm:rep}
Suppose $T$ is a bilinear bi-parameter CZO. Then
$T$ can be extended to act on bounded and compactly supported functions
 $f_i \colon \R^{n+m} \to \C$ (or e.g. $L^3(\R^{n+m})$ functions), and we have
$$
\langle T(f_1,f_2), f_3\rangle = C_T \mathbb{E}_{\omega}\mathop{\sum_{k = (k_1, k_2, k_3) \in \Z_+^3}}_{v = (v_1, v_2, v_3) \in \Z_+^3} \alpha_{k, v} 
\sum_{u}
\bla U^{v}_{k, u, \mathcal{D}_{\omega}}(f_1, f_2), f_3 \bra,
$$
where $C_T \lesssim 1$, $\alpha_{k, v} = 2^{- \alpha \max k_i/2} 2^{- \alpha \max v_j/2}$, the summation over $u$ is finite, and
$U^{v}_{k, u, \mathcal{D}_{\omega}}$ is always either a shift of complexity $(k,v)$,
a partial paraproduct of complexity $k$ or $v$ (this requires $k= 0$ or $v=0$) or a full paraproduct (this requires $k=v=0$) associated with some product
BMO function $b$ satisfying $\|b\|_{\BMO_{\textup{prod}}} \le 1$. 
\end{thm}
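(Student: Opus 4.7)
The plan is to follow the random-grid framework of Nazarov--Treil--Volberg, generalised to the bi-parameter setting in \cite{Ma1} and to the bilinear setting in \cite{LMOV}. First I would expand all three functions in the random Haar basis and write
$$
\langle T(f_1,f_2), f_3\rangle = \sum_{I_1, I_2, I_3 \in \calD^n_{\omega_1}}\,\sum_{J_1, J_2, J_3 \in \calD^m_{\omega_2}} \bla T(\Delta_{I_1 \times J_1} f_1, \Delta_{I_2 \times J_2} f_2), \Delta_{I_3 \times J_3} f_3\bra.
$$
After a permutation argument I may focus on the configuration $\ell(I_3) \le \ell(I_1), \ell(I_2)$ and $\ell(J_3) \le \ell(J_1), \ell(J_2)$, and, thanks to the NTV probabilistic trick applied independently in the two parameters, insert the indicators that $I_3$ is $r$-good with respect to the $\calD^n_{\omega_1}$-hierarchy and $J_3$ is $r$-good with respect to $\calD^m_{\omega_2}$; the independence of $\omega_1^i, \omega_2^j$ at scales larger than $\ell(I_3), \ell(J_3)$ from the positions of the larger cubes guarantees this causes only a harmless universal constant loss inside $\E_\omega$.

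Next I would split the sum by the geometric relation between the cubes, handling the $\R^n$ and $\R^m$ variables independently. In each parameter there is a dichotomy: either the inner cube sits strictly inside a common ancestor of the two outer cubes, giving rise to a \emph{shift regime} with well-defined complexity $k_i$ or $v_j$, or the outer cubes collapse onto the inner one (coincidence at some scale), giving rise to a \emph{paraproduct regime}. The four combinations of these two regimes across the two parameters produce exactly the three classes of DMOs of Section \ref{sec:defbilinbiparmodel}: shift/shift yields a bilinear bi-parameter shift of complexity $(k,v)$, shift/paraproduct (or paraproduct/shift) yields a partial paraproduct with non-trivial $k$ (or $v$), and paraproduct/paraproduct yields a full paraproduct with $k=v=0$. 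The nine slot-variants of each DMO arise naturally, according to which of $f_1, f_2, f_3$ plays the role of the ``non-cancellative" input in each parameter.

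The coefficient estimates split accordingly. In the pure shift regime, the matrix entries are of the form $\bla T(h_{I_1}\otimes h_{J_1}, h_{I_2}\otimes h_{J_2}), h_{I_3}\otimes h_{J_3}\bra$ (with the appropriate non-cancellative substitutions), and the full/partial kernel H\"older estimates together with the weak boundedness and diagonal BMO assumptions give the required normalisation, with the decay factor $\alpha_{k,v} = 2^{-\alpha \max k_i/2}\, 2^{-\alpha \max v_j/2}$ extracted from the H\"older exponent $\alpha$ and the goodness parameters. In the partial paraproduct regime, after summing over the free parameter in the non-shift direction, the remaining coefficient is naturally identified as a function $b_{K, I_1, I_2, I_3}\colon \R^m \to \C$, whose $\BMO(\R^m)$ norm is controlled by pairing against a Haar atom and using the vector-valued reformulation of Section \ref{sec:reform} together with the bilinear one-parameter $T1$ theorem quoted at the end of Section \ref{sec:reform}. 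In the full paraproduct regime, the coefficient is exactly $\langle T(1,1), h_K\otimes h_V\rangle$ (or one of its eight analogues produced by the slot-configuration), and the product BMO assumption on $T(1,1)$ and its adjoints, together with Proposition \ref{prop:Journe}, provides the required $\BMO_{\textup{prod}}$ control.

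The main obstacle will be the bookkeeping: keeping the nine slot-configurations, the two pairs of complexities $(k_1, k_2, k_3)$ and $(v_1, v_2, v_3)$, and the shift/paraproduct alternative in each of the two parameters all under simultaneous control, while giving a rigorous meaning to the terms in which one of the inputs becomes the constant function $1$. This is where the partial kernel structure of Section \ref{sec:partial} and the vector-valued reformulation of Section \ref{sec:reform} are essential, since those pairings are not defined a priori but must be produced from the partial kernels (and from $T(1,1)$ understood via the decomposition \eqref{eq:FullKernelTerms}--\eqref{eq:PartialKernelTerms}). Once this bookkeeping is in place, the quantitative $\alpha_{k,v}$-decay propagates in each parameter separately just as in \cite{Ma1} and \cite{LMOV}, and the representation follows after taking $\E_\omega$.
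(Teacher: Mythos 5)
Your proposal has the right skeleton: decompose into nine symmetries according to which cube is smallest in each parameter, make the smallest cube good, split into separated/diagonal/nested pieces per parameter, and identify the resulting pieces as shifts, partial paraproducts or full paraproducts. Using the vector-valued reformulation of Section \ref{sec:reform} for the $\BMO(\R^m)$ bound on the partial-paraproduct coefficients is a viable alternative to the direct kernel estimate of Lemma \ref{lem:kernel_example}. There are, however, two genuine gaps.

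First, your justification for the goodness insertion is wrong as stated. You claim that $\omega_1^i$, $\omega_2^j$ at scales $\ge \ell(I_3)$, $\ge \ell(J_3)$ (which determine the goodness of $I_3$, $J_3$) are independent from the positions of the larger cubes. They are not: if $\ell(I_1) > \ell(I_3)$, then $I_1 + \omega_1$ depends on $\omega_1^i$ for every $2^{-i} < \ell(I_1)$, and this range overlaps the goodness scales $2^{-i} \ge \ell(I_3)$. To obtain the independence, the sums over $I_1, I_2$ (resp.\ $J_1, J_2$) must first be collapsed back into $E^\omega_{\ell(I_3)/2, \ell(J_3)/2} f_i$, so that the whole pairing depends only on $\omega_1^i$, $\omega_2^j$ at scales $< \ell(I_3)$, $< \ell(J_3)$; only \emph{then} is the goodness of $I_3$, $J_3$ independent of the pairing. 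This is precisely the collapse trick from \cite{LMOV} that the proof uses, and the paper explicitly warns that the original Hyt\"onen mechanism would be intractable at this level of generality. Your one-sentence version of the NTV trick does not go through.

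Second, the paraproduct regimes do not by themselves collapse to the DMOs of Section \ref{sec:defbilinbiparmodel}. After extracting the $T(\cdot,1)$- or $T(1,1)$-type coefficient from one piece of the bilinear collapse decomposition, one is left with a non-telescoping sum of the form $\sum_{k\ge 0}\langle h_{J_3^{(k+1)}}\rangle_{J_3^{(k)}}\langle\langle f_1,h_{I_2^{(1)}}\rangle_1,h_{J_3^{(k+1)}}\rangle\langle f_2\rangle_{I_2\times J_3^{(k)}}$. Only after summing with the matching pieces arising from the other components $\sigma_1^2, \sigma_1^3, \sigma_1^4$ does this telescope to a single paraproduct coefficient $\langle\langle f_1,h_{I_2^{(1)}}\rangle_1\rangle_{J_3}\langle f_2\rangle_{I_2\times J_3}$; the full paraproduct needs all four. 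Your outline treats each regime in isolation, so without this cross-combination the pieces would not assemble into the stated partial and full paraproducts, and the representation would not close.
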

\begin{proof}
We show the proof under the additional assumption that $T$ is a priori bounded, say from $L^3 \times L^3 \to L^{3/2}$.
At the end we comment why this is enough.

We show that given $N$ we have for all bounded $f_i$ with spt$\,f_i \subset B(0, c_02^{N})$ that
$$
\langle T(f_1,f_2), f_3\rangle = \lim_{M \to \infty} C_T \mathbb{E}_{\omega}\mathop{\sum_{k = (k_1, k_2, k_3) \in \Z_+^3}}_{v = (v_1, v_2, v_3) \in \Z_+^3} \alpha_{k, v} 
\sum_{u}
\bla U^{v, N, M}_{k, u, \mathcal{D}_{\omega}}(f_1, f_2), f_3 \bra,
$$
where $U^{v, N, M}_{k, u, \mathcal{D}_{\omega}}$ are dyadic model operators
so that all the appearing cubes $I_1, I_2, I_3$ and $J_1, J_2, J_3$ satisfy $2^{-M} \le \ell(I_i), \ell(J_j) \le 2^N$, and
we allow non-cancellative Haar functions in all slots on the top level $2^N$ except for the Haar functions related to paraproducts
$\pi$ (as part of a partial paraproduct) or $\Pi$ (a full paraproduct). A simple limiting argument, which we omit, gives the desired conclusion.

To get started fix $N$ and three bounded functions $f_1$, $f_2$ and $f_3$
with spt$\,f_i \subset B(0, c_02^{N})$.
Define
$$
E_{2^{-M_1}, 2^{-M_2}}^{\omega} f_1 = \sum_{\substack{I_1 \in \calD^n_{\omega_1} \\ \ell(I_1) = 2^{-M_1}}} \sum_{\substack{J_1 \in \calD^m_{\omega_2} \\ \ell(J_1) = 2^{-M_2}}}
 \langle f_1 \rangle_{I_1 \times J_1} 1_{I_1 \times J_1},
$$
and also write $E_{2^{-M}}^{\omega} f_1 = E_{2^{-M}, 2^{-M}}^{\omega} f_1$.
Using the boundedness of $T$ we have
$$
\langle T(f_1, f_2), f_3\rangle = \lim_{M \to \infty} \langle T(E_{2^{-M}}^{\omega} f_1, E_{2^{-M}}^{\omega}f_2), E_{2^{-M}}^{\omega}f_3\rangle.
$$
By dominated convergence we further have that
$$
\langle T(f_1, f_2), f_3\rangle = \lim_{M \to \infty}  \E_{\omega}  \langle T(E_{2^{-M}}^{\omega} f_1, E_{2^{-M}}^{\omega}f_2), E_{2^{-M}}^{\omega}f_3\rangle.
$$
We fix $M$ for now.

Define $\Delta_I^{\le N} = \Delta_I$ if $\ell(I) < 2^N$, $\Delta_I^{\le N} = \Delta_I + E_I$ if $\ell(I) = 2^N$ and $\Delta_I^{\le N} = 0$
if $\ell(I) > 2^N$. As $N$ is fixed for the whole argument,
we will be abusing notation and writing $\Delta_I^1 = \Delta_I^{1, \le N}$, $\Delta_J^2 = \Delta_J^{2, \le N}$ and
$\Delta_{I \times J} = \Delta_I^{1, \le N} \Delta_J^{2, \le N} = \Delta_I^1 \Delta_J^2$. Similarly, $h_I$ can stand
for $h_I$ or $h_I^0$ if $\ell(I) = 2^N$. It is only inside this proof that this can happen.

Now, with this notation we have with any $M_1, M_2$ such that $2^{-M_i} < 2^N$ that
\begin{equation}\label{eq:collapse}
E_{2^{-M_1}, 2^{-M_2}}^{\omega} f_1 = \sum_{\substack{I_1 \in \calD^n_{\omega_1}, J_1 \in \calD^m_{\omega_2} \\ 2^{-M_1} < \ell(I_1) \le 2^N
\\ 2^{-M_2} < \ell(J_1) \le 2^N
}} \Delta_{I_1 \times J_1}f_1.
\end{equation}
Using this with $M_1 = M_2 =  M$ we have for all large $M$ that
\begin{equation*}
\begin{split}
 \langle T(E_{2^{-M}}^{\omega} &f_1, E_{2^{-M}}^{\omega}f_2), E_{2^{-M}}^{\omega}f_3\rangle \\
&=\sum_{\substack{I_1, I_2, I_3 \in \calD^n_{\omega_1} \\ 
2^{-M} < \ell(I_i) \le 2^N}} 
 \sum_{\substack{J_1, J_2, J_3 \in \calD^m_{\omega_2} \\ 
 2^{-M} < \ell(J_i) \le 2^N} }
\langle T(\Delta_{I_1 \times J_1} f_1, \Delta_{I_2 \times J_2} f_2), \Delta_{I_3 \times J_3} f_3 \rangle.
\end{split}
\end{equation*}
Notice that the sums are finite as the functions have compact support. This makes all the reorganisations of sums
and interchanging $\E_{\omega}$ and sums readily true. If the reader is already wondering about the reason
for the exact support condition spt$\,f_i \subset B(0, c_02^{N})$, this is related to the fact that we want to ensure that
the smallest cubes are strictly smaller than $2^N$ in the so called separated sums below. This will become clear at the end.

We split
\begin{align*}
\sum_{\substack{I_1, I_2, I_3 \in \calD^n_{\omega_1} \\ 
2^{-M} < \ell(I_i) \le 2^N}} &= \sum_{\substack{I_3 \in \calD^n_{\omega_1} \\ 2^{-M} < \ell(I_3) \le 2^N}} \sum_{\substack{I_1 \in \calD^n_{\omega_1} \\ \ell(I_3) \leq \ell(I_1) \le 2^N  }}
\sum_{\substack{I_2 \in \calD^n_{\omega_1} \\ \ell(I_3) \leq \ell(I_2) \le 2^N}} \\
&+ \sum_{\substack{I_1 \in \calD^n_{\omega_1} \\ 2^{-M} < \ell(I_1) < 2^N}} \sum_{\substack{I_2 \in \calD^n_{\omega_1} \\ \ell(I_1) \leq \ell(I_2) \le 2^N  }}
\sum_{\substack{I_3 \in \calD^n_{\omega_1} \\ \ell(I_1) < \ell(I_3) \le 2^N }} \\
&+ \sum_{\substack{I_2 \in \calD^n_{\omega_1} \\ 2^{-M} < \ell(I_2) < 2^N}} \sum_{\substack{I_1 \in \calD^n_{\omega_1} \\ \ell(I_2) < \ell(I_1) \le 2^N }}
\sum_{\substack{I_3 \in \calD^n_{\omega_1} \\ \ell(I_2) < \ell(I_3) \le 2^N }}.
\end{align*}
We split the $J_1, J_2, J_3$ sum similarly. This gives us a splitting
$$
\langle T(E_{2^{-M}}^{\omega} f_1, E_{2^{-M}}^{\omega}f_2), E_{2^{-M}}^{\omega}f_3\rangle= \sum_{i=1}^9 \Sigma^i(\omega),
$$
where the enumeration is so that we first multiplied the first $I$ summation with all the three possible $J$ summations to get $\Sigma^1, \Sigma^2, \Sigma^3$, and so on. We consider $N$ and $M$ fixed at this point of the argument, and therefore just write $\Sigma^i$ instead of $\Sigma^i_{N,M}$.
For example, $\Sigma^1(\omega)$ equals
\begin{align*}
\sum_{\substack{I_3 \in \calD^n_{\omega_1} \\ 2^{-M} < \ell(I_3) \le 2^N}}& \sum_{\substack{I_1 \in \calD^n_{\omega_1} \\ \ell(I_3) \leq \ell(I_1) \le 2^N  }}
\sum_{\substack{I_2 \in \calD^n_{\omega_1} \\ \ell(I_3) \leq \ell(I_2) \le 2^N}} \\
& \sum_{\substack{J_3 \in \calD^m_{\omega_2} \\ 2^{-M} < \ell(J_3) \le 2^N}} \sum_{\substack{J_1 \in \calD^m_{\omega_2} \\ \ell(J_3) \leq \ell(J_1) \le 2^N  }}
\sum_{\substack{J_2 \in \calD^m_{\omega_2} \\ \ell(J_3) \leq \ell(J_2) \le 2^N}}
\langle T(\Delta_{I_1 \times J_1} f_1, \Delta_{I_2 \times J_2} f_2), \Delta_{I_3 \times J_3} f_3 \rangle.
\end{align*}
These are the nine main symmetries, which are handled relatively similarly.
We choose to deal with $\Sigma^1(\omega)$. 

\subsubsection*{Adding goodness by a collapse trick}
A cube $I \in \calD^n_{\omega_1}$ is called bad if there exists such a cube $P \in \calD^n_{\omega_1}$ that $\ell(P) \ge 2^r \ell(I)$ and 
$$
d(I, \partial P) \le \ell(I)^{\gamma_n}\ell(P)^{1-\gamma_n}.
$$
Here $\gamma_n = \alpha/(2[2n + \alpha])$, where $\alpha > 0$ appears in the kernel estimates. Otherwise a cube is called good.
We note that
$\pi_{\textrm{good},n} := \mathbb{P}_{\omega_1}(I + \omega_1 \textrm{ is good})$ is independent of the choice of $I \in \mathcal{D}_0^n$. The appearing parameter $r$ is a large enough fixed constant so that $\pi_{\textrm{good}, n} > 0$ (and also $\pi_{\textrm{good}, m} > 0$).
Moreover, for a fixed $I \in \mathcal{D}_0^n$
the set $I + \omega_1$ depends on $\omega_1^i$ with $2^{-i} < \ell(I)$, while the goodness of $I + \omega_1$ depends on $\omega_1^i$ with $2^{-i} \ge \ell(I)$. These notions are independent by the product probability structure.

The way of adding goodness like below is from Li--Martikainen--Ou--Vuorinen \cite{LMOV} -- it is a significantly cleaner way to add goodness to the smallest cubes than the
one originally used by Hyt\"onen \cite{Hy}. In fact, in this generality the original way would probably cause a tedious mess.

We want the smallest cubes (i.e. $I_3$ and $J_3$) to be good. This can be achieved with the averaged sum $\E_{\omega} \Sigma^1(\omega)$.
The following argument requires us to collapse certain sums at this point to gain enough independence. Therefore, we write
$$
\Sigma^1(\omega) = \sum_{\substack{I_3 \in \calD^n_{\omega_1} \\ 2^{-M} < \ell(I_3) \le 2^N}} \sum_{\substack{J_3 \in \calD^m_{\omega_2} \\ 2^{-M} < \ell(J_3) \le 2^N}} \langle T(E_{\ell(I_3)/2, \ell(J_3)/2}^{\omega} f_1, E_{\ell(I_3)/2, \ell(J_3)/2}^{\omega} f_2), \Delta_{I_3 \times J_3} f_3 \rangle,
$$
where we used \eqref{eq:collapse} with $2^{-M_1} = \ell(I_3)/2$ and $2^{-M_2} = \ell(J_3)/2$.

Now the whole pairing depends only on $\omega_1^i$ and $\omega_2^j$ for $2^{-i} < \ell(I_3)$ and $2^{-j} < \ell(J_3)$. However, the goodness of $I_3$
depends on $\omega_1^i$ for $2^{-i} \ge \ell(I_3)$ (and similarly for $J_3$). Using this independence we get
\begin{align*}
&\E_{\omega} \Sigma^1(\omega) \\
&= C_1 \E_{\omega} \sum_{\substack{I_3 \in \calD^n_{\omega_1, \good} \\ 2^{-M} < \ell(I_3) \le 2^N}} \sum_{\substack{J_3 \in \calD^m_{\omega_2, \good} \\ 2^{-M} < \ell(J_3) \le 2^N}}
\langle T(E_{\ell(I_3)/2, \ell(J_3)/2}^{\omega} f_1, E_{\ell(I_3)/2, \ell(J_3)/2}^{\omega} f_2), \Delta_{I_3 \times J_3} f_3 \rangle \\
&=: C_1 \E_{\omega} \Sigma^1_{\good}(\omega),
\end{align*}
where $C_1 = (\pi_{\good, n} \pi_{\good, m})^{-1} < \infty$.

We now fix an arbitrary $\omega = (\omega_1, \omega_2)$ for the rest of the argument, and study $\Sigma^1 := \Sigma^1_{\good}(\omega)$. We will also be writing $\calD^n$ instead
of $\calD^n_{\omega_1}$, and so on. Next, we re-expand the functions $E_{\ell(I_3)/2, \ell(J_3)/2}f_1$ and $E_{\ell(I_3)/2, \ell(J_3)/2}f_2$ to see that $\Sigma^1$ equals
\begin{align*}
\sum_{\substack{I_3 \in \calD^n_{\good} \\ 2^{-M} < \ell(I_3) \le 2^N}}& \sum_{\substack{I_1 \in \calD^n \\ \ell(I_3) \leq \ell(I_1) \le 2^N  }}
\sum_{\substack{I_2 \in \calD^n \\ \ell(I_3) \leq \ell(I_2) \le 2^N}} \\
& \sum_{\substack{J_3 \in \calD^m_{\good} \\ 2^{-M} < \ell(J_3) \le 2^N}} \sum_{\substack{J_1 \in \calD^m \\ \ell(J_3) \leq \ell(J_1) \le 2^N  }}
\sum_{\substack{J_2 \in \calD^m \\ \ell(J_3) \leq \ell(J_2) \le 2^N}}
\langle T(\Delta_{I_1 \times J_1} f_1, \Delta_{I_2 \times J_2} f_2), \Delta_{I_3 \times J_3} f_3 \rangle.
\end{align*}

\subsubsection*{Bilinear collapse argument}
Another collapse of sums follows -- but not the same as before. The double sum $\sum_{\substack{I_1 \in \calD^n \\ \ell(I_3) \leq \ell(I_1) \le 2^N  }} \sum_{\substack{I_2 \in \calD^n \\ \ell(I_3) \leq \ell(I_2) \le 2^N}}$ 
can be organised as
$$
\sum_{\substack{I_1 \in \calD^n \\ \ell(I_3) \leq \ell(I_1)  \le 2^N}} \sum_{\substack{I_2 \in \calD^n \\ \ell(I_1) \leq \ell(I_2) \le 2^N  }}
+\sum_{\substack{I_2 \in \calD^n \\ \ell(I_3) \leq \ell(I_2) < 2^N }} \sum_{\substack{I_1 \in \calD^n \\ \ell(I_2) < \ell(I_1) \le 2^N  }}.
$$
A similar decomposition is performed on the $J_1, J_2$ summations. This decomposes
$
\Sigma^1 = \sum_{i=1}^4 \sigma_{1}^i,
$
where the enumeration is again so that we first multiplied the first $I$ summation with the two possible $J$ summations to get $\sigma^1_{1}, \sigma^2_{1}$.
We can now collapse certain summations in all of the terms $\sigma^i_{1}$, which makes this a useful decomposition. However, the downside is that
we have the main symmetries $\Sigma^1, \ldots, \Sigma^9$, but splitting them further, like we did above in the case of $\Sigma^1$,
causes the following phenomena: whenever a paraproduct like term
appears in some $\sigma^i_{k}$, it has to be combined (i.e. summed up) with the analogous terms from certain $\sigma^j_{k}$ in order to get a simple paraproduct.

We now collapse the innermost $I$ and $J$ summations in all of the terms $\sigma_{1}^i$. This gives
\begin{align*}
\sigma_{1}^{1} =\sum_{\substack{I_3 \in \calD^n_{\good} \\ 2^{-M} < \ell(I_3) \le 2^N}}& \sum_{\substack{I_1, I_2 \in \calD^n \\ \ell(I_3) \le \ell(I_1) = 2\ell(I_2) \le 2^N}} \\
&\sum_{\substack{J_3 \in \calD^m_{\good} \\ 2^{-M} < \ell(J_3) \le 2^N}} \sum_{\substack{J_1, J_2 \in \calD^m \\ \ell(J_3) \le \ell(J_1) = 2\ell(J_2) \le 2^N}}
\langle T(\Delta_{I_1 \times J_1} f_1, E_{I_2 \times J_2} f_2), \Delta_{I_3 \times J_3} f_3 \rangle;
\end{align*}
\begin{align*}
\sigma_{1}^2 = \sum_{\substack{I_3 \in \calD^n_{\good} \\ 2^{-M} < \ell(I_3) \le 2^N}}& \sum_{\substack{I_1, I_2 \in \calD^n \\ \ell(I_3) \le \ell(I_1) = 2\ell(I_2) \le 2^N}} \\
&\sum_{\substack{J_3 \in \calD^m_{\good} \\ 2^{-M} < \ell(J_3) \le 2^N}} \sum_{\substack{J_1, J_2 \in \calD^m \\ \ell(J_3) \le \ell(J_1) = \ell(J_2) < 2^N}}
\langle T(E_{J_1}^2 \Delta_{I_1}^1 f_1, E_{I_2}^1 \Delta_{J_2}^2 f_2), \Delta_{I_3 \times J_3} f_3 \rangle; 
\end{align*}
\begin{align*}
\sigma_{1}^3  = \sum_{\substack{I_3 \in \calD^n_{\good} \\ 2^{-M} < \ell(I_3) \le 2^N}}& \sum_{\substack{I_1, I_2 \in \calD^n \\ \ell(I_3) \le \ell(I_1) = \ell(I_2) < 2^N}} \\
&\sum_{\substack{J_3 \in \calD^m_{\good} \\ 2^{-M} < \ell(J_3) \le 2^N}} \sum_{\substack{J_1, J_2 \in \calD^m \\ \ell(J_3) \le \ell(J_1) = 2\ell(J_2) \le 2^N}}
\langle T(E_{I_1}^1 \Delta_{J_1}^2 f_1, E_{J_2}^2 \Delta_{I_2}^1 f_2), \Delta_{I_3 \times J_3} f_3 \rangle;
\end{align*}
\begin{align*}
\sigma_{1}^4 = \sum_{\substack{I_3 \in \calD^n_{\good} \\ 2^{-M} < \ell(I_3) \le 2^N}}& \sum_{\substack{I_1, I_2 \in \calD^n \\ \ell(I_3) \le \ell(I_1) = \ell(I_2) < 2^N}} \\
&\sum_{\substack{J_3 \in \calD^m_{\good} \\ 2^{-M} < \ell(J_3) \le 2^N}} \sum_{\substack{J_1, J_2 \in \calD^m \\ \ell(J_3) \le \ell(J_1) = \ell(J_2) < 2^N}}
\langle T( E_{I_1 \times J_1} f_1, \Delta_{I_2 \times J_2} f_2), \Delta_{I_3 \times J_3} f_3 \rangle.
\end{align*}
\subsubsection*{Bilinear bi-parameter organisation of sums}
We start dealing with $\sigma_1^1$, but due to the need to combine paraproduct like terms, this systemically requires the need to combine
the terms coming from $\sigma_1^1$ with some of the analogous terms coming from $\sigma_1^2$, $\sigma_1^3$ and $\sigma_1^4$. Therefore, we in essence
deal with all of them simultaneously.

With a fixed $I_3$ we split
\begin{align*}
\sum_{\substack{I_1, I_2 \in \calD^n \\ \ell(I_3) \le \ell(I_1) = 2\ell(I_2) \le 2^N}} =
\sum_{\substack{I_1,I_2 \in \calD^n  \\ \ell(I_3)\le \ell (I_1)=2\ell(I_2) \le 2^N \\ \max( d(I_1, I_3), d(I_2, I_3)) > \ell(I_3)^{\gamma_n} \ell(I_2)^{1-\gamma_n}}} &+
\mathop{\sum_{\substack{I_1,I_2 \in \calD^n  \\ \ell(I_3)\le \ell (I_1)=2\ell(I_2) \le 2^N \\ \max( d(I_1, I_3), d(I_2, I_3)) \le \ell(I_3)^{\gamma_n} \ell(I_2)^{1-\gamma_n}}}}_{I_1 \cap I_3 = \emptyset \textup{ or } I_1 = I_3 \textup{ or } I_2 \cap I_3 = \emptyset}  \\
&+\sum_{\substack{I_1,I_2 \in \calD^n \\  \ell(I_1) = 2\ell(I_2) \le 2^N \\ I_3 \subset I_2 \subset I_1}}.
\end{align*}
These parts are titled separated, diagonal and nested, respectively. 
A similar decomposition of  $\sum_{\substack{J_1, J_2 \in \calD^m \\ \ell(J_3) \le \ell(J_1) = 2\ell(J_2)\le 2^N}}$ is done with a fixed $J_3$. This splits
$\sigma_1^1$ into nine parts, which are titled like separated/separated, separated/diagonal, etc.

We begin with the most difficult case -- i.e. Nested/Nested. The remaining cases are easier, except for the fact that
finding a common parent (which is required for the shift structure) is easiest (as it is obvious) in the nested case. We only briefly comment on the other cases
after handling Nested/Nested.
\subsubsection*{Nested/Nested}
We have by definition that
$$
\sigma_{1, \textup{nes}, \textup{nes}}^1 = \sum_{\substack{I_2 \in \calD^n:\, \ell(I_2) < 2^N \\ I_3 \in \calD^n_{\good}: \, \ell(I_3) > 2^{-M} \\ I_3 \subset I_2}}
\sum_{\substack{J_2 \in \calD^m: \, \ell(J_2) < 2^N \\ J_3 \in \calD^m_{\good}:\, \ell(J_3) > 2^{-M} \\ J_3 \subset J_2}} \langle T(\Delta_{I_2^{(1)} \times J_2^{(1)}} f_1, E_{I_2 \times J_2} f_2), \Delta_{I_3 \times J_3} f_3 \rangle.
$$
We write
\begin{align*}
\langle T(\Delta_{I_2^{(1)} \times J_2^{(1)}} f_1, E_{I_2 \times J_2} f_2), &\Delta_{I_3 \times J_3} f_3 \rangle 
= \langle f_1, h_{I_2^{(1)}} \otimes h_{J_2^{(1)}}\rangle \langle f_2 \rangle_{I_2 \times J_2} \langle f_3, h_{I_3} \otimes h_{J_3}\rangle \\
&\times  \langle T(h_{I_2^{(1)}} \otimes h_{J_2^{(1)}}, 1_{I_2 \times J_2}),  h_{I_3} \otimes h_{J_3} \rangle.
\end{align*}
First, we split
\begin{align*}
T(h_{I_2^{(1)}} \otimes h_{J_2^{(1)}}, 1_{I_2 \times J_2}) &=  
T(h_{I_2^{(1)}} \otimes h_{J_2^{(1)}}, 1_{I_2^c} \otimes 1_{J_2^c})
- T(h_{I_2^{(1)}} \otimes h_{J_2^{(1)}}, 1 \otimes 1_{J_2^c}) \\
&-T(h_{I_2^{(1)}} \otimes h_{J_2^{(1)}}, 1_{I_2^c} \otimes 1) 
+ T(h_{I_2^{(1)}} \otimes h_{J_2^{(1)}}, 1).
\end{align*}
Using the identity
$$
h_{I_2^{(1)}} = \langle h_{I_2^{(1)}} \rangle_{I_2} + s_{I_2}, \qquad s_{I_2} := 1_{I_2^c}[ h_{I_2^{(1)}} - \langle h_{I_2^{(1)}} \rangle_{I_2}],
$$
we further write
$$
- T(h_{I_2^{(1)}} \otimes h_{J_2^{(1)}}, 1 \otimes 1_{J_2^c}) = - T(s_{I_2} \otimes h_{J_2^{(1)}}, 1 \otimes 1_{J_2^c}) - \langle h_{I_2^{(1)}}\rangle_{I_2}T(1\otimes h_{J_2^{(1)}}, 1 \otimes 1_{J_2^c})
$$
and
$$
-T(h_{I_2^{(1)}} \otimes h_{J_2^{(1)}}, 1_{I_2^c} \otimes 1) = -T(h_{I_2^{(1)}} \otimes s_{J_2}, 1_{I_2^c} \otimes 1) - \langle h_{J_2^{(1)}}\rangle_{J_2} T(h_{I_2^{(1)}} \otimes 1, 1_{I_2^c} \otimes 1)
$$
and
\begin{align*}
T(h_{I_2^{(1)}} \otimes h_{J_2^{(1)}}, 1) &= T(s_{I_2} \otimes s_{J_2}, 1)
+ \langle h_{I_2^{(1)}} \rangle_{I_2} T(1 \otimes s_{J_2}, 1) \\
&+ \langle h_{J_2^{(1)}} \rangle_{J_2} T(s_{I_2} \otimes 1, 1) 
+ \langle h_{I_2^{(1)}} \rangle_{I_2} \langle h_{J_2^{(1)}} \rangle_{J_2}  T(1,1).
\end{align*}

We have split $\sigma_{1, \textup{nes}, \textup{nes}}^1$ into four essentially different parts
$
\sigma_{1, \textup{nes}, \textup{nes}}^1 = \sum_{u=1}^4 \sigma_{1, \textup{nes}, \textup{nes}, u}^1,
$
where we agree that $ \sigma_{1, \textup{nes}, \textup{nes}, 1}^1$ corresponds to
\begin{align*}
T(h_{I_2^{(1)}} \otimes h_{J_2^{(1)}}, 1_{I_2^c} \otimes 1_{J_2^c}) &- T(s_{I_2} \otimes h_{J_2^{(1)}}, 1 \otimes 1_{J_2^c}) \\
& -T(h_{I_2^{(1)}} \otimes s_{J_2}, 1_{I_2^c} \otimes 1) + T(s_{I_2} \otimes s_{J_2}, 1),
\end{align*}
$ \sigma_{1, \textup{nes}, \textup{nes}, 2}^1$ corresponds to
$$
\langle h_{I_2^{(1)}}\rangle_{I_2}[-T(1\otimes h_{J_2^{(1)}}, 1 \otimes 1_{J_2^c}) + T(1 \otimes s_{J_2}, 1)],
$$
$ \sigma_{1, \textup{nes}, \textup{nes}, 3}^1$ corresponds to
$$
\langle h_{J_2^{(1)}}\rangle_{J_2}[-T(h_{I_2^{(1)}} \otimes 1, 1_{I_2^c} \otimes 1) + T(s_{I_2} \otimes 1, 1) ],
$$
and $ \sigma_{1, \textup{nes}, \textup{nes}, 4}^1$ corresponds to
$
\langle h_{I_2^{(1)}} \rangle_{I_2} \langle h_{J_2^{(1)}} \rangle_{J_2}  T(1,1).
$

We start with the terms $\sigma_{1, \textup{nes}, \textup{nes}, 2}^1$, $\sigma_{1, \textup{nes}, \textup{nes}, 3}^1$ and $\sigma_{1, \textup{nes}, \textup{nes}, 4}^1$.
The first two are handled similarly (they both produce partial paraproducts), and we only deal with $\sigma_{1, \textup{nes}, \textup{nes}, 3}^1$. The term $\sigma_{1, \textup{nes}, \textup{nes}, 4}^1$ produces a  full paraproduct.
\subsubsection*{The term $\sigma_{1, \textup{nes}, \textup{nes}, 3}^1$}
We are looking at the term
\begin{align*}
&\sigma_{1, \textup{nes}, \textup{nes}, 3}^1 = \sum_{\substack{I_2 \in \calD^n:\, \ell(I_2) < 2^N \\ I_3 \in \calD^n_{\good}: \, \ell(I_3) > 2^{-M} \\ I_3 \subset I_2}}
\sum_{\substack{J_3 \in \calD^m_{\good} \\ 2^{-M} < \ell(J_3) < 2^N}} 
\langle f_3, h_{I_3} \otimes h_{J_3}\rangle   \langle -T(h_{I_2^{(1)}} \otimes 1, 1_{I_2^c} \otimes 1)\\
 &+ T(s_{I_2} \otimes 1, 1), h_{I_3} \otimes h_{J_3}\rangle 
 \sum_{\substack{k \ge 0\\ 2^k \le 2^{N-1}/\ell(J_3)}}
 \langle h_{J_3^{(k+1)}} \rangle_{J_3^{(k)}} \langle
 \langle f_1, h_{I_2^{(1)}}\rangle_1, h_{J_3^{(k+1)}}\rangle  \langle f_2 \rangle_{I_2 \times J_3^{(k)}}.
\end{align*}
Notice that 
\begin{align*}
\langle h_{J_3^{(k+1)}} \rangle_{J_3^{(k)}} \langle \langle f_1, h_{I_2^{(1)}}\rangle_1, h_{J_3^{(k+1)}}\rangle &= \langle \Delta_{J_3^{(k+1)}} \langle f_1, h_{I_2^{(1)}}\rangle_1 \rangle_{J_3^{(k)}} \\
&= \langle  \langle f_1, h_{I_2^{(1)}}\rangle_1 \rangle_{J_3^{(k)}} - \langle  \langle f_1, h_{I_2^{(1)}}\rangle_1 \rangle_{J_3^{(k+1)}}
\end{align*}
if $2^k < 2^{N-1}/\ell(J_3)$, and it equals $\langle  \langle f_1, h_{I_2^{(1)}}\rangle_1 \rangle_{J_3^{(k)}}$  if $2^k = 2^{N-1}/\ell(J_3)$.

The sum does not collapse. To achieve this, we need to sum $\sigma_{1, \textup{nes}, \textup{nes}, 3}^1$ with the term
\begin{align*}
&\sigma_{1, \textup{nes}, \textup{nes}, 3}^2 =   \sum_{\substack{I_2 \in \calD^n:\, \ell(I_2) < 2^N \\ I_3 \in \calD^n_{\good}: \, \ell(I_3) > 2^{-M} \\ I_3 \subset I_2}}
\sum_{\substack{J_3 \in \calD^m_{\good} \\ 2^{-M} < \ell(J_3) < 2^{N-1}}} 
\langle f_3, h_{I_3} \otimes h_{J_3}\rangle     \langle -T(h_{I_2^{(1)}} \otimes 1, 1_{I_2^c} \otimes 1)\\
 &+ T(s_{I_2} \otimes 1, 1), h_{I_3} \otimes h_{J_3}\rangle 
\sum_{\substack{k \ge 0\\ 2^k < 2^{N-1}/\ell(J_3)}} \langle  \langle f_1, h_{I_2^{(1)}}\rangle_1 \rangle_{J_3^{(k+1)}}
[\langle f_2 \rangle_{I_2 \times J_3^{(k)}} - \langle f_2 \rangle_{I_2 \times J_3^{(k+1)}}].
\end{align*}
This formula for $\sigma_{1, \textup{nes}, \textup{nes}, 3}^2$ can be seen by calculating similarly as we did with $\sigma_{1, \textup{nes}, \textup{nes}, 3}^1$.
Noting the key cancellation the result is that $\sigma_{1, \textup{nes}, \textup{nes}, 3}^1 + \sigma_{1, \textup{nes}, \textup{nes}, 3}^2$ equals
\begin{align*}
\sum_{\substack{I_2 \in \calD^n \\ I_3 \in \calD^n_{\good} \\  \ell(I_2) < 2^N \\ \ell(I_3) > 2^{-M} \\ I_3 \subset I_2}}&
\sum_{\substack{J_3 \in \calD^m_{\good} \\ 2^{-M} < \ell(J_3) < 2^{N}}}
\langle f_3, h_{I_3} \otimes h_{J_3}\rangle   \langle -T(h_{I_2^{(1)}} \otimes 1, 1_{I_2^c} \otimes 1)
+ T(s_{I_2} \otimes 1, 1), h_{I_3} \otimes h_{J_3}\rangle  \\
&\times\sum_{\substack{k \ge 0\\ 2^k \le 2^{N-1}/\ell(J_3)}}
[\langle  \langle f_1, h_{I_2^{(1)}}\rangle_1 \rangle_{J_3^{(k)} }\langle f_2 \rangle_{I_2 \times J_3^{(k)}} - \langle  \langle f_1, h_{I_2^{(1)}}\rangle_1 \rangle_{J_3^{(k+1)}} \langle f_2 \rangle_{I_2 \times J_3^{(k+1)}}],
\end{align*}
where we understand that $[\langle  \langle f_1, h_{I_2^{(1)}}\rangle_1 \rangle_{J_3^{(k)} }\langle f_2 \rangle_{I_2 \times J_3^{(k)}} - \langle  \langle f_1, h_{I_2^{(1)}}\rangle_1 \rangle_{J_3^{(k+1)}} \langle f_2 \rangle_{I_2 \times J_3^{(k+1)}}]$ is replaced with $\langle  \langle f_1, h_{I_2^{(1)}}\rangle_1 \rangle_{J_3^{(k)}} \langle f_2 \rangle_{I_2 \times J_3^{(k)}}$ if $2^k = 2^{N-1}/\ell(J_3)$. Now, this last sum collapses to
$\langle  \langle f_1, h_{I_2^{(1)}}\rangle_1 \rangle_{J_3 }\langle f_2 \rangle_{I_2 \times J_3}$. Define
$$
b_{I_2, I_3} = C^{-1} \Big(\frac{\ell(I_2)}{\ell(I_3)}\Big)^{\alpha/2} |I_2|^{-1/2} \langle  -T(h_{I_2^{(1)}} \otimes 1, 1_{I_2^c} \otimes 1) + T(s_{I_2} \otimes 1, 1), h_{I_3} \rangle_1
$$
whenever $I_2 \in \calD^n$, $I_3 \in \calD^n_{\good}$ and $I_3 \subset I_2$, and otherwise set $b_{I_2, I_3} = 0$. We have that
$\sigma_{1, \textup{nes}, \textup{nes}, 3}^1 + \sigma_{1, \textup{nes}, \textup{nes}, 3}^2$ equals
$$
C \sum_{\substack{I_2, I_3 \in \calD^n \\ 2^{-M} < \ell(I_2), \ell(I_3) < 2^N \\ I_3 \subset I_2}}
\Big(\frac{\ell(I_3)}{\ell(I_2)}\Big)^{\alpha/2} \langle h_{I_3} \otimes \pi_{\calD^m_{\good}, b_{I_2, I_3}}^{N,M}( \langle f_1, h_{I_2^{(1)}}\rangle_1, \langle f_2, h_{I_2}^0)\rangle_1, f_3\rangle,
$$
where
$$
 \pi_{\calD^m_{\good}, b_{I_2, I_3}}^{N,M}(g_1, g_2) = \sum_{\substack{J_3 \in \calD^m_{\good} \\ 2^{-M} < \ell(J_3) < 2^{N}}}
 \langle b_{I_2, I_3}, h_{J_3} \rangle
 \langle g_1 \rangle_{J_3} \langle g_2 \rangle_{J_3} h_{J_3}.
$$
This is seen to be a sum of (truncated) partial paraproducts with indices $0, 1, k$, $k \ge 1$. This is because of the following lemma, which gives
the required BMO norm bound for the functions $b_{I_2, I_3}$. We give the proof relatively carefully as a model.
\begin{lem}\label{lem:kernel_example}
Let $I_2 \in \calD^n$, $I_3 \in \calD^n_{\good}$ and $I_3 \subset I_2$. Then we have
\begin{align*}
|\langle T(h_{I_2^{(1)}} \otimes 1, 1_{I_2^c} \otimes 1), h_{I_3} \otimes a_J\rangle| + |\langle T(s_{I_2}& \otimes 1, 1), h_{I_3} \otimes a_J\rangle| \\
&\lesssim \Big(\frac{\ell(I_3)}{\ell(I_2)}\Big)^{\alpha/2}\Big( \frac{|I_3|}{|I_2|} \Big)^{1/2} |J|
\end{align*}
for all cubes $J \subset \R^m$ and all functions $a_J$ such that spt$\,a_J \subset J$, $|a_J| \le 1$ and $\int a_J = 0$. Notice that
here $h_{I_3}$ really stands for a cancellative Haar function (as is the case above where $\ell(I_3) < 2^N$), but whether $h_{I_2^{(1)}}$
is cancellative or not does not matter.
\end{lem}
\begin{proof}
We only explicitly deal with $|\langle T(s_{I_2} \otimes 1, 1), h_{I_3} \otimes a_J\rangle|$, and estimate
\begin{align*}
|\langle T(s_{I_2} \otimes 1, 1), h_{I_3} \otimes a_J\rangle| &\le |\langle T(s_{I_2} \otimes 1_{3J}, 1 \otimes 1_{3J}), h_{I_3} \otimes a_J\rangle| \\
&+ |\langle T(s_{I_2} \otimes 1_{3J}, 1 \otimes 1_{(3J)^c}), h_{I_3} \otimes a_J\rangle| \\
&+ |\langle T(s_{I_2} \otimes 1_{(3J)^c}, 1), h_{I_3} \otimes a_J\rangle| = A_1 + A_2 + A_3.
\end{align*}

We first consider the case $\ell(I_3) < 2^{-r}\ell(I_2)$ ($r$ is related to goodness). We have
\begin{align*}
A_1 = \Big|\int_{\R^n} \int_{I_2^c} \int_{I_3} &\big[K_{1_{3J}, 1_{3J}, a_J}(x_1,y_1,z_1) \\
&- K_{1_{3J}, 1_{3J}, a_J}(c_{I_3},y_1,z_1)\big] s_{I_2}(y_1) h_{I_3}(x_1) \ud x_1 \ud y_1 \ud z_1\Big|,
\end{align*}
which gives
\begin{align*}
A_1 &\lesssim C(1_{3J}, 1_{3J}, a_J) \ell(I_3)^{\alpha} \int_{\R^n} \int_{I_2^c} \int_{I_3} \frac{|s_{I_2}(y_1) h_{I_3}(x_1)| \ud x_1\ud y_1 \ud z_1 }{(|c_{I_3} - y_1| + |c_{I_3} - z_1|)^{2n+\alpha}} \\
&\lesssim |J| |I_2|^{-1/2} |I_3|^{1/2} \ell(I_3)^{\alpha} \int_{I_2^c} \frac{\ud y_1}{|c_{I_3}-y_1|^{n+\alpha}} \lesssim \Big(\frac{\ell(I_3)}{\ell(I_2)}\Big)^{\alpha/2}\Big( \frac{|I_3|}{|I_2|} \Big)^{1/2} |J|.
\end{align*}
The last estimate used the fact that $d(I_3, I_2^c) > \ell(I_3)^{\gamma_n}\ell(I_2)^{1-\gamma_n} \ge \ell(I_3)^{1/2} \ell(I_2)^{1/2}$, which follows 
from the goodness of the cube $I_3$ and the fact that $\ell(I_3) < 2^{-r}\ell(I_2)$. The estimate for $A_2$ is very similar -- just use the full kernel representation
and a H\"older estimate of the full kernel instead. The $\R^n$ integrals are handled using exactly the same argument as above, while the $|J|$ is obtained
from the estimate
\begin{equation*}
\ell(J)^{\alpha} \int_{(3J)^c} \int_{3J} \int_J \frac{\ud x_2\ud y_2 \ud z_2 }{(|c_{J} - y_2| + |c_{J} - z_2|)^{2m+\alpha}} \lesssim |J| \ell(J)^{\alpha} \int_{(3J)^c} \frac{\ud z_2}{|c_J-z_2|^{m+\alpha}}
\lesssim |J|.
\end{equation*}
As we just disregarded the $3J$ in the $y_2$ integral above, we see that $A_3$ follows by essentially the same estimate.
We are done with the case $\ell(I_3) < 2^{-r}\ell(I_2)$.

Next, we deal with the remaining case where $\ell(I_3) \sim \ell(I_2)$. Here we use the above splitting to $A_i$, $i = 1, 2,3$, but
also further split each of these to $A_i^k$, $k = 1,2$, where $k=1$ means that we have replaced $s_{I_2}$ with $1_{3I_3}s_{I_2}$ and $k=2$ means that
we have replaced $s_{I_2}$ with $1_{(3I_3)^c}s_{I_2}$. The required estimate for $A_1^1$ follows using the partial kernel representation, the size estimate for the kernel, and the estimate
$
\int_{3I_3 \setminus I_3} \int_{I_3} \frac{\ud x_1 \ud y_1}{|x_1-y_1|^n} \lesssim |I_3|.
$
The estimate for $A_1^2$ follows using the partial kernel representation, the H\"older estimate for the kernel, and standard estimates.
All the remaining estimates follow using similar estimates, we just point out which kernel estimate is used. In $A_2^1$ use the full kernel and a mixed H\"older and size estimate, and
in $A_2^2$ use the full kernel and a H\"older estimate. The remaining terms $A_3^1$ and $A_3^2$ are handled as $A_2^1$ and $A_2^2$ respectively.
\end{proof}
\subsubsection*{The term $\sigma_{1, \textup{nes}, \textup{nes}, 4}^1$}
Here we need to sum up all the terms  $\sigma_{1, \textup{nes}, \textup{nes}, 4}^i$, $i = 1, \ldots, 4$. We first sum up $\sigma_{1, \textup{nes}, \textup{nes}, 4}^1$
and $\sigma_{1, \textup{nes}, \textup{nes}, 4}^2$. Using calculations like above we see that
\begin{align*}
\sigma_{1, \textup{nes}, \textup{nes}, 4}^1 + \sigma_{1, \textup{nes}, \textup{nes}, 4}^2 = &
\sum_{\substack{I_3 \in \calD^n_{\good} \\ 2^{-M} < \ell(I_3) < 2^{N}}}
\sum_{\substack{J_3 \in \calD^m_{\good} \\ 2^{-M} < \ell(J_3) < 2^{N}}} 
\langle f_3, h_{I_3} \otimes h_{J_3}\rangle   \langle T(1, 1), h_{I_3} \otimes h_{J_3}\rangle \\
&\sum_{\substack{k \ge 0\\ 2^k \le 2^{N-1}/\ell(I_3)}} \langle h_{I_3^{(k+1)}} \rangle_{I_3^{(k)}} \langle  \langle f_1, h_{I_3^{(k+1)}}\rangle_1 \rangle_{J_3 }\langle f_2 \rangle_{I_3^{(k)} \times J_3}.
\end{align*}
Notice that here
\begin{align*}
 \langle h_{I_3^{(k+1)}} \rangle_{I_3^{(k)}} \langle  \langle f_1, h_{I_3^{(k+1)}}\rangle_1 \rangle_{J_3}  &= \langle h_{I_3^{(k+1)}} \otimes \langle f_1, h_{I_3^{(k+1)}}\rangle_1 \rangle_{I_3^{(k)} \times J_3} \\
& = \langle \Delta_{I_3^{(k+1)}}^1 f_1 \rangle_{I_3^{(k)} \times J_3} = \langle f_1 \rangle_{I_3^{(k)} \times J_3} - \langle f_1 \rangle_{I_3^{(k+1)} \times J_3}
\end{align*}
if $2^k < 2^{N-1}/\ell(I_3)$, and it equals $\langle f_1 \rangle_{I_3^{(k)} \times J_3}$  if $2^k = 2^{N-1}/\ell(I_3)$.
In exactly the same way we see that
\begin{align*}
\sigma_{1, \textup{nes}, \textup{nes}, 4}^3 + \sigma_{1, \textup{nes}, \textup{nes}, 4}^4 = &
\sum_{\substack{I_3 \in \calD^n_{\good} \\ 2^{-M} < \ell(I_3) < 2^{N-1}}}
\sum_{\substack{J_3 \in \calD^m_{\good} \\ 2^{-M} < \ell(J_3) < 2^{N}}}  
\langle f_3, h_{I_3} \otimes h_{J_3}\rangle   \langle T(1, 1), h_{I_3} \otimes h_{J_3}\rangle \\
&\sum_{\substack{k \ge 0\\ 2^k < 2^{N-1}/\ell(I_3)}}  \langle f_1 \rangle_{I_3^{(k+1)} \times J_3} [ \langle f_2 \rangle_{I_3^{(k)} \times J_3} - \langle f_2 \rangle_{I_3^{(k+1)} \times J_3}].
\end{align*}
Noticing the cancellation we have
\begin{align*}
\sum_{i=1}^4 \sigma_{1, \textup{nes}, \textup{nes}, 4}^i =&
\sum_{\substack{I_3 \in \calD^n_{\good} \\ 2^{-M} < \ell(I_3) < 2^{N}}}
\sum_{\substack{J_3 \in \calD^m_{\good} \\ 2^{-M} < \ell(J_3) < 2^{N}}}  
\langle f_3, h_{I_3} \otimes h_{J_3}\rangle   \langle T(1, 1), h_{I_3} \otimes h_{J_3}\rangle \\
&\sum_{\substack{k \ge 0\\ 2^k \le 2^{N-1}/\ell(I_3)}} [\langle f_1 \rangle_{I_3^{(k)} \times J_3} \langle f_2 \rangle_{I_3^{(k)} \times J_3}
- \langle f_1 \rangle_{I_3^{(k+1)} \times J_3} \langle f_2 \rangle_{I_3^{(k+1)} \times J_3}],
\end{align*}
where we understand that $[\langle f_1 \rangle_{I_3^{(k)} \times J_3} \langle f_2 \rangle_{I_3^{(k)} \times J_3}
- \langle f_1 \rangle_{I_3^{(k+1)} \times J_3} \langle f_2 \rangle_{I_3^{(k+1)} \times J_3}]$ is replaced with $\langle f_1 \rangle_{I_3^{(k)} \times J_3}\langle f_2 \rangle_{I_3^{(k)} \times J_3}$ if $2^k = 2^{N-1}/\ell(I_3)$. This 
last sum collapses to $\langle f_1 \rangle_{I_3 \times J_3} \langle f_2 \rangle_{I_3 \times J_3}$. Therefore, we have shown that
$$
\sum_{i=1}^4 \sigma_{1, \textup{nes}, \textup{nes}, 4}^i = \langle \Pi_{\calD^n_{\good}, \calD^m_{\good}, T(1,1)}^{N,M} (f_1, f_2), f_3 \rangle,
$$
where $\Pi_{\calD^n_{\good}, \calD^m_{\good}, T(1,1)}^{N,M} (f_1, f_2) $ equals
$$
\sum_{\substack{I_3 \in \calD^n_{\good},\, J_3 \in \calD^m_{\good} \\ 2^{-M} < \ell(I_3), \, \ell(J_3) < 2^{N}}}  \langle T(1, 1), h_{I_3} \otimes h_{J_3}\rangle\langle f_1 \rangle_{I_3 \times J_3} \langle f_2 \rangle_{I_3 \times J_3}
h_{I_3} \otimes h_{J_3}.
$$

It remains to consider the term $\sigma_{1, \textup{nes}, \textup{nes}, 1}^1$.

\subsubsection*{The term $\sigma_{1, \textup{nes}, \textup{nes}, 1}^1$}
This term clearly produces a sum of shifts of the type $(0,1,k), (0,1,v)$, $k, v \ge 1$, provided that we check the estimate
$$
|I_2|^{-1/2} |J_2|^{-1/2} |\langle T(s_{I_2} \otimes s_{J_2}, 1), h_{I_3} \otimes h_{J_3} \rangle| \lesssim \Big(\frac{\ell(I_3)}{\ell(I_2)}\Big)^{\alpha/2} \frac{|I_3|^{1/2}}{|I_2|} 
\Big(\frac{\ell(J_3)}{\ell(J_2)}\Big)^{\alpha/2} \frac{|J_3|^{1/2}}{|J_2|}, 
$$
and the three other completely analogous estimates.
This estimate is checked using similar estimates as in Lemma \ref{lem:kernel_example} (but only the full kernel representation is needed). Therefore, we are done with this term,
and so with the whole Nested/Nested part.

\subsubsection*{Remaining parts}
As mentioned, the other parts are essentially easier than the one we just handled, namely Nested/Nested. We now briefly indicate how they are handled.
First, we cover the lemmata which give the existence of common ancestors (the existence of the common parent was obvious above because of the nestedness).
The following lemma gives the existence of the common dyadic parent in the separated case.
\begin{lem}
Let $I_1, I_2 \in \calD^n$ and $I_3 \in \calD^n_{\good}$ be such that $\ell(I_3) \le \ell(I_1) = 2\ell(I_2)$ and
$\max( d(I_1, I_3), d(I_2, I_3)) > \ell(I_3)^{\gamma_n} \ell(I_2)^{1-\gamma_n}$. Then there exists a cube $K \in \calD^n$ so that $I _1\cup I_2 \cup I_3 \subset K$ and
$$
\max(d(I_3,I_1), d(I_3,I_2)) \gtrsim \ell(I_3)^{\gamma_n}\ell(K)^{1-\gamma_n}.
$$
\end{lem}
The existence of the common dyadic parent in the diagonal case is given next.
\begin{lem}
Let $I_1, I_2 \in \calD^n$ and $I_3 \in \calD^n_{\good}$ be such that we have $\ell(I_3) \le \ell(I_1) = 2\ell(I_2)$,
$\max( d(I_1, I_3), d(I_2, I_3)) \le \ell(I_3)^{\gamma_n} \ell(I_2)^{1-\gamma_n}$ and
either $I_1 \cap I_3 = \emptyset$ or $I_1 = I_3$ or $I_2 \cap I_3 = \emptyset$.
Then there exists a cube $K \in \calD^n$ so that $I_1 \cup I_2 \cup I_3 \subset K$ and
$\ell(K) \le 2^r\ell(I_3)$.
\end{lem}
The results are by Hyt\"onen (see \cite{Hy} and \cite{Hy2})
in the linear case, and the above formulations are proved in Li--Martikainen--Ou--Vuorinen \cite{LMOV}.
Denote the smallest common dyadic ancestor by $I_1 \vee I_2 \vee I_3$ (which exists by the above lemmata).
Using these results we arrange the desired shift structure by reorganising the summations to the cases
where $I_1 \vee I_2 \vee I_3 = K$ for a given $K \in \calD^n$ (and similarly for the summations over $\calD^m$).

As the Nested/Nested is the only place, where a full paraproduct appears, we
only get shifts or partial paraproducts from the remaining terms. The collapse of the various partial paraproducts require the same kind of arguments as above.

Having taken care of the structural considerations, things boil down to proving the desired bounds required by the shifts and partial paraproducts.
Now, we explain the role of the assumption spt$\,f_i \subset B(0, c_02^{N})$. This ensures that e.g. all the appearing cubes $I_i$ satisfy
$d(I_i, I_{i'}) \le 2c_02^N$. So if the smallest cube $I_3$ satisfies $\ell(I_3) = 2^N$, then
$$
 \max( d(I_1, I_3), d(I_2, I_3)) \le 2c_02^N < \ell(I_3)^{\gamma_n}\ell(I_2)^{1-\gamma_n}
$$
if $c_0$ is chosen appropriately. This means that $\ell(I_3) < 2^N$ in the separated sum, which is key for obtaining
the desired estimates using H\"older bounds of the kernels. On the other hand, in the diagonal sum it does not matter
whether the Haar functions are cancellative or not (we demonstrate this below). And, in the various nested cases we sum as above.

Therefore, in the majority of the cases we use the kernel estimates like we did above, and combine these with the quantitative bounds for $I_1 \vee I_2 \vee I_3$ given by the above two lemmata. However, we did not explain the role of the diagonal BMO assumptions nor the weak boundedness assumptions yet. The need arises in the cases
Diagonal/Nested (and Nested/Diagonal ) and Diagonal/Diagonal respectively.  To conclude, we give an idea how the diagonal BMO assumptions
are used in the Diagonal/Nested case.

\subsubsection*{Diagonal/Nested and the diagonal BMO assumptions}
If we extract the partial paraproduct here as previously, we are e.g. left with the task of showing the BMO bound
$$
|\langle T(h_{I_1} \otimes 1, 1_{I_2} \otimes 1), h_{I_3} \otimes a_J\rangle| \lesssim |J|,
$$ 
whenever $I_1, I_2$ and $I_3$ are like in the diagonal summation, $J \subset \R^m$ is a cube, and $a_J$ is a function such that spt$\,a_J \subset J$, $|a_J| \le 1$ and $\int a_J = 0$.
If $I_1 \cap I_3 = \emptyset$ or $I_2 \cap I_3 = \emptyset$, this is not difficult. So we only consider the remaining case where $I_1 = I_3$ and $I_2 \in \textup{ch}(I_1)$.
Next, we use the familiar splitting $1 = 1_{3J} + 1_{(3J)^c}$, and also split
$
h_{I_1} = |I_1|^{-1/2} \sum_{I' \in \textup{ch}(I_1)} \epsilon_{I'} 1_{I'},
$
where $\epsilon_{I'} = \pm 1$. So we are left with terms like
$$
|I_1|^{-1} \sum_{I', I'' \in \textup{ch}(I_1)} |\langle T(1_{I'} \otimes 1_{(3J)^c}, 1_{I_2} \otimes 1), 1_{I''} \otimes a_J \rangle|
$$
and
$$
|I_1|^{-1} \sum_{I', I'' \in \textup{ch}(I_1)} |\langle T(1_{I'} \otimes 1_{3J}, 1_{I_2} \otimes 1_{3J}), 1_{I''} \otimes a_J\rangle|.
$$
In the first sum if $I' \ne I_2$ or $I'' \ne I_2$, we use the full kernel representation. If $I' = I'' = I_2$ we use the partial
kernel representation with $K_{1_{I_2}, 1_{I_2}, 1_{I_2}}$. In the second sum if $I' \ne I_2$ or $I'' \ne I_2$ we use
the partial kernel representation with $K_{1_{3J}, 1_{3J}, 1_{a_J}}$. Finally, in the second sum the case $I' = I'' = I_2$ requires us to bound
$
|I_1|^{-1} |\langle T(1_{I_2} \otimes 1_{3J}, 1_{I_2} \otimes 1_{3J}), 1_{I_2} \otimes a_J\rangle|.
$
But this is dominated by $|J|$ using one of the diagonal BMO assumptions. This ends our treatment of $\Sigma^1$, and thus the proof of
the a priori bounded case.

\subsection*{$T$ is not a priori bounded}
It is possible to first prove a representation theorem in a certain finite set up, where no a priori boundedness is needed.
Reductions of this type in the linear one-parameter situation appear in
\cite{GH} and \cite{Hy3}. We omit the technical details in our setting as they are similar.
A corollary of such a special representation is the boundedness of $T$, say from $L^3 \times L^3 \to L^{3/2}$. After
this, we can run the above argument. 

\end{proof}
\section{Boundedness results for DMOs}\label{sec:ModelBou}
\subsection{Weighted bounds for shifts and partial paraproducts}\label{sec:modelBW}
It is well-known that
\begin{equation}\label{eq:basicinfty}
\|f\|_{L^2(v)}^2 \lesssim_{[v]_{A_{\infty}(\R^n)}} \int_{\R^n} \sum_{I \in \calD^n} |\Delta_I f|^2  v 
\end{equation}
for $f \colon \R^n \to \C$ and $v \in A_{\infty}(\R^n)$. The fact that this holds for $A_{\infty}$ weights is important for us (due to the nature
of bilinear weights). This can be proved using some good lambda estimates as in \cite{CWW}.
See also e.g. Theorem 3.4 in \cite{DIPTV} for an explicit proof of this in a more general setting. 
The following $A_{\infty}$ extrapolation result is \cite[Theorem 2.1]{CUMP}.
\begin{lem}\label{thm:extra}
Let $(f,g)$ be a pair of positive functions defined on $\mathbb R^m$. Suppose that there exists some $0<p_0<\infty$ such that for every $w\in A_\infty(\mathbb R^m)$ we have
\[
\int_{\mathbb R^m} f^{p_0} w \le C([w]_{\infty}, p_0) \int_{\mathbb R^m} g^{p_0} w.
\]
Then for all $0<p<\infty$ and $w\in A_\infty(\mathbb R^m)$ we have
\[
\int_{\mathbb R^m} f^{p} w \le C([w]_{\infty}, p) \int_{\mathbb R^m} g^{p} w.
\]
\end{lem}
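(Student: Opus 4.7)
The statement is the $A_\infty$-extrapolation theorem of Cruz-Uribe, Martell and P\'erez \cite{CUMP}. My plan is to adapt Rubio de Francia's extrapolation machinery to the $A_\infty$ setting. I would treat the case $p>p_0$ in detail; the case $p<p_0$ is handled by an essentially dual H\"older argument built on the same pieces.

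Fix $w\in A_\infty(\mathbb{R}^m)$ and, after a truncation, assume $\int g^p w<\infty$. The first step uses the duality $L^{p/p_0}(w)^*=L^{(p/p_0)'}(w)$ (valid since $p>p_0$):
$$\Big(\int f^p w\Big)^{p_0/p}=\|f^{p_0}\|_{L^{p/p_0}(w)}=\sup_{h\ge 0,\,\|h\|_{L^{(p/p_0)'}(w)}\le 1}\int f^{p_0}hw.$$
For each admissible $h$ I would form the Rubio de Francia iterate
$Rh=\sum_{k\ge 0}M^k h\,(2\|M\|)^{-k}$, where $\|M\|$ is the operator norm of the (unweighted) maximal function on $L^{(p/p_0)'}(w)$. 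Standard properties give $h\le Rh$ pointwise, $\|Rh\|_{L^{(p/p_0)'}(w)}\le 2$, and $M(Rh)\le 2\|M\|\,Rh$. The last estimate places $Rh$ in $A_1$ with a controlled constant, and combined with $w\in A_\infty$ it gives $Rh\cdot w\in A_\infty$ quantitatively in terms of $[w]_{A_\infty}$.

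The core step then applies the hypothesis at exponent $p_0$ with the admissible weight $W=Rh\cdot w\in A_\infty$, and chains H\"older at the end:
$$\int f^{p_0}hw\le \int f^{p_0}Rh\cdot w\le C\int g^{p_0}Rh\cdot w\le C\,\|g^{p_0}\|_{L^{p/p_0}(w)}\|Rh\|_{L^{(p/p_0)'}(w)}\le 2C\Big(\int g^p w\Big)^{p_0/p}.$$
Taking the supremum over $h$ and raising to the power $p/p_0$ yields the desired inequality.

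The main obstacle is ensuring $\|M\|_{L^{(p/p_0)'}(w)\to L^{(p/p_0)'}(w)}<\infty$, i.e.\ $w\in A_{(p/p_0)'}$. Since $w\in A_\infty$ only gives $w\in A_r$ for some finite $r$ depending on $[w]_{A_\infty}$, this fails when $p\gg p_0$ so that $(p/p_0)'$ is close to $1$. The standard remedy is to iterate the extrapolation in short steps $p_0\to p_1\to p_2\to\cdots$, choosing each $p_j$ so that the conjugate exponent $(p_{j+1}/p_j)'$ exceeds $r$; after finitely many iterations one reaches any prescribed $p>0$ with constants depending only on $[w]_{A_\infty}$ and $p$. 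A second technical point worth care is the quantitative verification that $Rh\cdot w\in A_\infty$, which can be done via Jones factorization or by combining the $A_1$ estimate for $Rh$ with the reverse H\"older inequality for $w$.
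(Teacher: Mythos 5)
The paper itself offers no proof of this lemma: it is stated as a known result and cited to Cruz-Uribe, Martell, and P\'erez \cite[Theorem 2.1]{CUMP}. Your proposal therefore fills a genuine omission, but it has a gap at the crucial step. You run the Rubio de Francia algorithm with the ordinary Hardy--Littlewood maximal function $M$, obtain $Rh \in A_1$ for Lebesgue measure, and then assert that this together with $w\in A_\infty$ forces $Rh\cdot w\in A_\infty$. That implication is false in general: $A_1\cdot A_\infty\not\subset A_\infty$, since e.g.\ $|x|^{-n/2}\in A_1(\R^n)\subset A_\infty(\R^n)$ yet $|x|^{-n/2}\cdot|x|^{-n/2}=|x|^{-n}$ is not even locally integrable. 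The patches you suggest do not close this quantitatively: the reverse H\"older exponent of $Rh$ is controlled only through the bound $[Rh]_{A_1}\le 2\|M\|$, and $\|M\|\ge 1$ always, so it stays bounded, whereas the conjugate of the reverse H\"older exponent afforded by $w\in A_\infty$ can be arbitrarily large as $[w]_{A_\infty}$ grows; the two cannot be matched uniformly. Your iteration in short steps addresses only the operator-norm issue for $M$ on $L^{(p/p_0)'}(w)$ --- the failure of $Rh\cdot w\in A_\infty$ reappears identically at every step.

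The fix, which is essentially the route taken in \cite{CUMP}, is to run the Rubio de Francia iteration with the \emph{weighted} maximal operator $M_w g := \sup_Q 1_Q\, w(Q)^{-1}\int_Q |g|\,w$ in place of $M$. By the Besicovitch covering lemma (or by averaging the dyadic version), $M_w$ is bounded on $L^t(w)$ for every $t>1$ and every locally finite weight $w$, with a purely dimensional constant, so no $A_p$ hypothesis on $w$ is needed and the short-step iteration becomes superfluous. The algorithm now yields $M_w(Rh)\le 2\|M_w\|\,Rh$, that is, the weighted $A_1$-type bound $w(Q)^{-1}\int_Q Rh\,w \lesssim \operatorname{ess\,inf}_Q Rh$ for all cubes $Q$. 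This is precisely what combines cleanly with $w\in A_\infty$: since $\langle Rh\cdot w\rangle_Q=\langle w\rangle_Q\cdot w(Q)^{-1}\int_Q Rh\,w$, and using $\langle w\rangle_Q\le [w]_{A_\infty}\exp\langle\log w\rangle_Q$ together with $\operatorname{ess\,inf}_Q Rh\le\exp\langle\log Rh\rangle_Q$, one obtains $\langle Rh\cdot w\rangle_Q\lesssim\exp\langle\log(Rh\cdot w)\rangle_Q$, so $Rh\cdot w\in A_\infty$ with controlled constant. With this single change the rest of your chain of inequalities goes through unchanged.
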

Therefore, by extrapolation \eqref{eq:basicinfty} improves to
$$
\int_{\R^n} \Big( \sum_j |f_j|^2 \Big)^{p/2}v \lesssim_{[v]_{A_{\infty}(\R^n)}} \int_{\R^n}\Big( \sum_j \sum_{I \in \calD^n} |\Delta_I f_j|^2 \Big)^{p/2} v, \qquad p \in (0,\infty).
$$
A direct corollary is that
\begin{equation}\label{eq:LowerSF1}
\|f\|_{L^p(v)}^p \lesssim \iint_{\R^{n+m}} \Big(\sum_{I \in \calD^n} |\Delta_I^1 f|^2\Big)^{p/2} v
\end{equation}
and
\begin{equation}\label{eq:ainftyprod}
\|f\|_{L^p(v)}^p 
\lesssim \iint_{\R^{n+m}} \Big(\sum_{\substack{I \in \calD^n \\ J \in \calD^m}} |\Delta_{I \times J} f|^2\Big)^{p/2} v
\end{equation}
for $f \colon \R^{n+m} \to \C$, $p \in (0, \infty)$ and weights $v$ satisfying
$\esssup_{x_2 \in \R^m} [v(\cdot, x_2)]_{A^{\infty}(\R^n)} < \infty$
and $\esssup_{x_1 \in \R^m} [v(x_1, \cdot)]_{A^{\infty}(\R^m)} < \infty$ (and the implicit constants depend on these norms).
We will apply
this with $v = v_3 = w_1^{r/p} w_2^{r/q} \in A_{2r}(\R^{n} \times \R^m)$, where $p,q \in (1, \infty)$, $1/p+1/q=1/r$ and $w_1 \in A_p(\R^n \times \R^m)$, $w_2 \in A_q(\R^n \times \R^m)$.

\subsubsection*{Shifts}
We prove the weighted estimates for shifts.
\begin{prop}\label{prop:weightedShifts}
Let $1 < p, q < \infty$ and $1/2 < r < \infty$ satisfy $1/p+1/q = 1/r$, $w_1 \in A_p(\R^n \times \R^m)$ and $w_2 \in A_q(\R^n \times \R^m)$
be bi-parameter weights, and set $v_3 := w_1^{r/p} w_2^{r/q}$.
Suppose that $(S_{\omega})_{\omega}$ is a collection of bilinear bi-parameter shifts of the same type, where
$S_{\omega}$ is defined in the grid $\calD_{\omega}$.
Then we have
$$
\|\E_{\omega} S_{\omega}(f_1, f_2)\|_{L^r(v_3)} \le C([w_1]_{A_p(\R^n \times \R^m)}, [w_2]_{A_q(\R^n \times \R^m)}) \|f_1\|_{L^p(w_1)} \|f_2\|_{L^q(w_2)}.
$$
\end{prop}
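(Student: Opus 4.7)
I would prove the bound for each $S_\omega$ uniformly in $\omega$; the expectation then follows by Minkowski in the Banach case and by absorbing $\mathbb E_\omega$ into the pointwise estimates in the quasi-Banach case. So fix $\omega$ and drop subscripts. The guiding idea is to avoid duality entirely: although $r < 1$ is permitted, the weight $v_3 \in A_{2r}(\R^n \times \R^m)$ lies in the larger class $A_\infty$ both globally and in each variable separately, so the $A_\infty$ lower square function estimate \eqref{eq:ainftyprod} and its one-parameter analogs remain at our disposal.

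Consider first the type of shift with cancellative output Haar $h_{I_3} \otimes h_{J_3}$. By \eqref{eq:ainftyprod},
$$\|S(f_1,f_2)\|_{L^r(v_3)}^r \lesssim \int_{\R^{n+m}} \Big(\sum_{K,V} |A^{V,v}_{K,k}(f_1,f_2)|^2\Big)^{r/2} v_3,$$
since the blocks $A^{V,v}_{K,k}(f_1,f_2)$ carry bi-parameter cancellation on $K \times V$. Within each block I would apply the coefficient bound together with Cauchy--Schwarz over the internal sums in $(I_1,I_2)$ and $(J_1,J_2)$; the Haar normalizations $|I_i|^{1/2}/|K|$ and $|J_j|^{1/2}/|V|$ redistribute so as to produce a pointwise product of two bi-parameter square functions, one acting on $f_1$ and one on $f_2$. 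The weighted Hölder inequality $\|g_1 g_2\|_{L^r(v_3)} \le \|g_1\|_{L^p(w_1)}\|g_2\|_{L^q(w_2)}$, valid since $v_3 = w_1^{r/p} w_2^{r/q}$ and $r/p + r/q = 1$, then reduces the estimate to two applications of Lemma \ref{lem:standardEst1}, yielding the desired $\|f_1\|_{L^p(w_1)} \|f_2\|_{L^q(w_2)}$.

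For the other eight types of shifts, a non-cancellative Haar appears either in one of the output slots or in a slot paired with $f_i$. When the output becomes non-cancellative in, say, the $\R^n$ variable, I would replace the square function in that direction by a maximal function, invoking the one-parameter analog of \eqref{eq:ainftyprod} (i.e.\ \eqref{eq:LowerSF1}) in the remaining direction and the Fefferman--Stein estimate from Lemma \ref{lem:standardEst2} in the non-cancellative one. When a non-cancellative Haar pairs with $f_i$, the inner product $\langle f_i, h_I^0 \otimes h_J\rangle$ (or its symmetric version) is controlled by a $\varphi^1_{\calD^n}$- or $\varphi^2_{\calD^m}$-type object, which is again handled by Lemma \ref{lem:standardEst2}. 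If the target exponent falls outside the direct range of any of these bounds, I would pass through an $A_\infty$ extrapolation (Lemma \ref{thm:extra}) applied to the pair consisting of $|S(f_1,f_2)|$ and the appropriate square/maximal function expression, thus descending from a convenient Banach exponent.

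\textbf{Main obstacle.} The hard part is the joint presence of the quasi-Banach range $r<1$ and non-cancellative output Haar functions: duality is unavailable and the clean bi-parameter square function bound no longer controls $S(f_1,f_2)$ directly. For each of the nine shift types one must choose precisely the right hybrid of \eqref{eq:ainftyprod}/\eqref{eq:LowerSF1} in the cancellative directions with a maximal-function bound in the non-cancellative ones, and then glue the pieces together via Lemma \ref{thm:extra}. A subtler bookkeeping issue is that the six cube side lengths entering the coefficient bound must be redistributed by Cauchy--Schwarz in exactly the way that reproduces two full bi-parameter square functions of $f_1$ and $f_2$ on the right-hand side; this is where most of the careful computation lives.
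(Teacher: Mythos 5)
Your fixed-grid analysis is roughly aligned with the paper's: avoid duality, rely on the $A_\infty$ lower square function bounds \eqref{eq:LowerSF1} and \eqref{eq:ainftyprod} in the cancellative directions, use the coefficient bound plus Cauchy--Schwarz to produce square/maximal functions of $f_1$ and $f_2$, and finish with the weighted H\"older identity $\|g_1 g_2\|_{L^r(v_3)} \le \|g_1\|_{L^p(w_1)}\|g_2\|_{L^q(w_2)}$ and Lemmas~\ref{lem:standardEst1}--\ref{lem:standardEst2}. That part of the proposal is correct in spirit.

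The genuine gap is in the very first sentence: you claim that once uniform bounds for each fixed $S_\omega$ are in hand, the estimate for $\E_\omega S_\omega$ follows ``by Minkowski in the Banach case and by absorbing $\E_\omega$ into the pointwise estimates in the quasi-Banach case.'' The Banach half is fine, but the quasi-Banach half has no content: when $r<1$, the integral $L^r(v_3)$ quasi-norm is not subadditive under integration in $\omega$, so $\|\E_\omega S_\omega(f_1,f_2)\|_{L^r(v_3)} \not\le \E_\omega\|S_\omega(f_1,f_2)\|_{L^r(v_3)}$ in general, and there is no grid-independent pointwise majorant of $S_\omega(f_1,f_2)$ into which $\E_\omega$ could be ``absorbed'' --- the maximal and square functions you produce live on the random grid $\calD_\omega$. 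This is exactly the issue the paper flags in the introduction (``quasi-Banach estimates cannot be moved \ldots\ directly via the representation due to the presence of the averaging''). The paper's resolution is different from yours and is essential: prove the non-averaged bound for all exponents with weights; observe that the averaged bound with weights follows by Minkowski only at a single Banach-range tuple such as $(p,q,r)=(4,4,2)$; and then apply the \emph{bilinear Rubio de Francia extrapolation} of \cite{DU,GM} to the averaged operator to recover the full weighted range $1/2<r<\infty$, including $r<1$. You do invoke extrapolation, but only the $A_\infty$ extrapolation of Lemma~\ref{thm:extra} for the square function/maximal function estimates inside the fixed-grid computation --- not bilinear $A_p$ extrapolation for the averaged operator, which is the step that actually crosses into the quasi-Banach range. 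Without it your argument only yields the proposition for $r\ge 1$.
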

\begin{rem}
Here and below the non-averaged cases, where there is only one operator in a fixed grid, are true and easier to prove.
\end{rem}
\begin{proof}[Proof of Proposition \ref{prop:weightedShifts}]
We allow the implicit constants to depend on $[w_1]_{A_p(\R^\times \R^m)}$ 
and $[w_2]_{A_p(\R^\times \R^m)}$. We first prove the statement in a fixed grid directly. Let $S = S_{\calD}$ for some fixed $\calD=\calD^n \times \calD^m$.

The easiest case is when we have $h_{I_3}^0 \otimes h_{J_3}^0$ (i.e. all the non-cancellative functions are outside)
 -- then we don't need the $A_{\infty}$ square function
estimates at all, and we can work direcly. So we omit writing more about this case.

Consider the case
$
S(f_1, f_2) = \sum_{K \times V \in \calD}A_{K, V} (f_1,f_2), 
$
where $A_{K , V}(f_1,f_2)$ has the form
\begin{align*}
 \sum_{\substack{I_1, I_2, I_3 \in \calD^n \\ I_i^{(k_i)}  = K}} 
&\sum_{\substack{J_1, J_2, J_3 \in \calD^m \\ J_i^{(v_i)} = V}} a_{K,V, (I_i), (J_j)} 
\langle f_1, h_{I_1}^0 \otimes h_{J_1}\rangle \langle f_2,  h_{I_2} \otimes h_{J_2}\rangle  h_{I_3} \otimes h_{J_3}^0.
\end{align*}
Using the lower square function estimate \eqref{eq:LowerSF1} we get that
\begin{align*}
\|S(f_1, f_2) & \|_{L^r(v_3)} 
 \lesssim \Big\| \Big( \sum_{K \in \calD^n} 
\Big| \sum_{V \in \calD^m} A_{K, V}(f_1,f_2) \Big|^2 \Big)^{1/2} \Big \|_{L^r(v_3)}.
\end{align*}
Using the cancellative Haar functions and the normalisation of the coefficients $a_{K,V,(I_i),(J_i)}$
we get
$
|A_{K,V} (f_1,f_2)| 
\le M_{\calD} (\Delta_{V,v_1}^{2} f_1)
 M_{\calD} (\Delta_{K \times V}^{k_2, v_2} f_2).
 $
 Combining these gives that
\begin{align*}
\|S(f_1, f_2)\|_{L^2(v_3)} 
&\lesssim  \Big\| \Big( \sum_{K \in \calD^n} 
\Big| \sum_{V \in \calD^m} M_{\calD} (\Delta_{V,v_1}^{2} f_1)
 M_{\calD} (\Delta_{K \times V}^{k_2, v_2} f_2) \Big|^2 \Big)^{1/2} \Big \|_{L^r(v_3)} \\
&\le \Big\| \Big( \sum_{V \in \calD^m} [M_{\calD} \Delta_{V,v_1}^{2} f_1]^2 \Big)^{1/2} \Big\|_{L^p(w_1)} 
\Big\| \Big( \sum_{K\times V \in \calD} [M_{\calD} \Delta_{K \times V}^{k_2, v_2} f_2]^2 \Big)^{1/2} \Big\|_{L^q(w_2)},
\end{align*}
which is clearly dominated by $\|f_1\|_{L^p(w_1)} \|f_2\|_{L^q(w_2)}$.

There is no essential difference with any of the other cases. If we have $h_{I_3} \otimes h_{J_3}$ just use \eqref{eq:ainftyprod} in the beginning instead.
We are done proving the non-averaged case. For the averaged case of the proposition, notice that it follows from the non-averaged case if e.g. $p=q=4$ and $r=2$ (the point being that $r > 1$).
Bilinear extrapolation \cite{DU, GM} then gives the averaged version for all exponents.
\end{proof}
\subsubsection*{Partial paraproducts}
We need the following lemma.
\begin{lem}\label{lem:fs}
Let $v\in A_\infty(\mathbb R^m)$  and $b\in \BMO(\mathbb R^m)$. Then for a bilinear paraproduct $\pi_b$ in $\R^m$ we have
\begin{equation}\label{eq:fsest}
\int_{\mathbb R^m}| \pi_{b} (g_1, g_2)| v \lesssim [v]_{A_\infty(\R^m)} \|b\|_{\BMO} \int_{\mathbb R^m} (M_{\mathcal D^m}g_1 M_{\mathcal D^m}g_2) v.
\end{equation}
\end{lem}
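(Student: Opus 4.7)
The plan is to obtain \eqref{eq:fsest} via the standard sparse domination route for bilinear paraproducts, combined with the defining property of $A_\infty$ weights. First I would reduce to a single model of $\pi_b$; since the three shapes $A_b$, $A_b^{1*}$, $A_b^{2*}$ are handled by symmetric arguments, I would concentrate on $A_b$. So the goal is to bound, pointwise a.e.,
\begin{equation*}
|A_b(g_1,g_2)(x)| \lesssim \|b\|_{\BMO} \sum_{Q \in \calS} \ave{|g_1|}_Q \ave{|g_2|}_Q 1_Q(x),
\end{equation*}
where $\calS \subset \calD^m$ is a sparse family (depending on $g_1, g_2$): for each $Q \in \calS$ there is a major subset $E_Q \subset Q$ with $|E_Q| \ge \tfrac12 |Q|$ and the $E_Q$ are pairwise disjoint.

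The construction of $\calS$ is a familiar stopping-time argument. Starting from a maximal dyadic cube $Q_0$, I would add $Q_0$ to $\calS$ and stop at its maximal descendants $Q' \subset Q_0$ for which $\ave{|g_i|}_{Q'} > 2 \ave{|g_i|}_{Q_0}$ for some $i\in\{1,2\}$, or for which the local paraproduct mass $\frac{1}{|Q'|}\int_{Q'} |A_b^{Q_0}(g_1,g_2)|$ is large, where $A_b^{Q_0} := \sum_{V \subset Q_0} \ave{b,h_V}\ave{g_1}_V \ave{g_2}_V h_V$. The Calder\'on--Zygmund weak-type $(1,1)$ bound for bilinear paraproducts, which follows from the John--Nirenberg inequality $\|b - \ave{b}_{Q_0}\|_{\exp L(Q_0)} \lesssim \|b\|_{\BMO}$, guarantees that the stopping cubes occupy at most half the measure of $Q_0$, which yields the major subsets $E_Q$. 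On each $Q \in \calS$ one then bounds the ``top part'' of the paraproduct by $\|b\|_{\BMO} \ave{|g_1|}_Q \ave{|g_2|}_Q$ using that $\sum_{V \subset Q} |\ave{b,h_V}|^2 \le \|b\|_{\BMO}^2 |Q|$.

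With the sparse bound in hand, the rest is direct:
\begin{align*}
\int_{\R^m} |A_b(g_1,g_2)|\,v
&\lesssim \|b\|_{\BMO} \sum_{Q \in \calS} \ave{|g_1|}_Q \ave{|g_2|}_Q v(Q).
\end{align*}
The defining $A_\infty$ property gives $v(Q) \le C[v]_{A_\infty(\R^m)} v(E_Q)$ whenever $|E_Q| \ge \tfrac12 |Q|$. Since $\ave{|g_i|}_Q \le \inf_{E_Q} M_{\calD^m} g_i$ and the $E_Q$ are disjoint,
\begin{equation*}
\sum_{Q \in \calS} \ave{|g_1|}_Q \ave{|g_2|}_Q v(Q)
\lesssim [v]_{A_\infty(\R^m)} \sum_{Q \in \calS} \int_{E_Q} M_{\calD^m} g_1 \cdot M_{\calD^m} g_2 \cdot v
\le [v]_{A_\infty(\R^m)} \int_{\R^m} M_{\calD^m} g_1 \cdot M_{\calD^m} g_2 \cdot v,
\end{equation*}
which is exactly \eqref{eq:fsest}.

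The main obstacle is the sparse domination step for the paraproduct; once that is in place, the $A_\infty$ transfer is routine. For the $A_b^{1*}$ and $A_b^{2*}$ forms one performs the same stopping-time argument, the only change being that the averages $\ave{g_i}_V$ get replaced by $\ave{g_i, h_V}/|V|^{1/2}$ or $\ave{\phi}_V$ in the duality, and the same $\|b\|_{\BMO}$ bound $\sum_{V \subset Q}|\ave{b,h_V}|^2 \lesssim \|b\|_{\BMO}^2 |Q|$ together with Cauchy--Schwarz and the Carleson embedding delivers the required local estimate. Alternatively one could avoid sparse domination altogether by testing against $\phi = \sgn(\pi_b(g_1,g_2))v$, writing $\ave{\pi_b(g_1,g_2),\phi\rangle$ as a sum over $V$, and using the BMO estimate together with the $A_\infty$ version of the square function bound \eqref{eq:basicinfty}; but the sparse route makes the dependence on $[v]_{A_\infty(\R^m)}$ transparent and matches the sharpness asserted in \eqref{eq:fsest}.
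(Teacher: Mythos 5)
Your proof shares the paper's starting point---a sparse domination of the bilinear paraproduct---but the endgame is genuinely different. After arriving at $\sum_{Q\in\calS}\langle|g_1|\rangle_Q\langle|g_2|\rangle_Q v(Q)$, the paper rewrites $\langle|g_1|\rangle_Q\langle|g_2|\rangle_Q\le\big(\langle(M_{\calD^m}g_1)^{1/2}(M_{\calD^m}g_2)^{1/2}\rangle_Q^v\big)^2$ and invokes the weighted Carleson embedding theorem; the $A_\infty$ hypothesis enters only through the packing bound $\sum_{Q\in\calS,\,Q\subset R}v(Q)\lesssim[v]_{A_\infty}v(R)$, which for sparse $\calS$ holds with a constant linear in the (Fujii, hence also in the paper's) $A_\infty$ characteristic. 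You instead pass from $v(Q)$ to $v(E_Q)$ and sum over the disjoint major subsets $E_Q$.

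The gap is the assertion that ``the defining $A_\infty$ property gives $v(Q)\le C[v]_{A_\infty(\R^m)}v(E_Q)$ whenever $|E_Q|\ge\tfrac12|Q|$.'' That linear dependence on $[v]_{A_\infty}$ does not follow from the definition and is not correct in general. With the paper's $A_\infty$ constant, Jensen's inequality applied separately on $E_Q$ and $Q\setminus E_Q$ gives
\begin{equation*}
v(Q)\le\frac1\alpha\Big(\frac{[v]_{A_\infty}}{(1-\alpha)^{1-\alpha}}\Big)^{1/\alpha}v(E_Q), \qquad \alpha=\frac{|E_Q|}{|Q|},
\end{equation*}
which for $\alpha=\tfrac12$ yields $v(Q)\le 4[v]_{A_\infty}^2v(E_Q)$; with the Fujii characteristic the dependence is exponential (for $v=N1_{[0,1/N]}+N^{-1}1_{[1/N,1]}$ one has $[v]^{\mathrm{Fujii}}_{A_\infty}\sim\log N$ while $v([0,1])/v([1/2,1])\sim N$). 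As written, then, your argument proves \eqref{eq:fsest} with $[v]_{A_\infty}^2$ in place of $[v]_{A_\infty}$.

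For the downstream application this is harmless: Lemma \ref{lem:fs} is used only as the seed for the $A_\infty$ extrapolation Lemma \ref{thm:extra}, which tolerates arbitrary dependence $C([w]_\infty,p_0)$. So once the false linear claim is replaced by the correct quadratic one, your argument still delivers everything the paper needs. The Carleson embedding route is simply cleaner: it sidesteps the quantitative major-subset inequality entirely and proves the lemma exactly as stated.
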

\begin{proof}
Using sparse domination in the form sense for $\pi_b$, as stated e.g. in \cite{LMOV}, we have
\begin{align*}
\int_{\mathbb R^m}| \pi_{b } (g_1, g_2)| v 
&=\Big\langle \pi_{b } (g_1, g_2), \text{sgn}(\pi_{b } (g_1, g_2))v\Big\rangle \\
&\lesssim \|b \|_{\BMO}\sum_{Q\in \mathcal S} \langle |g_1 |\rangle_{Q} \langle |g_2| \rangle_{Q}v(Q)\\
&\le \|b \|_{\BMO}\sum_{Q\in \mathcal S} \Big(\big\langle (M_{\mathcal D^m} g_1 )^{1/2} (M_{\mathcal D^m}g_2 )^{1/2}  \big\rangle_Q^v\Big)^2v(Q)\\
&\lesssim \|b \|_{\BMO}[v]_{A_\infty(\R^m)} \| (M_{\mathcal D^m} g_1 )^{1/2} (M_{\mathcal D^m}g_2 )^{1/2}\|_{L^2(v)}^2,
\end{align*}
where in the last step we have used the Carleson embedding theorem.
\end{proof}
Now we use the $A_{\infty}$ extrapolation, Lemma \ref{thm:extra}, in a more crucial way than above.
Notice that via Lemma \ref{thm:extra} we can improve Lemma \ref{lem:fs} as follows.
By extrapolating the estimate \eqref{eq:fsest} we get that if $v \in A_{\infty}(\R^m)$ and 
$b_k \in \BMO(\R^m)$ with $\| b_k \|_{\BMO(\R^m)} \le 1$, $k \in \Z$, then for all $p \in (0,\infty)$ we have
\begin{equation}\label{eq:fs1}
\int_{\R^m} \Big| \sum_{k \in \Z} \pi_{b_k} (g_{1,k},g_{2,k}) \Big|^p v
\le C([v]_{A_\infty},p) \int_{\R^m} \Big(\sum_{k \in \Z} M_{\calD^m} g_{1,k} M_{\calD^m}g_{2,k}  \Big)^p v.
\end{equation}
Similarly, one concludes from \eqref{eq:fs1} that if
$\|b_{j,k} \|_{\BMO(\R^m)} \le 1$ and $v \in A_{\infty}(\R^m)$, 
then
\begin{equation}\label{eq:fs2}
\begin{split}
\int_{\R^m} &\Big(\sum_{j \in \Z} \Big| \sum_{k \in \Z} \pi_{b_{j,k}} (g_{1,j,k},g_{2,j,k})\Big|^p\Big)^{q/p}v \\
&\le C([v]_{A_\infty},p,q)
\int_{\R^m} \Big(\sum_{j \in \Z} \Big| \sum_{k \in \Z} M_{\calD^m} g_{1,j,k} M_{\calD^m}g_{2,j,k}\Big|^p\Big)^{q/p}v
\end{split}
\end{equation}
for every $p,q \in (0, \infty)$.

We are ready to prove the weighted bound for the partial paraproducts.
\begin{prop}\label{prop:weightedParProd}
Let $1 < p, q < \infty$ and $1/2 < r < \infty$ satisfy $1/p+1/q = 1/r$, $w_1 \in A_p(\R^n \times \R^m)$ and $w_2 \in A_q(\R^n \times \R^m)$
be bi-parameter weights, and set $v_3 := w_1^{r/p} w_2^{r/q}$.
Suppose that $(P_{\omega})_{\omega}$ is a collection of partial paraproducts of the same type, where
$P_{\omega}$ is defined in the grid $\calD_{\omega}$.
Then
$$
\|\E_{\omega} P_{\omega}(f_1, f_2)\|_{L^r(v_3)} \le C([w_1]_{A_p(\R^n \times \R^m)}, [w_2]_{A_q(\R^n \times \R^m)}) \|f_1\|_{L^p(w_1)} \|f_2\|_{L^q(w_2)}.
$$

\end{prop}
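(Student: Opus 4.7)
The plan is to mirror the strategy for shifts (Proposition~\ref{prop:weightedShifts}): first establish the non-averaged estimate in a single fixed grid $\calD = \calD^n \times \calD^m$, and then deduce the averaged version by Minkowski at the tuple $p=q=4$, $r=2$ (valid since $r>1$), followed by bilinear extrapolation \cite{DU, GM} to the full range. The real work is thus the single-grid bound.

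I would fix one of the nine forms of $P$, say the one with $h_{I_3}^0$ in the output $\R^n$-slot (the others are symmetric). Writing $b_{K,(I_i)} = N_{K,(I_i)}\tilde b_{K,(I_i)}$ with $N_{K,(I_i)} := |I_1|^{1/2}|I_2|^{1/2}|I_3|^{1/2}/|K|^2$, each $\tilde b_{K,(I_i)}$ has $\BMO$ norm at most one. Since $v_3 \in A_{2r}(\R^n\times\R^m) \subset A_\infty$, the slices $v_3(x_1,\cdot)$ lie in $A_\infty(\R^m)$ uniformly in $x_1$. The next step is to apply the extrapolated paraproduct estimate \eqref{eq:fs1} fiberwise in $x_2$ to the expansion of $P(f_1,f_2)(x_1,\cdot)$ as a sum of paraproducts $\pi_{\tilde b_{K,(I_i)}}$ (with scalar weights $|I_3|^{-1/2} N_{K,(I_i)} = |I_1|^{1/2}|I_2|^{1/2}/|K|^2$) and then integrate in $x_1$ to obtain
$$
\|P(f_1,f_2)\|_{L^r(v_3)}^r \lesssim \int \Big|\sum_{K,(I_i):\,x_1\in I_3}\tfrac{|I_1|^{1/2}|I_2|^{1/2}}{|K|^2}\,M_{\calD^m}\langle f_1,h_{I_1}\rangle_1(x_2)\,M_{\calD^m}\langle f_2,h_{I_2}\rangle_1(x_2)\Big|^r v_3\,dx_1\,dx_2.
$$

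For each $K\ni x_1$ there is a unique $I_3 \ni x_1$ with $I_3^{(k_3)}=K$, so I would group by $K$ and apply Cauchy--Schwarz in each inner sum $\{I_\ell : I_\ell^{(k_\ell)}=K\}$ (using $\sum_{I : I^{(k)}=K}|I|=|K|$) to bound the dominating function pointwise by $\sum_{K\ni x_1}\frac{G^1_K(x_2)G^2_K(x_2)}{|K|}$, where $G^\ell_K(x_2)^2 := \sum_{I:I^{(k_\ell)}=K}|M_{\calD^m}\langle f_\ell,h_I\rangle_1(x_2)|^2$. A further Cauchy--Schwarz in $K$ splits this pointwise as $A_1 A_2$ with $A_\ell(x_1,x_2)^2 := \sum_K \frac{1_K(x_1)}{|K|}G^\ell_K(x_2)^2$, after which H\"older's inequality in $L^r(v_3) = L^p(w_1)\cdot L^q(w_2)$ reduces everything to the one-sided bound $\|A_\ell\|_{L^{p_\ell}(w_\ell)} \lesssim \|f_\ell\|_{L^{p_\ell}(w_\ell)}$ for $\ell=1,2$.

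The hard part will be this last bound for $A_\ell$. A naive use of $\frac{1_{I^{(k_\ell)}}(x_1)}{|I^{(k_\ell)}|} \le M^1_{\calD^n}[1_I/|I|](x_1)$ followed by Fefferman--Stein term by term fails because the sum over $I$ diverges (the shifted-ancestor indexing has $2^{k_\ell n}$ siblings per scale). The plan is to decompose $A_\ell^2 = \sum_j A_{\ell,j}^2$ by the scale of $K$, use the identity $A_{\ell,j}^2(x_1,x_2) = E^1_{K_j(x_1)}[G_j^2](x_1,x_2) \le M^1_{\calD^n}[G_j^2](x_1,x_2)$ where $K_j(x_1)$ is the dyadic ancestor of size $2^{j+k_\ell}$ containing $x_1$, and then invoke the vector-valued Fefferman--Stein inequality of Lemma~\ref{lem:standardEst2} together with the square-function bound $\|\varphi^1_{\calD^n}f_\ell\|_{L^{p_\ell}(w_\ell)} \lesssim \|f_\ell\|_{L^{p_\ell}(w_\ell)}$ to close the estimate. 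Throughout one must avoid duality (the $A_\infty$ replacement being by design, since $r$ can be less than $1$) and carefully track the $A_{p_\ell}$-dependence of the constants.
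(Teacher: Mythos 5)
Your reduction to the non-averaged, fixed-grid estimate via the tuple $(4,4,2)$ followed by bilinear extrapolation matches the paper, and applying the extrapolated paraproduct estimate \eqref{eq:fs1} fiberwise in $x_2$ is also the paper's first step. Afterwards you depart: you decouple $f_1$ and $f_2$ via two Cauchy--Schwarz steps and reduce to a one-sided bound on each $A_\ell$, whereas the paper instead identifies the dominated expression, slicewise in $x_2$, as a one-parameter bilinear dyadic shift acting on $\varphi^1_{\calD^n}(f_1)$ and $\varphi^1_{\calD^n}(f_2)$ and quotes the weighted theory of such shifts. Your route amounts to reproving that one-parameter shift bound by hand, which is a legitimate alternative, but two points do not go through as written.

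The bound on $A_\ell$ has a gap. Your scale decomposition gives $A_{\ell,j}^2 = E^1_{K_j(\cdot)}[G_j^2] \le M^1_{\calD^n}[G_j^2]$, so $A_\ell \le (\sum_j M^1 G_j^2)^{1/2}$, and the Fefferman--Stein statement you now need is $\|(\sum_j M^1 h_j)^{1/2}\|_{L^p(w)} \lesssim \|(\sum_j h_j)^{1/2}\|_{L^p(w)}$ for nonnegative $h_j := G_j^2$, i.e.\ Fefferman--Stein at the $\ell^1$ endpoint. Lemma~\ref{lem:standardEst2} requires $s>1$, and the $\ell^1$ version is false even unweighted (take $h_j = 1_{[j,j+1]}$, $j=0,\dots,N-1$: the left side is at least of order $(\log N)^{1/2} N^{1/p}$ while the right side is $N^{1/p}$). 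The step can be repaired by sharpening to $A_{\ell,j} \le M^1_{\calD^n,s} G_j$ for some $1<s<\min(p_\ell,2)$ and invoking the $M_s$-version of vector-valued Fefferman--Stein in $\ell^2$, which in turn requires the self-improvement $w_\ell \in A_{p_\ell/s}$ from the reverse H\"older inequality; none of this is in the write-up.

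The assertion that the other eight forms ``are symmetric'' is also incorrect. There are two genuinely distinct configurations: the one you take, with $h_{I_3}^0$ in the output so that all the $\R^n$-pairings of $f_1,f_2$ are cancellative; and the one with the non-cancellative Haar function in an input slot, say $h_{I_1}^0$. In the latter case $\langle f_1,h_{I_1}^0\rangle_1$ has no orthogonality, the $I_1$-sum collapses to an average rather than a square sum, and your Cauchy--Schwarz in $K$ no longer produces the $1/|K|$ normalization defining $A_1$. The paper treats this configuration by first applying the lower square function estimate \eqref{eq:LowerSF1} in the $\R^n$ variable (using the now-cancellative $h_{I_3}$) and then the vector-valued paraproduct bound \eqref{eq:fs2}. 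Your proposal does not address this case.
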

\begin{proof}
Similarly as in Proposition \ref{prop:weightedShifts}, by bilinear extrapolation it is enough to prove the non-averaged version.
We first consider the case that $P$ has the form
\[
P(f_1, f_2):= \sum_{K\in \mathcal D^n} \sum_{\substack{I_1, I_2, I_3 \in \mathcal D^n\\ I_i^{(k_i)}=K}} h_{I_3}^0\otimes \pi_{b_{K, (I_i)}} (\langle f_1, h_{I_1} \rangle_1, \langle f_2, h_{I_2} \rangle_1),
\]
where $\pi_{b_K, (I_i)}$ is a bilinear paraproduct with
\[
\| b_{K, (I_i)} \|_{\BMO(\R^m)} \le \frac {|I_1|^{1/2} |I_2|^{1/2} |I_3|^{1/2}}{|K|^2}=:a_{K,(I_i)}.
\]

In this case, notice that we may apply \eqref{eq:fs1} to conclude that
\begin{equation}\label{eq:Applyfs1}
\begin{split}
\|P & (f_1, f_2)\|_{L^r(v_3)} \\
&\lesssim \Big\| \sum_{K \in \calD^n} \sum_{\substack{I_1,I_2,I_3 \in \calD^n \\ I_i^{(k_i)}=K}}
a_{K,(I_i)}h^0_{I_3} \otimes M_{\calD^m} \langle f_1,h_{I_1} \rangle_1 M_{\calD^m} \langle f_2,h_{I_2} \rangle_1 \Big\|_{L^r(v_3)} \\
& = \Big\| \sum_{K \in \calD^n} \sum_{\substack{I_1,I_2,I_3 \in \calD^n \\ I_i^{(k_i)}=K}}
a_{K,(I_i)} h^0_{I_3} \otimes \langle \varphi^1_{\calD^n}(f_1),h_{I_1} \rangle_1 \langle \varphi^1_{\calD^n} (f_2),h_{I_2} \rangle_1 \Big\|_{L^r(v_3)},
\end{split}
\end{equation}
where we recall that
$
\varphi^1_{\calD^n}(f) = \sum_{I \in \calD^n} h_I \otimes M_{\calD^m} \langle f_1, h_I \rangle_1.
$

For a fixed $x_2 \in \R^m$ the last function in \eqref{eq:Applyfs1} as a function of $x_1$ is a one-parameter bilinear shift
acting on $\varphi^1_{\calD^n}(f_1)(\cdot, x_2)$ and $\varphi^1_{\calD^n}(f_2)(\cdot,x_2)$.
Therefore, the weighted boundedness of one-parameter bilinear shifts implies that
$\|P  (f_1, f_2)\|_{L^r(v_3)}$ is dominated by
\begin{equation}\label{eq:AfterApplyfs1}
\begin{split}
\Big( \int_{\R^m} &\| \varphi^1_{\calD^n}(f_1)(\cdot,x_2) \|_{L^p(w_1(\cdot,x_2))}^r
 \| \varphi^1_{\calD^n}(f_2)(\cdot,x_2) \|_{L^q(w_2(\cdot,x_2))}^r \ud x_2\Big)^{1/r}\\
& \lesssim  \|\varphi^1_{\calD^n}(f_1) \|_{L^p(w_1)}\|\varphi^1_{\calD^n}(f_2) \|_{L^q(w_2)}
\lesssim \|f_1 \|_{L^p(w_1)}\|f_2 \|_{L^q(w_2)}.
\end{split}
\end{equation}

By symmetry it remains to consider the case when $P$ has the  form
\[
P(f_1, f_2):= \sum_{K\in \mathcal D^n} \sum_{\substack{I_1, I_2, I_3 \in \mathcal D^n\\ I_i^{(k_i)}=K}} h_{I_3}\otimes \pi_{b_{K, (I_i)}} (\langle f_1, h_{I_1}^0 \rangle_1, \langle f_2, h_{I_2} \rangle_1).
\]
Applying the lower square function estimate \eqref{eq:LowerSF1} and the estimate \eqref{eq:fs2}, 
we have similarly as in \eqref{eq:Applyfs1} that
\begin{equation}\label{eq:Applyfs2}
\begin{split}
\|P & (f_1, f_2)\|_{L^r(v_3)}\\
&\lesssim \Big\| \Big( \sum_{K \in \calD^n } \Big| \sum_{\substack{I_1,I_2,I_3 \in \calD^n \\ I_i^{(k_i)}=K}}
h_{I_3} \otimes \pi_{b_{K, (I_i)}} (\langle f_1, h_{I_1}^0 \rangle_1, \langle f_2, h_{I_2} \rangle_1) \Big|^2 \Big)^{1/2} \Big\|_{L^r(v_3)} \\
& \lesssim \Big\| \Big( \sum_{K \in \calD^n } \Big| \sum_{\substack{I_1,I_2,I_3 \in \calD^n \\ I_i^{(k_i)}=K}}
a_{K,(I_i)} h_{I_3} \otimes \langle M_{\calD}  f_1, h_{I_1}^0 \rangle_1  
\langle \varphi^1_{\calD^n}(f_2), h_{I_2} \rangle_1 \Big|^2 \Big)^{1/2}\Big\|_{L^r(v_3)} ,
\end{split}
\end{equation}
where we also estimated that 
$ M_{\calD^m} \langle f_1,h^0_{I_1} \rangle_1 \le \langle M_{\calD}  f_1, h_{I_1}^0 \rangle_1$. 
With a fixed $x_2 \in \R^m$, consider the function in the last line in \eqref{eq:Applyfs2} as a function of $x_1 \in \R^m$. 
It is the square function of a one-parameter bilinear shift. 
Therefore, similarly as in \eqref{eq:Applyfs1} and \eqref{eq:AfterApplyfs1}, we have that the last term 
in \eqref{eq:Applyfs2} is dominated by
$$
\| M_{\calD}  f_1\|_{L^p(w_1)} \| \varphi^1_{\calD^n}(f_2)\|_{L^q(w_2)} \\
\lesssim \|  f_1\|_{L^p(w_1)} \| f_2\|_{L^q(w_2)}.
$$
We are done.
\end{proof}

\subsection{Unweighted boundedness of full paraproducts}
\begin{prop}\label{prop:FullProd}
Let $1 < p, q \le \infty$ and $1/2 < r < \infty$ satisfy $1/p+1/q = 1/r$.
Suppose that $(\Pi_{\omega})_{\omega}$ is a collection of bilinear bi-parameter full paraproducts of the same type, where $\Pi_{\omega}$ is
defined in the grid $\calD_{\omega}$.
Then we have
$$
\|\E_{\omega} \Pi_{\omega}(f_1, f_2)\|_{L^r(\R^{n+m})} \lesssim \|f_1\|_{L^p(\R^{n+m})} \|f_2\|_{L^q(\R^{n+m})}.
$$
\end{prop}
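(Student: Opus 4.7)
The argument naturally splits according to whether $r \ge 1$ (Banach range) or $r < 1$ (quasi-Banach).

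\emph{Banach range $r \ge 1$.} I fix $\omega$ and deal first with a single grid, focusing on one of the nine variants, say
\[
\Pi_\omega(f_1,f_2) = \sum_{K, V} \langle b_\omega, h_K \otimes h_V\rangle \langle f_1\rangle_{K \times V}\langle f_2\rangle_{K \times V} h_K \otimes h_V,
\]
with $\|b_\omega\|_{\BMO_{\textup{prod}}(\calD_\omega)} \le 1$. The Carleson sequence condition defining dyadic product BMO, combined with a square function and $H^1_{\textup{prod}}$-$\BMO_{\textup{prod}}$ duality argument of Chang--Fefferman type, yields
\[
\|\Pi_\omega(f_1,f_2)\|_{L^r(\R^{n+m})} \lesssim \|(M_S f_1)(M_S f_2)\|_{L^r} \lesssim \|f_1\|_{L^p}\|f_2\|_{L^q}
\]
for all $r \ge 1$ and $p, q \in (1, \infty]$ with $1/p + 1/q = 1/r$, where $M_S$ is the bi-parameter strong maximal operator (whose $L^p$-boundedness closes the estimate). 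For the remaining eight variants, where one or two Haar functions land on input slots, I extract a vector-valued shift or square function in those directions and run the same Carleson/maximal argument in the remaining parameter. Averaging over $\omega$ is then immediate by Minkowski's inequality since $r \ge 1$.

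\emph{Quasi-Banach range $r \in (1/2,1)$.} Following the strategy of Muscalu--Pipher--Tao--Thiele, the plan is to establish weak-type endpoint estimates for the averaged operator at the relevant corners, for instance
\[
\|\E_\omega \Pi_\omega(f_1,f_2)\|_{L^{s,\infty}(\R^{n+m})} \lesssim \|f_1\|_{L^1}\|f_2\|_{L^{q_0}}, \qquad \tfrac{1}{s} = 1 + \tfrac{1}{q_0},
\]
for $q_0 \in (1,\infty]$ together with its symmetric counterpart, and then to interpolate these against the Banach-range bound above via multilinear Marcinkiewicz-type interpolation to cover the whole range $r > 1/2$. The weak-type bound is proved by a bi-parameter Calder\'on--Zygmund decomposition of $f_1$ at level $\lambda$: write $f_1 = g + \sum_j b_j$, where $g \in L^2$ with $\|g\|_{L^2}^2 \lesssim \lambda \|f_1\|_{L^1}$ and the $b_j$'s are supported on an exceptional set $\Omega$ of measure $\lesssim \|f_1\|_{L^1}/\lambda$. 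The good part is controlled by the Banach-range estimate from the first step; for the bad part, the exceptional set is enlarged using Journ\'e's covering lemma to absorb the singular off-diagonal contributions, and the Carleson property of the coefficients $\langle b_\omega, h_K \otimes h_V\rangle$ is used to sum uniformly in $\omega$ before taking the expectation.

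\emph{Main obstacle.} The delicate point is the quasi-Banach endpoint. Unlike in one parameter, the exceptional set $\Omega$ is only a union of dyadic rectangles of varying aspect ratios, so the bi-parameter singular integral character of $\Pi_\omega$ forces the use of Journ\'e's covering lemma to control the kernel's interaction with its complement. Moreover, since we must average over $\omega$ and quasi-Banach norms do not linearise under expectation, the whole weak-type analysis has to be carried out at the level of $\E_\omega \Pi_\omega$ rather than grid by grid; this means the Calder\'on--Zygmund decomposition and the Journ\'e dilation have to be set up compatibly with the randomisation. The remaining interpolation and summation steps are standard once the endpoints are in place, though the correct bilinear Marcinkiewicz-type formulation in this quasi-Banach bi-parameter setting still requires care.
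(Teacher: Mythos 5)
Your Banach-range argument is consonant with the paper's, but your quasi-Banach plan diverges from the paper in a way that likely breaks.

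The paper does \emph{not} run a Calder\'on--Zygmund decomposition of $f_1$ at the $L^1$ scale, and it does \emph{not} invoke Journ\'e's covering lemma. Instead, following \cite{MPTT}, it proves a \emph{restricted} weak-type estimate in the interior of the exponent range: for $1<p,q<\infty$ and $1/2<r<1$ with $1/p+1/q=1/r$, given any $E$ of finite measure there exists a major subset $E'\subset E$ (with $|E'|\ge|E|/2$) such that
$$
|\langle \E_\omega \Pi_\omega(f_1,f_2), f_3\rangle| \lesssim \|f_1\|_{L^p}\|f_2\|_{L^q}\,|E|^{1/r'} \qquad\text{for all } |f_3|\le 1_{E'}.
$$
The exceptional set that gets excised from $E$ is constructed from super-level sets of \emph{auxiliary maximal and square functions applied to $f_1,f_2$}, not from a CZ decomposition, and the bad rectangles are handled through the modified $H^1$--$\BMO$ duality of Lemma \ref{lem:H1-BMO-Modified}. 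In this scheme both inputs always sit in $L^p$, $L^q$ with $p,q>1$; only the output index $r$ goes below $1$. Strong type is then recovered by Thiele-style restricted-weak-type interpolation, and $p=\infty$ or $q=\infty$ by duality.

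The concrete gap in your proposal is the choice of endpoint. You aim for
$$
\|\E_\omega \Pi_\omega(f_1,f_2)\|_{L^{s,\infty}} \lesssim \|f_1\|_{L^1}\|f_2\|_{L^{q_0}},\qquad 1/s=1+1/q_0,
$$
i.e.\ you put one input in $L^1$. But bi-parameter singular integrals, and bi-parameter paraproducts in particular, famously \emph{fail} the weak $(1,1)$ endpoint; this is precisely why the classical one-parameter route of CZ decomposition plus interpolation is unavailable here, a point the paper stresses in its introduction. Restricting $f_2$ to a large cube reduces the bilinear full paraproduct to a linear bi-parameter paraproduct with a product BMO symbol, and those do not map $L^1\to L^{1,\infty}$. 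So the endpoint you want to establish is almost certainly false, and no amount of Journ\'e-covering-lemma bookkeeping for the bad part will rescue it (in the linear setting Journ\'e's lemma gives $H^1\to L^1$, not $L^1\to L^{1,\infty}$, and the bilinear bi-parameter analog of $H^1$ is not what you are working with). The fix is exactly what the paper does: keep $p,q>1$ throughout, and work with a restricted-weak-type bound in $r<1$ rather than an $L^1$-based endpoint. Your concern about averaging not commuting with quasi-Banach norms is well founded, and the paper's exceptional-set construction (independent of $\omega$ on the $\calD_0$ side, with $\omega$-shifted rectangles controlled via the $\tilde\Omega_u$ enlargement) is precisely the device that makes the argument compatible with the $\E_\omega$.
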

\begin{proof}
The proof of the Banach range boundedness $p,q,r \in (1,\infty)$ is a standard argument using duality.
Suppose now that $1/p + 1/q = 1/r$ with $1 < p, q < \infty$ and $\frac 12< r<1$, and $E \subset \R^{n+m}$ with $0 < |E| < \infty$.
We prove that for some $E' \subset E$ with $|E'| \ge |E|/2$ we have
\begin{align*}
|\langle \E_\omega \Pi_\omega (f_1, f_2), f_3 \rangle| &\lesssim  \|f_1\|_{L^p(\R^{n+m})} \|f_2\|_{L^q(\R^{n+m})} |E|^{1/r'}, \qquad |f_3| \le 1_{E'}.
\end{align*}
We omit the details of this core part of the proof, as later on we have to perform the argument in the commutator setting, which is quite a bit more demanding, and we give the
full details there. However, we take this opportunity to explain the completely standard interpolation argument, which can always be used to end the proof after such a weak type estimate.
This general scheme is used e.g. in \cite{MPTT}.

At this point we know that
\begin{align}
\label{eq:BRange} &\|\E_\omega \Pi_\omega(f_1, f_2)\|_{L^r(\R^{n+m})} 
\lesssim \|f_1\|_{L^p(\R^{n+m})} \|f_2\|_{L^q(\R^{n+m})}, \quad \text{if } r > 1, \\
\label{eq:Weak}& \|\E_\omega \Pi_\omega(f_1, f_2)\|_{L^{r, \infty}(\R^{n+m})} 
\lesssim \|f_1\|_{L^p(\R^{n+m})} \|f_2\|_{L^q(\R^{n+m})}, \quad \text{if }r< 1.
\end{align}
Notice that for $r = 1$ we may easily get that
if $0 < |E_i| < \infty$, $i = 1,2,3$, $|f_1| \le 1_{E_1}$ and $|f_2| \le 1_{E_2}$ then there exists $E_3' \subset E_3$ so that $|E_3'| \ge |E_3|/2$, and so that
for all $|f_3| \le 1_{E_3'}$ we have
$$
|\langle \E_\omega \Pi_\omega(f_1, f_2), f_3\rangle| \lesssim |E_1|^{1/p}|E_2|^{1/p'}. 
$$
This follows by taking convex combinations of the estimates \eqref{eq:BRange}, \eqref{eq:Weak}.
Then use e.g. Theorem 3.8 (or rather its proof) in Thiele's book \cite{Th:Book} to update all of our estimates that are either
weak type (if $r < 1$) or restricted weak type (if $r=1$) into strong type bounds. The cases $p = \infty$ or $q =  \infty$ follow by duality.
\end{proof}
\begin{rem}\label{rem:tensorfullb}
If $b = b_1 \otimes b_2$, where $\|b_1\|_{\BMO(\R^n)}, \|b_2\|_{\BMO(\R^m)} \le 1$, we get the weighted estimate for full paraproducts. Denoting $b_K^1 = \langle b_1, h_K\rangle$, $b_V^2 = \langle b_2, h_V\rangle$ we e.g. have
\begin{align*}
&\Big\| \sum_{K, V} b_K^1 b_V^2 \langle f_1 \rangle_{K \times V} \langle f_2 \rangle_{K \times V} h_K \otimes h_V \Big\|_{L^r(v_3)} \\
&\lesssim \Big\| \Big(\sum_K |b_K^1|^2 \frac{1_K}{|K|} \otimes \Big| \sum_V b_V^2 \langle f_1 \rangle_{K \times V} \langle f_2 \rangle_{K \times V} h_V  \Big|^2 \Big)^{1/2} \Big\|_{L^r(v_3)} \\
&\lesssim \Big\| \Big( \sum_K |b_K^1|^2 \frac{1_K}{|K|} \otimes [ M \langle f_1 \rangle_{K,1} M \langle f_2 \rangle_{K,1}]^2\Big)^{1/2} \Big\|_{L^r(v_3)} \\
&\lesssim \Big( \E \Big\| \sum_K \epsilon_K b_K^1 h_K \otimes  \langle M^2 f_1 \rangle_{K,1}  \langle M^2 f_2 \rangle_{K,1} \Big\|_{L^r(v_3)}^r \Big)^{1/r}
\lesssim \| M^1 M^2 f_1 M^1 M^2 f_2\|_{L^r(v_3)},
\end{align*}
from which the claim follows. We used \eqref{eq:fs2} and Kahane--Khintchine. Other forms of full paraproducts are similar but even easier.
\end{rem}

\section{Weighted, quasi-Banach and mixed-norm estimates for CZOs}\label{sec:TBou}
Recall that a bilinear bi-parameter SIO $T$ is free of full paraproducts if the following holds:
$
\langle S(1, 1), h_I \otimes h_J \rangle = 0
$
for all
$
S \in \{T, T^{*1}, T^{*2}, T^{1*}_1, T^{2*}_1, T^{1*}_2, T^{2*}_2, T^{1*, 2*}_{1,2}, T^{1*, 2*}_{2,1}\}
$
and all cubes $I \subset \R^n$, $J \subset \R^m$.
We begin with the improved bounds that hold in this case.
\begin{thm}\label{cor:T1-1}
Let $T$ be a bilinear bi-parameter CZO that is free of full paraproducts.
Then we have
\begin{equation}\label{eq:weightedT}
\|T(f_1, f_2)\|_{L^r(v_3)} \lesssim_{[w_1]_{A_p}, [w_2]_{A_q}} \|f_1\|_{L^p(w_1)} \|f_2\|_{L^q(w_2)} 
\end{equation}
for all $1 < p, q < \infty$ and $1/2 < r < \infty$ satisfying $1/p+1/q = 1/r$, and for all bi-parameter weights $w_1 \in A_p(\R^n \times \R^m)$, $w_2 \in A_q(\R^n \times \R^m)$ with $v_3 := w_1^{r/p} w_2^{r/q}$. 

We also have
\begin{equation}\label{eq:weightedTmixed}
\|T(f_1, f_2)\|_{L^{r_1}(v_{3,1}; L^{r_2}(v_{3,2}))} \lesssim_{[w_{i, j}]} \|f_1\|_{L^{p_1}(w_{1,1}; L^{p_2}(w_{2,1}))}\|f_2\|_{L^{q_1}(w_{2,1}; L^{q_2}(w_{2,2}))}
\end{equation}
for all $1 < p_i, q_i < \infty$ and $1/2 < r_i < \infty$ with $1/p_i + 1/q_i = 1/r_i$, for all
$w_{1,1} \in A_{p_1}(\R^n)$, $w_{2,1} \in A_{q_1}(\R^n)$, $w_{1,2} \in A_{p_2}(\R^m)$, $w_{2,2} \in A_{q_2}(\R^m)$
with $v_{3,1}:= w_{1,1}^{r_1/p_1}w_{2,1}^{r_1/q_1}$ and $v_{3,2}:= w_{1,2}^{r_2/p_2}w_{2,2}^{r_2/q_2}$.

In the unweighted case we have
$$
\|T(f_1, f_2)\|_{L^{r_1}(\R^n; L^{r_2}(\R^m))} \lesssim \|f_1\|_{L^{p_1}(\R^n; L^{p_2}(\R^m))}\|f_2\|_{L^{q_1}(\R^n; L^{q_2}(\R^m))}
$$
for all $1 < p_i, q_i \le \infty$ and $1/2 < r_i < \infty $ with $1/p_i + 1/q_i = 1/r_i$, except that if $r_2 < 1$ we have
to assume $\infty \not \in \{p_1, q_1\}$.
\end{thm}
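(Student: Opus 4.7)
The overall strategy is to invoke the representation theorem (Theorem~\ref{thm:rep}) to reduce the estimates to the weighted bounds for shifts and partial paraproducts from Propositions~\ref{prop:weightedShifts} and~\ref{prop:weightedParProd}. The assumption that $T$ is free of full paraproducts is crucial here: it ensures the representation contains only shifts and partial paraproducts, avoiding the full paraproducts for which we only have unweighted estimates (Proposition~\ref{prop:FullProd}).

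For the scalar weighted estimate \eqref{eq:weightedT}, I would first prove it in a Banach tuple such as $p = q = 4$, $r = 2$. In this range, Minkowski's inequality allows pulling the $L^r(v_3)$ norm inside the $\mathbb{E}_\omega \sum_{k,v}\sum_u$ summation of the representation. For each fixed complexity $(k,v)$, the operator $\mathbb{E}_\omega U^v_{k,u,\mathcal{D}_\omega}$ is an average of shifts or of partial paraproducts, to which Propositions~\ref{prop:weightedShifts} and~\ref{prop:weightedParProd} apply with a constant depending only on $[w_1]_{A_p}$ and $[w_2]_{A_q}$ (the complexity dependence is already absorbed into the coefficient normalizations in the definitions of the DMOs). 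Summation over $(k,v)$ then converges thanks to the exponential decay $\alpha_{k,v} = 2^{-\alpha\max k_i/2}\,2^{-\alpha\max v_j/2}$. The quasi-Banach range cannot be recovered directly from the representation, because the $L^r$-quasi-norm with $r < 1$ does not permit moving $\mathbb{E}_\omega$ outside; this is precisely where bilinear Rubio de Francia extrapolation \cite{DU, GM} is applied, starting from the single-tuple bound obtained above, to yield \eqref{eq:weightedT} in the full range.

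For the weighted mixed-norm estimate \eqref{eq:weightedTmixed}, combine \eqref{eq:weightedT} with vector-valued extrapolation, using the operator-valued reformulation of Section~\ref{sec:reform}. Observe that tensor product weights $w^{(1)} \otimes w^{(2)}$ with $w^{(1)} \in A_{p}(\R^n)$ and $w^{(2)} \in A_{p}(\R^m)$ lie in $A_p(\R^n \times \R^m)$, so \eqref{eq:weightedT} immediately yields the diagonal case $r_1 = r_2$ with such weights. The off-diagonal case follows by iterating Rubio de Francia extrapolation, first in the $\R^m$-variable (viewing $T$ as acting on $L^{p_2}(\R^m)$-valued functions via the vector-valued formulation) and then in the $\R^n$-variable. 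For the unweighted mixed-norm estimate, take trivial weights in \eqref{eq:weightedTmixed} to obtain the finite-exponent cases, and handle $p_i = \infty$ or $q_i = \infty$ through the standard endpoint bounds for bilinear SIOs in $\R^m$ furnished by the quantitative one-parameter $T1$ theorem cited at the end of Section~\ref{sec:reform}, combined with interpolation. The exceptional restriction $\infty \notin \{p_1, q_1\}$ when $r_2 < 1$ reflects the fact that a bilinear SIO on $\R^m$ with $L^\infty$ input only lands in $\BMO$, which is incompatible with a quasi-Banach target. The main obstacle throughout is the interaction between the quasi-Banach regime and the averaging in the representation, and it is overcome precisely by the availability of multilinear extrapolation for the DMO-level weighted bounds.
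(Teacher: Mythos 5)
Your strategy for \eqref{eq:weightedT} and \eqref{eq:weightedTmixed} matches the paper's, with only a cosmetic difference in where extrapolation is applied: the paper builds extrapolation directly into Propositions~\ref{prop:weightedShifts} and~\ref{prop:weightedParProd}, so that $\E_\omega S_\omega$ and $\E_\omega P_\omega$ are already bounded for all $1/2 < r < \infty$, and then sums over complexities using $\|\sum_i g_i\|_{L^r(v_3)}^r \le \sum_i \|g_i\|_{L^r(v_3)}^r$ for $r<1$; your route (prove $T$ at one Banach tuple, then extrapolate $T$) rearranges the same ingredients and is equally valid. For \eqref{eq:weightedTmixed}, the paper also needs only a single extrapolation, in the $\R^n$-variable, after plugging tensor-product weights into \eqref{eq:weightedT} with $p=p_2$, $q=q_2$, $r=r_2$; the $\R^m$-exponents are free from the outset, so no iteration ``first in $\R^m$'' is needed, and the operator-valued reformulation of Section~\ref{sec:reform} plays no role in the weighted mixed-norm estimate.

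The genuine gap is in the unweighted mixed-norm estimate when some of $p_1, q_1, p_2, q_2$ equal $\infty$. You propose to handle these ``through the standard endpoint bounds for bilinear SIOs in $\R^m$ \dots combined with interpolation,'' but interpolation from the finite-exponent region cannot reach $p_1=\infty$ or $q_1=\infty$. The paper's argument instead fixes $1 \le r_2 < \infty$ so that $E_3 = L^{r_2}(\R^m)$ is a Banach space, uses the operator-valued reformulation of Section~\ref{sec:reform} to view $T$ as a bilinear SIO on $\R^n$ with $\bddlin(E_1 \times E_2, E_3)$-valued kernel (the inner operators $U_1(x_1,y_1,z_1)$ being uniformly bounded on $E_1 \times E_2 \to E_3$ by the quantitative one-parameter $T1$ theorem, which is what permits $p_2, q_2 = \infty$), then runs the classical Calder\'on--Zygmund decomposition in the Banach-space-valued setting to obtain the weak endpoint $L^1(\R^n;E_1) \times L^1(\R^n;E_2) \to L^{1/2,\infty}(\R^n;E_3)$, and finally upgrades this to a pointwise sparse domination, from which all $p_1, q_1 \in (1,\infty]$ follow. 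It is this weak-type-plus-sparse-domination step that is missing from your proposal; without it there is no way to produce the $L^\infty$ outer endpoints. Relatedly, your explanation of the restriction $\infty\notin\{p_1,q_1\}$ when $r_2<1$ misplaces the obstruction: it is not that the inner SIO maps $L^\infty$ only into $\BMO$, but that $E_3=L^{r_2}(\R^m)$ fails to be a Banach space when $r_2<1$, so the vector-valued Calder\'on--Zygmund argument that produces the $p_1=\infty$, $q_1=\infty$ endpoints cannot be run.
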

\begin{rem}
Let $1 < u_1, u_2 < \infty$ and $1/2 < u_3 < \infty$ satisfy $1/u_1+1/u_2 = 1/u_3$.
Recall that weighted bounds imply vector-valued bounds, so we have in the situation of the previous theorem that for instance
\begin{align*}
\Big\| \Big(\sum_j |T(f_1^j, f_2^j)|^{u_3} \Big)^{1/u_3} \Big\|_{L^r(v_3)} 
\lesssim
\Big\| \Big(\sum_j |f_1^j|^{u_1} \Big)^{1/u_1} \Big\|_{L^p(w_1)} 
\Big\| \Big(\sum_j |f_2^j|^{u_2} \Big)^{1/u_2} \Big\|_{L^q(w_2)},
\end{align*}
whenever $1 < p, q < \infty$ and $1/2 < r < \infty$ satisfy $1/p+1/q = 1/r$, and $w_1 \in A_p(\R^n \times \R^m)$, $w_2 \in A_q(\R^n \times \R^m)$ with $v_3 := w_1^{r/p} w_2^{r/q}$.
\end{rem}
\begin{proof}[Proof of Theorem \ref{cor:T1-1}]
The implicit constants are allowed to depend on $A_p$ characteristics below.
Write the pointwise identity
\begin{equation}\label{eq:pointwiseRep}
T(f_1, f_2) = C_T \mathop{\sum_{k = (k_1, k_2, k_3) \in \Z_+^3}}_{v = (v_1, v_2, v_3) \in \Z_+^3} \alpha_{k, v} 
\sum_{u} \mathbb{E}_{\omega} U^{v}_{k, u, \mathcal{D}{\omega}}(f_1, f_2),
\end{equation}
where the DMOs are shifts or partial paraproducts. The results for averages of model operators in
Propositions \ref{prop:weightedShifts} and \ref{prop:weightedParProd} imply \eqref{eq:weightedT} by using $\|\sum_i g_i\|_{L^r(v_3)}^r \le \sum_i \| g_i  \|_{L^r(v_3)}^r$, if $r < 1$, or by using the normal triangle inequality otherwise.

We now simply use extrapolation again to get mixed-norm estimates.
 Suppose $1/p_2 + 1/q_2 = 1/r_2$, where $1 < p_2, q_2 < \infty$, $1/2 < r_2 < \infty$.
Fix some weights $w_{1,2} \in A_{p_2}(\R^m)$, $w_{2,2} \in A_{q_2}(\R^m)$ and set $v_{3,2}:= w_{1,2}^{r_2/p_2}w_{2,2}^{r_2/q_2}$.
Define
$$
H_{r_2, v_{3,2}}(x_1) = H(x_1):= \Big(\int_{\R^m} |T(f_1,f_2)(x_1,x_2)|^{r_2}v_{3,2}(x_2) \ud x_2\Big)^{1/r_2},
$$
$$
F_{1, p_2, w_{1,2}}(x_1) = F_1(x_1):= \Big(\int_{\R^m} |f_1(x_1,x_2)|^{p_2}w_{1,2}(x_2) \ud x_2\Big)^{1/p_2},
$$
and 
$$
F_{2, q_2, w_{2,2}}(x_1) = F_{2}(x_1):= \Big(\int_{\R^m} |f_2(x_1,x_2)|^{q_2}w_{2,2}(x_2) \ud x_2\Big)^{1/q_2}.
$$
Suppose $w_{1,1} \in A_{p_2}(\R^n)$, $w_{2,1} \in A_{q_2}(\R^n)$ and define $v_{3,1}:= w_{1,1}^{r_2/p_2}w_{2,1}^{r_2/q_2}$.
Let $w_1 := w_{1,1} \otimes w_{1,2} \in A_{p_2}(\R^n \times \R^m)$, $w_2 := w_{2,1} \otimes w_{2,2} \in A_{q_2}(\R^n\times \R^m)$
and define the related weight $v_{3} := w_1^{r_2/p_2} w_2^{r_2/q_2}$.

Applying \eqref{eq:weightedT} with $p=p_2$, $q=q_2$, $r=r_2$ and with the weights $w_1$, $w_2$ and $v_3$ we have that
$
\| H \|_{L^{r_2}(v_{3,1})} 
\lesssim \| F_1 \|_{L^{p_2}(w_{1,1})}\| F_2 \|_{L^{q_2}(w_{2,1})}.
$
Because this holds for all weights $w_{1,1} \in A_{p_2}(\R^n)$ and $w_{2,1} \in A_{q_2}(\R^n)$ with $v_{3,1} = w_1^{r_2/p_2}w_2^{r_2/q_2}$,
then bilinear extrapolation (see e.g. \cite{DU}) implies that
\begin{equation}\label{eq:OuterExtrapolated}
\| H \|_{L^{r_1}(v_{3,1})} 
\lesssim \| F_1 \|_{L^{p_1}(w_{1,1})}\| F_2 \|_{L^{q_1}(w_{2,1})}
\end{equation}
holds for all exponents $1 < p_1, q_1 < \infty$, $1/2 < r_1 < \infty$, and all weights 
$w_{1,1} \in A_{p_1}(\R^n)$, $w_{2,1} \in A_{q_1}(\R^n)$ and $v_{3,1}:= w_1^{r_1/p_1}w_2^{r_1/q_1}$. But \eqref{eq:OuterExtrapolated}
is exactly what is claimed in \eqref{eq:weightedTmixed}.

Let us now discuss the unweighted case of these mixed-norm estimates, when we can also
allow cases when some of the exponents $p_1, q_1, p_2, q_2$ are equal to $\infty$.
The starting point is the estimate
\begin{equation}\label{eq:single-exponent}
\|T(f_1, f_2)\|_{L^r(\R^{n+m})} \lesssim \|f_1\|_{L^p(\R^{n+m})} \|f_2\|_{L^q(\R^{n+m})}
\end{equation}
valid for all $1 < p, q \le \infty$, $1/2 < r < \infty$ with $1/p + 1/q = 1/r$. Notice that in this inequality
we can simply allow the case $\infty \in \{p,q\}$ by duality.

Fix now $1 < p_2, q_2 \le \infty$ with $1/p_2 + 1/q_2 = 1/r_2$ and $1 \le r_2 < \infty$, and define the Banach spaces
$E_1 = L^{p_2}(\R^m)$, $E_2 = L^{q_2}(\R^m)$ and $E_3 = L^{r_2}(\R^m)$.
Next, for all nice functions $f_i \colon \R^n \to F(\R^m) := \{g\colon g \colon \R^m \to \C\}$
such that spt$\,f_i \cap \textup{spt}\, f_j = \emptyset$ for some $i,j$,
we have using Section \ref{sec:reform} that
$$
\langle T(f_1, f_2), f_3\rangle = \int_{\R^n} \int_{\R^n} \int_{\R^n} \bla U_1(x_1, y_1, z_1)(f_1(y_1), f_2(z_1)), f_3(x_1)\bra\ud x_1 \ud y_1 \ud z_1,
$$
where we have the natural size estimate and the H\"older estimate
$$
\|U_1(x_1, y_1, z_1) - U_1(x_1', y_1, z_1)\|_{E_1 \times E_2 \to E_3}
\lesssim \frac{|x_1-x_1'|^{\alpha}}{(|x_1-y_1| + |x_1-z_1|)^{2n+\alpha}}
$$
whenever $|x_1-x_1'| \le \max(|x_1-y_1|, |x_1-z_1|)/2$ together with the symmetric one.
Since we know from \eqref{eq:single-exponent} that
$$
\|T(f_1, f_2)\|_{L^{r_2}(\R^n; E_3)} \lesssim \|f_1\|_{L^{p_2}(\R^n; E_1)} \|f_2\|_{L^{q_2}(\R^n; E_2)},
$$
we get as in the scalar-valued case (and with the same classical Calder\'on--Zygmund decomposition proof) that
$
T \colon L^{1}(\R^n; E_1) \times  L^{1}(\R^n; E_2) \to  L^{1/2,\infty}(\R^n; E_3).
$
Such a weak type estimate implies a pointwise sparse domination -- see for instance \cite{Li} for a proof in the scalar case.
However, essentially the same proof works in the Banach-valued case.
From the sparse domination it follows directly that
$$
\|T(f_1, f_2)\|_{L^{r_1}(\R^n; E_3)} \lesssim \|f_1\|_{L^{p_1}(\R^n; E_1)} \|f_2\|_{L^{q_1}(\R^n; E_2)} 
$$
for all  $1 < p_1, q_1 \le \infty$ with $1/p_1 + 1/q_1 = 1/r_1$, $1/2 < r_1 < \infty$.
Combining everything we have shown the theorem.
\end{proof}
We also have the following unweighted estimates for all bilinear bi-parameter CZOs.
\begin{thm}\label{cor:T1-2}
Let $T$ be a bilinear bi-parameter CZO.
Then we have
$$
\|T(f_1, f_2)\|_{L^r(\R^{n+m})} \lesssim \|f_1\|_{L^p(\R^{n+m})} \|f_2\|_{L^q(\R^{n+m})} 
$$
for all $1 < p, q \le \infty$ and $1/2 < r < \infty$ satisfying $1/p+1/q = 1/r$. 
Moreover, we have the mixed-norm estimates
$$
\|T(f_1,f_2)\|_{L^{r_1}(\R^n; L^{r_2}(\R^m))} \lesssim \|f_1\|_{L^{p_1}(\R^n; L^{p_2}(\R^m))} \|f_2\|_{L^{q_1}(\R^n; L^{q_2}(\R^m))}
$$
for all $1 < p_i, q_i \le \infty$ with $1/p_i + 1/q_i = 1/r_i$, $i = 1,2$,
$1/2 < r_1 < \infty$, $1 \le r_2 < \infty$. 
\end{thm}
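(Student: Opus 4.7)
The plan has two phases: first prove the scalar $L^p \times L^q \to L^r$ bounds via the representation theorem, and then bootstrap to mixed-norm estimates via the operator-valued formulation of Section \ref{sec:reform}.

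For the scalar bounds, apply the representation Theorem \ref{thm:rep} to expand
$$
\langle T(f_1,f_2), f_3 \rangle = C_T \mathbb{E}_\omega \mathop{\sum_{k \in \Z_+^3}}_{v \in \Z_+^3} \alpha_{k,v} \sum_u \bla U^v_{k,u,\calD_\omega}(f_1,f_2), f_3 \bra,
$$
where each $U^v_{k,u,\calD_\omega}$ is an averaged shift, partial paraproduct, or full paraproduct. Proposition \ref{prop:weightedShifts} and Proposition \ref{prop:weightedParProd}, applied with the trivial weights $w_1 = w_2 = 1$, give uniform $L^p \times L^q \to L^r$ bounds in the full range for averages of shifts and partial paraproducts of fixed complexity $(k,v)$. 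Proposition \ref{prop:FullProd} handles the full paraproducts in the full range as well. Summing over $(k,v,u)$ using the exponential decay $\alpha_{k,v} = 2^{-\alpha \max k_i/2} 2^{-\alpha \max v_j/2}$, together with the triangle inequality when $r \ge 1$ and the inequality $\| \sum_i g_i \|_{L^r}^r \le \sum_i \| g_i \|_{L^r}^r$ when $r < 1$, yields the scalar estimate.

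For the mixed-norm estimates, fix any tuple $(p_2, q_2, r_2)$ with $1 < p_2, q_2 \le \infty$, $1 \le r_2 < \infty$ and $1/p_2 + 1/q_2 = 1/r_2$. Set the Banach spaces $E_1 = L^{p_2}(\R^m)$, $E_2 = L^{q_2}(\R^m)$, $E_3 = L^{r_2}(\R^m)$ (all genuinely Banach because $r_2 \ge 1$). The scalar bound just established, combined with Fubini, says
$$
T \colon L^{p_2}(\R^n; E_1) \times L^{q_2}(\R^n; E_2) \to L^{r_2}(\R^n; E_3).
$$
By Section \ref{sec:reform}, $T$ is represented on disjointly supported tuples by operator-valued kernels $U_1(x_1, y_1, z_1)$ that are bilinear SIOs on $\R^m$. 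The quantitative bilinear $T1$ theorem stated at the end of Section \ref{sec:reform} gives $\|U_1(x_1,y_1,z_1)\|_{E_1 \times E_2 \to E_3} \lesssim \|U_1(x_1,y_1,z_1)\|$, so the size and H\"older estimates for $U_1$ transfer to estimates in the operator norm $\|\cdot\|_{E_1 \times E_2 \to E_3}$. Thus $T$ is a Banach-valued bilinear Calder\'on--Zygmund operator on $\R^n$, bounded in the single tuple $(p_2, q_2, r_2)$.

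The final step is to run, in this Banach-valued setting, exactly the argument used at the end of the proof of Theorem \ref{cor:T1-1}: the classical Calder\'on--Zygmund decomposition (which works verbatim with values in a Banach space) yields the weak-type bound
$$
T \colon L^1(\R^n; E_1) \times L^1(\R^n; E_2) \to L^{1/2,\infty}(\R^n; E_3),
$$
and the resulting pointwise sparse domination from \cite{Li} (whose proof transfers directly to the Banach-valued setting) then gives
$$
\| T(f_1, f_2) \|_{L^{r_1}(\R^n; E_3)} \lesssim \| f_1 \|_{L^{p_1}(\R^n; E_1)} \| f_2 \|_{L^{q_1}(\R^n; E_2)}
$$
for all $1 < p_1, q_1 \le \infty$ with $1/p_1 + 1/q_1 = 1/r_1$ and $1/2 < r_1 < \infty$. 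This is the claimed mixed-norm estimate. The main obstacle is verifying that $U_1$ inherits its structural estimates in the Banach norm rather than just in the abstract norm $\|\cdot\|$ of Section \ref{sec:reform}; the hypothesis $r_2 \ge 1$ is needed precisely so that $E_3$ is Banach, which is what allows the CZ decomposition and sparse domination to operate.
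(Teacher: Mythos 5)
Your proposal is correct and follows essentially the same two-step route as the paper: first obtain the scalar $L^p\times L^q\to L^r$ bounds by combining the representation theorem with Propositions \ref{prop:weightedShifts}, \ref{prop:weightedParProd} (at $w_1=w_2=1$) and \ref{prop:FullProd} and summing using the exponential decay in complexity, then lift to mixed-norm bounds via the operator-valued reformulation of Section \ref{sec:reform}, the Banach-valued Calder\'on--Zygmund decomposition, and sparse domination. Your observation that $r_2\ge 1$ is what keeps $E_3$ Banach is exactly the point the paper uses (implicitly, by citing the identical argument from Theorem \ref{cor:T1-1}).
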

\begin{proof}
Write again the pointwise identity \eqref{eq:pointwiseRep}, which this time contains all model operators.
Then argue as before but also use Proposition \ref{prop:FullProd}. This gives the desired non-mixed estimate in the full range, which
can be lifted to the claimed mixed-norm estimate by exactly the same operator-valued argument as above.
\end{proof}

\section{Decompositions and upper bounds for first order commutators}\label{sec:com1}
We begin by considering first order commutators.
\begin{thm}\label{thm:mainCOM1}
Let $1 < p,q \le \infty$ and $1/2 < r < \infty$ satisfy $1/p + 1/q = 1/r$, and let $b \in \bmo(\R^{n+m})$.
Suppose $T$ is a bilinear bi-parameter CZO.
Then we have
$$
\|[b, T]_1(f_1, f_2)\|_{L^r(\R^{n+m})} \lesssim \|b\|_{\bmo(\R^{n+m})} \|f_1\|_{L^p(\R^{n+m})} \|f_2\|_{L^q(\R^{n+m})},
$$
and similarly for $[b, T]_2$.
\end{thm}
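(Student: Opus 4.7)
The plan is to reduce the commutator estimate to a complexity-dependent bound for each dyadic model operator appearing in Theorem \ref{thm:rep}. Applying the representation and the linearity of the commutator,
\[
[b,T]_1(f_1,f_2) = C_T \E_\omega \sum_{k,v,u} \alpha_{k,v}\, [b, U^{v}_{k,u,\calD_\omega}]_1(f_1,f_2),
\]
so it suffices to prove that for each type of DMO $U = U^{v}_{k,u}$ one has
\[
\|[b,U]_1(f_1,f_2)\|_{L^r} \lesssim P(k,v) \|b\|_{\bmo} \|f_1\|_{L^p}\|f_2\|_{L^q}
\]
with polynomial growth $P(k,v)$ in complexity, since this is dominated by the decay $\alpha_{k,v} \sim 2^{-\alpha\max k_i/2}2^{-\alpha\max v_j/2}$. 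Upon averaging, one uses either the triangle inequality in $L^r$ for $r \ge 1$, or the $r$-subadditivity $\|\sum g_i\|_{L^r}^r \le \sum \|g_i\|_{L^r}^r$ together with a weak-type/interpolation scheme as in Proposition \ref{prop:FullProd} when $r < 1$.

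The core combinatorial step is the commutator decomposition for a single DMO, implementing the guideline stated in the introduction. In a pairing $\langle bf_1, h_{I_1}^{\epsilon_1}\otimes h_{J_1}^{\epsilon_2}\rangle$ we expand $bf_1$ using the bi-parameter martingale decomposition when both $\epsilon_1,\epsilon_2 = 1$ (cancellative slot), using only the $\R^n$-martingale expansion when $(\epsilon_1,\epsilon_2) = (1,0)$, only the $\R^m$-martingale expansion when $(\epsilon_1,\epsilon_2)=(0,1)$, and leaving $bf_1$ untouched when $\epsilon_1=\epsilon_2=0$. Whenever a non-cancellative Haar function occurs, we add and subtract an appropriate average $\langle b\rangle_{R}$ where $R$ is the cube/rectangle associated with that non-cancellative slot. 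The point is that such a subtraction splits $[b,U]_1$ into two pieces: one where a term of the form $b - \langle b\rangle_R$ is paired against a cancellative Haar function and produces a factor controlled by $\|b\|_{\bmo}$ via the John--Nirenberg property, and another where a martingale difference $\Delta_{I\times J}b$ (or $\Delta_I^1 b$, $\Delta_J^2 b$) appears and can again be absorbed into $\|b\|_{\bmo}$. In both cases the leftover operator structure looks like a DMO of comparable complexity (a shift, a partial paraproduct, or a full paraproduct times a bounded factor), which can therefore be estimated by Section \ref{sec:ModelBou} and Theorem \ref{cor:T1-2}.

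In the Banach range $r > 1$ this program is essentially immediate once the decomposition is written down: the weighted square function bounds of Lemmas \ref{lem:standardEst1} and \ref{lem:standardEst2} applied to the rearranged sums, together with the Fefferman--Stein inequality and the BMO control of $b$, yield the pointwise/norm bounds with the required polynomial dependence on $(k,v)$. The cases with non-cancellative Haar functions (partial and full paraproducts) are handled by the same averaging trick, combined with the $A_\infty$-based paraproduct estimates \eqref{eq:fs1}--\eqref{eq:fs2}, which are sharp enough to survive the commutator structure.

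The main obstacle is the quasi--Banach range $r < 1$. Here we cannot simply exchange $\E_\omega$ and $L^r$ via Minkowski, and for the full paraproduct pieces we must mimic the weak-type scheme of Proposition \ref{prop:FullProd}: produce a major subset $E'\subset E$ of a given set $E$ with $|E'|\ge |E|/2$ and the pairing estimate $|\langle [b,U]_1(f_1,f_2), f_3\rangle|\lesssim \|b\|_{\bmo}\|f_1\|_{L^p}\|f_2\|_{L^q}|E|^{1/r'}$ for $|f_3|\le 1_{E'}$, then interpolate/upgrade to strong estimates via Thiele's restricted weak-type machinery \cite{Th:Book}. This is more delicate than in Proposition \ref{prop:FullProd} because the commutator contains the non-local operator $b\cdot$ acting on $U(f_1,f_2)$: care is required when choosing $E'$ to control the exceptional set coming from $M_{\calD}b$ (via the John--Nirenberg inequality for $\bmo$), and to keep the stopping-cube bookkeeping compatible with the paraproduct decomposition. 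Once the weak-type estimate is in place, bilinear interpolation between it and the Banach-range bound (obtained as above) closes the full range $1 < p,q \le \infty$, $1/2 < r < \infty$. The proof for $[b,T]_2$ is identical after swapping the roles of $f_1$ and $f_2$.
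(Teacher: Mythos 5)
Your proposal follows the paper's route exactly: apply the representation theorem, implement the commutator decomposition driven by whether each Haar slot is cancellative (expanding $bf$ in zero, one, or two parameters accordingly, and adding/subtracting an average when a non-cancellative slot appears), and handle the quasi--Banach range by proving a weak-type bound for the \emph{averaged} commutator and then upgrading via restricted weak-type interpolation. The paper carries this out in Propositions \ref{prop:com1ofmodelBanach}, \ref{prop:com1ofShiftsQuasiBanach}, \ref{prop:com1PP} and \ref{prop:com1FP}, with the modified $H^1$--BMO duality Lemma \ref{lem:H1-BMO-Modified} and the auxiliary operators $\Phi_1,\Phi_2^l$ supplying the precise stopping-time bookkeeping that you sketch more loosely (in particular, the exceptional sets come from level sets of products of $b$-adapted square/maximal operators, not directly from $M_{\calD}b$), but the strategy is the same.
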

\begin{proof}
Using Theorem \ref{thm:rep} write the pointwise identity
$$
[b,T]_1(f_1, f_2) = C_T \mathop{\sum_{k = (k_1, k_2, k_3) \in \Z_+^3}}_{v = (v_1, v_2, v_3) \in \Z_+^3} \alpha_{k, v} 
\sum_{u} \mathbb{E}_{\omega} [b,U^{v}_{k, u, \mathcal{D}_{\omega}}]_1(f_1, f_2).
$$
\emph{Averages} of commutators of model operators map in the full range with a bound polynomial in complexity -- this is proved carefully below (see
Propositions \ref{prop:com1ofShiftsQuasiBanach}, \ref{prop:com1PP} and \ref{prop:com1FP}).
Using this we get the claim.
\end{proof}
The corresponding results for iterated commutators are proved in Section \ref{sec:iterated}.

\subsection{Martingale difference expansions of products}\label{sec:marprod}
The idea is that a product $bf$ paired with Haar functions is expanded in the bi-parameter fashion only if both of the Haar functions are cancellative. In a mixed
situation we expand only in $\R^n$ or $\R^m$, and in the remaining fully non-cancellative situation we do not expand at all.
When pairing with a non-cancellative Haar function we add and subtract a suitable average of $b$.

Let $\calD^n$ and $\calD^m$ be some fixed dyadic grids in $\R^n$ and $\R^m$, respectively, and write $\calD= \calD^n \times \calD^m$.
In what follows we sum over $I \in \calD^n$ and $J \in \calD^m$.
\subsubsection*{Paraproduct operators}
Let us first define certain paraproduct operators:
\begin{align*}
A_1(b,f) &= \sum_{I, J} \Delta_{I \times J} b \Delta_{I \times J} f, \,\,
A_2(b,f) = \sum_{I, J} \Delta_{I \times J} b E_I^1\Delta_J^2 f, \\
A_3(b,f) &= \sum_{I, J} \Delta_{I \times J} b \Delta_I^1 E_J^2  f, \,\,
A_4(b,f) = \sum_{I, J} \Delta_{I \times J} b \bla f \bra_{I \times J},
\end{align*}
and
\begin{align*}
A_5(b,f) &= \sum_{I, J} E_I^1 \Delta_J^2 b \Delta_{I \times J} f, \,\,
A_6(b,f) = \sum_{I, J}  E_I^1 \Delta_J^2 b  \Delta_I^1 E_J^2  f, \\
A_7(b,f) &= \sum_{I, J} \Delta_I^1 E_J^2  b \Delta_{I \times J} f, \,\,
A_8(b,f) = \sum_{I, J}  \Delta_I^1 E_J^2 b E_I^1 \Delta_J^2 f.
\end{align*}
The operators are grouped into two collections, since they are handled differently (using product BMO or little BMO estimates, respectively).
Also recall that $\bmo \subset \BMO_{\textup{prod}}$.
When the underlying grid needs to be written, we write $A_{i, \calD}(b, f)$. When desired, these operators can be written with Haar functions using the standard formulas.
Recall that then one has to be slightly careful when a term like $h_I h_I$ or $h_J h_J$ appears (as they do e.g. when expanding $A_1$ using Haar functions).
This really can be of the form $h_I^{\epsilon_1} h_I^{\epsilon_2}$ for possibly different $\epsilon_1, \epsilon_2$. However, the only property we will
use is that $|h_I h_I| = 1_I / |I|$, i.e. we always treat such products as non-cancellative objects.

We also define
$$
a^1_1(b,f) = \sum_I \Delta_I^1 b \Delta_I^1 f \qquad \textup{and} \qquad
a^1_2(b,f) = \sum_I \Delta_I^1 b E_I^1 f.
$$
The operators $a^2_1(b,f)$ and $a^2_2(b,f)$ are defined analogously. Again, we can also e.g. write
$a^1_{1, \calD^n}(b,f)$ to emphasise the underlying dyadic grid.

\begin{lem}\label{lem:basicAa}
Let $H_b$ be $A_i(b,\cdot)$, $i=1,\ldots, 8$, or $a_j^1(b,\cdot)$, $a_j^2(b,\cdot)$, $j=1,2$. Let also $b\in \bmo(\R^{n+m})$, $p \in (1,\infty)$ and $w \in A_p(\R^n \times \R^m)$.
Then we have
\begin{align*}
\|H_b f\|_{L^p(w)} \lesssim C([w]_{A_p(\R^n \times \R^m)}) \|b\|_{\bmo(\R^{n+m})} \|f\|_{L^p(w)}.
\end{align*}
\end{lem}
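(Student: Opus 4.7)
The plan is to split the twelve operators into three groups according to the structure of the Haar coefficients carried by the symbol $b$: (i) the full bi-parameter paraproducts $A_1, A_2, A_3, A_4$, where $b$ contributes a full bi-parameter martingale difference $\Delta_{I\times J}b$; (ii) the mixed paraproducts $A_5, A_6, A_7, A_8$, where $b$ contributes one-parameter cancellation composed with a one-parameter average, as in $E_I^1\Delta_J^2 b$ or $\Delta_I^1 E_J^2 b$; and (iii) the one-parameter paraproducts $a_1^1, a_2^1, a_1^2, a_2^2$, which genuinely act in a single variable only.

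For group (iii), freeze $x_2$ and observe that $a_1^1(b,f)(\cdot,x_2)$ and $a_2^1(b,f)(\cdot,x_2)$ are the standard one-parameter dyadic Haar paraproducts in $\R^n$ with symbol $b(\cdot,x_2)$ applied to $f(\cdot,x_2)$. By the little BMO hypothesis we have $\|b(\cdot,x_2)\|_{\BMO(\R^n)}\lesssim \|b\|_{\bmo}$ and $[w(\cdot,x_2)]_{A_p(\R^n)}\lesssim [w]_{A_p(\R^n\times\R^m)}$ uniformly in $x_2$, so the classical weighted one-parameter paraproduct bound applies fiberwise and Fubini in the second variable delivers the claim. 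The operators $a_j^2$ are symmetric.

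For group (ii), $A_5$ is representative. Writing
\[
A_5(b,f)(x_1,x_2)=\sum_J\Bigl[\sum_I E_I^1(\Delta_J^2 b)(x_1,x_2)\cdot \Delta_I^1(\Delta_J^2 f)(x_1,x_2)\Bigr],
\]
the inner $I$-sum, for each $J$ with $x_2\in J$, is a one-parameter paraproduct in $\R^n$ with symbol $(\Delta_J^2 b)(\cdot,x_2)$, whose $\BMO(\R^n)$ norm is $\lesssim \|b\|_{\bmo}$ uniformly in $J$ and $x_2$. The outer $J$-sum carries orthogonality in the $\R^m$-variable via the Haar factor of $\Delta_J^2 f$, so a square function argument in $x_2$ combined with the one-parameter weighted paraproduct bound in $x_1$, together with the Fefferman--Stein inequality of Lemma \ref{lem:standardEst2} and the $\R^m$-square function estimate in Lemma \ref{lem:standardEst1}, assemble into the desired $L^p(w)$-bound. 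The remaining mixed cases $A_6, A_7, A_8$ follow from the same scheme with the roles of the two variables and of the cancellative/averaging slots permuted appropriately.

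For group (i), the inclusion $\bmo(\R^{n+m})\subset \BMO_{\textup{prod}}(\R^{n+m})$ lets us view each $A_k$, $k=1,\ldots,4$, as a standard dyadic bi-parameter paraproduct with symbol in $\BMO_{\textup{prod}}$. The proof then consists of a bi-parameter square function reduction (Lemma \ref{lem:standardEst1}) combined with a bi-parameter Carleson embedding, where the required Carleson condition on the coefficients $\langle b, h_I\otimes h_J\rangle$ is verified from the $\BMO_{\textup{prod}}$ norm of $b$ via Journ\'e's covering lemma; the passage to the weighted setting uses the $A_\infty$ square function estimate \eqref{eq:ainftyprod} and the Fefferman--Stein inequality. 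This last group is the main obstacle, since groups (ii) and (iii) are essentially Fubini reductions to one-parameter paraproduct theory, whereas group (i) genuinely requires the bi-parameter $\BMO_{\textup{prod}}$/Carleson machinery and cannot be obtained by iterating one-parameter bounds in either variable.
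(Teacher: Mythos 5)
The paper's own proof is essentially a pointer: it says these are ``standard $H^1$-$\BMO$ type duality estimates'' and refers to Holmes--Petermichl--Wick \cite{HPW}. Your attempt replaces that duality argument with a three-group decomposition and square-function/Carleson-embedding reductions. This is a genuinely different route, and while group (iii) is fine, there are problems with groups (ii) and (i).

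For group (ii) you have the roles of the two variables backwards. Take $A_5(b,f)=\sum_{I,J}E_I^1\Delta_J^2 b\,\Delta_{I\times J}f$. Freezing $x_2\in J$, the inner $I$-sum is $\sum_I \langle g\rangle_I\,\Delta_I h$ with $g=(\Delta_J^2 b)(\cdot,x_2)$, $h=(\Delta_J^2 f)(\cdot,x_2)$. This is a Haar multiplier with coefficients $\langle g\rangle_I$, \emph{not} a one-parameter paraproduct: the cancellative slot $\Delta_I$ sits on $f$, and the symbol slot carries an average $E_I$. Such an operator is controlled by $\|g\|_{L^\infty}$, not by $\|g\|_{\BMO(\R^n)}$; for a general BMO symbol the coefficients $\langle g\rangle_I$ are unbounded. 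The argument nevertheless survives because little BMO gives exactly the $L^\infty$ control $\|E_I^1\Delta_J^2 b\|_{L^\infty}\lesssim\|b\|_{\bmo}$ uniformly --- but the reason is the $L^\infty$ bound, not a BMO paraproduct estimate, and you should say so. Alternatively, freeze $x_1$: then for each $I$ the $J$-sum $\sum_J \Delta_J^2(E_I^1 b)\cdot\Delta_J^2 f$ is a genuine one-parameter paraproduct in $\R^m$ with symbol $\langle b\rangle_{I,1}(\cdot)\in\BMO(\R^m)$ uniformly (an average of BMO functions), and the outer $I$-sum carries the $\R^n$-orthogonality.

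For group (i) there is a real gap. The dyadic bi-parameter Carleson embedding theorem uses the condition $\sum_{R\subset\Omega}|\langle b,h_R\rangle|^2\lesssim|\Omega|$ (which is \emph{by definition} the dyadic $\BMO_{\textup{prod}}$ norm --- Journ\'e's covering lemma is not needed here and its mention is a red herring), and it yields the \emph{unweighted} $L^2$ bound $\sum_R|\langle b,h_R\rangle|^2|\langle f\rangle_R|^2\lesssim\|f\|_{L^2}^2$. It does not yield $\|A_4(b,f)\|_{L^p(w)}\lesssim\|f\|_{L^p(w)}$: after the $A_\infty$ square function reduction one faces $\sum_R|\langle b,h_R\rangle|^2|\langle f\rangle_R|^2\frac{w(R)}{|R|}$, and the unweighted Carleson condition on $\{|\langle b,h_R\rangle|^2\}$ does not control this quantity by $\int|f|^2 w$ --- a weighted Carleson embedding would require $\sum_{R\subset\Omega}|\langle b,h_R\rangle|^2\lesssim w(\Omega)$, which $\BMO_{\textup{prod}}$ does not give. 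The Fefferman--Stein inequality also does not bridge this gap. The working argument (and the one the paper intends, via \cite{HPW}) is $H^1$-$\BMO_{\textup{prod}}$ duality: write $\langle A_4(b,f),g\rangle=\sum_R\langle b,h_R\rangle\langle f\rangle_R\langle g,h_R\rangle$, bound this by $\|b\|_{\BMO_{\textup{prod}}}\int (\sum_R|\langle f\rangle_R|^2|\langle g,h_R\rangle|^2\frac{1_R}{|R|})^{1/2}\lesssim\|b\|_{\BMO_{\textup{prod}}}\int Mf\cdot S_{\calD}g$, and split with H\"older between $L^p(w)$ and $L^{p'}(w^{1-p'})$. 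You should replace your group (i) argument with this duality step.
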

\begin{proof}
The proofs follow by completely standard arguments using $H^1$-$\BMO$ type duality estimates. See e.g. \cite{HPW} (the operators
$a_j^1(b,\cdot)$, $a_j^2(b,\cdot)$ do not appear in \cite{HPW}, and the operators $A_i(b,\cdot)$ appear in a bit different Haar function forms).
\end{proof}

Let now $f \in L^p(\R^{n+m})$ for some $p \in (1,\infty)$, and $b \in \bmo(\R^{n+m})$.
We know that $b \in L^{p}_{\loc}(\R^{n+m})$ by the John--Nirenberg valid for little BMO.
For $I_0 \in \calD^n$ and $ J_0 \in \calD^m$ we will now introduce our expansions of $\langle bf, h_{I_0} \otimes h_{J_0}\rangle$, $\big \langle bf, h_{I_0} \otimes \frac{1_{J_0}}{|J_0|}\big\rangle$ and $\langle bf \rangle_{I_0 \times J_0}$.

We first consider $\langle bf, h_{I_0} \otimes h_{J_0} \rangle$. There holds
$$
1_{I_0 \times J_0} b
= \sum_{\substack{I_1\times J_1 \in \calD \\ I_1 \times J_1 \subset I_0 \times J_0}}\Delta_{I_1 \times J_1} b
+\sum_{\substack{J_1 \in \calD^m \\ J_1 \subset J_0}} E^1_{I_0} \Delta^2_{J_1} b
+ \sum_{\substack{I_1 \in \calD^n \\ I_1 \subset I_0}} \Delta^1_{I_1} E^2_{J_0} b
+ E_{I_0 \times J_0} b.
$$
Let us denote these terms by $I_j$, $j=1,2,3,4$, in the respective order.
We have the corresponding decomposition of $f$, whose terms we denote by $II_i$, $i=1,2,3,4$. Calculating carefully the pairings
$\langle I_j II_i, h_{I_0} \otimes h_{J_0} \rangle$ we see that
\begin{equation}\label{eq:biparEX}
\langle bf, h_{I_0} \otimes h_{J_0} \rangle = \sum_{i=1}^8 \langle A_i(b, f), h_{I_0} \otimes h_{J_0} \rangle + \langle b \rangle_{I_0 \times J_0} \langle f, h_{I_0} \otimes h_{J_0} \rangle.
\end{equation}

We now consider $\big \langle bf, h_{I_0} \otimes \frac{1_{J_0}}{|J_0|}\big\rangle$.
This time we write
$
1_{I_0} b  = \sum_{\substack{I_1 \in \calD^n \\ I_1 \subset I_0}}\Delta_{I_1}^1 b + E_{I_0}^1 b,
$
and similarly for $f$. Calculating $\langle bf, h_{I_0} \rangle_1$ we see that
\begin{equation}\label{eq:1EX}
\begin{split}
\Big \langle bf, h_{I_0} \otimes \frac{1_{J_0}}{|J_0|}\Big\rangle &= \sum_{i=1}^2 \Big\langle a_i^1(b,f), h_{I_0} \otimes \frac{1_{J_0}}{|J_0|} \Big\rangle \\
&+ \bla (\langle b \rangle_{I_0,1} - \langle b \rangle_{I_0 \times J_0}) \langle f, h_{I_0}\rangle_1\bra_{J_0}
+ \langle b \rangle_{I_0 \times J_0} \Big \langle f, h_{I_0} \otimes \frac{1_{J_0}}{|J_0|}\Big\rangle.
\end{split}
\end{equation}
When we have $\langle bf \rangle_{I_0 \times J_0}$ we do not expand at all:
\begin{equation}\label{eq:noEX}
\langle bf \rangle_{I_0 \times J_0} = \langle (b-\langle b \rangle_{I_0 \times J_0})f \rangle_{I_0 \times J_0}
+ \langle b \rangle_{I_0 \times J_0} \langle f\rangle_{I_0 \times J_0}.
\end{equation}

All of our commutators are simply decomposed using \eqref{eq:biparEX}, \eqref{eq:1EX} (and its symmetric form) and \eqref{eq:noEX} whenever
the relevant pairings/averages appear. This splits the commutator into several parts. Only the parts which  come from the last terms in \eqref{eq:biparEX}, \eqref{eq:1EX} and \eqref{eq:noEX} need to be combined with another part of the commutator of the same form. 
All the other parts are handled separately.

\subsection{Adapted maximal functions}\label{sec:AdapMaxFunc}
For $b \in \BMO(\R^n)$ and $f \colon \R^n \to \C$ define
$$
M_bf = \sup_I \frac{1_I}{|I|} \int_I |b-\langle b \rangle_I| |f|.
$$
In the situation $b \in \bmo(\R^{n+m})$ and $f \colon \R^{n+m} \to \C$ define $M_b$ analogously 
but take the supremum over all rectangles. The dyadic variants could also be defined, and denoted by
$M_{\calD^n, b}$ and $M_{\calD, b}$.
For a little BMO function $b \in \bmo(\R^{n+m})$ define 
$$
\varphi_{\calD^m, b}^2(f) = \sum_{J \in \calD^m} M_{\langle b \rangle_{J,2}} \langle f, h_J \rangle_2 \otimes h_J,
$$
and similarly define $\varphi_{\calD^n, b}^1(f)$. For our later usage it is important to not to use the dyadic
variant $M_{\calD^n, \langle b \rangle_{J,2}}$, as it would induce an unwanted dependence on $\calD^n$ (which has relevance
in some randomisation considerations). 
\begin{lem}\label{lem:bmaxbounds}
Suppose $\|b_i\|_{\BMO(\R^n)} \le 1$, $1 < u, p < \infty$ and $w \in A_p(\R^n)$. Then we have
\begin{equation}\label{eq:vMb}
\Big\| \Big( \sum_i [M_{b_i} f_i]^u \Big)^{1/u} \Big\|_{L^p(w)} \lesssim C([w]_{A_p(\R^n)}) \Big\| \Big( \sum_i |f_i|^u \Big)^{1/u} \Big\|_{L^p(w)}.
\end{equation}
The same bound holds with $\|b_i\|_{\bmo(\R^n \times \R^m)} \le 1$ and $w \in A_p(\R^n \times \R^m)$.
For a function $b$ with $\|b\|_{\bmo(\R^n \times \R^m)} \le 1$ we also have
$$
\|\varphi_{\calD^m, b}^2(f)\|_{L^p(w)} \le C([w]_{A_p(\R^n \times \R^m)})\|f\|_{L^p(w)}, \qquad 1 < p < \infty,\, w \in A_p(\R^n \times \R^m).
$$
\end{lem}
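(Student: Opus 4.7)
The plan is to derive \eqref{eq:vMb} from the standard trick that $M_b$ is dominated by a slightly enlarged maximal function, and then to deduce the $\varphi^2_{\calD^m, b}$ bound from \eqref{eq:vMb} via a slicing argument together with the bi-parameter square function estimate of Lemma \ref{lem:standardEst1}.

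First I would prove \eqref{eq:vMb}. For any $s > 1$ and any cube $I$, H\"older's inequality gives
\[
\frac{1}{|I|}\int_I |b-\langle b\rangle_I||f| \le \Big(\frac{1}{|I|}\int_I |b-\langle b\rangle_I|^{s'}\Big)^{1/s'}\Big(\frac{1}{|I|}\int_I |f|^s\Big)^{1/s},
\]
and the first factor is controlled by $\|b\|_{\BMO(\R^n)}$ by the John--Nirenberg inequality. Taking the sup over $I$ yields the pointwise estimate $M_b f \lesssim \|b\|_{\BMO(\R^n)} M_s f$, where $M_s f = (M|f|^s)^{1/s}$. Since $w \in A_p(\R^n)$ with $p \in (1,\infty)$, the openness of $A_p$ lets us choose $s > 1$ with $s < u$ such that $w \in A_{p/s}(\R^n)$, and then the scalar Fefferman--Stein inequality for $M$ (Lemma \ref{lem:standardEst2}), applied to $|f_i|^s$ with exponents $u/s$ and $p/s$, produces the vector-valued bound for $M_s$. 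This gives \eqref{eq:vMb}, and the bi-parameter version is identical since $b \in \bmo(\R^{n+m})$ enjoys the John--Nirenberg property (as recorded in Section \ref{ss:bmo}) and $w \in A_p(\R^n \times \R^m)$ is an $A_p$ weight with respect to the strong maximal function used for rectangles.

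Next I would prove the $\varphi^2_{\calD^m,b}$ estimate. Expanding $\Delta_J^2 [\varphi^2_{\calD^m, b}(f)] = M_{\langle b\rangle_{J,2}}\langle f, h_J\rangle_2 \otimes h_J$ and applying the bi-parameter weighted square function equivalence from Lemma \ref{lem:standardEst1} gives
\[
\|\varphi^2_{\calD^m, b}(f)\|_{L^p(w)} \lesssim \Big\| \Big(\sum_{J \in \calD^m} |M_{\langle b\rangle_{J,2}}\langle f, h_J\rangle_2|^2 \otimes \frac{1_J}{|J|} \Big)^{1/2}\Big\|_{L^p(w)}.
\]
Now I would freeze $x_2 \in \R^m$ and treat the inner sum as a vector-valued quantity in $x_1$. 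Setting $g_J^{x_2}(x_1) := |J|^{-1/2} 1_J(x_2)^{1/2} \langle f, h_J\rangle_2(x_1)$ (the scalar factor commutes with $M_{\langle b\rangle_{J,2}}$), the $x_2$-slice equals $(\sum_J |M_{\langle b\rangle_{J,2}} g_J^{x_2}|^2)^{1/2}$. Since $\|\langle b\rangle_{J,2}\|_{\BMO(\R^n)} \le \|b\|_{\bmo(\R^{n+m})} \le 1$ uniformly in $J$ and, by the equivalence recorded in Section 2.3, $[w(\cdot, x_2)]_{A_p(\R^n)} \lesssim [w]_{A_p(\R^n \times \R^m)}$ for a.e.\ $x_2$, the one-parameter version of \eqref{eq:vMb} applies with constants uniform in $x_2$. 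Integrating over $x_2$ and using Fubini, we land on $\|(\sum_J |g_J^{x_2}|^2)^{1/2}\|_{L^p(w)} = \|(\sum_J |\Delta_J^2 f|^2)^{1/2}\|_{L^p(w)}$, which is $\lesssim \|f\|_{L^p(w)}$ by Lemma \ref{lem:standardEst1} again.

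The main obstacle will be the slicing step in the third part: one needs the equivalent characterisation of bi-parameter $A_p$ weights as essentially uniform one-parameter $A_p$ weights in each slice, so that constants do not blow up when integrating the slice-wise vector-valued inequality. Everything else is a routine combination of John--Nirenberg, openness of $A_p$, and Fefferman--Stein.
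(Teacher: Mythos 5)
Your proof is correct and follows essentially the same route as the paper: dominate $M_b$ by $M_s$ via John--Nirenberg, use openness of $A_p$ (reverse H\"older) to pick $s$, and deduce the $\varphi^2_{\calD^m,b}$ bound from \eqref{eq:vMb} and the weighted square function equivalence of Lemma \ref{lem:standardEst1}, with the slicing in $x_2$ that the paper leaves implicit made explicit. The only tactical difference is that the paper obtains the vector-valued inequality \eqref{eq:vMb} by extrapolating from the scalar estimate $\|M_b f\|_{L^p(w)} \lesssim \|f\|_{L^p(w)}$, whereas you apply the weighted vector-valued Fefferman--Stein inequality directly to $M_s$ (which requires the additional, easily arranged, constraint $s < u$); both yield the same conclusion.
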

\begin{proof}
We begin by proving the bound $\|M_b f\|_{L^p(w)} \le C([w]_{A_p(\R^n \times \R^m)})\|f\|_{L^p(w)}$ -- the proof is the same in the one-parameter case.
Fix $w \in A_p(\R^n \times \R^m)$ and choose $s = s([w]_{A_p(\R^n \times \R^m)}) \in (1,p)$ so that $[w]_{A^{p/s}(\R^n \times \R^m)} \le C([w]_{A_p(\R^n \times \R^m)})$.
This can be done using the reverse H\"older inequality -- the well-known bi-parameter version is stated and proved e.g. in Proposition 2.2. of \cite{HPW}.
Using H\"older's inequality and the John--Nirenberg for little bmo we get that
$
M_b f \le C(s) M_s f = C([w]_{A_p(\R^n \times \R^m)})M_s f.
$
Now, using that $M \colon L^q(v) \to L^q(v)$ for all $q \in (1,\infty)$ and $v \in A_q(\R^n \times \R^m)$ we have
$$
\|M_b f\|_{L^p(w)} \le C([w]_{A_p(\R^n \times \R^m)})\| M |f|^s \|_{L^{p/s}(w)}^{1/s} \le C([w]_{A_p(\R^n \times \R^m)}) \|f\|_{L^p(w)}.
$$
The bi-parameter version of \eqref{eq:vMb} (and \eqref{eq:vMb} itself) now follow by extrapolation.

Next, using the estimate \eqref{eq:vMb} and the fact that $\| \langle b \rangle_{J,2} \|_{\BMO(\R^n)} \lesssim 1$
we get
\begin{equation*}
\begin{split}
\|\varphi^2_{\calD^m, b}(f)\|_{L^p(w)}
&\le C([w]_{A_p(\R^n \times \R^m)}) \Big\| \Big(\sum_{J} \big(M_{\langle b \rangle_{J,2}} \langle f, h_{J}\rangle_2 \big)^2 \otimes \frac{1_{J}}{|J|} \Big)^{1/2} 
\Big\|_{L^p(w)} \\
& \le C([w]_{A_p(\R^n \times \R^m)}) \Big\| \Big(\sum_{J} \big|\langle f, h_{J}\rangle_2 \big|^2 \otimes \frac{1_{J}}{|J|} \Big)^{1/2} 
\Big\|_{L^p(w)} \\
&\le C([w]_{A_p(\R^n \times \R^m)}) \| f \|_{L^p(w)}.
\end{split}
\end{equation*}

\end{proof}
We collect a few trivial, but useful, lemmas below.
\begin{lem}\label{lem:maximalbound}
For $I \in \calD^n$ and $J \in \calD^m$ we have
$$
\big|\bla (\langle b \rangle_{J, 2} -\langle b \rangle_{I \times J} )\langle f, h_J\rangle_2 \bra_I\big| 
\le \Big\langle \varphi^2_{\calD^m, b}(f), \frac{1_I}{|I|} \otimes h_J\Big\rangle
$$
and
$$
\big|\bla (b-\langle b \rangle_{I \times J})f \bra_{I \times J}\big| \lesssim \langle  M_b f \rangle_{I \times J}.
$$
\end{lem}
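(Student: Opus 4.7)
Both inequalities follow directly by choosing the rectangle $I\times J$ (respectively the cube $I$) as the test set in the supremum defining the relevant adapted maximal function; there is no serious obstacle, it is really a matter of unwinding the definitions.

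For the second inequality, the plan is as follows. For any point $x\in I\times J$, the rectangle $I\times J$ itself is admissible in the supremum defining $M_bf(x)$, so
\[
M_bf(x) \;\ge\; \frac{1}{|I\times J|}\int_{I\times J}|b-\langle b\rangle_{I\times J}|\,|f|.
\]
Averaging this pointwise bound over $x\in I\times J$ and using
\[
\bigl|\langle (b-\langle b\rangle_{I\times J})f\rangle_{I\times J}\bigr| \;\le\; \frac{1}{|I\times J|}\int_{I\times J}|b-\langle b\rangle_{I\times J}|\,|f|
\]
yields the desired estimate (in fact with constant $1$).

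For the first inequality, I would begin by computing the right-hand side explicitly. By orthogonality of the cancellative Haar functions $\{h_{J'}\}_{J'\in\calD^m}$ (with matching $\eta$ index, which is suppressed throughout the paper), the pairing
\[
\Big\langle \varphi^2_{\calD^m,b}(f),\,\tfrac{1_I}{|I|}\otimes h_J\Big\rangle \;=\; \sum_{J'\in\calD^m}\Big\langle M_{\langle b\rangle_{J',2}}\langle f,h_{J'}\rangle_2\otimes h_{J'},\,\tfrac{1_I}{|I|}\otimes h_J\Big\rangle
\]
collapses to the single $J'=J$ term $\bigl\langle M_{\langle b\rangle_{J,2}}\langle f,h_J\rangle_2\bigr\rangle_I$. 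Next, I would invoke the elementary identity $\langle\langle b\rangle_{J,2}\rangle_I=\langle b\rangle_{I\times J}$, so that for each $x_1\in I$ the cube $I$ itself is admissible in the supremum defining $M_{\langle b\rangle_{J,2}}$, giving
\[
M_{\langle b\rangle_{J,2}}(\langle f,h_J\rangle_2)(x_1) \;\ge\; \frac{1}{|I|}\int_I |\langle b\rangle_{J,2}-\langle b\rangle_{I\times J}|\,|\langle f,h_J\rangle_2|.
\]
Averaging in $x_1\in I$ produces an upper bound on $|\langle(\langle b\rangle_{J,2}-\langle b\rangle_{I\times J})\langle f,h_J\rangle_2\rangle_I|$, which is exactly the left-hand side. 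This completes the plan.
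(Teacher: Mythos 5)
Your proof is correct and follows the same route as the paper: the paper simply asserts $\big|\langle (\langle b \rangle_{J, 2} -\langle b \rangle_{I \times J} )\langle f, h_J\rangle_2 \rangle_I\big| \le \langle M_{\langle b \rangle_{J,2}} \langle f, h_J\rangle_2 \rangle_{I}$ and then cites orthogonality for the identification with the $\varphi^2_{\calD^m,b}$ pairing, while you make explicit the two ingredients the paper leaves implicit, namely that $\langle\langle b\rangle_{J,2}\rangle_I=\langle b\rangle_{I\times J}$ so that taking $I$ as the test cube in the supremum defining $M_{\langle b\rangle_{J,2}}$ produces exactly the right centering. Your treatment of the second inequality (testing $M_b$ with $R=I\times J$ and using the triangle inequality) is likewise the intended one-liner the paper dismisses as "even more immediate."
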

\begin{proof}
There holds
\begin{equation*}
\big|\bla (\langle b \rangle_{J, 2} -\langle b \rangle_{I \times J} )\langle f, h_J\rangle_2 \bra_I\big| \le \langle  M_{\langle b \rangle_{J,2}} \langle f, h_J\rangle_2 \rangle_{I}
= \Big\langle \varphi^2_{\calD^m, b}(f), \frac{1_I}{|I|} \otimes h_J\Big\rangle,
\end{equation*}
where the last inequality follows from orthogonality. The second claimed inequality is even more immediate.
\end{proof}

The validity of the following lemma is obvious.
\begin{lem}\label{lem:bmobound}
Suppose $I^{(i)} = Q^{(q)} = K$ and $J^{(j)} = R^{(r)} = V$. If $\|b\|_{\bmo(\R^{n+m})} = 1$ then we have
$
|\langle b \rangle_{Q \times R} - \langle b \rangle_{I \times J}| \lesssim \max(i, j, q, r).
$
\end{lem}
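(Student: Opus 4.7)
The plan is to telescope through the common dyadic ancestors $K \times V$ using the little BMO property. Since $I \subset K$, $Q \subset K$, $J \subset V$, and $R \subset V$, I write
\[
\langle b \rangle_{Q \times R} - \langle b \rangle_{I \times J} = \big(\langle b \rangle_{Q \times R} - \langle b \rangle_{K \times V}\big) + \big(\langle b \rangle_{K \times V} - \langle b \rangle_{I \times J}\big),
\]
and by symmetry reduce to controlling the first difference by $\max(q,r)$ and the second by $\max(i,j)$.

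To handle, say, the first difference, I split it further into a pure $\R^n$ telescoping and a pure $\R^m$ telescoping:
\[
\langle b \rangle_{Q \times R} - \langle b \rangle_{K \times V} = \big(\langle b \rangle_{Q \times R} - \langle b \rangle_{Q \times V}\big) + \big(\langle b \rangle_{Q \times V} - \langle b \rangle_{K \times V}\big).
\]
For the second summand I telescope through the dyadic ancestors $Q^{(s)}$ for $s=0,\ldots,q-1$ and write
\[
\langle b \rangle_{Q \times V} - \langle b \rangle_{K \times V} = \sum_{s=0}^{q-1} \frac{1}{|V|}\int_V \big[\langle b(\cdot,x_2)\rangle_{Q^{(s)}} - \langle b(\cdot,x_2)\rangle_{Q^{(s+1)}}\big]\ud x_2.
\]
Each bracketed difference is bounded in absolute value by $2\|b(\cdot,x_2)\|_{\BMO(\R^n)} \le 2\|b\|_{\bmo(\R^{n+m})} = 2$, so this summand is $\lesssim q$. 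The first summand $\langle b \rangle_{Q \times R} - \langle b \rangle_{Q \times V}$ is treated identically after Fubini, telescoping through $R^{(s)}$ with bounds from $\|b(x_1,\cdot)\|_{\BMO(\R^m)} \le 1$, giving $\lesssim r$. Adding yields $\lesssim q+r \lesssim \max(q,r)$, and the analogous argument for $\langle b \rangle_{K \times V} - \langle b \rangle_{I \times J}$ produces $\lesssim \max(i,j)$. Combining everything gives the claim. There is no real obstacle here; the only point to be careful about is that at each telescoping step the essential supremum characterisation of $\bmo(\R^{n+m})$ collapses to a one-parameter BMO estimate on a slice.
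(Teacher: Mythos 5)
Your argument is correct, and since the paper simply declares this lemma ``obvious'' without supplying a proof, there is nothing to compare against: your telescoping through the common ancestor $K\times V$ (separating the $\R^n$ and $\R^m$ variables and using the slice characterisation of $\bmo$) is exactly the standard way to make the ``obvious'' rigorous. One tiny inaccuracy: the bound on a single telescoping step is $|\langle g\rangle_{Q^{(s)}}-\langle g\rangle_{Q^{(s+1)}}|\le 2^n\|g\|_{\BMO(\R^n)}$ (not $2\|g\|_{\BMO(\R^n)}$), since $|Q^{(s+1)}|/|Q^{(s)}|=2^n$; this only changes the implicit constant and does not affect the conclusion.
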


\subsection{Banach range boundedness of commutators of DMOs}\label{sec:BanachforModels}
For the Banach range theory of commutators we only need the fact that all the DMOs from Section \ref{sec:defbilinbiparmodel}
are of the following general type. Fix dyadic grids $\calD^n$ and $\calD^m$.
Let $U = U^v_k$, $0 \le k_i \in \Z$ and $0 \le v_i \in \Z$, $i=1,2,3$, be a bilinear bi-parameter operator such that 
\begin{equation*}
\begin{split}
\langle U(f_1,f_2),f_3 \rangle
= \sum_{\substack{K \in \calD^n \\ V \in \calD^m}} 
\sum_{\substack{I_1, I_2, I_3 \in \calD^n \\ I_1^{(k_1)} = I_2^{(k_2)} = I_3^{(k_3)} = K}} 
&\sum_{\substack{J_1, J_2, J_3 \in \calD^m \\ J_1^{(v_1)} = J_2^{(v_2)} = J_3^{(v_3)} = V}} a_{K, V, (I_i), (J_j)} \\
&\times \langle f_1, \wt h_{I_1} \otimes  \wt h_{J_1}\rangle \langle f_2, \wt h_{I_2} \otimes \wt h_{J_2}\rangle  
\langle f_3, \wt  h_{I_3} \otimes \wt h_{J_3} \rangle,
\end{split}
\end{equation*}
where $a_{K, V, (I_i), (J_j)}$ are constants and for all $i=1,2,3$ we have $ \wt h_{I_i}= h_{I_i}$ for all $I_i \in \calD^n$ or $\wt h_{I_i}= h_{I_i}^0$ for all $I_i\in \calD^n$,
and similarly with the functions $\wt h_{J_j}$. We assume that for all $p,q,r \in (1,\infty)$ with $1/p + 1/q = 1/r$ we have
\begin{equation}\label{eq:bb}
\begin{split}
\sum_{\substack{K \in \calD^n \\ V \in \calD^m}} 
\sum_{\substack{I_1, I_2, I_3 \in \calD^n \\ I_1^{(k_1)} = I_2^{(k_2)} = I_3^{(k_3)} = K}} 
&\sum_{\substack{J_1, J_2, J_3 \in \calD^m \\ J_1^{(v_1)} = J_2^{(v_2)} = J_3^{(v_3)} = V}} \big| a_{K, V, (I_i), (J_j)} \\
&\times \langle f_1, \wt h_{I_1} \otimes  \wt h_{J_1}\rangle \langle f_2, \wt h_{I_2} \otimes \wt h_{J_2}\rangle  
\langle f_3, \wt  h_{I_3} \otimes \wt h_{J_3} \rangle \big | \\
& \lesssim \| f_1 \|_{L^p(\R^{n+m})} \| f_2 \|_{L^q(\R^{n+m})}\| f_3 \|_{L^{r'}(\R^{n+m})}. 
\end{split}
\end{equation}
We do not assume anything else about the constants $a_{K, V, (I_i), (J_j)}$.
In particular, $U$ can be a bilinear bi-parameter shift, a partial paraproduct or a full paraproduct.

The proof of the following Banach range boundedness result is now surprisingly easy.
\begin{prop}\label{prop:com1ofmodelBanach}
Let $p,q,r \in (1,\infty)$, $1/p + 1/q = 1/r$, $0 \le k_i \in \Z$ and $0 \le v_i \in \Z$, $i=1,2,3$. 
Let $U = U^v_k$ be a general bilinear bi-parameter DMO satisfying
\eqref{eq:bb}. In particular, $U$ can be a bilinear bi-parameter shift, a partial paraproduct or a full paraproduct.
Then for $b$ such that $\|b\|_{\bmo(\R^{n+m})} = 1$ we have
$$
\|[b, U]_1(f_1, f_2)\|_{L^r(\R^{n+m})} \lesssim (1+\max(k_i, v_i)) \|f_1\|_{L^p(\R^{n+m})} \|f_2\|_{L^q(\R^{n+m})}.
$$

\end{prop}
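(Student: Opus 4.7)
The plan is to dualize and write
\begin{equation*}
\langle [b,U]_1(f_1,f_2), f_3\rangle = \langle U(f_1, f_2), bf_3\rangle - \langle U(bf_1, f_2), f_3\rangle,
\end{equation*}
then, in the Haar expansion of $U$, decompose each of the pairings $\langle bf_3, \wt h_{I_3}\otimes \wt h_{J_3}\rangle$ (from the first term) and $\langle bf_1, \wt h_{I_1}\otimes \wt h_{J_1}\rangle$ (from the second) using the cancellation-aware identities of Section \ref{sec:marprod}: formula \eqref{eq:biparEX} when both Haar functions are cancellative, \eqref{eq:1EX} or its $(I \leftrightarrow J)$ symmetric analog when exactly one is, and \eqref{eq:noEX} when neither is. Each such identity cleanly splits the pairing into a \emph{non-diagonal} part, made of paraproduct-type expressions involving $A_i(b,\cdot)$, $a_j^1(b,\cdot)$, $a_j^2(b,\cdot)$ together with the ``middle'' terms from \eqref{eq:1EX} and \eqref{eq:noEX}, and a single \emph{diagonal} piece of the form $\langle b\rangle_{I_i \times J_i} \langle f_i, \wt h_{I_i}\otimes \wt h_{J_i}\rangle$.

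The non-diagonal parts get reabsorbed into $f_3$ (resp.\ $f_1$) slot-by-slot. For example, the $A_i$-contributions from \eqref{eq:biparEX} produce expressions of the shape
\begin{equation*}
\sum_{K,V}\sum_{(I_i),(J_j)} a_{K,V,(I_i),(J_j)} \langle f_1,\wt h_{I_1}\otimes\wt h_{J_1}\rangle \langle f_2,\wt h_{I_2}\otimes\wt h_{J_2}\rangle \langle A_i(b, f_3),\wt h_{I_3}\otimes\wt h_{J_3}\rangle,
\end{equation*}
which \eqref{eq:bb} dominates by $\|f_1\|_{L^p}\|f_2\|_{L^q}\|A_i(b,f_3)\|_{L^{r'}} \lesssim \|b\|_{\bmo}\|f_1\|_{L^p}\|f_2\|_{L^q}\|f_3\|_{L^{r'}}$, the last estimate being Lemma \ref{lem:basicAa}. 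The middle terms from \eqref{eq:1EX} and \eqref{eq:noEX} are handled in exactly the same way after invoking Lemma \ref{lem:maximalbound} to dominate them by pairings against $\varphi^{1}_{\calD^n,b}(f_3)$, $\varphi^{2}_{\calD^m,b}(f_3)$, or $M_b f_3$, each of which is $L^{r'}$-bounded with norm $\lesssim \|b\|_{\bmo}$ by Lemma \ref{lem:bmaxbounds}. None of these contributions carries any factor depending on the complexity $(k,v)$.

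All of the complexity dependence is concentrated in the combined diagonal contribution: the diagonal pieces from the two halves of the commutator appear with opposite signs and collapse to
\begin{equation*}
\sum_{K,V}\sum_{(I_i),(J_j)} a_{K,V,(I_i),(J_j)}\big(\langle b\rangle_{I_3\times J_3} - \langle b\rangle_{I_1\times J_1}\big)\prod_{i=1}^3 \langle f_i, \wt h_{I_i}\otimes \wt h_{J_i}\rangle.
\end{equation*}
Since $I_1^{(k_1)} = I_3^{(k_3)} = K$ and $J_1^{(v_1)} = J_3^{(v_3)} = V$, Lemma \ref{lem:bmobound} gives the key bound $|\langle b\rangle_{I_3\times J_3} - \langle b\rangle_{I_1\times J_1}| \lesssim (1 + \max(k_i, v_i))\|b\|_{\bmo}$, after which a last application of \eqref{eq:bb} finishes the proof.

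The main obstacle is bookkeeping rather than analytic: there are nine cancellativity patterns for $(\wt h_{I_i}, \wt h_{J_i})_{i=1}^3$, each of which forces a slightly different choice of expansion formula on each side of the commutator, and in the mixed cases one must track the normalization factors $|I_i|^{\pm 1/2}$ and $|J_j|^{\pm 1/2}$ carefully in order to verify that the middle terms really do fit into a pairing of the type required by \eqref{eq:bb}. Once one or two representative cases are checked the rest follow by symmetry.
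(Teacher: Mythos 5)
Your proposal is correct and follows essentially the same route as the paper's own proof. Both start from $\langle [b,U]_1(f_1,f_2),f_3\rangle = \langle U(f_1,f_2),bf_3\rangle - \langle U(bf_1,f_2),f_3\rangle$, expand the pairings $\langle bf_1,\cdot\rangle$ and $\langle bf_3,\cdot\rangle$ via \eqref{eq:biparEX}, \eqref{eq:1EX} or \eqref{eq:noEX} according to the cancellativity of the Haar functions in the corresponding slot, absorb the non-diagonal pieces back into a single function slot through Lemmas \ref{lem:basicAa}, \ref{lem:maximalbound} and \ref{lem:bmaxbounds} followed by \eqref{eq:bb}, and isolate the complexity factor in the combined $\langle b\rangle_{I_3\times J_3}-\langle b\rangle_{I_1\times J_1}$ term via Lemma \ref{lem:bmobound}. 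The paper organizes the case analysis slightly differently (it reduces to seven configurations determined by the Haar functions in the $f_1$ and $f_3$ slots only, since the $f_2$ slot plays no role in the commutator $[b,\cdot]_1$, and one need not expand $bf_2$ at all), but this is pure bookkeeping and your "nine patterns, check two and symmetrize" description is an accurate summary of the same work.
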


\begin{proof}
 We separately treat the different possible combinations of cancellative and non-cancellative Haar functions.
 The proof depends only on what Haar functions  we have paired with $f_1$ and $f_3$ in $\langle U(f_1, f_2), f_3\rangle$.
 All the model operators fall into one of the following cases:

\begin{enumerate}
\item  We have $\langle f_1,  h_{I_1} \otimes  h_{J_1}\rangle
\langle f_3,  h_{I_3} \otimes  h_{J_3}\rangle$.

\item   We have $\langle f_1, h_{I_1}^0 \otimes h_{J_1}\rangle \langle f_3,  h_{I_3} \otimes  h_{J_3}\rangle$, or one of the three other symmetric
cases. 

\item  We have $\langle f_1,  h_{I_1}^0 \otimes  h_{J_1}\rangle
\langle f_3,  h_{I_3} \otimes  h_{J_3}^0\rangle$, or the symmetric case.

\item We have $\langle f_1,  h_{I_1}^0 \otimes  h_{J_1}^0\rangle
\langle f_3,  h_{I_3} \otimes  h_{J_3}\rangle$, or the symmetric case.

\item We have $\langle f_1,  h_{I_1}^0 \otimes  h_{J_1}\rangle
\langle f_3,  h_{I_3}^0 \otimes  h_{J_3}\rangle$, or the symmetric case.

\item We have  $\langle f_1,  h_{I_1}^0 \otimes  h_{J_1}^0\rangle
\langle f_3,  h_{I_3}^0 \otimes  h_{J_3}\rangle$, or one of the other three symmetric cases.

\item We have  $\langle f_1,  h_{I_1}^0 \otimes  h_{J_1}^0\rangle
\langle f_3,  h_{I_3}^0 \otimes  h_{J_3}^0\rangle$.

\end{enumerate}

\textbf{Case 1.}  
Expand using \eqref{eq:biparEX}.
Using Lemma \ref{lem:bmobound}, the boundedness property \eqref{eq:bb} and the boundedness of
the operators $A_i(b, \cdot)$, $i = 1,\ldots, 8$, we have that
$$
|\langle [b,U]_1(f_1, f_2), f_3\rangle| \lesssim \max(k_i, v_i) \|f_1\|_{L^p(\R^{n+m})} \|f_2\|_{L^q(\R^{n+m})} \|f_3\|_{L^{r'}(\R^{n+m})}.
$$

\textbf{Case 2.}
This time we expand using \eqref{eq:biparEX} and \eqref{eq:1EX}. Then we use Lemma \ref{lem:maximalbound}, Lemma \ref{lem:bmobound}, the boundedness property \eqref{eq:bb},
the boundedness of the operators $A_i(b, \cdot)$, $i = 1,\ldots, 8$, the boundedness of the operators $a_i^1(b,\cdot)$, $a_i^2(b,\cdot)$, $i = 1,2$, and Lemma
\ref{lem:bmaxbounds}.
This gives us the desired bound. For example, one calculates like
\begin{align*}
&\sum_{\substack{K \in \calD^n \\ V \in \calD^m}} \sum_{\substack{I_1, I_2, I_3 \in \calD^n \\ I_1^{(k_1)} = I_2^{(k_2)} = I_3^{(k_3)} = K}} 
\sum_{\substack{J_1, J_2, J_3 \in \calD^m \\ J_1^{(v_1)} = J_2^{(v_2)} = J_3^{(v_3)} = V}} |a_{K, V, (I_i), (J_j)}| \\
&\times \big|\bla (\langle b \rangle_{J_1,2}-\langle b \rangle_{I_1 \times J_1}) \langle f_1,h_{J_1} \rangle_2,h^0_{I_1} \bra \big|
|\langle f_2,  \wt h_{I_2} \otimes \wt h_{J_2}\rangle|
|\langle f_3, h_{I_3} \otimes h_{J_3} \rangle| \\
&\lesssim
\sum_{\substack{K \in \calD^n \\ V \in \calD^m}} \sum_{\substack{I_1, I_2, I_3 \in \calD^n \\ I_1^{(k_1)} = I_2^{(k_2)} = I_3^{(k_3)} = K}} 
\sum_{\substack{J_1, J_2, J_3 \in \calD^m \\ J_1^{(v_1)} = J_2^{(v_2)} = J_3^{(v_3)} = V}} |a_{K, V, (I_i), (J_j)}| \\
&\times \langle \varphi^2_{\calD^m, b}(f_1), h_{I_1}^0 \otimes h_{J_1}\rangle
|\langle f_2,  \wt h_{I_2} \otimes \wt h_{J_2}\rangle|
|\langle f_3, h_{I_3} \otimes h_{J_3} \rangle| \\
&\lesssim \| \varphi^2_{\calD^m, b}(f_1) \|_{L^p(\R^{n+m})} \|f_2\|_{L^q(\R^{n+m})} \|f_3\|_{L^{r'}(\R^{n+m})} \\
&\lesssim \|f_1\|_{L^p(\R^{n+m})} \|f_2\|_{L^q(\R^{n+m})} \|f_3\|_{L^{r'}(\R^{n+m})}.
\end{align*}

\textbf{Cases 3.-7.}
We keep operating as above, and so these cases are very similar.
\end{proof}

\subsection{Quasi--Banach estimates for commutators of averages of DMOs}\label{sec:quasiviarest}
\subsubsection*{Duality lemma}
We present a technical modification of the $H^1$-$\BMO$ type duality estimate.

For a sequence of scalars $\{a_{R}\}_{R \in \calD_{\omega}}$ denote
$$
\|\{a_{R}\}_{R \in \calD_{\omega}}\|_{\BMO_{\textup{prod}}(\calD_{\omega})} := 
\sup_{\Omega} \Big( \frac{1}{|\Omega|} \mathop{\sum_{R \in \calD_{\omega}}}_{R \subset \Omega} |a_{R}|^2  \Big)^{1/2},
$$
where the supremum is taken over those sets $\Omega \subset \R^{n+m}$ such that $|\Omega| < \infty$ and such that for every $x \in \Omega$ there exist
$K \times V \in \calD_{\omega}$ so that $x \in K \times V \subset \Omega$.
\begin{lem}\label{lem:H1-BMO-Modified}
Let $\omega=(\omega_1,\omega_2)$ be a random parameter and $F \subset \R^{n+m}$. 
Suppose $\calC \subset \calD_0$ is a collection of rectangles such that
$|R \cap F | \ge \frac{99}{100} |R|$ for all $R \in \calC$.
Let  $\{a_{R + \omega }\}_{R \in \calC}$ and $\{b_{R}\}_{R \in \calC}$ be two collections of scalars.
Then
$$
\sum_{R \in \calC} | a_{R + \omega} b_{R} |
\lesssim \| \{ a_{R+ \omega} \}_{R \in \calC} \|_{\BMO_{\textup{prod}}(\calD_\omega)}
\iint_F \Big( \sum_{R \in \calC} |b_{R}|^2 \frac{1_{R}}{|R|} \Big)^{1/2}.
$$
\end{lem}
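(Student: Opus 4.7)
The plan is to follow the Chang--Fefferman approach to $H^1$--$\BMO_{\textup{prod}}$ duality with the standard right-hand side $\int_{\R^{n+m}} S(b)$ sharpened to $\int_F S(b)$ by exploiting the assumption $|R\cap F|\ge 99|R|/100$. Write $S(b)=\big(\sum_{R\in\calC}|b_R|^2 1_R/|R|\big)^{1/2}$ and $\Omega_k=\{S(b)>2^k\}$, and decompose $\calC=\bigsqcup_{k\in\Z}\calC_k$ with
$$
\calC_k=\{R\in\calC : |R\cap\Omega_k|>|R|/2,\ |R\cap\Omega_{k+1}|\le|R|/2\}.
$$
Cauchy--Schwarz at each level reduces matters to estimating $\sum_{R\in\calC_k}|a_{R+\omega}|^2$ and $\sum_{R\in\calC_k}|b_R|^2$ separately.

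For the $b$-factor, the $F$-assumption combined with $|R\cap\Omega_k|>|R|/2$ gives $|R\cap\Omega_k\cap F|>|R|/3$, so $\bigcup_{R\in\calC_k}R\subset W_k:=\{M_{\calD_0}(1_{\Omega_k\cap F})>1/3\}$ with $|W_k|\lesssim|\Omega_k\cap F|$ by the weak-type inequality. Since $|R\setminus\Omega_{k+1}|\ge|R|/2$ and $S(b)\le 2^{k+1}$ on $\Omega_{k+1}^c$, a Haar-orthogonality rearrangement yields
$$
\sum_{R\in\calC_k}|b_R|^2\le 2\int_{W_k\cap\Omega_{k+1}^c}S(b)^2\lesssim 4^k|\Omega_k\cap F|.
$$

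For the $a$-factor, I would produce a $\calD_\omega$-open set $U_k$ containing $R+\omega$ for every $R\in\calC_k$ and satisfying $|U_k|\lesssim|\Omega_k\cap F|$. Concretely, for each $R\in\calC_k$ I would select a $\calD_\omega$-rectangle $R^*\supset R+\omega$ of side length comparable to $R$ which inherits $|R^*\cap\Omega_k\cap F|\gtrsim|R^*|$ from the analogous estimate on $R$; then $U_k:=\{M_{\calD_\omega}(1_{\Omega_k\cap F})>\delta\}$ for a small fixed $\delta>0$ is automatically $\calD_\omega$-open, has $|U_k|\lesssim|\Omega_k\cap F|$ by weak-type, and contains every such $R^*$ and hence every $R+\omega$. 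The defining property of $\|\cdot\|_{\BMO_{\textup{prod}}(\calD_\omega)}$ then gives
$$
\sum_{R\in\calC_k}|a_{R+\omega}|^2\le\sum_{\substack{R'\in\calD_\omega\\R'\subset U_k}}|a_{R'}|^2\le\|\{a_{R+\omega}\}\|_{\BMO_{\textup{prod}}(\calD_\omega)}^2\,|U_k|.
$$

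Combining the two factors yields $\sum_{R\in\calC_k}|a_{R+\omega}b_R|\lesssim\|\{a_{R+\omega}\}\|_{\BMO_{\textup{prod}}(\calD_\omega)}\,2^k|\Omega_k\cap F|$, after which summing in $k$ via the layer-cake identity $\sum_k 2^k|\Omega_k\cap F|\sim\int_F S(b)$ gives the claim. The main obstacle is the construction of the $\calD_\omega$-enlargement $R^*$ in the $a$-factor step: it must simultaneously sit above $R+\omega$, have size comparable to $R$, and absorb enough of the $\calD_0$-overlap $R\cap\Omega_k\cap F$ to place $R+\omega$ inside $U_k$. The hypothesis $|R\cap F|\ge\tfrac{99}{100}|R|$ is precisely what allows this transfer between the two grids, since only a rectangle with a large $F$-content can see its enlargement remain well inside a level set restricted to $F$.
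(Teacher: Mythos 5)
Your decomposition is structurally the same as the paper's: level sets of the square function, Cauchy--Schwarz at each level, a Carleson-type bound on the $b$-side localised to $F$, a $\BMO_{\textup{prod}}$ bound on the $a$-side, and a layer-cake sum. The $b$-side and the conclusion are fine. The gap is in the $a$-side, and it is exactly where you flagged the obstacle.

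You want to place every $R+\omega$, $R\in\calC_k$, inside a $\calD_\omega$-open set $U_k$ with $|U_k|\lesssim|\Omega_k\cap F|$, and you propose to do this by producing a $\calD_\omega$-rectangle $R^*\supset R+\omega$ of comparable size inheriting $|R^*\cap\Omega_k\cap F|\gtrsim|R^*|$, then taking $U_k=\{M_{\calD_\omega}(1_{\Omega_k\cap F})>\delta\}$. Such an $R^*$ need not exist. The $\calD_\omega$-ancestors of $R+\omega$ can drift monotonically away from $R$ at every scale (take $n=1$, $I=[0,1)$, $\omega=1/2$, and all the coarser-scale shift digits zero: the ancestors of $[1/2,3/2)$ are $[1/2,1/2+2^k)$, and none contains $[0,1)$). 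So the density of $\Omega_k\cap F$ on $R$ does not transfer to any $\calD_\omega$-rectangle of comparable size containing $R+\omega$, and $M_{\calD_\omega}(1_{\Omega_k\cap F})$ can be small on $R+\omega$. Your closing remark, that the hypothesis $|R\cap F|\ge\frac{99}{100}|R|$ is ``precisely what allows this transfer between the two grids,'' is also misplaced: that hypothesis is used only to localise the $b$-side integral to $F$ (passing from $|R\cap\Omega_{u-1}^c|\gtrsim|R|$ to $|R\cap F\cap\Omega_{u-1}^c|\gtrsim|R|$); it does nothing for the grid transfer on the $a$-side.

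The paper's fix is to sidestep $\calD_\omega$-rectangles entirely. It works with $\Omega_u=\{1_F S>2^{-u}\}$ (which absorbs $F$ into the definition), sets $\tilde\Omega_u=\{M1_{\Omega_u}>c\}$ for the \emph{non-dyadic} strong maximal function $M$, and uses the purely geometric fact $R+\omega\subset 3R$: since $|R\cap\Omega_u|\ge|R|/100$, one has $M1_{\Omega_u}\ge \frac{|R\cap\Omega_u|}{|3R|}\gtrsim 1$ on all of $3R$, so $R+\omega\subset 3R\subset\tilde\Omega_u$ once $c$ is small enough. The set $\tilde\Omega_u$ is open, hence a valid competitor in the $\BMO_{\textup{prod}}(\calD_\omega)$ supremum, and $|\tilde\Omega_u|\lesssim|\Omega_u|$ for indicator functions. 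If you replace your $R^*/M_{\calD_\omega}$ device by this $3R/M$ device, the rest of your argument goes through.
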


\begin{proof}
Write
$
S = \big( \sum_{R \in \calC}|b_{R}|^2 \frac{1_{R}}{|R|} \big)^{1/2}.
$
We may suppose that $\| 1_F S \|_{L^1} < \infty$.
For $u \in \Z$ let $\Omega_u=\{ 1_F S> 2^{-u}\}$ and $\wt{\Omega}_u= \{M1_{\Omega_u} >c\}$,
where $c=c(n,m) \in (0,1)$ is small enough. 
Define the collections
$
\widehat{\calC}_u
=\Big\{ R \in \calC \colon |R \cap \Omega_u| \ge  |R|/100\Big\}
$
and write $\calC_u=\widehat{\calC}_u \setminus \widehat{\calC}_{u-1}$. 

Let $R \in \calC$ be such that $b_{R} \not=0$. Then for all
$x=(x_1,x_2) \in R \cap F$ there holds that
$
0 < |b_{R}||R|^{-1/2} \le 1_F(x) S(x).
$
Since $|R \cap F | \ge \frac{99}{100} |R|$, this implies that  $R \in \widehat{\calC}_u$ for all large  $u \in \Z$.
On the other hand, since $|\Omega_u| \to 0$, as $u \to -\infty$, we have $R \not \in \widehat{\calC}_u$ for all small $u \in \Z$.
Therefore, (because $\Omega_{u-1} \subset \Omega_u$ for all $u$) it holds that
$$
\sum_{R \in \calC} | a_{R+\omega} b_{R} |
= \sum_{u \in \Z} \sum_{R \in \calC_u} | a_{R+\omega} b_{R} |.
$$

For fixed $u$ estimate
$$
\sum_{R \in \calC_u} | a_{R+\omega} b_{R} |
\le \Big( \sum_{R \in \calC_u}  |a_{R+\omega}|^2\Big)^{1/2}
\Big(\sum_{R \in \calC_u} |b_{R}| ^2\Big)^{1/2}. 
$$
Suppose $R \in \calC_u$. Because $|R \cap \Omega_u| \ge \frac{1}{100}|R|$, there holds that 
$
R+\omega \subset 3R \subset \wt \Omega_u
$
if $c = c(n,m)$ is fixed small enough.
Therefore, since $|\wt \Omega_u| \lesssim |\Omega_u|$, we get that
$$
\Big( \sum_{R \in \calC_u}  |a_{R+\omega}|^2\Big)^{1/2}
\lesssim  \| \{ a_{R+ \omega} \}_{R \in \calC} \|_{\BMO_{\textup{prod}}(\calD_\omega)} | \Omega_u |^{1/2}.
$$

Let again $R \in \calC_u$. Every $R \in \calC$ satisfies by assumption that $|R \cap F| \ge \frac{99}{100} |R|$, and because $R \not \in \widehat \calC_{u-1}$,
there holds that $|R \cap \Omega_{u-1}^c| \ge \frac{99}{100} |R|$. Thus, we have $|R \cap F \cap \Omega_{u-1}^c| \ge \frac{98}{100} |R|$, and this gives
(noting also that $R \subset \wt \Omega_u$ for every $R \in \calC_u$) that
$$
\Big(\sum_{R \in \calC_u} |b_{R}|^2\Big)^{1/2}
\lesssim \Big(\iint \displaylimits _{\wt \Omega_u \setminus \Omega_{u-1}}1_F\sum_{R \in \calC_u} |b_{R}| ^2\frac{1_R}{|R|}\Big)^{1/2}
\le \Big(\iint \displaylimits _{\wt \Omega_u \setminus \Omega_{u-1}}1_FS^2\Big)^{1/2} 
\lesssim 2^{-u}|\Omega_u|^{1/2}.
$$
The claim follows by summing over $u$.
\end{proof}

\begin{rem}\label{lem:H1-BMO-Modified-1par}
Set
$$
\|\{a_{V}\}_{V \in \calD^m_{\omega_2}} \|_{\BMO(\calD^m_{\omega_2})} := 
\sup_{V_0 \in \calD^m_{\omega_2}}\Big( \frac{1}{|V_0|} \sum_{\substack{V \in \calD^m_{\omega_2} \\ V \subset V_0}} |a_V|^2\Big)^{1/2}.
$$

For weak type estimates of partial paraproducts we use the following special case of Lemma \ref{lem:H1-BMO-Modified}.
Let $F \subset \R^{n+m}$ and $K_0 \in \calD^n_0$. Suppose $\calC \subset \calD^m_0$ is a collection of cubes such that
$|(K_0 \times V) \cap F | \ge \frac{99}{100} |K_0 \times V|$ for every $V \in \calC$. Let $\omega_2$ be a random parameter,
and let $\{a_{V+\omega_2}\}_{V \in \calC}$ be a collection of scalars. Then, for all scalars  $\{b_V\}_{V \in \calC}$
we have
$$
\sum_{V \in \calC} | a_{V+\omega_2}b_V| 
\lesssim \| \{ a_{V+\omega_2} \}_{V \in \calC} \|_{\BMO(\calD^m_{\omega_2})} \iint_F \frac{1_{K_0}}{|K_0|} \otimes \Big( \sum_{V \in \calC}  |b_V|^2 \frac{1_V}{|V|} \Big)^{1/2}.
$$
This follows from Lemma \ref{lem:H1-BMO-Modified}. To see this, let $\wt \calC=\{K \times V \in \calD_0  \colon K=K_0,  V \in \calC\}$.
For $K \times V \in \calC$ define $\wt a_{K \times (V+\omega_2)} = a_{V+\omega_2}$. 
Then we have
$$
\| \{\wt a_{K \times (V+\omega_2)}\}_{K \times V \in \wt \calC} \|_{\BMO_{\textup{prod}}(\calD^n_0 \times \calD^m_{\omega_2})}
= \frac{1}{|K_0|^{1/2}} \| \{a_{V+\omega_2}\}_{V \in \calC} \|_{\BMO( \calD^m_{\omega_2})}.
$$
\end{rem}

We move on to proving the quasi--Banach estimates.

\subsubsection*{Shifts}
\begin{prop}\label{prop:com1ofShiftsQuasiBanach}
Let $\|b\|_{\bmo(\R^{n+m})}  = 1$, and let $1 < p, q \le \infty$ and $1/2 < r < \infty$ satisfy $1/p+1/q = 1/r$.
Suppose that $(S^{v}_{k, \omega})_{\omega}$ is a collection of bilinear bi-parameter shifts of the same type and of complexity $(k,v)$, where
$S^{v}_{k, \omega}$ is defined in the grid $\calD_{\omega}$.
Then we have
$$
 \|\mathbb{E}_{\omega}[b,S^{v}_{k, \omega}]_1(f_1, f_2)\|_{L^r(\R^{n+m})} 
 \lesssim (1+\max(k_i, v_i)) \|f_1\|_{L^p(\R^{n+m})} \|f_2\|_{L^q(\R^{n+m})}.
$$
\end{prop}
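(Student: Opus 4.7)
The plan is to mirror the two-step strategy used for the full paraproducts in Proposition \ref{prop:FullProd}. The Banach range $p,q,r \in (1,\infty)$ is already covered by Proposition \ref{prop:com1ofmodelBanach} applied to the individual shifts (Fubini and the triangle inequality then transfer the bound to $\mathbb{E}_\omega S^{v}_{k,\omega}$). So the new content is the quasi--Banach range $r \le 1$, for which I would prove a restricted weak-type estimate: given $E \subset \R^{n+m}$ with $0 < |E| < \infty$, a major subset $E' \subset E$ with $|E'| \ge |E|/2$ exists such that
$$
|\langle \mathbb{E}_\omega [b, S^{v}_{k,\omega}]_1(f_1,f_2), f_3 \rangle|
\lesssim (1+\max(k_i,v_i)) \|f_1\|_{L^p(\R^{n+m})} \|f_2\|_{L^q(\R^{n+m})} |E|^{1/r'}
$$
for all $|f_3| \le 1_{E'}$. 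Once this is in hand, the interpolation scheme cited at the end of the proof of Proposition \ref{prop:FullProd} (see \cite{Th:Book}) promotes the Banach strong bounds and this restricted weak bound to the claimed strong bounds in the whole quasi--Banach range.

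For the weak-type inequality I would construct the exceptional set $\Omega$ from level sets of $M_s f_i$ (with a suitable $s > 1$ chosen small enough), $M_b f_i$, and the $b$-adapted maximal operators $\varphi^1_{\calD^n_{\omega_1},b}$, $\varphi^2_{\calD^m_{\omega_2},b}$ of Section \ref{sec:AdapMaxFunc}, all normalised so that Lemma \ref{lem:bmaxbounds} gives $|\Omega| \le |E|/2$. Setting $E' := E \setminus \Omega$ and $F := \R^{n+m} \setminus \Omega$, the standard good-shadow property holds: any rectangle $R$ contributing non-trivially to a pairing against $f_3$ satisfies $|R \cap F| \ge (99/100)|R|$, which is precisely the input required by Lemma \ref{lem:H1-BMO-Modified}.

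I would then expand both halves of
$$
\langle [b, S^{v}_{k,\omega}]_1(f_1,f_2), f_3 \rangle
= \langle S^{v}_{k,\omega}(f_1,f_2), bf_3 \rangle - \langle S^{v}_{k,\omega}(bf_1,f_2), f_3 \rangle
$$
by applying the martingale decompositions of Section \ref{sec:marprod} to $\langle bf_3, \wt h_{I_3} \otimes \wt h_{J_3}\rangle$ and $\langle bf_1, \wt h_{I_1} \otimes \wt h_{J_1}\rangle$, selecting among \eqref{eq:biparEX}, \eqref{eq:1EX} and \eqref{eq:noEX} according to which of the Haar functions paired with $f_1$ and $f_3$ are cancellative. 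The $A_i(b,\cdot)$ and $a_j^i(b,\cdot)$ pieces produced by the expansions are absorbed by replacing $f_i$ with $\varphi$-type or $M_b$-type adapted maximal functions via Lemma \ref{lem:maximalbound}, and then controlled on $F$ by the construction of $\Omega$. The two ``top'' pieces carrying $\langle b\rangle_{I_1 \times J_1}$ and $\langle b\rangle_{I_3 \times J_3}$ must be combined before estimating: the commutator cancellation turns them into a single sum whose coefficients carry the difference $\langle b\rangle_{I_3 \times J_3} - \langle b\rangle_{I_1 \times J_1}$, bounded by $1+\max(k_i,v_i)$ through Lemma \ref{lem:bmobound}. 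Writing this coefficient sequence as an $\mathbb{E}_\omega$-average of a $\BMO_{\textup{prod}}(\calD_\omega)$-controlled quantity and applying Lemma \ref{lem:H1-BMO-Modified} yields a Carleson-type integral over $F$, which in turn is dominated by $\int_F Mf_1 \cdot Mf_2 \lesssim |E|^{1/r'}\|f_1\|_{L^p}\|f_2\|_{L^q}$ thanks to the choice of $\Omega$.

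The main obstacle I expect is the bookkeeping of this cancellation between the two halves of the commutator across the nine Haar patterns of $S^{v}_{k,\omega}$: in the doubly non-cancellative cases the formula \eqref{eq:noEX} (rather than \eqref{eq:biparEX} or \eqref{eq:1EX}) must be used, and it is only \emph{after} averaging over $\omega$ that the coefficient sequence has a $\BMO_{\textup{prod}}(\calD_\omega)$ bound of the right size. This is precisely why the proposition is stated for averaged shifts rather than in a single fixed grid, and also why Lemma \ref{lem:H1-BMO-Modified} was formulated to accept BMO sequences depending on $\omega$.
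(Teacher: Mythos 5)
Your overall outline (Banach range via Proposition \ref{prop:com1ofmodelBanach}, then restricted weak type plus interpolation in the quasi-Banach range, with exceptional sets, adapted maximal functions, and the commutator cancellation producing the $(1+\max(k_i,v_i))$ factor) is consistent with what the paper does. The paper itself omits the proof of Proposition \ref{prop:com1ofShiftsQuasiBanach} and instead demonstrates the method on the iterated shift case in Proposition \ref{prop:com2ofmodelQuasiBanach}, which is the template you should compare against.

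However, there are two genuine gaps. First, you construct a \emph{single} exceptional set $\Omega$ and write the final estimate as $\int_F Mf_1 \cdot Mf_2 \lesssim |E|^{1/r'}\|f_1\|_{L^p}\|f_2\|_{L^q}$. This inequality is false as written: with $\Omega=\{Mf_1 Mf_2 > C|E|^{-1/r}\}$ the set $F=\Omega^c$ has infinite measure, so $\int_F Mf_1 Mf_2=\infty$ while the right-hand side is finite (recall $1/r'<0$ when $r<1$). What makes the paper's argument work is the \emph{layered} construction $\Omega_u = \{\cdots > C 2^{-u}|E|^{-1/r}\}$, the enlargements $\wt\Omega_u$, and the decomposition of dyadic rectangles into shells $\calR_u=\widehat\calR_u\setminus\widehat\calR_{u-1}$. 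Combined with the localisation property $\Lambda^\omega_{K\times V}(f_1,f_2,f_3)=\Lambda^\omega_{K\times V}(f_1,f_2,1_{\wt\Omega_u}f_3)$, each shell contributes $\lesssim 2^{-u}|E|^{-1/r}|\wt\Omega_u|\lesssim 2^{-u(1-r)}|E|^{1/r'}$, and the sum over $u\ge1$ converges precisely because $r<1$. Without this geometric gain there is no way to close the estimate.

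Second, Lemma \ref{lem:H1-BMO-Modified} is the wrong tool for shifts. That lemma is tailored to partial and full paraproducts, where the coefficients genuinely carry a $\BMO$ or $\BMO_{\textup{prod}}$ normalisation indexed by $K\times V$. Shift coefficients $a_{K,V,(I_i),(J_j)}$ are just pointwise-normalised scalars with no $\BMO_{\textup{prod}}$ structure, and the ``top'' pieces of the commutator inherit only a pointwise bound $\lesssim(1+\max(k_i,v_i))\cdot a$ via Lemma \ref{lem:bmobound}. Accordingly, the paper handles shifts with direct square-function estimates and the $\E_\omega$-averaged auxiliary operators $S^{i,j}_U$, $S^1_{i,U}$, $S^2_{j,U}$ of Lemma \ref{lem:DetSquareShift}, \emph{not} with Lemma \ref{lem:H1-BMO-Modified}. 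These averaged square functions are also what resolve the dependence of $\varphi^2_{\calD^m_{\omega_2},b}$ on the random grid $\omega$; replacing them by a single $M_b$-type maximal function, as you propose, leaves the $\omega$-dependence of the adapted expansion unaccounted for when you subsequently try to move $\E_\omega$ past the maximal functions.
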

\begin{proof}
We omit this shift case, and only show the corresponding proof for partial and full paraproducts below. However, we demonstrate the iterated case for shifts later.
\end{proof}

\subsubsection*{Partial paraproducts}\label{sec:ComPartialP}
Below it is convenient to write the partial paraproducts in the following way. Let $\omega = (\omega_1, \omega_2) \in (\{0,1\}^n)^{\Z} \times (\{0,1\}^m)^{\Z}$ and
$k = (k_1, k_2, k_3)$, $k_1, k_2, k_3 \ge 0$. For each $K, I_1, I_2, I_3 \in \calD^n_{\omega_1}$ with $I_i^{(k_i)} = K$ and $V \in \calD^m_{\omega_2}$
we are given a constant $a_{K, V, (I_i)}^{\omega}$ such that for all $K, I_1, I_2, I_3 \in \calD^n_{\omega_1}$ with $I_i^{(k_i)} = K$ we have
$$
\|\{ a_{K, V, (I_i)}^{\omega} \}_{V \in \calD^m_{\omega_2}} \|_{\BMO(\calD^m_{\omega_2})}
\le \frac{|I_1|^{1/2} |I_2|^{1/2} |I_3|^{1/2}}{|K|^2}.
$$
We consider a partial paraproduct $P_{k, \omega}$ of complexity $k$ and of a particular form
\begin{equation}\label{eq:ParticularPartialP}
\begin{split}
\langle P_{k, \omega}(f_1,f_2), f_3\rangle =  \sum_{K \in \calD^n_{\omega_1}} \sum_{\substack{I_1, I_2, I_3 \in \calD^n_{\omega_1} \\ I_i^{(k_i)} = K}} \sum_{V \in \calD^m_{\omega_2}}
&a_{K, V, (I_i)}^{\omega} \bla  f_1, h_{I_1}^0 \otimes h_V  \bra  \\
&\times \Big\langle f_2, h_{I_2} \otimes \frac{1_V}{|V|} \Big\rangle \Big\langle f_3, h_{I_3} \otimes \frac{1_V}{|V|} \Big\rangle.
\end{split}
\end{equation}

Before we estimate the commutators of partial paraproducts 
we define two auxiliary operators and prove their boundedness. 
Given $\omega_2$ set for $g \colon \R^m \to \C$ and $f \colon \R^{n+m} \to \C$ that
$$
\wt{S}_{\omega_2}g
= \Big(\sum_{V \in \calD^m_0} |\langle g, h_{V+\omega_2} \rangle|^2 \frac{1_V}{|V|} \Big)^{1/2}, \quad
\wt{S}^2_{\omega_2}f= \Big(\sum_{V \in \calD^m_0} |\langle f, h_{V+\omega_2} \rangle_2|^2 \otimes  \frac{1_V}{|V|} \Big)^{1/2}.
$$ 
Recall that $\E_{\omega}=\E_{\omega_1} \E_{\omega_2}$. Let  $\Phi_1$ and $\Phi_2^l$, $0 \le l \in \Z$, be the operators
$$
\Phi_1(f)=\E_{\omega_2}M^1 \wt{S}^2_{\omega_2}( \varphi^2_{\omega_2,b} f), \quad 
\Phi_2^l(f)=\Big( \sum_{K \in \calD^n_0} \E_{\omega_1}(M^1\Delta^1_{K+\omega_1,l} \varphi^1_{\omega_1} f)^2 \Big)^{1/2},
$$
where  $\varphi^2_{\omega_2,b} f_1 := \varphi^2_{\calD^m_{\omega_2},b} f_1$ is the function from Section \ref{sec:AdapMaxFunc}
and $\varphi^1_{\omega_1}f:= \varphi^1_{\calD^n_{\omega_1}}f$ was introduced in Lemma \ref{lem:standardEst2}.

\begin{lem}\label{lem:AuxBdd}
We have for all $l \in \Z$, $l \ge 0$, that
$$
\|\Phi_1(f)\|_{L^s(\R^{n+m})}+\|\Phi_2^l(f)\|_{L^s(\R^{n+m})} \lesssim \|f\|_{L^s(\R^{n+m})}, \qquad s \in (1,\infty),
$$
where the bound is independent of $l$.
\end{lem}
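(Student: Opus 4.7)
The plan is to handle $\Phi_1$ and $\Phi_2^l$ separately, using in both cases the vector-valued Fefferman--Stein maximal inequality from Lemma \ref{lem:standardEst2}, the standard $L^s$ square function estimates of Lemma \ref{lem:standardEst1}, the boundedness of $\varphi^1_{\calD^n}$ and $\varphi^2_{\calD^m,b}$ from Lemmas \ref{lem:standardEst2} and \ref{lem:bmaxbounds}, together with Minkowski's inequality to swap a random expectation with an $L^s$ norm.

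For $\Phi_1$: First I use Minkowski (the triangle inequality for the $L^s$-valued integrand) to pull $\E_{\omega_2}$ outside the norm, reducing matters to a uniform-in-$\omega_2$ bound for $\|M^1\wt S^2_{\omega_2}(\varphi^2_{\omega_2,b}f)\|_{L^s}$. Since $M^1$ acts only in the $\R^n$ variable, Fubini reduces its $L^s$ mapping property to scalar $L^s$ boundedness of $M$. Next, using that for each $V\in\calD^m_0$ the matching-Haar pairing satisfies $\langle \varphi^2_{\omega_2,b}f, h_{V+\omega_2}\rangle_2 = M_{\langle b\rangle_{V+\omega_2,2}}\langle f, h_{V+\omega_2}\rangle_2$, the quantity $\wt S^2_{\omega_2}(\varphi^2_{\omega_2,b}f)$ is precisely the one-parameter square function in the $\R^m$ direction of $\varphi^2_{\omega_2,b}f$; applying the standard $L^s$ square function estimate slice-by-slice in $x_1$ yields $\|\wt S^2_{\omega_2}(\varphi^2_{\omega_2,b}f)\|_{L^s} \sim \|\varphi^2_{\omega_2,b}f\|_{L^s}$, and Lemma \ref{lem:bmaxbounds} finishes the estimate.

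For $\Phi_2^l$: Reindex the outer sum via $K\mapsto K+\omega_1$ to rewrite
\[
\Phi_2^l(f) = \bigl(\E_{\omega_1} G_{\omega_1}(f)^2\bigr)^{1/2}, \qquad G_{\omega_1}(f) := \Bigl(\sum_{K' \in \calD^n_{\omega_1}} |M^1 \Delta^1_{K',l}\varphi^1_{\omega_1}f|^2\Bigr)^{1/2}.
\]
First I would bound $G_{\omega_1}(f)$ uniformly in $\omega_1$: the vector-valued Fefferman--Stein inequality (Lemma \ref{lem:standardEst2} with index $u=2$) removes the maximal function, producing $\bigl\|\bigl(\sum_{K'}|\Delta^1_{K',l}\varphi^1_{\omega_1}f|^2\bigr)^{1/2}\bigr\|_{L^s}$; since the disjoint martingale blocks satisfy $\sum_{K'\in\calD^n_{\omega_1}}|\Delta^1_{K',l}g|^2 = \sum_{I\in\calD^n_{\omega_1}}|\Delta^1_I g|^2$, the one-parameter square function estimate applied in $\R^n$ fibrewise together with Lemma \ref{lem:standardEst2} gives $\|G_{\omega_1}(f)\|_{L^s} \lesssim \|\varphi^1_{\omega_1}f\|_{L^s} \lesssim \|f\|_{L^s}$, with an estimate independent of $l$ and $\omega_1$. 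For $s\ge 2$ one finishes via the mixed-norm Minkowski inequality $\|G\|_{L^s_x L^2_{\omega_1}} \le \|G\|_{L^2_{\omega_1} L^s_x}$, which yields $\|\Phi_2^l(f)\|_{L^s} \le (\E_{\omega_1}\|G_{\omega_1}(f)\|_{L^s}^2)^{1/2} \lesssim \|f\|_{L^s}$.

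The main obstacle is the range $1<s<2$, where the above Minkowski exchange runs in the wrong direction. I plan to handle it by linearization together with duality: measurably select averaging cubes realizing $M^1$ and unimodular phases absorbing the absolute values, so that the entire operator $f\mapsto \Phi_2^l(f)$ is dominated by a pairing of $f$ against the unit ball of $\ell^2_K(L^2(\bbP_{\omega_1}))$ via a genuinely linear operator of the same structural type. The adjoint then falls under the already-established regime $s'>2$ (its essential ingredients are again a maximal function, a martingale block shift, and the operator $\varphi^1_{\omega_1}$, all of which admit adjoint analogues with the same bounds), and dualizing returns the desired $L^s$ estimate for $\Phi_2^l$.
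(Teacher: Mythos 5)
Your treatment of $\Phi_1$ is fine: the averaging over $\omega_2$ is just an integral against a probability measure, so the triangle (Minkowski) inequality reduces matters to a uniform-in-$\omega_2$ bound for $M^1 \wt S^2_{\omega_2}\varphi^2_{\omega_2,b}$, which closes by Lemmas \ref{lem:standardEst1}--\ref{lem:bmaxbounds}. For $\Phi_2^l$ the case $s\ge 2$ is also fine via the mixed-norm Minkowski inequality in the favourable direction.

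The gap is in the case $1<s<2$ of $\Phi_2^l$. You assert that after linearizing the maximal operators and the $\ell^2_K L^2_{\omega_1}$-norm, "the adjoint then falls under the already-established regime $s'>2$". That claim is not accurate: the linearized operator $L$ maps scalar $f$ to an $\ell^2_K(L^2(\bbP_{\omega_1}))$-valued function, so $L^*$ maps $H$-valued $g$ to the \emph{sum} $\sum_K \E_{\omega_1}\wt\varphi^{1*}_{\omega_1}\Delta^1_{K+\omega_1,l}A^{1*}_{K,\omega_1}\bar\sigma_{K,\omega_1}g_{K,\omega_1}$, which is not a square function of the shape $\Phi_2^l$. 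You cannot cite the $s\ge 2$ bound for it; you need a fresh argument. Concretely, one must (i) pull $\E_{\omega_1}$ out of the $L^{s'}$ norm by the triangle inequality, (ii) use the $L^{s'}$-boundedness of $\wt\varphi^{1*}_{\omega_1}$, (iii) use the Haar/Littlewood--Paley square function characterization of $L^{s'}$ together with the orthogonality of the blocks $\{\Delta^1_{K+\omega_1,l}\}_K$ to convert $\big\|\sum_K \Delta^1_{K+\omega_1,l}h_K\big\|_{L^{s'}}$ to $\big\|(\sum_K|h_K|^2)^{1/2}\big\|_{L^{s'}}$, (iv) use the \emph{vector-valued} bound for the adjoint of the linearized Fefferman--Stein maximal operator (not a pointwise domination by $M$, which fails), and finally (v) recombine the $\omega_1$-integral via Cauchy--Schwarz followed by Minkowski, which is favourable exactly because $s'\ge 2$. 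This is a substantial amount of missing work, and as written it reads as if the bound is immediate.

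The paper sidesteps all of this. It proves the $L^2(w)$ bound for $w\in A_2(\R^n\times\R^m)$: at the exponent $2$ the expectation and the squared weighted norm exchange \emph{exactly} by Fubini,
$\|\Phi_2^l f\|_{L^2(w)}^2 = \E_{\omega_1}\big\|\big(\sum_{K\in\calD^n_{\omega_1}}(M^1\Delta^1_{K,l}\varphi^1_{\omega_1}f)^2\big)^{1/2}\big\|_{L^2(w)}^2$,
and the inner quantity is controlled uniformly by the weighted Fefferman--Stein and square function estimates (Lemmas \ref{lem:standardEst1}, \ref{lem:standardEst2}). Extrapolation in $A_p$ then gives all $s\in(1,\infty)$ at once. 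This avoids the Minkowski direction issue, the linearization, and the duality argument entirely, and it yields the (stronger) weighted $L^s(w)$ bounds that the paper uses elsewhere. Your outline can be made to work, but the paper's weighted route is shorter and delivers more.
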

\begin{proof}
We use weights and extrapolation.
This is useful with $\Phi_2^l$ in order to reduce to $L^2$ estimates where we can take the expectation out. With $\Phi_1$ we could do without weights
by estimating directly in $L^s$. Take $w \in A_2(\R^n \times \R^m)$. 

Notice that for all $V \in \calD^m_0$ we have
$
|\langle f, h_{V+\omega_2} \rangle_2| \otimes  \frac{1_V}{|V|^{1/2}} \lesssim M^2 ( \langle f, h_{V+\omega_2} \rangle_2 \otimes h_{V+\omega_2}).
$
This implies
$$
\| \wt{S}^2_{\omega_2}f \|_{L^2(w)} \lesssim \Big\| \Big( \sum_{V \in \calD^m_{\omega_2}} [M^2 ( \langle f, h_{V} \rangle_2 \otimes h_{V})]^2 \Big)^{1/2} \Big\|_{L^2(w)}
\le C([w]_{A_2}) \|f\|_{L^2(w)},
$$
where we used weighted maximal function and weighted square function estimates. Lemma \ref{lem:bmaxbounds} says that
$\|\varphi_{\omega_2, b}^2(f)\|_{L^2(w)} \le C([w]_{A_2}) \|f\|_{L^2(w)}$, and obviously $M^1$ satisfies the same bound. The $L^2(w)$
result for $\Phi_1$ follows, and we can extrapolate.

Next, we have
\begin{align*}
\|\Phi_2^l(f)\|_{L^2(w)}^2 = \E_{\omega_1} \Big\| \Big( \sum_{K \in \calD^n_{\omega_1}} (M^1\Delta^1_{K,l} \varphi^1_{\omega_1} f)^2 \Big)^{1/2} \Big\|_{L^2(w)}^2
\le C([w]_{A_2}) \|f\|_{L^2(w)}^2
\end{align*}
using weighted maximal function and weighted square function estimates and Lemma \ref{lem:standardEst2}. 
We can extrapolate to finish.
\end{proof}

We are ready to treat the first order commutator of partial paraproducts.

\begin{prop}\label{prop:com1PP}
Let $\|b\|_{\bmo(\R^{n+m})}  = 1$, and let $1 < p, q \le \infty$ and $1/2 < r < \infty$ satisfy $1/p+1/q = 1/r$.
Suppose that $(P_{k, \omega})_{\omega}$ is a collection of partial paraproducts of the same type and of fixed complexity $k = (k_1, k_2, k_3)$, where
$P_{k, \omega}$ is defined in the grid $\calD_{\omega}$.
Then we have
$$
 \|\mathbb{E}_{\omega}[b,P_{k,\omega}]_1(f_1, f_2)\|_{L^r(\R^{n+m})} 
 \lesssim (1+\max k_i) \|f_1\|_{L^p(\R^{n+m})} \|f_2\|_{L^q(\R^{n+m})}.
$$
\end{prop}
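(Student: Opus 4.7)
The Banach range ($r>1$) follows directly from Proposition \ref{prop:com1ofmodelBanach} applied to each $P_{k,\omega}$, noting that the quantitative BMO-coefficient hypothesis gives the general bound \eqref{eq:bb} with the required polynomial-in-complexity dependence, and then using the triangle inequality to take $\E_\omega$ outside. So the real content is the quasi-Banach range $1/2<r\le 1$, and following the blueprint indicated in the proof of Proposition \ref{prop:FullProd}, the plan is to prove the restricted weak type estimate
$$
|\langle \E_\omega [b,P_{k,\omega}]_1(f_1,f_2), f_3\rangle|
\lesssim (1+\max k_i)\|f_1\|_{L^p}\|f_2\|_{L^q}|E|^{1/r'}
$$
for every set $E$ with $0<|E|<\infty$, every $|f_3|\le 1_{E'}$, and some $E'\subset E$ with $|E'|\ge |E|/2$; then the standard interpolation scheme from \cite{Th:Book} (combined with the Banach range bound) promotes this into the strong type statement.

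First I would decompose $[b,P_{k,\omega}]_1$ using the bi-parameter/one-parameter martingale expansions of Section \ref{sec:marprod}. Because $P_{k,\omega}$ pairs $f_1$ against $h_{I_1}^0\otimes h_V$, only the one-parameter expansion \eqref{eq:1EX} (in $\R^n$) and the trivial identity \eqref{eq:noEX} for the inner average are needed on the $f_1$-side. This splits $[b,P_{k,\omega}]_1(f_1,f_2)$ into two families. The \emph{expanded} family involves $a_i^1(b,\cdot)$ applied to $f_1$ and the ``maximal'' correction $\bla(\langle b\rangle_{V,2}-\langle b\rangle_{I_1\times V})\langle f_1,h_V\rangle_2\bra_{I_1}$, both of which are pointwise dominated by operators whose $L^p$-boundedness is already in hand via Lemma \ref{lem:basicAa}, Lemma \ref{lem:maximalbound} and Lemma \ref{lem:bmaxbounds} (specifically via $\varphi^2_{\calD^m_{\omega_2},b}$ and $M^1$). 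The \emph{modulated} family, coming from the last term of \eqref{eq:1EX} and \eqref{eq:noEX}, contains $\langle b\rangle_{I_1\times V}$ or $\langle b\rangle_{I_1\times V}-\langle b\rangle_{I_2\times V}$ times a standard partial paraproduct action, which must be combined with the matching piece from $[b,P_{k,\omega}]_1$ to produce telescoping differences controlled by $\max k_i$ via Lemma \ref{lem:bmobound}. The upshot is that each surviving piece can be written as a pairing of the form
$$
\sum_{K\in\calD^n_{\omega_1}}\sum_{I_i^{(k_i)}=K}\sum_{V\in\calD^m_{\omega_2}} c_{K,V,(I_i)}^\omega\,
\langle G_1,\wt h_{I_1}\otimes h_V\rangle\,
\langle G_2,\wt h_{I_2}\otimes \tfrac{1_V}{|V|}\rangle\,
\langle G_3,\wt h_{I_3}\otimes\tfrac{1_V}{|V|}\rangle,
$$
where the $c_{K,V,(I_i)}^\omega$ still have the same $\BMO(\calD^m_{\omega_2})$-norm in $V$, the extra factor $1+\max k_i$ has been extracted, and the $G_j$ are either $f_j$, $f_3$, or one of the auxiliary objects $\varphi^2_{\omega_2,b}f_1$, $M^1 f_1$ controlled by $\Phi_1$ and $\Phi_2^l$ of Lemma \ref{lem:AuxBdd}.

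Next, fix $E\subset \R^{n+m}$ with $0<|E|<\infty$ and choose an exceptional set
$$
\Omega=\{M_s f_1>C\|f_1\|_{L^p}/|E|^{1/p}\}\cup\{M_s f_2>C\|f_2\|_{L^q}/|E|^{1/q}\},
$$
for a suitable $s\in(1,\min(p,q))$; with $C$ large enough $|\Omega|\le |E|/4$, and setting $\wt\Omega=\{M1_\Omega>1/100\}$ we still have $|\wt\Omega|\le|E|/2$, so $E':=E\setminus \wt\Omega$ works. We then restrict the sum over $(K,V)$ (and over the descendants $I_i$) to the \emph{good} collection $\calC=\{R=K\times V\in\calD_0:\,|(R+\omega)\cap (\wt\Omega)^c|\ge \tfrac{99}{100}|R+\omega|\}$; any rectangle failing this condition satisfies $R+\omega\subset\wt\Omega$, which forces the corresponding $\langle G_3,\cdot\rangle$ factor, with $G_3=f_3$ supported in $E'\subset(\wt\Omega)^c$, to vanish after a standard truncation--Carleson argument. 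For the good collection, I would freeze $K$, bring the $K$-sum outside, and on the inner $V$-sum apply the one-parameter duality lemma from Remark \ref{lem:H1-BMO-Modified-1par} with $F=(\wt\Omega)^c$ to the BMO-sequence $\{c_{K,V,(I_i)}^\omega\}_V$ and to the scalars $b_V$ given by the product $\langle G_2,\cdot\rangle\langle G_3,\cdot\rangle$; this is what allows the expectation $\E_{\omega_2}$ to move inside and hit only the square-function type expression
$$
\Big(\sum_V|b_V|^2\tfrac{1_V}{|V|}\Big)^{1/2}\lesssim M^1(\wt S^2_{\omega_2}G_2)\cdot M^1(\wt S^2_{\omega_2}G_3)\cdot \tfrac{1_K}{|K|^{1/2}}
$$
on the good set. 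The outer $K$-sum is then treated analogously using the $\R^n$-analog of the duality lemma (or directly Cauchy--Schwarz plus Lemma \ref{lem:standardEst1}, \ref{lem:standardEst2}, since on the $n$-side only shift-type structure survives after complexity $\max k_i$), producing two factors of the form $M^1\wt S^2_{\omega_2}(\cdot)$ paired against one factor of $\Phi_2^{\max k_i}$-type. Taking $\E_{\omega_1}\E_{\omega_2}$ inside the resulting positive expression and invoking Lemma \ref{lem:AuxBdd} together with H\"older's inequality in $L^p\times L^q\times L^{r'}$ applied on the good set (where $M_s f_j$ is pointwise bounded by $\|f_j\|/|E|^{1/\cdot}$) yields the required restricted weak type bound $|E|^{1/r'}\|f_1\|_{L^p}\|f_2\|_{L^q}$ with the factor $1+\max k_i$.

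The main obstacle is the combinatorial bookkeeping of the modulated terms: the averaged $b$-factors emerging from \eqref{eq:1EX} and \eqref{eq:noEX} must be telescoped against the matching contributions from $[b,P_{k,\omega}]_1$ across the several possible positions of the non-cancellative Haar functions (all the ``cases 1--7'' patterns of Proposition \ref{prop:com1ofmodelBanach}) so that the only surviving $b$-dependence in the modulated family is the complexity factor of Lemma \ref{lem:bmobound} -- otherwise $\|b\|_{\bmo}$ would not appear linearly and the expectation $\E_\omega$ could not be absorbed into one of the $L^s$-bounded auxiliary operators $\Phi_1,\Phi_2^l$. Once this accounting is done and the duality lemma of Remark \ref{lem:H1-BMO-Modified-1par} has been applied case by case, the quasi-Banach endpoint follows, and interpolation \`a la \cite{Th:Book} concludes the proof, with the $p=\infty$ or $q=\infty$ cases obtained by duality through the adjoints $P_{k,\omega}^{i*}$, which are partial paraproducts of the same type.
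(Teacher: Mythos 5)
Your high-level plan is the right one and matches the paper: Banach range from Proposition~\ref{prop:com1ofmodelBanach}, quasi--Banach range via a restricted weak type bound plus the interpolation scheme of \cite{Th:Book}, decomposition of the commutator using \eqref{eq:1EX}--\eqref{eq:noEX}, auxiliary square functions $\Phi_1$, $\Phi_2^l$ and the modified duality lemma of Remark~\ref{lem:H1-BMO-Modified-1par}. But the weak type argument itself has several real gaps.

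First, your exceptional set is defined through $M_s f_1$ and $M_s f_2$, but neither $\Phi_1(f_1)$ nor $\Phi_2^{k_2}(f_2)$ is pointwise dominated by $M_s$ of the corresponding function; on $(\wt\Omega)^c$ you therefore have no control on the auxiliary square functions that actually appear after the duality lemma. The correct normalization is $\|\Phi_1(f_1)\Phi_2^{k_2}(f_2)\|_{L^r}=1$, and the exceptional sets must be level sets of $\Phi_1(f_1)\Phi_2^{k_2}(f_2)$ itself.

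Second, and more fundamentally, you use a \emph{single} exceptional set while a graded family $\Omega_u=\{\Phi_1(f_1)\Phi_2^{k_2}(f_2)>C2^{-u}|E|^{-1/r}\}$, $u\ge 0$, is indispensable. With one level, after Remark~\ref{lem:H1-BMO-Modified-1par} the resulting integral runs over $\Omega^c$, a set of infinite measure, with no way to finish; the paper localizes the integral to $\wt\Omega_u\setminus\Omega_{u-1}$, uses the pointwise bound $\Phi_1(f_1)\Phi_2^{k_2}(f_2)\lesssim 2^{-u}|E|^{-1/r}$ on $\Omega_{u-1}^c$ and $\iint_{\wt\Omega_u}\Phi_2^{k_3}(1_{\wt\Omega_u}f_3)\lesssim|\wt\Omega_u|\lesssim 2^{ur}|E|$, and then sums $2^{-u(1-r)}$, which converges exactly because $r<1$. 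This geometric decay across levels is the mechanism making the quasi--Banach endpoint available, and it is absent from your argument. Relatedly, your claim that a rectangle failing $|(R+\omega)\cap(\wt\Omega)^c|\ge\frac{99}{100}|R+\omega|$ must satisfy $R+\omega\subset\wt\Omega$ is false (failing only means $|(R+\omega)\cap\wt\Omega|>|R|/100$; containment requires a further $M1_{\wt\Omega}$ enlargement as in the paper).

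Third, your final step invokes ``H\"older in $L^p\times L^q\times L^{r'}$'', but for $1/2<r<1$ one has $r'<0$, so that inequality does not exist. The paper never uses $L^{r'}$ duality in this range -- it uses the pointwise bound on $\Omega_{u-1}^c$ together with Cauchy--Schwarz and the $L^2$-boundedness of $\Phi_2^{k_3}$ on a set of controlled measure. Fixing all of these would essentially reproduce the paper's multi-scale stopping-time argument.
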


\begin{proof}
The case $r>1$ is clear by Proposition \ref{prop:com1ofmodelBanach}.
Our remaining task is to prove a  weak type estimate when $r<1$, which combined with the Banach range boundedness
implies Proposition \ref{prop:com1PP} via interpolation. This general scheme was already explained in the proof of Proposition \ref{prop:FullProd}.

All the different forms of partial paraproducts are treated in a similar way. Here we assume that the operators  are of the  form
\eqref{eq:ParticularPartialP}, and therefore the commutator is split using the identity \eqref{eq:1EX}.
This leads to several terms, of which the two coming from the last term in \eqref{eq:1EX} need to be combined -- this part of the commutator produces the dependence on the complexity via Lemma \ref{lem:bmobound}. All the other parts of the commutator
are estimated separately. Here we focus on the part of the commutator involving 
$\bla (\langle b \rangle_{V,2} - \langle b \rangle_{I_1 \times V}) \langle f_1, h_{V} \rangle_2 , h_{I_1}^0\bra$.
All the other parts are estimated relatively similarly -- see also the proof of Proposition \ref{prop:com1FP} (the full paraproduct case), where
a different type of term is handled.

Let $\Phi_1$ and $\Phi_2^{k_2}$ be the auxiliary operators from Lemma \ref{lem:AuxBdd}, and recall the operator
$\varphi^2_{\omega_2,b}$ involved in the definition of $\Phi_1$. By Lemma \ref{lem:maximalbound} we have that
\begin{equation}\label{eq:phi_bDomination}
\big | \bla (\langle b \rangle_{V,2} - \langle b \rangle_{I_1 \times V}) 
\langle f_1, h_{V} \rangle_2 , h_{I_1}^0\bra \big |
\le \langle \varphi^2_{\omega_2,b} f_1, h^0_{I_1} \otimes h_V \rangle.
\end{equation}
Since $\Phi_1$ and $\Phi^{k_2}_2$ are bounded, it is enough to show that for all $f_1 \in L^p(\R^{n+m})$, $f_2 \in L^q(\R^{n+m})$ with
$\| \Phi_1(f_1)\Phi_2^{k_2}(f_2) \|_{L^r(\R^{n+m})}=1$ and $E \subset \R^{n+m}$ with $0<|E| < \infty$
there exists $E' \subset E$ with $|E'| \ge \frac{99}{100}|E|$ so that 
\begin{equation}\label{eq:JustAsPartialP}
\begin{split}
\sum_{\substack{K \in \calD^n_0 \\ V \in \calD^m_0}} \Lambda_{K,V}(f_1,f_2,f_3)
\lesssim |E|^{1/r'}
\end{split}
\end{equation} 
holds for all $f_3$ such that $|f_3| \le 1_{E'}$. Here $\Lambda_{K,V}$ is defined to act on three functions by
\begin{equation*}
\begin{split}
\Lambda_{K,V}(f_1,f_2,f_3)=
\E_\omega \sum_{\substack{I_1, I_2, I_3 \in \calD^n_{\omega_1} \\ I_i^{(k_i)} = K+\omega_1}} 
\Big| & a_{K+\omega_1, V+\omega_2, (I_i)}^{\omega} 
\langle \varphi^2_{\omega_2,b} f_1, h^0_{I_1} \otimes h_{V+\omega_2} \rangle \\ 
&\times \Big\langle f_2, h_{I_2} \otimes \frac{1_{V+\omega_2}}{|V|} \Big\rangle 
\Big \langle f_3, h_{I_3}\otimes \frac{1_{V+\omega_2}}{|V|} \Big \rangle \Big|.
\end{split}
\end{equation*}

We turn to prove \eqref{eq:JustAsPartialP}. Define the sets
$
\Omega_u =  \{ \Phi_1(f_1)\Phi_2^{k_2}(f_2) > C2^{-u} | E | ^{-1/r}  \}
$, $u \ge 0$.
For a small enough $c=c(n,m) \in (0,1)$ define the enlargement by 
$
\wt{\Omega}_u= \{M1_{\Omega_u} > c \}.
$
Define $E'=E \setminus \wt{\Omega}_0$. 
By choosing the constant $C$ in the definition of the sets $\Omega_u$ to be large, we have $|E'| \ge \frac{99}{100} |E |$.
Let $\widehat{\calR}_u$ be the collection of rectangles
$
\widehat{\calR}_u= \big\{ R \in \calD_0 \colon
|R \cap \Omega_u| \ge  |R|/100 \big \},
$
and write $\calR_u = \widehat{\calR}_u \setminus \widehat{\calR}_{u-1}$ when $u \ge 1$.

Now, we fix an arbitrary function $f_3$  such that $|f_3| \le 1_{E'}$ and consider \eqref{eq:JustAsPartialP} with $f_3$.
Let $K \times V \in \calD_0$. 
First, we show that if $\Lambda_{K,V} (f_1,f_2,f_3) \not=0$ then $K \times V \in \widehat \calR_u$ for some $u$.
We have for all $\omega=(\omega_1,\omega_2)$ and every $(x_1,x_2) \in K \times V$ that
\begin{equation*}
\begin{split}
\sum_{\substack{I_1 \in \calD^n_{\omega_1} \\ I_1^{(k_1)} = K+\omega_1}}
\frac{|I_1|^{1/2}}{|K||V|^{1/2}}| \langle \varphi^2_{\omega_2,b} f_1, h^0_{I_1} \otimes h_{V+\omega_2} \rangle|
& \le \sum_{\substack{I_1 \in \calD^n_{\omega_1} \\ I_1^{(k_1)} = K+\omega_1}} \frac{|I_1|^{1/2}}{|K|} \langle \wt S^2_{\omega_2} (\varphi^2_{\omega_2,b} f_1), h_{I_1}^0 \rangle_1(x_2) \\
&\lesssim M^1 \wt S^2_{\omega_2} (\varphi^2_{\omega_2,b} f_1)(x_1,x_2)
\end{split}
\end{equation*}
and
\begin{equation*}
\begin{split}
\sum_{\substack{I_2 \in \calD^n_{\omega_1} \\ I_2^{(k_2)} = K+\omega_1}} 
&\frac{|I_2|^{1/2}}{|K|} \Big | \Big \langle f_2, h_{I_2} \otimes \frac{1_{V+\omega_2}}{|V|} \Big \rangle \Big |
\lesssim 
 \sum_{\substack{I_2 \in \calD^n_{\omega_1} \\ I_2^{(k_2)} = K+\omega_1}} \frac{|I_2|^{1/2}}{|K|}
 M\langle f_2, h_{I_2} \rangle_1(x_2) \\
&= \sum_{\substack{I_2 \in \calD^n_{\omega_1} \\ I_2^{(k_2)} = K+\omega_1}} \frac{|I_2|^{1/2}}{|K|}
 \langle  \varphi^1_{\omega_1} f_2, h_{I_2} \rangle_1 (x_2)
\lesssim M^1(\Delta^1_{K+\omega_1,k_2} \varphi^1_{\omega_1} f_2) (x_1,x_2).
\end{split}
\end{equation*}
Therefore, if $\Lambda_{K,V} (f_1,f_2,f_3) \not=0$, then for
every $x \in K \times V$ there holds that
\begin{equation*}
\begin{split}
0 < \E_\omega \sum_{\substack{I_1,I_2 \in \calD^n_{\omega_1} \\ I_i^{(k_i)} = K+\omega_1}}
 \frac{|I_1|^{1/2}|I_2|^{1/2}}{|K|^2|V|^{1/2}} 
\Big| \langle \varphi^2_{\omega_2,b} f_1, h^0_{I_1} & \otimes h_{V+\omega_2} \rangle
 \Big \langle f_2, h_{I_2} \otimes \frac{1_{V+\omega_2}}{|V|} \Big \rangle \Big | \\
& \lesssim \Phi_1(f_1)(x) \Phi_2^{k_2}(f_2)(x).
\end{split}
\end{equation*}
The inequality ``$<$'' holds since the integrand is positive for $\omega$ in a set of positive measure.
From this it follows that $K \times V \subset \Omega_u$ if $u$ is large enough, and so in particular $K \times V \in \widehat \calR_u$.

If $K \times V \in \widehat{\calR}_u$, then for all $\omega$ there holds that $(K \times V)+\omega \subset (3K) \times (3V) \subset \wt \Omega_u$.
The constant $c=c(n,m)$ in the definition of $\wt \Omega_u$ is chosen so that this inclusion holds.
If $K \times V \in \widehat \calR _0$, then $(K \times V)+\omega \subset \wt \Omega_0 \subset (E')^c$ for all $\omega$, which combined with the fact that
$|f_3| \le 1_{E'}$ implies that $\Lambda_{K,V}(f_1,f_2,f_3)=0$.

With the above observations we have that
$$
\sum_{\substack{K \in \calD^n_0 \\ V \in \calD^m_0}} \Lambda_{K,V}(f_1,f_2,f_3)
= \sum_{u=1}^\infty \sum_{K \times V \in \calR_u} 
\Lambda_{K,V}(f_1,f_2,f_3).
$$
We fix $u$ and estimate the corresponding term.

First, notice that if $K \times V \in \calR_u$, then 
$\Lambda_{K,V}(f_1,f_2,f_3)=\Lambda_{K,V}( f_1,f_2,1_{\wt \Omega_u}f_3)$.
For $K \in \calD^n_0$ define
$
\calC_{K,u}
=\{V \in \calD^m_0 \colon K \times V \in \calR_u\}
$,
which allows us to write $\sum_{K \times V \in \calR_u}=\sum_{K \in \calD^n_0} \sum_{V \in \calC_{K,u}}$.
If $K \in \calD^n_0$, then every $V \in \calC_{K,u}$ satisfies $|(K \times V) \cap \Omega_{u-1}^c | \ge \frac{99}{100} |K \times V|$.
Therefore, Remark \ref{lem:H1-BMO-Modified-1par} gives that
\begin{equation}\label{eq:ApplyH1BMOLemma}
\begin{split}
& \sum_{K \in \calD^n_0}  \sum_{V \in \calC_{K,u}}\Lambda_{K,V}( f_1,f_2,1_{\wt \Omega_u}f_3) 
 \lesssim \sum_{K \in \calD^n_0} \E_\omega \sum_{\substack{I_1, I_2, I_3 \in \calD^n_{\omega_1} \\ I_i^{(k_i)} = K+\omega_1}} 
 \bigg[\frac{ |I_1|^{1/2}|I_2|^{1/2}|I_3|^{1/2}}{|K|^2} \\
&\hspace{1cm}\times \iint \displaylimits_{\wt\Omega_u \setminus \Omega_{u-1}}
\frac{1_{K}}{|K|} \otimes \wt S_{\omega_2} \langle \varphi^2_{\omega_2,b} f_1,  h^0_{I_1}  \rangle_1 M \langle f_2, h_{I_2} \rangle_1 M \langle  1_{\wt \Omega_u}f_3, h_{I_3} \rangle_1 \bigg], 
\end{split}
\end{equation}
where  we used the estimate 
$
\big | \big \langle f_2, h_{I_2} \otimes \frac{1_{V+\omega_2}}{|V|} \big \rangle \big | 
\lesssim M \langle f_2, h_{I_2} \rangle_1(x_2)
$, $x_2 \in V$, and the same estimate with $1_{\wt \Omega_u}f_3$. 
We were able to insert the restriction $\wt \Omega_u$ to the integration area since $K \times V \subset \wt \Omega_u$ for every $K \times V \in \calR_u$.

Notice that 
$\wt S_{\omega_2} \langle \varphi^2_{\omega_2,b} f_1,  h^0_{I_1}  \rangle_1(x_2) 
\le \langle \wt S_{\omega_2}^2  (\varphi^2_{\omega_2,b} f_1),  h^0_{I_1}  \rangle_1(x_2)$ 
for all $x_2$, and 
recall that $M \langle f_2, h_{I_2} \rangle_1=\langle  \varphi^1_{\omega_1} f_2, h_{I_2} \rangle_1$. 
Thus, the inner sum over the cubes $I_i$ in the right hand side of \eqref{eq:ApplyH1BMOLemma} 
 is dominated by
\begin{equation*}
\begin{split}
\iint \displaylimits_{\wt\Omega_u \setminus \Omega_{u-1}}
M^1 \wt S^2_{\omega_2 }(\varphi^2_{\omega_2,b} f_1)
M^1(\Delta^1_{K+\omega_1,k_2} \varphi^1_{\omega_1} f_2)
M^1(\Delta^1_{K+\omega_1,k_3} \varphi^1_{\omega_1} (1_{\wt \Omega_u}f_3)).
\end{split}
\end{equation*}
Taking expectation $\E_{\omega}=\E_{\omega_1} \E_{\omega_2}$, using H\"older's inequality with respect to $\omega_1$ 
and summing over $K \in \calD^n_0$ shows that 
\begin{equation*}
\begin{split}
\sum_{K \in \calD^n_0}  \sum_{V \in \calC_{K,u}}\Lambda_{K,V}( f_1,f_2,1_{\wt \Omega_u}f_3) 
\lesssim \iint \displaylimits_{\wt\Omega_u \setminus \Omega_{u-1}} \Phi_1(f_1) \Phi_2^{k_2}(f_2)
\Phi_2^{k_3}(1_{\wt \Omega_u} f_3).
\end{split}
\end{equation*}

By definition we have $\Phi_1(f_1)(x) \Phi_2^{k_2}(f_2)(x) \lesssim 2^{-u}|E|^{-1/r}$ for all $x \in \Omega_{u-1}^c$. Also, just by using the 
$L^2$-boundedness of $\Phi_2^{k_3}$ and the fact that $\| f_3\|_{L^\infty} \le 1$ there holds that
$$
\iint \displaylimits_{\wt\Omega_u \setminus \Omega_{u-1}}
\Phi_2^{k_3}(1_{\wt \Omega_u} f_3)
\lesssim |\wt\Omega_u| \lesssim |\Omega_u| \lesssim 2^{ur} |E|.
$$
These combined give that
$
\sum_{K \in \calD^n_0}  \sum_{V \in \calC_{K,u}}
\Lambda_{K,V}(f_1,f_2,f_3)  \lesssim 2^{-u(1-r)} |E|^{1/r'},
$
which can be summed over $u$ since $r < 1$. This finishes the proof of \eqref{eq:JustAsPartialP},
and therefore we have proved the desired weak type estimate for the part of the commutator involving the terms
$\bla (\langle b \rangle_{V,2} - \langle b \rangle_{I_1 \times V}) \langle f_1, h_{V} \rangle_2 , h_{I_1}^0\bra$.
The corresponding estimate for the other parts   gives the weak type estimate for 
$\mathbb{E}_{\omega}[b,P_{k,\omega}]_1$, when $P_{k,\omega}$ is of the form \eqref{eq:ParticularPartialP}.
This ends our treatment of Proposition \ref{prop:com1PP}.

\end{proof}

\subsubsection*{Full paraproducts}
\begin{prop}\label{prop:com1FP}
Let $\|b\|_{\bmo(\R^{n+m})}  = 1$, and let $1 < p, q \le \infty$ and $1/2 < r < \infty$ satisfy $1/p+1/q = 1/r$.
Suppose that $(\Pi_{\omega})_{\omega}$ is a collection of full paraproducts of the same type, where
$\Pi_{\omega}$ is defined in the grid $\calD_{\omega}$.
Then we have
$$
 \|\mathbb{E}_{\omega}[b,\Pi_{\omega}]_1(f_1, f_2)\|_{L^r(\R^{n+m})} 
 \lesssim \|f_1\|_{L^p(\R^{n+m})} \|f_2\|_{L^q(\R^{n+m})}.
$$
\end{prop}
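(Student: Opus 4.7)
The Banach range $r \in (1,\infty)$ is immediate from Proposition~\ref{prop:com1ofmodelBanach}, so the plan is to establish the weak type endpoint in the quasi--Banach range $r < 1$ and interpolate via the scheme detailed in Proposition~\ref{prop:FullProd}. Concretely, the aim is to show that for every measurable $E \subset \R^{n+m}$ with $0 < |E| < \infty$ there is a major subset $E' \subset E$ with $|E'| \ge \tfrac{99}{100}|E|$ such that
$$
|\langle \E_\omega [b, \Pi_\omega]_1(f_1, f_2), f_3\rangle| \lesssim \|f_1\|_{L^p} \|f_2\|_{L^q} |E|^{1/r'}
$$
for all $|f_3| \le 1_{E'}$, normalising $\|f_1\|_{L^p} = \|f_2\|_{L^q} = 1$.

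The first structural step is to decompose the commutator. Writing $\langle [b, \Pi_\omega]_1(f_1, f_2), f_3\rangle = \langle \Pi_\omega(f_1, f_2), bf_3\rangle - \langle \Pi_\omega(bf_1, f_2), f_3\rangle$, the Haar and averaging pairings $\langle bf_3, h_K \otimes h_V\rangle$, $\langle bf_3, h_K \otimes \tfrac{1_V}{|V|}\rangle$, $\langle bf_3\rangle_{K \times V}$ and their $f_1$-counterparts that arise from the various forms of $\Pi_\omega$ are expanded using precisely the identities \eqref{eq:biparEX}, \eqref{eq:1EX} and \eqref{eq:noEX} from Section~\ref{sec:marprod}. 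The leading term in each identity, carrying the full average $\langle b \rangle_{K \times V}$, cancels between the two halves of the commutator, leaving pieces of two kinds: paraproduct pieces carrying an operator $A_i(b, f_j)$, $a_i^\iota(b, f_j)$ (applied to $f_3$ or to $f_1$ depending on which slot the non-cancellative Haar functions sit in) multiplied by the $\BMO_{\textup{prod}}$ sequence $\lambda_{K,V}^{b,\omega} = \langle b, h_{K+\omega_1}\otimes h_{V+\omega_2}\rangle$, and a diagonal piece of the shape
$$
\sum_{K,V} \lambda_{K,V}^{b,\omega} \langle (b - \langle b\rangle_{K \times V}) f_1 \rangle_{K \times V} \langle f_2 \rangle_{K \times V} \langle f_3, h_{K+\omega_1} \otimes h_{V+\omega_2}\rangle
$$
(and its variants). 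The crucial point inherited from $b \in \bmo \subset \BMO_{\textup{prod}}$ is $\|\{\lambda^{b,\omega}_{K,V}\}\|_{\BMO_{\textup{prod}}(\calD_\omega)} \lesssim 1$, which makes Lemma~\ref{lem:H1-BMO-Modified} directly available with this sequence.

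The weak type argument then follows the template of Proposition~\ref{prop:com1PP} but with Lemma~\ref{lem:H1-BMO-Modified} (bi-parameter form) replacing Remark~\ref{lem:H1-BMO-Modified-1par}. I would introduce bi-parameter auxiliary operators $\Phi$ and $\Psi$ controlling the size of the averages that remain after extracting $\lambda^{b,\omega}_{K,V}$ from each piece: for the $A_i(b, f_3)$ type pieces, domination by $\langle M_{\calD_\omega}(\varphi_{\omega,b} f_1)\rangle_{K \times V}$ and $\langle M_{\calD_\omega} f_2\rangle_{K \times V}$ suffices, while for the diagonal piece Lemma~\ref{lem:maximalbound} replaces $\langle(b-\langle b\rangle_{K\times V})f_1\rangle_{K \times V}$ by $\langle M_b f_1\rangle_{K \times V}$; $L^s$-boundedness of $\Phi, \Psi$ for all $s \in (1,\infty)$ is supplied by Lemmas~\ref{lem:bmaxbounds} and~\ref{lem:AuxBdd}. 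Level sets $\Omega_u = \{\Phi(f_1)\Psi(f_2) > C 2^{-u}|E|^{-1/r}\}$ with $M$-enlargements $\wt\Omega_u$ are used to define $E' = E \setminus \wt\Omega_0$ (the choice of $C$ giving the required measure of $E'$) and to organise the rectangle sum into layers $\calR_u$ of rectangles meeting $\Omega_u$ but not $\Omega_{u-1}$ in a substantial fraction. On each layer, Lemma~\ref{lem:H1-BMO-Modified} applied to the $\BMO_{\textup{prod}}$-sequence $\lambda^{b,\omega}_{K,V}$ converts the $\calR_u$-sum into an integral over $\wt\Omega_u \setminus \Omega_{u-1}$ of a product of auxiliary operators; the pointwise bound on $\Omega_{u-1}^c$, the $L^2$ bound on the $f_3$-part and $|\wt\Omega_u| \lesssim |\Omega_u| \lesssim 2^{ur}|E|$ yield the layer estimate $\lesssim 2^{-u(1-r)}|E|^{1/r'}$, summable in $u \ge 1$ since $r < 1$.

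The main obstacle is the diagonal piece. Unlike in the partial paraproduct case where the $\BMO_{\textup{prod}}$-type duality degenerated to a one-parameter $\BMO$ duality (Remark~\ref{lem:H1-BMO-Modified-1par}), here the coefficient $\lambda^{b,\omega}_{K,V}$ genuinely lives in bi-parameter product $\BMO$, and the diagonal piece still carries $\lambda^{b,\omega}_{K,V}$ after removing the commutator cancellation. This forces us to rely on the full Lemma~\ref{lem:H1-BMO-Modified} and on $L^s$-boundedness of $M_b$ from Lemma~\ref{lem:bmaxbounds} to tame the $b$-dependent averages. Once that handling is in place, all variants of $\Pi_\omega$ (corresponding to the nine possible positions of cancellative Haar functions) are treated identically, completing the weak type bound and, through interpolation with the Banach range, the proposition.
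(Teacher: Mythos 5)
Your high-level plan matches the paper's: establish the weak-type endpoint for $r<1$ via the level-set/layering scheme, use auxiliary operators that are $L^s$-bounded for $s\in(1,\infty)$, apply the product-BMO duality lemma on each layer, and sum the layer contributions $2^{-u(1-r)}|E|^{1/r'}$. The decomposition of the commutator via \eqref{eq:1EX} and \eqref{eq:noEX} with cancellation of the $\langle b\rangle_{K\times V}$ terms is also right. However, there is a genuine conceptual error at the heart of your argument.

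You identify the $\BMO_{\textup{prod}}$ coefficient sequence fed into Lemma~\ref{lem:H1-BMO-Modified} as $\lambda^{b,\omega}_{K,V} = \langle b, h_{K+\omega_1}\otimes h_{V+\omega_2}\rangle$, with $b$ the commutator symbol, and justify the product-BMO bound by the inclusion $\bmo\subset\BMO_{\textup{prod}}$. This is not what happens. The full paraproduct $\Pi_\omega$ comes equipped with its own coefficient sequence $a_{K,V}^{\omega}$, built from a \emph{separate} product BMO function that defines $\Pi_\omega$ (and satisfies $\|\{a_{K,V}^{\omega}\}\|_{\BMO_{\textup{prod}}(\calD_\omega)}\le 1$ by the definition of a full paraproduct). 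It is this sequence $a_{K,V}^\omega$, not anything derived from the commutator symbol $b$, that is plugged into Lemma~\ref{lem:H1-BMO-Modified}. The commutator symbol $b\in\bmo$ plays a different role entirely: it makes the operators $A_i(b,\cdot)$, $a_i^\iota(b,\cdot)$, $M_b$, $\varphi^2_{\omega,b}$ bounded (Lemmas~\ref{lem:basicAa}, \ref{lem:bmaxbounds}, \ref{lem:maximalbound}), and these are what enter the auxiliary operators $\Phi_1,\Phi_2$. Conflating the two symbols makes your ``crucial point inherited from $b\in\bmo\subset\BMO_{\textup{prod}}$'' incorrect; the inclusion $\bmo\subset\BMO_{\textup{prod}}$ is not what supplies the hypothesis of the duality lemma here.

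Related, your outline does not address why the sum over rectangles reduces to $\sum_{u\ge 1}\sum_{\calR_u}$ — i.e. why rectangles in $\widehat\calR_0$ (and rectangles in no $\widehat\calR_u$) contribute nothing. This hinges on a localisation identity such as $\langle a^1_{i,\omega_1}(b,f_3), h_{K+\omega_1}\otimes \frac{1_{V+\omega_2}}{|V|}\rangle = \langle a^1_{i,\omega_1}(b, 1_{(K\times V)+\omega}f_3), h_{K+\omega_1}\otimes \frac{1_{V+\omega_2}}{|V|}\rangle$, which in turn lets you insert $1_{\wt\Omega_u}$ and also conclude $\Lambda_{K,V}=0$ when $R+\omega\subset(E')^c$. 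Without this observation the layering step, and hence the whole weak-type bound, does not close. Once the correct $\BMO_{\textup{prod}}$ sequence ($a_{K,V}^\omega$) and the localisation step are supplied, the rest of your outline goes through along the same lines as the paper.
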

\begin{proof}
As in the partial paraproduct case we are done after showing the weak type estimate for $r<1$.
The different forms of full paraproducts are handled separately with analogous arguments. Let us consider here the case where every $\Pi_\omega$ is of the form
\begin{equation}\label{eq:OneFullPara}
\langle \Pi_{\omega}(f_1, f_2), f_3 \rangle = \sum_{K \times V \in \calD_{\omega}} 
a_{K,V}^{\omega} \langle f_1 \rangle_{K \times V} 
\Big \langle f_2, \frac{1_K}{|K|} \otimes h_V \Big \rangle  
\Big \langle f_3, h_K \otimes \frac{1_V}{|V|} \Big \rangle,
\end{equation}
where
$
\|\{a_{K,V}^{\omega}\}_{K \times V \in \calD_{\omega}}\|_{\BMO_{\textup{prod}}(\calD_{\omega})} \le 1.
$

The commutators $[b,\Pi_\omega]$ are again split with the identities from Section \ref{sec:marprod}, this time
using \eqref{eq:1EX} and \eqref{eq:noEX}.
The resulting terms are handled separately with similar arguments. 
Here we consider the term
\begin{equation}\label{eq:OneFromComFullP}
\E_\omega \sum_{K \times V \in \calD_{\omega}} 
a_{K,V}^{\omega} \langle f_1 \rangle_{K \times V} 
\Big \langle f_2, \frac{1_K}{|K|} \otimes h_V \Big \rangle  
\Big \langle a^1_{i,\omega_1}(b, f_3), h_K \otimes \frac{1_V}{|V|} \Big \rangle
\end{equation}
for some $i \in \{1,2\}$.

This time, let $\Phi_1$ and $\Phi_2$ be the auxiliary operators
$$
\Phi_1(f)
=\E_{\omega_2}\Big( \sum_{V \in \calD^m_0} (M \langle f, h_{V+\omega_2} \rangle_2)^2 \otimes \frac{1_V}{|V|} \Big)^{1/2}
$$ 
and 
$$
\Phi_2(f)
=\E_{\omega_1}\Big( \sum_{K \in \calD^n_0} \frac{1_K}{|K|} \otimes (M \langle a^1_{i,\omega_1}(b,f), h_{K+\omega_1} \rangle_1)^2 \Big)^{1/2}.
$$ 
Similarly as in Lemma \ref{lem:AuxBdd}  
these are bounded in $L^s$ for every $s \in (1, \infty)$, which uses the fact that 
$a^1_{i,\omega_1}(b,\cdot)$ is bounded by Lemma \ref{lem:basicAa}.

Let now $f_1 \in L^p(\R^{n+m})$ and $f_2 \in L^q(\R^{n+m})$ be such that  $\| Mf_1 \Phi_1(f_2) \|_{L^r}=1$ and let $E \subset \R^{n+m}$ with $0 < |E| < \infty$.
We show that there exists a set $E' \subset E$ with $|E'| \ge \frac{99}{100}|E|$ so that 
\begin{equation}\label{eq:EstOneFromComFullP}
 \sum_{K \times V \in \calD_{0}} \Lambda_{K,V}(f_1,f_2,f_3)
 \lesssim |E|^{1/r'}
\end{equation}
holds for all $f_3$ such that $|f_3| \le 1_{E'}$,
where $\Lambda_{K,V}$ acts on a triple of functions by
\begin{equation*}
\begin{split}
\Lambda_{K,V}(f_1,f_2,f_3)
=\E_\omega
\Big |a_{K+\omega_1,V+\omega_2}^{\omega} \langle f_1 \rangle_{(K \times V)+\omega} 
&\Big \langle f_2, \frac{1_{K+\omega_1}}{|K|} \otimes h_{V+\omega_2} \Big \rangle  \\
&\times \Big \langle a^1_{i,\omega_1}(b, f_3), h_{K+\omega_1} \otimes \frac{1_{V+\omega_2}}{|V|} \Big \rangle \Big |.
\end{split}
\end{equation*}
Since $\| Mg_1 \Phi_1(g_2)\|_{L^r} \lesssim \| g_1 \|_{L^p} \| g_2 \|_{L^q}$ for all $g_1$ and $g_2$, this gives the estimate that we want for
the term \eqref{eq:OneFromComFullP}. Together with the corresponding estimates for all the other parts of 
$\E_\omega[b,\Pi_\omega]$, this proves the weak type estimate we wanted to show.

We turn to prove \eqref{eq:EstOneFromComFullP}. For $u \ge 0$
let $\Omega_u=\{Mf_1\Phi_1(f_2) > C2^{-u} |E|^{-1/r}\}$ and
$\wt \Omega_u =\{M1_{\Omega_u} > c\}$, where $c=c(n,m) \in (0,1)$ is a small constant. 
Set $E' = E \setminus \wt \Omega_0$.  By choosing the constant $C$ to be large enough,
we have that $|E'| \ge \frac{99}{100}|E|$.
Then, define $\widehat{\calR}_u= \{R \in \calD_0 \colon |R \cap \Omega_u| \ge \frac{1}{100}|R|\}$ for $u \ge 0 $ and
$\calR_u=\widehat{\calR}_u \setminus \widehat{\calR}_{u-1}$ for $u \ge 1$.

Suppose $K \times V \in \calD_0$ is such that $\Lambda_{K,V} (f_1,f_2,f_3) \not=0$.
Then, wee see that
$$
0 <  \E_\omega \frac{1}{|V|^{1/2}} 
\Big |\langle f_1 \rangle_{(K \times V)+\omega} 
\Big \langle f_2, \frac{1_{K+\omega_1}}{|K|} \otimes h_{V+\omega_2} \Big \rangle \Big |
\lesssim Mf_1(x)\Phi_1(f_2)(x)
$$
for all $x \in K \times V $. The first ``$<$'' holds since the integrand is positive for $\omega$ in a set of positive measure.
Thus, $K \times V \subset \Omega_u$ for large enough $u$, so $K \times V \in \widehat{\calR}_u$.

If $R \in \widehat{\calR}_u$, then $R +\omega \subset 3R \subset  \wt \Omega_u$ for all $\omega$, which is based the fact that $c=c(n,m)$ in the definition of $\wt \Omega_u$
is small enough. Notice that 
\begin{equation}\label{eq:Localisation}
\Big \langle a^1_{i,\omega_1}(b, f_3), h_{K+\omega_1} \otimes \frac{1_{V+\omega_2}}{|V|} \Big \rangle
=\Big \langle a^1_{i,\omega_1}(b, 1_{(K\times V)+\omega }f_3), h_{K+\omega_1} \otimes \frac{1_{V+\omega_2}}{|V|} \Big \rangle.
\end{equation}
Thus, if $K \times V \in \widehat{\calR}_0$, then $\Lambda_{K,V}(f_1,f_2,f_3)=0$ since $(K \times V)+\omega \subset (E')^c$ for all $\omega$, and $|f_3| \le 1_{E'}$.

Now, we have that
$$
\sum_{K \times V \in \calD_{0}} \Lambda_{K,V}(f_1,f_2,f_3)
= \sum_{u=1}^\infty \sum_{K \times V \in \calR_u} \Lambda_{K,V}(f_1,f_2,1_{ \wt \Omega_u}f_3),
$$
where it was legitimate to replace $f_3$ with $1_{ \wt \Omega_u}f_3$ because of \eqref{eq:Localisation}. We fix one $u$ and 
estimate the related term.

If $K \times V \in \calR_u$, then by definition $|(K \times V) \cap \Omega_{u-1}^c| \ge \frac{99}{100} |K \times V|$.
Therefore, using Lemma \ref{lem:H1-BMO-Modified} and then
the estimate
$
\big |\big \langle f_2, \frac{1_{K+\omega_1}}{|K|} \otimes h_{V+\omega_2 } \big\rangle \big |
\lesssim M \langle f_2, h_{V+\omega_2} \rangle_2 (x_1)
$, $x_1 \in K$,
and a corresponding estimate related to $f_3$, we have that
\begin{equation*}
\begin{split}
\sum_{K \times V \in \calR_u} \Lambda_{K,V}(f_1,f_2,1_{\wt \Omega_u} f_3) 
\lesssim  \iint   \displaylimits_{\wt \Omega_u \setminus \Omega_{u-1}}  Mf_1 \Phi_1(f_2) \Phi_2(1_{\wt \Omega_u} f_3).
\end{split}
\end{equation*}
The restriction to $\wt \Omega_u$ in the integration came from the fact that every $R \in \calR_u$ satisfies $R \subset \wt \Omega_u$.
Since $Mf_1(x) \Phi_1(f_2)(x) \lesssim 2^{-u}|E|^{-1/r}$ for $x \in \Omega_{u-1}^c$, the operator $\Phi_2$ is $L^2$ bounded and $\|f_3\|_{L^\infty} \le 1$,
there holds that 
$$
\iint   \displaylimits_{\wt \Omega_u \setminus \Omega_{u-1}}  Mf_1 \Phi_1(f_2) \Phi_2(1_{\wt \Omega_u} f_3)
\lesssim 2^{-u}|E|^{-1/r} |\wt \Omega_u|
\lesssim 2^{-u(1-r)}|E|^{1/r'}.
$$
This can be summed over $u$ since $r < 1$, which finishes the proof of \eqref{eq:EstOneFromComFullP}.
\end{proof}

\section{Upper bounds for iterated commutators}\label{sec:iterated}
We give the details only for shifts in this section (in the first order case we gave the details for partial and full paraproducts).
First, we record the following lemma -- the proof is straightforward with our by now familiar method. We omit the details but see
Lemma 5.1 of \cite{LMV:Bloom}.
Parts of this lemma also appear in \cite{HPW}.
\begin{lem}\label{lem:weightedoneparcommutator}
Let $\|b_1\|_{\bmo(\R^{n+m})} = \|b_2\|_{\bmo(\R^{n+m})} = 1$, $1 < p < \infty$ and $w \in A_p(\R^n \times \R^m)$.
Then for $i = 1, \ldots, 8$ and $j=1,2$ we have
$$
\| [b_2, A_i(b_1, f)] \|_{L^p(w)} + \| [b_2, a_j^1(b_1, f)] \|_{L^p(w)} \le C([w]_{A_p(\R^n \times \R^m)}) \|f\|_{L^p(w)}.
$$
\end{lem}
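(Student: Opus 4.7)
The plan is to iterate the bi-parameter martingale expansion strategy of Section~\ref{sec:marprod}, but now with the paraproduct $A_i(b_1,\cdot)$ (respectively $a_j^1(b_1,\cdot)$) playing the role previously taken by the CZO $T$. Fixing for concreteness the representative $A_1(b_1,f)=\sum_{I\in\calD^n,\,J\in\calD^m}\Delta_{I\times J}b_1\cdot\Delta_{I\times J}f$, I would write
\[
[b_2,A_1(b_1,\cdot)]f \;=\; b_2\,A_1(b_1,f)\;-\;A_1(b_1,b_2 f),
\]
test both sides against a Haar function $h_{I_0}\otimes h_{J_0}$, and for every occurrence of a product $b_2 g$ (with $g$ equal to $A_1(b_1,f)$ on the outer term and to $f$ on the inner term) apply the bi-parameter identity~\eqref{eq:biparEX}, together with~\eqref{eq:1EX} and~\eqref{eq:noEX} when non-cancellative Haar functions appear. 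The ``averaged'' pieces of the form $\langle b_2\rangle_{I_0\times J_0}\langle g,h_{I_0}\otimes h_{J_0}\rangle$ produced on the two sides are algebraically identical, and therefore cancel.

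What survives is a finite list of iterated/tangled paraproducts in which at least one martingale coefficient of $b_2$ appears alongside the martingale coefficient of $b_1$. The cleanly iterated pieces are schematically of the form $A_k(b_2,A_1(b_1,f))$ or $A_1(b_1,A_k(b_2,f))$ with $k\in\{1,\ldots,8\}$, and these are bounded on $L^p(w)$ by composing two applications of Lemma~\ref{lem:basicAa}, each contributing one factor of $C([w]_{A_p(\R^n\times\R^m)})$ and one factor of $\|b_i\|_{\bmo}=1$. The ``tangled'' pieces, in which a coefficient $\Delta_{I'\times J'}b_2$ meets the coefficient $\Delta_{I\times J}b_1$ directly on a common rectangle, are handled by first dominating the problematic averages via Lemma~\ref{lem:maximalbound} in terms of the $b_2$-adapted operators $M_{b_2}$ and $\varphi^i_{\calD,b_2}$ of Section~\ref{sec:AdapMaxFunc}, and then invoking their weighted boundedness (Lemma~\ref{lem:bmaxbounds}) together with the standard weighted square function and maximal estimates (Lemmas~\ref{lem:standardEst1} and~\ref{lem:standardEst2}).

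The operators $a_j^1(b_1,\cdot)$ are handled identically, but since the $b_1$-coefficients live only on $\calD^n$ only the one-parameter identity~\eqref{eq:1EX} is needed in the $\R^n$-variable, making the bookkeeping strictly simpler. The main obstacle is precisely this bookkeeping: one has to verify the cancellation of the averaged tails separately for each of the ten operators $A_1,\ldots,A_8,a_1^1,a_2^1$, and in each case keep track of which positions carry cancellative versus non-cancellative Haar functions after all of the expansions. Once the cancellations have been identified, the remaining analytic content is a direct composition of the lemmata already in place, with no new $H^1$-$\BMO$ duality or weighted input beyond Lemmas~\ref{lem:basicAa}, \ref{lem:bmaxbounds}, \ref{lem:standardEst1} and \ref{lem:standardEst2}; this is the routine part which we omit, referring to Lemma~5.1 of~\cite{LMV:Bloom} for the full details.
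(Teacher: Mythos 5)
Your overall plan is the right one, and in spirit it matches the paper's (omitted) ``familiar method'' and the Bloom-type computation in Lemma 5.1 of \cite{LMV:Bloom}: decompose $b_2\cdot(\text{inner function})$ via the martingale rules of Section~\ref{sec:marprod}, harvest the cancellation of the averaged pieces, and then bound what remains by composing the $A_i$, $a_j$, $M_{b_2}$ and square-function bounds already in place (Lemmas~\ref{lem:basicAa}, \ref{lem:bmaxbounds}, \ref{lem:standardEst1}, \ref{lem:standardEst2}). That is also what the paper intends.

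However, there is a genuine defect in the way you set up the cancellation. If you literally pair $[b_2,A_1(b_1,\cdot)]f$ against a \emph{fixed external} $h_{I_0}\otimes h_{J_0}$ and apply~\eqref{eq:biparEX} to the outer product $b_2\cdot A_1(b_1,f)$, the averaged term you produce is $\langle b_2\rangle_{I_0\times J_0}\langle A_1(b_1,f),h_{I_0}\otimes h_{J_0}\rangle$. But the inner product $b_2 f$ occurs \emph{inside} the operator, paired against the internal $h_I\otimes h_J$ in the sum defining $A_1$, so the expansion there produces an averaged term carrying $\langle b_2\rangle_{I\times J}$ --- a different rectangle. Writing both out,
\begin{align*}
\textup{outer avg.}&=\langle b_2\rangle_{I_0\times J_0}\sum_{I,J}\langle b_1,h_I\otimes h_J\rangle\langle f,h_I\otimes h_J\rangle\langle h_I h_I\otimes h_J h_J,\,h_{I_0}\otimes h_{J_0}\rangle,\\
\textup{inner avg.}&=\sum_{I,J}\langle b_1,h_I\otimes h_J\rangle\,\langle b_2\rangle_{I\times J}\,\langle f,h_I\otimes h_J\rangle\langle h_I h_I\otimes h_J h_J,\,h_{I_0}\otimes h_{J_0}\rangle,
\end{align*}
one sees these are not algebraically identical; their difference carries factors $(\langle b_2\rangle_{I_0\times J_0}-\langle b_2\rangle_{I\times J})$ over rectangles with $I\times J\subsetneq I_0\times J_0$, and the pairing $\langle h_I h_I,h_{I_0}\rangle$ does not decay in $\ell(I_0)/\ell(I)$, so those residuals are not trivially summable. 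The clean fix is to pair the commutator against an arbitrary test function $g$, pass $b_2$ across the operator (so $\langle b_2 A_1(b_1,f),g\rangle = \langle A_1(b_1,f),b_2 g\rangle$), and expand both $b_2 g$ and $b_2 f$ \emph{at the Haar-pairing positions inside} $A_i(b_1,\cdot)$ — applying \eqref{eq:biparEX}, \eqref{eq:1EX}, or \eqref{eq:noEX} according to whether the internal Haar function is cancellative, half-cancellative (such as $h_I\otimes h_J h_J$), or fully non-cancellative. Then both sides produce the averaged piece $\langle b_2\rangle_{I\times J}$ with the \emph{same} $I,J$, the cancellation is exact, and what survives is precisely the compositions $A_1(b_1,A_i(b_2,\cdot))$, adjoints $A_i^{*}(b_2,A_1(b_1,\cdot))$, and $M_{b_2}$-controlled terms that your Lemmas cover. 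With that correction your argument goes through and reproduces the intended proof.
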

Next, we set up some additional notation and record  the boundedness of certain deterministic square functions.
Let $i,j \in \Z$,  $i, j \ge 0$. Suppose that we have a family of operators $U=\{U_ {\omega}\}_{\omega}$ such that for all $\omega$ there holds
$$
\| U_{\omega} f \|_{L^2(w)} \le C([w]_{A_2(\R^n \times \R^m)})\| f \|_{L^2(w)}, \quad f \in L^2(w),
$$
for all $w \in A_2(\R^n \times \R^m)$. 
Define
$$
S_{U}^{i,j} f
=\Big(\sum_{K \times V \in \calD_0} \E_{\omega} (M \Delta^{i,j}_{(K+\omega_1) \times (V+\omega_2)} U_{\omega}f)^2 \Big)^{1/2}.
$$
Similarly, given $U=\{U_ {\omega_1}\}_{\omega_1}$ or $U=\{U_ {\omega_2}\}_{\omega_2}$ we set
$$
S^1_{i,U}=\Big(\sum_{K \in \calD_0^n} \E_{\omega_1} (M \Delta^{1}_{K+\omega_1,i} U_{\omega_1} f)^2 \Big)^{1/2}, \,\,\,
S^2_{j,U}=\Big(\sum_{V \in \calD_0^m} \E_{\omega_2} (M \Delta^{2}_{V+\omega_2,j} U_{\omega_2} f)^2 \Big)^{1/2}.
$$
We write $S^{i,j}$, $S^1_{i}$ and $S^2_j$ if there is no $U$ present.
The proof of the next lemma follows from extrapolation similarly as in Lemma \ref{lem:AuxBdd}.
\begin{lem}\label{lem:DetSquareShift}
For all $p \in (1, \infty)$ and $w \in A_p(\R^n\times\R^m)$ there holds
$$
\| S^{i,j}_Uf\|_{L^p(w)}+\| S^{1}_{i,U}f\|_{L^p(w)}+\| S^{2}_{j,U}f\|_{L^p(w)}
\le C([w]_{A_p(\R^n\times\R^m)})  \| f\|_{L^p(w)}.
$$
\end{lem}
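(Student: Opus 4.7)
The plan is to establish the three estimates first at $p=2$ for every $w\in A_2(\R^n\times\R^m)$, and then promote them to the full range by bi-parameter Rubio de Francia extrapolation. This is the route already used in Lemma \ref{lem:AuxBdd}, and the assumption on $U$ is tailored exactly to make it work.

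For $S^{i,j}_U$, since every summand is nonnegative, Tonelli allows me to move $\E_\omega$, the sum, and the $L^2(w)$-integration freely, giving
\begin{align*}
\|S^{i,j}_U f\|_{L^2(w)}^2
= \E_\omega \sum_{K\times V\in\calD_0}
\big\|M\Delta^{i,j}_{(K+\omega_1)\times(V+\omega_2)}U_\omega f\big\|_{L^2(w)}^2.
\end{align*}
For each fixed $\omega$ I then apply the bi-parameter Fefferman--Stein inequality from Lemma \ref{lem:standardEst2} with $s=2$ to remove the maximal function up to a constant $C([w]_{A_2})$. Next, I use the pointwise identity $|\Delta^{i,j}_{K\times V}g|^2=\sum_{I^{(i)}=K,\,J^{(j)}=V}|\Delta_{I\times J}g|^2$ (Haar functions at a common parent scale are disjointly supported), so that summing over $K\in\calD^n_{\omega_1}$, $V\in\calD^m_{\omega_2}$ collapses the martingale blocks to $\sum_{I,J}|\Delta_{I\times J} U_\omega f|^2$. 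Then the weighted bi-parameter square function bound in Lemma \ref{lem:standardEst1} yields $\lesssim \|U_\omega f\|_{L^2(w)}^2$, and the hypothesis on $U_\omega$ provides $\lesssim C([w]_{A_2})^2\|f\|_{L^2(w)}^2$ uniformly in $\omega$. Reinserting $\E_\omega$ finishes the $L^2(w)$ estimate. The one-parameter versions $S^1_{i,U}$ and $S^2_{j,U}$ are handled by exactly the same argument, using the one-parameter martingale block identity and the corresponding one-parameter weighted square function bound from Lemma \ref{lem:standardEst1}.

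Once the $L^2(w)$ bound with quantitative $A_2$-dependence is known for all $w\in A_2(\R^n\times\R^m)$, bi-parameter extrapolation applied to the pair $(|f|,\,S^{i,j}_U f)$ (and analogously for $S^1_{i,U}$, $S^2_{j,U}$) delivers the desired $L^p(w)$ estimate for every $p\in(1,\infty)$ and $w\in A_p$. I do not expect any genuine obstacle: the only mild care needed is to verify that the constants picked up in each step are uniform in $\omega$ and depend only on $[w]_{A_2}$ (so that $\E_\omega$ can be pulled outside), but this is immediate because $[w]_{A_2}$ and the hypothesised bound on $U_\omega$ are independent of $\omega$.
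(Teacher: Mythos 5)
Your proposal is correct and follows the same strategy the paper uses: the paper's proof is a single line (``follows from extrapolation similarly as in Lemma~\ref{lem:AuxBdd}''), and what you wrote is precisely the intended content of that line --- pull $\E_\omega$ out at the level of $L^2(w)$, apply the weighted Fefferman--Stein and square-function bounds from Lemmas~\ref{lem:standardEst2} and~\ref{lem:standardEst1} together with the disjoint-support collapse of the martingale blocks, invoke the $A_2$-boundedness hypothesis on $U_\omega$, and finish by bi-parameter Rubio de Francia extrapolation.
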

We are ready to handle the iterated commutators of shifts.

\begin{prop}\label{prop:com2ofmodelQuasiBanach}
Let $\|b_1\|_{\bmo(\R^{n+m})} = \|b_2\|_{\bmo(\R^{n+m})} = 1$, and let $1 < p, q \le \infty$ and $1/2 < r < \infty$ satisfy $1/p+1/q = 1/r$.
Suppose that $(S_{\omega})_{\omega} = (S^{v}_{k, \omega})_{\omega}$ is a collection bilinear bi-parameter shifts of the same type and of complexity $(k,v)$, each
defined in the grid $\calD_{\omega}$.
Then we have
$$
 \|\mathbb{E}_{\omega}[b_2,[b_1,S_{\omega}]_1]_2(f_1, f_2)\|_{L^r(\R^{n+m})} 
 \lesssim (1+\max(k_i, v_i))^2 \|f_1\|_{L^p(\R^{n+m})} \|f_2\|_{L^q(\R^{n+m})},
$$
and similarly for $\mathbb{E}_{\omega}[b_2,[b_1,S_{\omega}]_1]_1$.
\end{prop}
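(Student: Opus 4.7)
The plan is to follow the template of the first-order case from Section \ref{sec:com1}, but now apply the martingale difference expansions of Section \ref{sec:marprod} to both $b_1$ and $b_2$ simultaneously. Writing the iterated commutator as
\begin{align*}
\langle [b_2,[b_1,S_\omega]_1]_2(f_1,f_2),f_3\rangle
&= \langle S_\omega(b_1 f_1, b_2 f_2), f_3\rangle - \langle S_\omega(b_1 f_1, f_2), b_2 f_3\rangle \\
&\quad - \langle S_\omega(f_1, b_2 f_2), b_1 f_3\rangle + \langle S_\omega(f_1, f_2), b_1 b_2 f_3\rangle,
\end{align*}
each pairing of the form $\langle b_k f_j, \widetilde{h}_I \otimes \widetilde{h}_J\rangle$ arising inside a shift is split using \eqref{eq:biparEX}, \eqref{eq:1EX} or \eqref{eq:noEX} according to which of the two Haar functions are cancellative. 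The pairing $\langle b_1 b_2 f_3, \widetilde{h}_I \otimes \widetilde{h}_J\rangle$ in the last term is expanded first with respect to $b_2$, treating $b_2 f_3$ as the inner function, and then each of the resulting pieces is expanded again with respect to $b_1$.

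After these expansions three kinds of sub-terms appear. \emph{Paraproduct sub-terms} involve at least one of the operators $A_i(b_k,\cdot)$ or $a_j^k(b_k,\cdot)$ acting on some $f_l$; these reduce to bilinear bi-parameter operators of the general type handled by Proposition \ref{prop:com1ofmodelBanach}, with complexity-independent outside norm coming from Lemma \ref{lem:basicAa} and, when a second $b$ is added on top, from the iterated Lemma \ref{lem:weightedoneparcommutator}. \emph{Single-average sub-terms} couple one paraproduct with one average difference $\langle b_l\rangle_{Q\times R}-\langle b_l\rangle_{I\times J}$, which by Lemma \ref{lem:bmobound} contributes a factor $\lesssim \max(k_i,v_i)$. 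The \emph{main term}, where both expansions collapse to averaged pieces, contributes a product of two such average differences and hence the claimed $(1+\max(k_i,v_i))^2$ growth; here the four commutator pieces must be summed together so that the telescoping built into \eqref{eq:biparEX}--\eqref{eq:noEX} leaves exactly this product and no residue. Proposition \ref{prop:com1ofmodelBanach} then settles the Banach range $r > 1$.

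For the quasi-Banach range $1/2 < r < 1$ we run the weak-type/level-set machinery of Section \ref{sec:quasiviarest}. For the paraproduct and single-average sub-terms, Lemma \ref{lem:maximalbound} dominates each pairing of $b_k f_j$ with mixed or fully non-cancellative Haar functions pointwise by $\varphi^i_{\calD,b_k}(f_j)$ or by $M_{b_k} f_j$, and the argument of Propositions \ref{prop:com1ofShiftsQuasiBanach}--\ref{prop:com1FP} then runs almost verbatim with the auxiliary operators $\Phi_1,\Phi_2$ enlarged to accommodate one or two $\varphi_{b_k}$ factors. These enlarged $\Phi_j$'s remain $L^s$-bounded for all $s \in (1,\infty)$ by extrapolation through Lemma \ref{thm:extra} combined with Lemma \ref{lem:bmaxbounds}. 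For the main term the level-set argument is the same, with the polynomial prefactor $(1+\max(k_i,v_i))^2$ produced exactly as in the Banach case. Interpolating the resulting weak estimate with the Banach range as in the proof of Proposition \ref{prop:FullProd} closes the full range; the symmetric statement for $[b_2,[b_1,S_\omega]_1]_1$ is identical up to relabelling the Haar function on $f_3$.

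The main obstacle is combinatorial rather than analytical: the four commutator pieces, the two nested martingale expansions and the nine possible Haar cancellation patterns of a shift produce a long but uniform list of sub-terms, and one must verify case by case that no paraproduct sub-term carries a hidden complexity loss. This is precisely guaranteed by Lemmas \ref{lem:basicAa} and \ref{lem:weightedoneparcommutator}, which provide complexity-independent operator norms for $A_i(b_k,\cdot)$, $a_j^k(b_k,\cdot)$ and their $[b_l,\cdot]$ commutators, so that the final quadratic complexity growth comes only from the genuinely averaged main term via Lemma \ref{lem:bmobound}.
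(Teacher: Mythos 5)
Your overall skeleton --- the four-term identity, the classification into paraproduct / single-average / main terms, the identification of the $(1+\max(k_i,v_i))^2$ growth coming from the fully-averaged piece via Lemma \ref{lem:bmobound}, and the strategy of interpolating a weak-type estimate against the Banach range --- is broadly aligned with the paper. However, there is a genuine gap in your treatment of the quasi-Banach range, and it is precisely at the point that the paper singles out as the new difficulty of the iterated case.

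The paper's proof first expands the \emph{inner} commutator $[b_1,S_\omega]_1$, producing (among others) terms of the form $A^\omega_{K\times V}\big(f_1,f_2,a^1_{i,\omega_1}(b_1,f_3)\big)$. When the outer commutator $[b_2,\cdot]_2$ is taken and the level-set localisation replaces $f_3$ by $1_{\wt\Omega_u}f_3$, one is left with the \emph{nested} expression $a^1_{i,\omega_1}\big(b_1, b_2\,1_{\wt\Omega_u}f_3\big)$, in which $b_2$ sits \emph{inside} the $b_1$-paraproduct. Your ``simultaneous expansion'' of $\langle b_1b_2f_3,\tilde h_I\otimes\tilde h_J\rangle$ hits the same obstruction: after peeling off $b_1$ you also land on $a^1_i(b_1,b_2 f_3)$, and there is no way to further expand $b_2 f_3$ \emph{inside} that operator using only the rules \eqref{eq:biparEX}--\eqref{eq:noEX}. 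The paper resolves this by the add-and-subtract decomposition \eqref{eq:Com2Split1}: writing
$a^1_{i,\omega_1}(b_1,b_2 g)= b_2\,a^1_{i,\omega_1}(b_1,g)-[b_2,a^1_{i,\omega_1}(b_1,\cdot)](g)$
separates the term into the one-parameter commutator $[b_2,a^1_{i,b_1}]$ applied to the localised function, plus a residual that is a genuine first-order outer $b_2$-commutator of the shift. Controlling the first piece requires the square function $S^1_{k_1,[b_2,a^1_{i,b_1}]}$ of Lemma \ref{lem:DetSquareShift}, whose boundedness in turn rests on the weighted commutator estimate of Lemma \ref{lem:weightedoneparcommutator}. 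None of this can be absorbed into your ``$\Phi_1,\Phi_2$ enlarged to accommodate one or two $\varphi_{b_k}$ factors'' bounded by Lemma \ref{lem:bmaxbounds}: the object $[b_2,a^1_{i,b_1}]$ is not a maximal/$\varphi_b$-type operator, and the claim that the first-order quasi-Banach argument ``runs almost verbatim'' is exactly what the paper says is false here (``The corresponding term in the Banach range proof can be handled immediately by just using Lemma \ref{lem:weightedoneparcommutator}. Here we need to work a bit harder.''). You do cite Lemma \ref{lem:weightedoneparcommutator} for the Banach range, but in the quasi-Banach discussion you omit it and cite Lemma \ref{lem:bmaxbounds} instead --- that is the wrong tool. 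To close the gap you need to (i) exhibit the add-and-subtract step or an equivalent that isolates $[b_2,a^1_{i,b_1}]$ acting on the localised $1_{\wt\Omega_u}f_3$, and (ii) feed that commutator family into Lemma \ref{lem:DetSquareShift} via Lemma \ref{lem:weightedoneparcommutator} to obtain a bounded auxiliary square function for the level-set estimate.
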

\begin{proof}
We focus on the harder quasi--Banach range proof, which contains the decomposition of the iterated commutator that is
used also in the Banach range case. Some comments about the Banach range case are made along the way.
We only consider the iterated commutator of the form $[b_2,[b_1,S_{\omega}]_1]_2$, the other being analogous.

The shifts we consider here are of the form
\begin{equation*}
\langle S_{\omega}(f_1,f_2),f_3 \rangle
=  \sum_{\substack{K \in \calD^n_0 \\ V \in \calD^m_0}}  A^{\omega}_{K \times V}(f_1, f_2, f_3),
\end{equation*}
where
\begin{equation*}
\begin{split}
A^{\omega}_{K \times V}(f_1, f_2, f_3) 
= \sum_{\substack{I_1, I_2, I_3 \in \calD^n_{\omega_1} \\ I_i^{(k_i)} = K+\omega_1}} &
\sum_{\substack{J_1, J_2, J_3 \in \calD^m_{\omega_2} \\ J_i^{(v_i)}  = V+\omega_2}} 
a^{\omega}_{K+\omega_1, V+\omega_2, (I_i), (J_j)}  \\
&\times  \langle f_1,  h_{I_1}^0 \otimes   h_{J_1}\rangle \langle f_2,  h_{I_2} \otimes  h_{J_2}\rangle  
\langle f_3, h_{I_3} \otimes  h_{J_3}^0 \rangle.
\end{split}
\end{equation*}
Shifts of other form are handled analogously.

The first step of the decomposition is to expand $\langle [b_1,S_{\omega}]_1(f_1,f_2),f_3 \rangle$ using our usual rules.
This leads to multiple terms, and their contributions to the second order commutator $[b_2,[b_1,S_{\omega}]_1]_2$ are considered separately.
One of the terms is
\begin{equation}\label{eq:term1}
\sum_{\substack{K \in \calD^n_0 \\ V \in \calD^m_0}}  A^{\omega}_{K \times V}(f_1, f_2, a_{i,\omega_1}^1(b_1, f_3)).
\end{equation}
We consider its contribution first, and briefly comment the others at the end. 

We need to prove the following key statement. Let $r < 1$, and
suppose $f_1 \in L^p(\R^{n+m})$ and $f_2 \in L^q(\R^{n+m})$ satisfy
\begin{align*}
\Big\| S^2_{v_1}f_1 S^{k_2,v_2}f_2 + \sum_{j=1}^8  S^2_{v_1}f_1 S^{k_2,v_2}_{A_{j,b_2}} f_2 \Big\|_{L^r} = 1,
\end{align*}
where $A_{j,b_2}$ denotes the family $\{A_{j,\omega}(b_2, \cdot)\}_{\omega}$, and
the square functions are defined as above.
Given $E \subset \R^{n+m}$ with $0 < |E| < \infty$
there exists a subset $E' \subset E$ with $|E'| \ge \frac{99}{100} |E|$  so that for all functions $f_3$
satisfying $|f_3| \le 1_{E'}$ there holds
\begin{equation}\label{eq:ResAveShiftExEst2}
\begin{split}
\Big|\E_{\omega}\sum_{\substack{K \in \calD^n_0 \\ V \in \calD^m_0}} 
[A^{\omega}_{K \times V}(f_1, f_2, a_{i,\omega_1}^1(b_1, b_2f_3)) 
-A^{\omega}_{K \times V}&(f_1, b_2f_2, a_{i,\omega_1}^1(b_1, f_3))] 
 \Big| \\
 &\lesssim (1+\max(k_i,v_i))|E|^{1/r'}.
\end{split}
\end{equation}
In view of Lemma \ref{lem:DetSquareShift} this is enough.

We define
$$
\Omega_u =\Big\{S^2_{v_1}f_1 S^{k_2,v_2}f_2 +  \sum_{j=1}^8 S^2_{v_1}f_1 S^{k_2,v_2}_{A_{j,b_2}} f_2
> C_0 2^{-u}|E|^{-1/r}\Big\}, \quad u \ge 0,
$$
and 
$
\wt \Omega_u = \{M1_{\Omega_u}> c_1\},
$
where $c_1>0$ is a small enough dimensional constant. Then we
choose $C_0=C_0(c_1)$ so large that the set $E':=E \setminus \wt \Omega_0$ satisfies $|E'|  \ge \frac{99}{100} |E|$.
Define the collections
$
\widehat \calR_u =\big\{ R \in \calD_0 \colon |R \cap \Omega_u | \ge |R|/2\big\},
$
and set $\calR_u = \widehat\calR_u \setminus \widehat \calR_{u-1}$ for $u \ge 1$. 
Fix now some function $f_3$ such that $|f_3| \le 1_{E'}$.
We abbreviate
$$
\Lambda_{K \times V}^{\omega}(f_1,f_2,f_3) = 
A^{\omega}_{K \times V}(f_1, f_2, a_{i,\omega_1}^1(b_1, b_2f_3)) 
- A^{\omega}_{K \times V}(f_1, b_2f_2, a_{i,\omega_1}^1(b_1, f_3)).
$$
Notice  the localisation property
$$
\Lambda_{K \times V}^{\omega}(f_1,f_2,f_3)
=\Lambda_{K \times V}^{\omega}(f_1,f_2,1_{(K+\omega_1) \times (V+\omega_2)} f_3).
$$
With some thought, similarly as in the proofs of Proposition \ref{prop:com1PP} and Proposition \ref{prop:com1FP}, 
we see that we may write 
\begin{align*}
\E_{\omega}\sum_{\substack{K \in \calD^n_0 \\ V \in \calD^m_0}} 
 \Lambda^{\omega}_{K \times V} (f_1,f_2,f_3 )
=\sum_{u=1}^\infty \sum_{K \times V\in \calR_u} 
\E_{\omega} \Lambda^{\omega}_{K \times V} (f_1,f_2, 1_{\wt \Omega_u} f_3 ).
\end{align*}

We now fix $u$, and our goal is to prove
$$
\sum_{K \times V\in \calR_u} 
\E_{\omega} |\Lambda^{\omega}_{K \times V} (f_1,f_2, 1_{\wt \Omega_u} f_3 )| \lesssim
(1+\max(k_i,v_i))2^{-u(1-r)}|E|^{1/r'}.
$$
By adding and subtracting
$
A^{\omega}_{K \times V}(f_1, f_2, b_2a_{i,\omega_1}^1(b_1, 1_{\wt \Omega_u}f_3)),
$
we see that
\begin{equation}\label{eq:Com2Split1}
\begin{split}
\Lambda^{\omega}_{K \times V} (f_1,f_2, 1_{\wt \Omega_u} f_3 ) 
&= -A^{\omega}_{K \times V}(f_1, f_2, [b_2,a_{i,\omega_1}^1(b_1, \cdot)](1_{\wt \Omega_u}f_3)) \\
&+ A^{\omega}_{K \times V}(f_1, f_2, b_2a_{i,\omega_1}^1(b_1, 1_{\wt \Omega_u}f_3)) \\
&- A^{\omega}_{K \times V}(f_1, b_2f_2, a_{i,\omega_1}^1(b_1, 1_{\wt \Omega_u}f_3)).
\end{split}
\end{equation}

We now consider the first term of \eqref{eq:Com2Split1}. The corresponding term in the Banach range proof can be handled immediately by just using Lemma \ref{lem:weightedoneparcommutator}. Here we need to work a bit harder. Denote the family $\{ [b_2, a^1_{i,\omega_1}(b_1, \cdot)]\}_{\omega_1}$ 
by $[b_2,a^1_{i,b_1}]$. We see that
\begin{equation*}
\begin{split}
\sum_{K\times V \in \calR_u}  & \E_{\omega} 
\big| A^{\omega}_{K \times V}(f_1, f_2, [b_2,a_{i,\omega_1}^1(b_1, \cdot)](1_{\wt \Omega_u}f_3)) \big| \\
& \lesssim \int_{\wt \Omega_u \setminus \Omega_{u-1}} S^{2}_{v_1}f_1 S^{k_2,v_2} f_2 S^1_{k_1,[b_2,a^1_{i,b_1}]}(1_{\wt \Omega_u}f_3).
\end{split}
\end{equation*}
From here the estimate can be concluded in the familiar way, using that 
$
S^{2}_{v_1}f_1 S^{k_2,v_2} f_2 \lesssim 2^{-u}|E|^{-1/r}
$
in the complement of $\Omega_{u-1}$ and that the square function $S^1_{k_1,[b_2,a^1_{i,b_1}]}$ is bounded. The boundedness
of this square function follows from Lemmas \ref{lem:weightedoneparcommutator} and \ref{lem:DetSquareShift}.

We now consider the last difference in \eqref{eq:Com2Split1}. In the Banach range this can simply be handled
by using the boundedness of $[b_2, S_{\omega}]_2$ and $a^1_{i, \omega_1}(b_1, \cdot)$.
Here we need to split this further by expanding the products $b_2a_{i,\omega_1}^1(b_1, 1_{\wt \Omega_u}f_3)$
and $b_2f_2$ using our usual expansions rules determined by the form $A^{\omega}_{K \times V}$.

This produces several terms, one of them being $A^{\omega}_{K \times V}(f_1, A_{j, \omega}(b_2,f_2), a_{i,\omega_1}^1(b_1, 1_{\wt \Omega_u}f_3))$, $j=1,\dots,8$.
Its contribution can be handled by estimating as
\begin{equation*}
\begin{split}
 \sum_{K\times V \in \calR_u}   \E_{\omega} 
\big| A^{\omega}_{K \times V}(&f_1, A_{j, \omega}(b_2,f_2), a_{i,\omega_1}^1(b_1, 1_{\wt \Omega_u}f_3))\big| \\
& \lesssim \int_{\wt \Omega_u \setminus \Omega_{u-1}} S^{2}_{v_1}f_1 S^{k_2,v_2}_{A_{j,b_2}} f_2 S^1_{k_3,a^1_{i,b_1}}(1_{\wt \Omega_u}f_3),
\end{split}
\end{equation*}
and then concluding as usual. The other produced terms from the expansion are handled similarly. We have proved
\eqref{eq:ResAveShiftExEst2}, and this finishes the treatment of the contribution of \eqref{eq:term1} to $[b_2,[b_1,S_{\omega}]_1]_2$.

We explain what happens if instead of \eqref{eq:term1} we consider another term
coming from the decomposition of $\langle [b_1,S_{\omega}]_1(f_1,f_2),f_3 \rangle$. The term that we handled
is the only one which uses the add and subtract trick from above, which lead to \eqref{eq:Com2Split1}. The other ones
are handled by directly expanding the products $b_2f_2$ and $b_2f_3$ in the usual way. 
Some of the terms are
particularly easy in the Banach range case, since they do not even require expansions and essentially reduce to the boundedness of $[b_2, S_{\omega}]_2$.
We omit these details.
\end{proof}

We end this section by stating the corresponding result for CZOs.
\begin{thm}\label{thm:mainCOM3}
Let $\|b_1\|_{\bmo(\R^{n+m})} = \|b_2\|_{\bmo(\R^{n+m})} = 1$, and let $1 < p, q \le \infty$ and $1/2 < r < \infty$ satisfy $1/p+1/q = 1/r$.
Let $T$ be a bilinear bi-parameter CZO.
Then we have
$$
 \|[b_2,[b_1,T]_1]_2(f_1, f_2)\|_{L^r(\R^{n+m})} 
 \lesssim \|f_1\|_{L^p(\R^{n+m})} \|f_2\|_{L^q(\R^{n+m})},
$$
and similarly for $[b_2,[b_1,T]_1]_1$.
\end{thm}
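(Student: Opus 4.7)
The plan is to follow exactly the strategy used for the first order commutators in Theorem \ref{thm:mainCOM1}, namely to apply the representation theorem \ref{thm:rep} to get the pointwise identity
\[
[b_2, [b_1, T]_1]_2(f_1, f_2) = C_T \mathop{\sum_{k \in \Z_+^3}}_{v \in \Z_+^3} \alpha_{k, v} \sum_{u} \E_{\omega}\, [b_2, [b_1, U^{v}_{k, u, \calD_{\omega}}]_1]_2(f_1, f_2),
\]
and then to bound each average of iterated commutators of DMOs with a constant that grows at most polynomially in the complexity $(k,v)$. Since $\alpha_{k,v} = 2^{-\alpha \max k_i/2} 2^{-\alpha \max v_j/2}$ decays exponentially, polynomial growth of the DMO commutator bounds will sum against $\alpha_{k,v}$ and yield the claim. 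The shift case is already established in Proposition \ref{prop:com2ofmodelQuasiBanach}, so the remaining work is to prove the analogous bounds for partial paraproducts and for full paraproducts (including the type where the non-cancellative Haar functions force us to expand only partially in the $\R^n$ or $\R^m$ variable).

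Concretely, for each type of DMO $U_{\omega}$ in a grid $\calD_{\omega}$, I would prove that for $1 < p, q \le \infty$, $1/2 < r < \infty$ with $1/p+1/q=1/r$ and $\|b_1\|_{\bmo} = \|b_2\|_{\bmo}= 1$,
\[
\|\E_{\omega} [b_2, [b_1, U_{\omega}]_1]_2(f_1, f_2)\|_{L^r} \lesssim (1+\max(k_i, v_i))^2 \|f_1\|_{L^p} \|f_2\|_{L^q}.
\]
The Banach range $r > 1$ can be handled directly by iterating the commutator decomposition scheme of Section \ref{sec:marprod}: expand $b_1 f_1$ (or $b_1 f_2$) via \eqref{eq:biparEX}, \eqref{eq:1EX}, \eqref{eq:noEX} depending on which Haar functions are cancellative, and then expand once more with $b_2$ applied to whatever factor is available. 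In every piece one is left with either (i) a DMO of the same type acting on outputs of the bounded operators $A_i(b_j,\cdot)$, $a_j^i(b_j,\cdot)$ (Lemma \ref{lem:basicAa}), or on adapted maximal functions/paraproducts like $\varphi^2_{\calD^m,b}$ (Lemma \ref{lem:bmaxbounds}), with at most a factor $(1+\max(k_i,v_i))^2$ coming from two applications of Lemma \ref{lem:bmobound}; or (ii) a term containing an iterated commutator $[b_2, A_i(b_1, \cdot)]$ or $[b_2, a_j^1(b_1, \cdot)]$, which is bounded by Lemma \ref{lem:weightedoneparcommutator}. Then the general boundedness \eqref{eq:bb} of the underlying DMO closes the Banach range estimate exactly as in Proposition \ref{prop:com1ofmodelBanach}.

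The main obstacle is the quasi--Banach range $r < 1$, where we must produce a weak type estimate at a test exponent and then interpolate (the scheme used at the end of the proof of Proposition \ref{prop:FullProd}). Here I would adapt the weak-type machinery already developed in this paper: for partial paraproducts the template is the proof of Proposition \ref{prop:com1PP}, and for full paraproducts it is Proposition \ref{prop:com1FP}. The necessary modifications are: one more add--and--subtract step, analogous to \eqref{eq:Com2Split1} in Proposition \ref{prop:com2ofmodelQuasiBanach}, which splits the iterated commutator into (a) a piece that is a DMO commuted with $[b_2, a_j^i(b_1, \cdot)]$ or $[b_2, A_i(b_1, \cdot)]$, handled by the extra expansion via Lemma \ref{lem:weightedoneparcommutator}, plus (b) pieces where both $b_1$ and $b_2$ end up inside outer operators of the type $A_i$ or $a_j^i$ so that the remaining object is again a DMO acting on bounded outputs. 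The enveloping sets $\Omega_u$ are then built from a control function that incorporates all the relevant deterministic square functions $S^{i,j}_{U}$, $S^1_{i,U}$, $S^2_{j,U}$ (bounded by Lemma \ref{lem:DetSquareShift}) applied to each of $f_1,f_2$ either directly or after $A_{j,b_\ell}$; the $H^1$--$\BMO_{\textup{prod}}$ type duality (Lemma \ref{lem:H1-BMO-Modified} and Remark \ref{lem:H1-BMO-Modified-1par}) and the $L^\infty$ bound $\|f_3\|_\infty \le 1$ on the good set $E'$ then deliver the estimate $\lesssim 2^{-u(1-r)}|E|^{1/r'}$, summable in $u$.

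Once these weak-type bounds are in place, the standard interpolation argument recalled in the proof of Proposition \ref{prop:FullProd} upgrades them to strong $L^p \times L^q \to L^r$ bounds across the full range (including $p=\infty$ or $q=\infty$ by duality). Combining with the representation theorem and summing over $(k,v,u)$ against $\alpha_{k,v}(1+\max(k_i,v_i))^2$ completes the proof. The symmetric commutator $[b_2,[b_1,T]_1]_1$ is handled by the same argument with the roles of $f_1$ and $f_2$ interchanged; in particular the key add--and--subtract step needs to be performed on the $f_2$ slot instead.
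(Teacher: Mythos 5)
Your proposal matches the paper's own (implicit) proof exactly: the paper states Theorem \ref{thm:mainCOM3} at the end of Section \ref{sec:iterated} without writing out a separate argument, because the proof is precisely what you describe --- expand $[b_2,[b_1,T]_1]_2$ via the representation theorem, invoke the polynomial-in-complexity bounds for iterated commutators of averaged DMOs (explicitly proved for shifts in Proposition \ref{prop:com2ofmodelQuasiBanach}, with the partial and full paraproduct cases noted to be analogous), and sum against the exponentially decaying $\alpha_{k,v}$. Your outline of how to carry out the partial and full paraproduct cases (iterated expansion rules, the add--and--subtract step parallel to \eqref{eq:Com2Split1}, Lemmas \ref{lem:weightedoneparcommutator} and \ref{lem:DetSquareShift}, the weak-type argument with $H^1$--$\BMO$ duality, and interpolation) is consistent with what the paper does for shifts and what it does in the first-order case for paraproducts.
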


\section{Lower bounds for commutators}\label{sec:LowerBounds}
We quickly prove lower bounds by developing bilinear bi-parameter analogs of some methods present in the very recent paper \cite{Hy5} by Hyt\"onen.
Let $K$ be a kernel satisfying the standard estimates of a full kernel (as in Section \ref{ss:FK}). We also assume that
$K$ is uniformly non-degenerate. In our setup this means that for all $y \in \R^{n+m}$ and $r_1, r_2 > 0$ there exists $x \in \R^{n+m}$
such that $|x_1-y_1| > r_1$, $|x_2 - y_2| > r_2$ and
\begin{equation}\label{eq:UND}
|K(x, y, y)| \gtrsim \frac{1}{r_1^{2n} r_2^{2m}}.
\end{equation}
\begin{rem}
Regarding the assumed H\"older conditions of the kernel $K$, similarly as in \cite{Hy5}, a weaker modulus of continuity could be enough. 
Also, this bilinear non-degeneracy condition could perhaps be relaxed somewhat. We did not pursue these avenues.
\end{rem}
\begin{exmp}
Define the kernels of the bilinear one-parameter Riesz transforms $R_i^n$, $i = 1, \ldots, 2n$, by setting for $(x,y,z) \in \R^{3n}$ with $x \ne y$ or $x \ne z$ that
$$
K_{R_i^n}(x,y,z) = \frac{x_i - y_i}{( |x-y|^2 + |x-z|^2)^{(2n+1)/2}} \, \textup{ or } K_{R_i^n}(x,y,z) = \frac{x_i - z_i}{( |x-y|^2 + |x-z|^2)^{(2n+1)/2}}
$$
depending whether $i = 1,\ldots n$ or $i = n+1, \ldots, 2n$, respectively.   Let $R_{i,j} = R_{i,j}^{n,m} := R_i^n \otimes R_j^m$ denote the corresponding bilinear
bi-parameter Riesz transform on $\R^{n+m}$, and let $K_{i,j}$ denote its full kernel. Obviously $K_{i,j}$ satisfies \eqref{eq:UND}.
\end{exmp}
Notice that \eqref{eq:UND} implies the following: given a rectangle $R = I \times J$ and $C_0 > 0$, there exists a rectangle $\tilde R = \tilde I \times \tilde J$
such that $\ell(I) = \ell(\tilde I)$, $\ell(J) = \ell(\tilde J)$, $d(I, \tilde I) \ge C_0\ell(I)$, $d(J, \tilde J) \ge C_0\ell(J)$ and such that
for some $\sigma \in \C$ with $|\sigma| = 1$ we have for all $x \in \tilde R$ and $y,z \in R$
that
$$
\Re \sigma K(x,y,z) \gtrsim_{C_0} \frac{1}{|R|^2}.
$$
To see this, let, for a big enough constant $A = A(C_0)$, the centre $c_{\tilde R}$ of $\tilde R$ be the point $x$ given by \eqref{eq:UND}
applied to $y = c_R$, $r_1 = A\ell(I)$ and $r_2 = A\ell(J)$. Choose $\sigma$ so that
$\sigma K(c_{\tilde R}, c_R, c_R) = |K(c_{\tilde R}, c_R, c_R)|$, and use mixed H\"older and size estimates repeatedly.

Let $k \ge 1$ and $b \in L^k_{\textup{loc}}(\R^{n+m})$.
Let $\gamma_1, \gamma_2 \in \{0, 1, \ldots, k\}$ be such that $\gamma_1 + \gamma_2 = k$, $r > 0$ and $C_ 0 > 0$.
We define $\Gamma = \Gamma(K, b, k, r, \gamma_1, \gamma_2, C_0)$ by setting
$$
\Gamma = \sup \frac{1}{|R|^{1/r}} \Big\| x \mapsto 1_{\tilde R}(x) \iint_{A \times A} (b(x) - b(y))^{\gamma_1} (b(x) - b(z))^{\gamma_2} K(x,y,z) \ud y \ud z \Big\|_{L^{r, \infty}(\R^{n+m)}},
$$
where the supremum is taken over all rectangles $R, \tilde R$ with
$\ell(I) = \ell(\tilde I)$, $\ell(J) = \ell(\tilde J)$, $d(I, \tilde I) \ge C_0\ell(I)$ and $d(J, \tilde J) \ge C_0\ell(J)$, and all subsets $A \subset R$.
\begin{rem}
It is clear that if $T$ is a bilinear bi-parameter singular integral with a full kernel $K$, then the function inside the $L^{r,\infty}$ norm in $\Gamma$ is equal to
$$1_{\tilde R}(x) [b,\ldots[b, [b, T]_{i_1}]_{i_2}\ldots]_{i_k}(1_A, 1_A)(x),$$
where $i_j \in \{1,2\}$ with exactly $\gamma_1$ of them being $1$ and $\gamma_2$ of them being $2$.
Therefore, if $1/p + 1/q = 1/r$
for some $p,q \in (1, \infty]$ and this iterated commutator maps $L^p \times L^q \to L^{r, \infty}$, then $\Gamma$ is dominated by this norm. However, the constant
$\Gamma$ is significantly weaker and depends only on the kernel $K$ and some off-diagonal assumptions.
\end{rem}
\begin{prop}
Suppose $K$ is a uniformly non-degenerate bilinear bi-parameter full kernel, $k \ge 1$ and $b \in L^k_{\textup{loc}}(\R^{n+m})$ is real-valued.
Let $\gamma_1, \gamma_2 \in \{0, 1, \ldots, k\}$ be such that $\gamma_1 + \gamma_2 = k$, $r > 0$ and $C_ 0 > 0$. Then for 
$\Gamma = \Gamma(K, b, k, r, \gamma_1, \gamma_2, C_0)$ we have
$
\|b\|_{\bmo(\R^{n+m})} \lesssim \Gamma^{1/k}.
$
\end{prop}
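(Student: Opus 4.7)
My plan is to adapt the median method of Hyt\"onen \cite{Hy5} and Lerner--Ombrosi--Rivera-R\'ios \cite{LOR2} to the bilinear bi-parameter setting, as previewed in the discussion of lower bounds. Fix a rectangle $R$ and let $m := m_R(b)$ be a median of $b$ over $R$, so that the level sets $E_\sigma := \{y \in R : \sigma(b(y) - m) \ge 0\}$, $\sigma \in \{+,-\}$, satisfy $|E_\sigma| \ge |R|/2$. The goal is to show $M_k^R := |R|^{-1}\int_R |b-m|^k \lesssim \Gamma$; indeed, $\|b\|_{\bmo(\R^{n+m})} \sim \sup_R (M_k^R)^{1/k}$ by John--Nirenberg for little BMO, so this would give $\|b\|_{\bmo(\R^{n+m})} \lesssim \Gamma^{1/k}$.

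Using the uniform non-degeneracy \eqref{eq:UND} together with the H\"older and mixed H\"older/size bounds on $K$, I would first select a translate $\tilde R$ of $R$ with matching sidelengths, separations $d(I,\tilde I),\, d(J,\tilde J)$ comparable to $C_0 \ell(R)$, and a unimodular $\sigma^*$ such that
\[
\Re\bigl(\sigma^* K(x,y,z)\bigr) \gtrsim \frac{1}{|R|^2}, \qquad x \in \tilde R,\ y,z \in R.
\]
This is obtained by realising \eqref{eq:UND} at the centres $(c_{\tilde R}, c_R, c_R)$, choosing $\sigma^*$ to rotate $K$ onto the positive real axis, and propagating to the full product via the H\"older estimates once $C_0$ is large enough. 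By the definition of $\Gamma$ together with weak-type Chebyshev, for $A := E_\sigma$ chosen by pigeonhole so that $\int_{E_\sigma}|b-m|^k \ge \tfrac12\int_R|b-m|^k$, one obtains a subset $G \subset \tilde R$ with $|G| \ge \tfrac{99}{100}|\tilde R|$ on which
\[
\Phi(x) := \iint_{E_\sigma \times E_\sigma}(b(x)-b(y))^{\gamma_1}(b(x)-b(z))^{\gamma_2} K(x,y,z)\,\mathrm{d}y\,\mathrm{d}z
\]
satisfies $|\Phi(x)| \lesssim \Gamma$.

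Writing $p := b(x) - m$ and $q_u := b(u) - m$, I would expand
\[
(p - q_y)^{\gamma_1}(p - q_z)^{\gamma_2} = \sum_{i=0}^{\gamma_1}\sum_{j=0}^{\gamma_2}(-1)^{i+j}\binom{\gamma_1}{i}\binom{\gamma_2}{j} p^{k-i-j} q_y^i q_z^j,
\]
and use that $q_y^i q_z^j = \sigma^{i+j}|q_y|^i |q_z|^j$ on $E_\sigma \times E_\sigma$. The kernel sign bound then forces the $(i,j) = (\gamma_1, \gamma_2)$ leading term in $\Re(\sigma^* \Phi(x))$ to have magnitude
\[
\ge \frac{c}{|R|^2}\Bigl(\int_{E_\sigma}|b-m|^{\gamma_1}\Bigr)\Bigl(\int_{E_\sigma}|b-m|^{\gamma_2}\Bigr) \gtrsim M_k^R,
\]
via Jensen and the pigeonhole choice of $\sigma$, while each remaining term is dominated pointwise by $|b(x) - m|^{k-i-j}(M_k^R)^{(i+j)/k}$ using only the size bound on $K$. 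Thus, for $x \in G$,
\[
M_k^R \lesssim \Gamma + \sum_{l=1}^{k-1}|b(x) - m|^{l}(M_k^R)^{(k-l)/k}.
\]

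The main obstacle — and where the bi-parameter median method genuinely earns its power — is eliminating the $|b(x) - m|$ factors from the lower-order terms. The strategy is to argue first for the truncation $b_N := (-N)\vee(b \wedge N)$, so that $\|b_N\|_{\bmo(\R^{n+m})} < \infty$ \emph{a priori} and absorption is legitimate; then iterate the median construction on sub-rectangles $R' \subset \tilde R$ of all scales, with a further translate $\tilde{R}'$, to obtain analogous bounds $|b(x) - m_{R'}| \lesssim (M_k^{R'})^{1/k}$ on a large subset of $\tilde{R}'$. Bridging $|m_R - m_{R'}| \lesssim (M_k^R)^{1/k}$ through an intermediate rectangle containing both (Chebyshev applied to the median) and combining with Young's inequality $|b(x) - m|^l (M_k^R)^{(k-l)/k} \le \varepsilon M_k^R + C_\varepsilon |b(x) - m|^k$ closes the estimate, giving $M_k^R \lesssim \Gamma(K, b_N)$ with a constant independent of $R$ and $N$. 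Passing $N \to \infty$ via monotonicity then yields $\|b\|_{\bmo(\R^{n+m})}^k \lesssim \Gamma$, as desired. The delicate point is precisely this absorption step, since $\|b\|_{\bmo}^{k-l}(M_k^R)^{l/k}$ is of the same order as the leading term after supping in $R$, and the refined selection of $x$ on a sub-rectangle is essential.
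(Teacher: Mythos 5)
There is a genuine gap, and it concerns the very first step: where to take the median and what to restrict. You take the median $m$ of $b$ over $R$ and restrict only $y,z$ to a half-level-set $E_\sigma \subset R$, leaving $x \in \tilde R$ free. With $x$ unrestricted, the factors $b(x)-b(y)$ and $b(x)-b(z)$ have no definite sign, which is exactly why you are forced into the binomial expansion and then into the absorption problem you flag at the end. The absorption scheme you sketch (truncation plus iterated medians plus Young) is not carried out, and it is far from clear it closes: after taking the supremum in $R$, a term like $\|b\|_{\bmo}^{k-l}(M_k^R)^{l/k}$ is genuinely of the same size as the leading term $M_k^R$ and does not come with a small parameter, so neither Young's inequality nor the $b_N$ truncation by itself gives absorption.

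The paper's argument avoids all of this by a two-sided restriction which you are missing. Take $\alpha$ to be a median of $b$ over $\tilde R$, not $R$, and restrict \emph{both} $y,z \in R \cap \{b \le \alpha\}$ \emph{and} $x \in \tilde R \cap \{b \ge \alpha\}$. On this domain $b(x)-b(y) \ge \alpha - b(y) = (\alpha - b(y))_+ \ge 0$ and likewise for $z$, so
\[
\iint_{[R\cap\{b\le\alpha\}]^2} (b(x)-b(y))^{\gamma_1}(b(x)-b(z))^{\gamma_2}\,\ud y\,\ud z
\ge \int_R (\alpha-b)_+^{\gamma_1}\cdot\int_R(\alpha-b)_+^{\gamma_2}
\ge |R|^2\Big(\frac{1}{|R|}\int_R(\alpha-b)_+\Big)^k
\]
by Jensen, with no expansion and no lower-order terms. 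Since $\Re\sigma K \gtrsim |R|^{-2}$ on $\tilde R\times R\times R$ the integrand times $\sigma K$ is pointwise $\gtrsim |R|^{-2}$ times something nonnegative, so the full integral dominates $\big(\frac{1}{|R|}\int_R(\alpha-b)_+\big)^k$ for every $x$ in $\tilde R \cap \{b\ge\alpha\}$, which has measure $\ge |R|/2$ because $\alpha$ is a median on $\tilde R$. Then the weak-$L^r$ definition of $\Gamma$ (with $A=R\cap\{b\le\alpha\}$) gives $\int_R(\alpha-b)_+ \lesssim \Gamma^{1/k}|R|$ directly; the symmetric estimate $\int_R(b-\alpha)_+\lesssim\Gamma^{1/k}|R|$ finishes the proof. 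Note also that it is unnecessary to express $\|b\|_{\bmo}$ in terms of $M_k^R$: once you control $\frac{1}{|R|}\int_R|b-\alpha|$ uniformly, the bmo bound follows since the oscillation about any constant dominates the oscillation about the mean.

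To summarize the fix: change the median from $R$ to $\tilde R$; restrict $x$ as well as $y,z$; and then drop the binomial expansion entirely. With those changes your outline becomes correct and coincides with the paper's short argument.
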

\begin{proof}
Let $R = I \times J$ be a fixed rectangle. As we saw above we can find $\tilde R = \tilde I \times \tilde J$
such that $\ell(I) = \ell(\tilde I)$, $\ell(J) = \ell(\tilde J)$, $d(I, \tilde I) \ge C_0\ell(I)$, $d(J, \tilde J) \ge C_0\ell(J)$ and such that
for some $\sigma \in \C$ with $|\sigma| = 1$ we have for all $x \in \tilde R$ and $y,z \in R$
that
$
\Re \sigma K(x,y,z) \gtrsim_{C_0} |R|^{-2}.
$
For $t \in \R$ let $t_+ = \max(t,0)$. For an arbitrary $\alpha \in \R$ and $x \in \tilde R \cap \{b \ge \alpha\}$ we have
\begin{align*}
\Big( \frac{1}{|R|} \int_R (\alpha - b)_+ \Big)^k &\le \frac{1}{|R|^2} \iint_{[R \cap \{b \le \alpha\}]^2} (b(x) - b(y))^{\gamma_1} (b(x) - b(z))^{\gamma_2}
\ud y \ud z \\
& \lesssim \Re \sigma \iint_{[R \cap \{b \le \alpha\}]^2} (b(x) - b(y))^{\gamma_1} (b(x) - b(z))^{\gamma_2} K(x,y,z) \ud y \ud z.
\end{align*}
Letting $\alpha$ be a median of $b$ in $\tilde R$ we have that
\begin{align*}
|R|^{1/r} &\Big( \frac{1}{|R|} \int_R (\alpha - b)_+ \Big)^k \lesssim 
|\tilde R \cap \{b \ge \alpha\}|^{1/r} \Big( \frac{1}{|R|} \int_R (\alpha - b)_+ \Big)^k \\
&\lesssim \Big\| x \mapsto 1_{\tilde R}(x) \iint_{[R \cap \{b \le \alpha\}]^2} (b(x) - b(y))^{\gamma_1} (b(x) - b(z))^{\gamma_2} K(x,y,z) \ud y \ud z \Big\|_{L^{r, \infty}(\R^{n+m})}.
\end{align*}
Using the definition of $\Gamma$ we get
$
\int_R (\alpha - b)_+ \lesssim \Gamma^{1/k} |R|.
$
Combining with the symmetric estimate
$
\int_R (b-\alpha)_+ \lesssim \Gamma^{1/k} |R|
$
we are done.
\end{proof}

\begin{appendix}
\section{Bilinear bi-parameter multipliers}
We show that the multipliers $T_m$ studied e.g. in \cite{MPTT}
are paraproduct free (even the partial paraproducts vanish) bilinear bi-parameter CZOs.
Recall that in Grafakos--Torres \cite{GT} it was shown that the bilinear one-parameter multipliers of Coifman--Meyer \cite{CM} are bilinear one-parameter singular integrals satisfying $T1$ type assumptions. So what we do is in this spirit but in the bi-parameter setting.

Let $m \in L^{\infty}(\R^{n+m} \times \R^{n+m})$
be smooth outside the set $\Delta:= \{(\xi, \eta) \in \R^{n+m} \times \R^{n+m} \colon |\xi_1| + |\eta_1| = 0 \textup{ or } |\xi_2| + |\eta_2| = 0\}$, and assume that it satisfies
the estimate
$$
|\partial^{\alpha_1}_{\xi_1} \partial^{\alpha_2}_{\xi_2}  \partial^{\beta_1}_{\eta_1}  \partial^{\beta_2}_{\eta_2}  m(\xi, \eta)| \lesssim
(|\xi_1| + |\eta_1|)^{-|\alpha_1| - |\beta_1|} (|\xi_2| + |\eta_2|)^{-|\alpha_2| - |\beta_2|}. 
$$
For Schwartz functions $f_1, f_2 \colon \R^{n+m} \to \C$ and $x \in \R^{n+m}$ define
$$
T_m(f_1, f_2)(x) = \iint_{\R^{n+m}} \iint_{\R^{n+m}} m(\xi, \eta) \widehat f_1(\xi) \widehat f_2(\eta) e^{2\pi i x \cdot (\xi + \eta)} \ud \xi \ud \eta,
$$
where
$
 \widehat f_1(\xi) = \iint_{\R^{n+m}} f_1(x) e^{-2\pi i x \cdot \xi}\ud x.
$

Fix a smooth function $p_1\colon\R^{2n} \to [0,1]$ so that $p_1(\xi_1, \eta_1) = 1$ when $|\xi_1| + |\eta_1| \le 1$ and $p_1(\xi_1, \eta_1) = 0$ when $|\xi_1| + |\eta_1| \ge 2$. Set $\delta_1(\xi_1, \eta_1) = p_1(\xi_1, \eta_1) - p_1(2\xi_1, 2\eta_1)$. Define $\delta_2$ similarly in $\R^{2m}$, and set
$$
m_{j_1, j_2}(\xi, \eta) = m(\xi, \eta) \delta_1(2^{-j_1}\xi_1, 2^{-j_1}\eta_1) \delta_2(2^{-j_2}\xi_2, 2^{-j_2}\eta_2), \qquad j_1, j_2 \in \Z.
$$
For $J \in \N$ define
$$
m_J(\xi, \eta) = \sum_{|j_1|, |j_2| \le J} m_{j_1, j_2}(\xi, \eta).
$$
We show that $T_{m_J}$ is a paraproduct free (in the strong sense that also the partial paraproducts vanish) bilinear bi-parameter singular integral in our class uniformly on $J$.

\subsection*{Full kernel representation}
Set $K_J(x,y) = \sum_{|j_1|, |j_2| \le J} K_{j_1, j_2}(x,y)$, where
$$
K_{j_1, j_2}(x,y) = \iint_{\R^{n+m}} \iint_{\R^{n+m}} m_{j_1, j_2}(\xi, \eta) e^{2\pi i (x \cdot \xi + y\cdot \eta)} \ud \xi \ud \eta.
$$
Notice that $K_J$ is a Schwartz function as $m_J$ is, and that for all $x \in \R^{n+m}$ we have
$$
T_{m_J}(f_1, f_2)(x) = K_J * (f_1 \otimes f_2)(x,x) = \iint_{\R^{n+m}} \iint_{\R^{n+m}} K_J(x-y, x-z)f_1(y)f_2(z)\ud z \ud y.
$$
We claim that $(x,y,z) \mapsto K_J(x-y,x-z)$ satisfies the estimates required from a full kernel uniformly on $J$.
Using standard arguments involving integration by parts as in Stein's book \cite{Stein:book} pp. 245--246 (which is the linear
one-parameter case), we see that
$$
\sum_{j_1, j_2 \in \Z} |\partial^{\alpha_1}_{x_1} \partial^{\alpha_2}_{x_2}  \partial^{\beta_1}_{y_1}  \partial^{\beta_2}_{y_2} K_{j_1, j_2}(x,y)|
\lesssim (|x_1|+|y_1|)^{-2n - |\alpha_1| - |\beta_1|}(|x_2|+|y_2|)^{-2m - |\alpha_2| - |\beta_2|}.
$$
These derivative bounds imply the desired full kernel estimates for $K_J$, uniformly on $J$.

\subsection*{Partial kernel representation}
Let $f_i = f_i^1 \otimes f_i^2$, where $f_i^1\colon \R^n \to \C$ and $f_i^2 \colon \R^m \to \C$, $i = 1,2,3$. We write
$$
\langle T_{m_J}(f_1, f_2), f_3\rangle = \langle T_{m_{J, f_1^2, f_2^2, f_3^2}}(f_1^1, f_2^1), f_3^1\rangle,
$$
where
$$
m_{J, f_1^2, f_2^2, f_3^2}(\xi_1, \eta_1) = \iiint_{\R^m \times \R^m \times \R^m} m_J(\xi, \eta) \widehat{f_1^2}(\xi_2) \widehat{f_2^2}(\eta_2) f_3^2(x_2)
e^{2\pi i x_2 \cdot (\xi_2 + \eta_2)} \ud \xi_2 \ud \eta_2 \ud x_2
$$
and
$$
T_{m_{J, f_1^2, f_2^2, f_3^2}}(f_1^1, f_2^1)(x_1) = \iint_{\R^n \times \R^n} m_{J, f_1^2, f_2^2, f_3^2}(\xi_1, \eta_1)
\widehat{f_1^1}(\xi_1) \widehat{f_2^1}(\eta_1) e^{2\pi i x_1 \cdot (\xi_1 + \eta_1)} \ud \xi_1 \ud \eta_1.
$$
Notice that
$$
\partial^{\alpha_1}_{\xi_1} \partial^{\beta_1}_{\eta_1} m_{J, f_1^2, f_2^2, f_3^2}(\xi_1, \eta_1) = \langle T_{\partial^{\alpha_1}_{\xi_1} \partial^{\beta_1}_{\eta_1}m_J(\xi_1, \cdot, \eta_1, \cdot)}(f_1^2, f_2^2), f_3^2\rangle,
$$
where
$$
T_{\partial^{\alpha_1}_{\xi_1} \partial^{\beta_1}_{\eta_1} m_J(\xi_1, \cdot, \eta_1, \cdot)}(f_1^2, f_2^2)(x_2) = \iint_{\R^m \times \R^m} \partial^{\alpha_1}_{\xi_1} \partial^{\beta_1}_{\eta_1}m_J(\xi, \eta) \widehat{f_1^2}(\xi_2) \widehat{f_2^2}(\eta_2) e^{2\pi i x_2 \cdot (\xi_2 + \eta_2)} \ud \xi_2 \ud \eta_2.
$$
Since we have
$$
|\partial^{\alpha_2}_{\xi_2} \partial^{\beta_2}_{\eta_2} \partial^{\alpha_1}_{\xi_1} \partial^{\beta_1}_{\eta_1} m_J(\xi_1, \xi_2, \eta_1, \xi_2)|
\lesssim (|\xi_1| + |\eta_1|)^{-|\alpha_1| - |\beta_1|} (|\xi_2| + |\eta_2|)^{-|\alpha_2| - |\beta_2|},
$$
Coifman--Meyer \cite{CM} and Grafakos--Torres \cite{GT} tell us that
$$
|\langle T_{\partial^{\alpha_1}_{\xi_1} \partial^{\beta_1}_{\eta_1}m_J(\xi_1, \cdot, \eta_1, \cdot)}(f_1^2, f_2^2), f_3^2\rangle|
\lesssim (|\xi_1| + |\eta_1|)^{-|\alpha_1| - |\beta_1|} \| f_1^2 \|_{L^4(\R^m)} \| f_2^2 \|_{L^4(\R^m)} \| f_3^2 \|_{L^2(\R^m)}.
$$
But this means that the same upper bound holds for $|\partial^{\alpha_1}_{\xi_1} \partial^{\beta_1}_{\eta_1} m_{J, f_1^2, f_2^2, f_3^2}(\xi_1, \eta_1)|$.
Another application of \cite{CM}, \cite{GT} tells us that for $x_1 \in \R^n$ we have
$$
T_{m_{J, f_1^2, f_2^2, f_3^2}}(f_1^1, f_2^1)(x_1) = \iint_{\R^n \times \R^n} K_{J, f_1^2, f_2^2, f_3^2}(x_1 - y_1, x_1 - z_1) f_1^1(y_1) f_2^1(z_1)\ud y_1 \ud z_1,
$$
where $(x_1, y_1, z_1) \mapsto K_{J, f_1^2, f_2^2, f_3^2}(x_1 - y_1, x_1 - z_1)$ is a bilinear Calder\'on--Zygmund kernel with a constant
dominated by $ \| f_1^2 \|_{L^4(\R^m)} \| f_2^2 \|_{L^4(\R^m)} \| f_3^2 \|_{L^2(\R^m)}$. In particular, we have the desired partial kernel representation
in $\R^n$ with the correct bounds. Of course, we can similarly also obtain the partial kernel representation in $\R^m$.

We record here that the above application of \cite{CM}, \cite{GT} actually gives the bound
\begin{equation}\label{eq:tensorprodcont}
\begin{split}
|\langle T_{m_J}(&f_1, f_2), f_3\rangle|  = |\langle T_{m_{J, f_1^2, f_2^2, f_3^2}}(f_1^1, f_2^1), f_3^1\rangle| \\
&\lesssim  \| f_1^1 \|_{L^4(\R^n)} \| f_1^2 \|_{L^4(\R^m)}
\| f_2^1 \|_{L^4(\R^n)} \| f_2^2 \|_{L^4(\R^m)} \| f_3^1 \|_{L^2(\R^n)} \| f_3^2 \|_{L^2(\R^m)},
\end{split}
\end{equation}
i.e. boundedness for functions of the tensor product form.

\subsection*{Boundedness and cancellation assumptions}
Notice that \eqref{eq:tensorprodcont} immediately implies the weak boundedness and diagonal BMO assumptions.

We claimed that $T_{m_J}$ is a paraproduct free bilinear bi-parameter singular integral.
It remains to check that for all bounded functions
$f_i \colon \R^n \to \C$, $g_i \colon \R^m \to \C$, $i=1,2$,
all 
$$
S_J \in \{T_{m_J}, T_{m_J}^{1*}, T_{m_J}^{2*}, (T_{m_J})^{1*}_1, (T_{m_J})^{2*}_1, (T_{m_J})^{1*}_2, (T_{m_J})^{2*}_2, (T_{m_J})^{1*, 2*}_{1,2}, (T_{m_J})^{1*, 2*}_{2,1}\}
$$
and all rectangles $R \subset \R^{n+m}$ there holds
$$
\langle S_J(1 \otimes g_1, 1 \otimes g_2), h_R \rangle 
= \langle S_J(f_1 \otimes 1, f_2 \otimes 1), h_R \rangle =0.
$$
By symmetry it is enough to show
$
\langle T_{m_J}(1 \otimes g_1, 1 \otimes g_2), h_R \rangle = 0.
$
We simply have that (the integral converges as $K_J$ is a Schwartz function)
$$
T_{m_J}(1 \otimes g_1, 1 \otimes g_2)(x) = \iint_{\R^{n+m}} \iint_{\R^{n+m}} K_J(y_1, x_2-y_2, z_1, x_2-z_2) g_1(y_2) g_2(z_2) \ud y \ud z
$$
is a constant fuction on $x_1$, and therefore the claim follows using $\int_{\R^n} h_R(x_1,x_2) \ud x_1 = 0$.

\subsection*{Conclusion}
With some more care it is possible to prove that $T_m$ itself is a paraproduct free bilinear bi-parameter CZO, and not only that
$$
\langle T_{m}(f_1, f_2), f_3\rangle = \lim_{J \to \infty} \langle T_{m_J}(f_1, f_2), f_3\rangle,
$$
where $T_{m_J}$ are such uniformly on $J$. 

\end{appendix}

\end{document}